\documentclass[10pt]{article}

\usepackage{color}
\usepackage{amsmath}

\usepackage{footnote}
\usepackage{subscript}
\usepackage{amsfonts,color}
\usepackage{amsmath,amssymb}
\usepackage{amssymb} 
\usepackage{mathtools}
\usepackage{amsthm,wasysym}
\usepackage{framed}
\usepackage{cancel}
\usepackage{graphicx}
\usepackage{caption}
\usepackage{subcaption}
\usepackage{rotating}

\usepackage[english]{babel}
\usepackage[utf8]{inputenc}
\makeatletter
\usepackage{float}
\usepackage{wrapfig}
\restylefloat{figure}

\makeatletter
\let\@fnsymbol\@arabic
\makeatother

\usepackage{bbm}

\newcommand{\id}{{\mathbf{\mathbbm{1}}}}

\newcommand{\tr}{{\rm tr}}
\newcommand{\dev}{{\rm dev}}
\newcommand{\Sym}{{\rm Sym}}
\newcommand{\sym}{{\rm sym}}
\newcommand{\skw}{{\rm skew}}

\newcommand{\Curl}{{\rm Curl}}
\newcommand{\Div}{{\rm Div}}
\newcommand{\axl}{{\rm axl}}
\newcommand{\anti}{{\rm \textbf{Anti}}}
\newcommand{\so}{\mathfrak{so}}

\newcommand{\norm}[1]{\|#1\|}

\setcounter{tocdepth}{2} 

\def\dd{\displaystyle}

\setlength{\textheight}{23cm}
\setlength{\textwidth}{17cm}
\setlength{\topmargin}{-1cm}
\setlength{\oddsidemargin}{-0cm}
\setlength{\evensidemargin}{-1cm}

\setlength\arraycolsep{2pt}

\newtheorem{theorem}{Theorem}[section]
\newtheorem{lemma}[theorem]{Lemma}
\newtheorem{remark}[theorem]{Remark}
\newtheorem{proposition}[theorem]{Proposition}

\newtheorem{definition}[theorem]{Definition}
\def\barr{\begin{array}}
\setcounter{secnumdepth}{4}
\setcounter{tocdepth}{4}
\usepackage{lscape}

	\def\earr{\end{array}}
\def\bec#1{\begin{equation}\label{#1}}
\def\becn{\begin{equation*}}
\def\endec{\end{equation}}
\def\endecn{\end{equation*}}
\def\dd{\displaystyle}
\def\bfm#1{\mbox{\boldmat}}

\begin{document}
	
	\title{Existence and uniqueness of  Rayleigh waves in isotropic  elastic Cosserat materials and algorithmic aspects}
\author{ Hassam Khan\thanks{Hassam Khan,  \ \  Lehrstuhl f\"{u}r Nichtlineare Analysis und Modellierung, Fakult\"{a}t f\"{u}r
		Mathematik, Universit\"{a}t Duisburg-Essen,  Thea-Leymann Str. 9, 45127 Essen, Germany, email: hassam.khan@stud.uni-due.de} \quad and \quad  Ionel-Dumitrel Ghiba\thanks{ Ionel-Dumitrel Ghiba (Corresponding author),  \ Department of Mathematics,  Alexandru Ioan Cuza University of Ia\c si,  Blvd.
		Carol I, no. 11, 700506 Ia\c si,
		Romania; and  Octav Mayer Institute of Mathematics of the
		Romanian Academy, Ia\c si Branch,  700505 Ia\c si, email:  dumitrel.ghiba@uaic.ro}  \quad and  \\   Angela Madeo\,\thanks{Angela Madeo,  \ Technische Universit\"at Dortmund, August-Schmidt-Str. 8, 44227 Dortmund, Germany, email: angela.madeo@tu-dortmund.de}
	 \quad and \quad    Patrizio Neff\,\thanks{Patrizio Neff,  \ \ Head of Lehrstuhl f\"{u}r Nichtlineare Analysis und Modellierung, Fakult\"{a}t f\"{u}r
		Mathematik, Universit\"{a}t Duisburg-Essen,  Thea-Leymann Str. 9, 45127 Essen, Germany, email: patrizio.neff@uni-due.de}
}

\maketitle
\begin{abstract}
	
	We discuss the propagation of  surface waves in an isotropic half space modelled with the linear  Cosserat theory of isotropic elastic materials. To this aim we use a method based on the algebraic analysis of the surface impedance matrix  and on the algebraic Riccati equation, and which  is independent of the common Stroh formalism. Due to this method,  a new algorithm which determines the amplitudes and the wave speed in the theory of isotropic elastic Cosserat materials is described. Moreover, the method allows {us} to prove  the existence and uniqueness of a subsonic solution of the secular equation, a problem which remains unsolved in almost all generalised linear theories of elastic materials. Since the results are suitable to be used for numerical implementations,     we propose  two numerical algorithms which are viable for any  elastic material. Explicit numerical calculations are made for   alumunium-epoxy  in the context of the Cosserat model. Since the  novel form of the secular equation for isotropic elastic material has not been explicitly derived elsewhere,   we establish it in this paper, too.

  \medskip
  
  \noindent\textbf{Keywords:} Cosserat elastic materials, Matrix analysis method, Riccati equation, Rayleigh waves, Stroh formalism, secular equation, existence and uniqueness.
  
  \medskip
  
  \noindent\textbf{AMS 2020 MSC:} 74J15, 74M25, 74H05, 74J05
  
\end{abstract}

\begin{footnotesize}
	\tableofcontents
\end{footnotesize}

\section{Introduction}

{\it  Traveling waves} can exist within a small depth from a free surface of an elastic continuum, while the bulk of the continuum remains almost at rest. Such waves are called {\it Rayleigh waves} named after the English physicist Lord Rayleigh, who carried out pioneering work in studies of wave propagations in isotropic elastic media \cite{Rayleigh}. Rayleigh waves are important for modelling  {\it seismic waves} that are created by  earthquakes on the surface of the earth.  These waves are a combination of  longitudinal (horizontal) and transverse (vertical) motions. The horizontal components of the displacement are parallel to the direction of propagation, whereas vertical components are directed into the half-space. Rayleigh waves move elliptically, they take place counter-clockwise near the surface and  clockwise deep down. The amplitude of the waves decays exponentially with depth beneath the surface. The studies of Rayleigh waves captivated the attention of many scientists owing to its industrial application such as material characterization, nondestructive evaluation and acoustic microscopy. These waves are also employed to detect cracks and other defects in the material. In addition, many applications have been found in  seismology  and near surface geophysical exploration.

Rayleigh waves  are particular  {\it inhomogeneous plane waves}.  Inhomogeneous plane waves, also known as 
\textit{evanescent waves}, represent those waves for which
the planes of constant phase are not the same as the planes of
constant amplitude. Usually, these types of waves are described in terms of
 the slowness bivector (a complex vector) 
and the amplitude bivector. Hayes \cite{hayes1986inhomogeneous,boulanger1993bivectors} has developed the
directional-ellipse method for a systematic study of all
inhomogeneous plane waves that may propagate in  classical
linear elasticity.  In classical elasticity of isotropic materials, the solution of the
corresponding equation for Rayleigh surface waves speed (the {\it secular equation})
has been studied numerically, see e.g.  \cite{Rayleigh,hayes1962note}. For the statement of
the problem one may consult  the book \cite{Achenbach} and the  works \cite{Nkemzi,Nkemzi2,Rahman,Malischewsky,Li,destrade2007seismic,Ting2,Ting,VinhMalischewsky1,VinhMalischewsky2,VinhOgden}.

An important non-trivial task in classical linear elasticity is the extension of the results concerning the Rayleigh waves to  anisotropic materials. Using a formalism ({\it Stroh formalism}) constructed  in \cite{stroh1958dislocations}, Stroh \cite{stroh1962steady} was able to avoid the complex secular equation derived by Synge \cite{synge1956elastic} and he has given a real expression of it, see also Currie \cite{currie1974rayleigh}. Another issue concerning the surface wave in an anisotropic elastic half-space is the derivation of the {\it explicit secular equations} such that a solution (analytical, if possible) can be easily  found (i.e.~at least with the help of a clear numerical strategy). In this respect, we mention  \cite{taziev1989dispersion,mozhaev1995some, destrade2001explicit,destrade2007seismic,destrade2002incompressible,ting2002explicit,ting2002explicit}. 
For isotropic  as well as for anisotropic materials,   the mathematical analysis of this equation has, perhaps, at least the same importance as  the derivation of the secular equations, since it is not obvious if there exists an admissible solution of the secular equation and if this solution is unique. By an admissible solution we mean a solution which takes into account all the restrictions which were imposed in the derivation of the secular equation. In many approaches this essential aspect is neglected or only conjectured.  For linear isotropic materials this is explained in the book by Achenbach \cite{Achenbach}. While  {\it strong ellipticity} (Legendre–Hadamard ellipticity) guarantees  the existence of plane waves, it is not obvious if  seismic waves exist since three inhomogeneous body wave solutions are needed when the traction-free boundary condition specific to seismic waves is solved. For seismic waves propagating in anisotropic elastic materials, the first uniqueness result is due to Barnett et al. \cite{barnett1973elastic}, see also \cite{chadwick1977foundations} for more  details,  while for the proof of the existence of the solution of the secular equation, we mention the works by Barnett and Lothe \cite{barnett1974consideration}, Lothe and Barnett \cite{lothe1976existence} and Ingebrigtsen and Tonning \cite{ingebrigtsen1969elastic}. 
  
Motivated by  previous work of Mielke and Sprenger \cite{mielke1998quasiconvexity} which is more related to  control theory \cite{knobloch2012topics} than to surface wave propagation, Fu and Mielke \cite{fu2002new} and Mielke and Fu \cite{mielke2004uniqueness} have devised a new method for anisotropic elastic materials which is not based on the Stroh formalism, it is conceptually different from the other methods   and it is mathematically  well explained. They have shown that the {\it impedance matrix} \cite{ingebrigtsen1969elastic} defining the secular equation is the solution of an \textit{algebraic Riccati equation}. Using the properties of this equation, in \cite{mielke2004uniqueness,fu2002new} it is then proven that   the secular equation does not admit spurious roots. It can be  said that the works by Mielke and Fu give an elegant final  answer to the general case of anisotropic elastic materials, in the framework of classical linear elasticity. Since in  \cite{mielke2004uniqueness,fu2002new} the secular equation is not written explicitly, in the current paper we have computed it for linear isotropic classical elasticity, too, and we compare it with the  classical one and that resulting by using the Stroh formalism.

However, the classical theory of elasticity does not explain certain discrepancies that
occur in propagation of waves at high frequency
and short wavelength. It is now a common place that the response of the material to external stimuli depends
heavily on the motions of its inner structure and if the ratio of
the characteristic length associated with the external stimuli and the
internal characteristic length is near  to $1$, then the response
of constituent subcontinua becomes important, see \cite{Eringen99}. Classical linear elasticity ignores
these effects. 

One of the first generalization of classical linear elasticity is represented by the {\it Cosserat (micropolar) continuum} theory in which the rotational degrees
of freedom play a central role \cite{Eringen64,Mindlin64}.   
For a quick comparison, we mention that denoting the macroscopic displacement by 
$\mathbf{u}:\Omega \subset\mathbb{R}^3\mapsto\mathbb{R}^3$ and the   microrotation vector field by $ \vartheta   :\Omega \subset\mathbb{R}^3\mapsto\mathbb{R}^3$,  the elastic energy density  of the isotropic linear Cosserat model read
\begin{align}
W(\mathrm{D}u ,{ \vartheta  } , \mathrm{D} \vartheta    )=& \,\mu_{\rm e} \,\lVert \dev_3\, \sym\,\mathrm{D}u \rVert ^{2}+\mu_{\rm c}\,\lVert \skw\,(\mathrm{D}u -\anti\, \vartheta   )\rVert ^{2}+\frac{2\,\mu_{\rm e} +3\,\lambda_{\rm e} }{6}\left[\mathrm{tr} \left(\mathrm{D}u \right)\right]^{2}\notag\\&+\frac{\mu_{\rm e}\,L_{\rm c}^2}{2}\left[ {\alpha_1}\,\lVert \dev_3\,\sym\,\mathrm{D} \vartheta    \rVert ^{2}+{\alpha_2}\,\lVert \skw\,\mathrm{D} \vartheta    \rVert ^{2}+\frac{2\,\alpha_1+3\,\alpha_3}{6}\left[\mathrm{tr} \left(\mathrm{D} \vartheta    \right)\right]^{2}\right],
\end{align}
where  $(\lambda_{\rm e}, \mu_{\rm e})$, $\mu_{\rm c} $, $L_{\rm c}$, $\alpha_1$, $\alpha_2$ and $ \alpha_3$  are six isotropic  elastic moduli representing the parameters related to the meso-scale,  the Cosserat couple modulus, the characteristic length, and the three
general isotropic curvature parameters, respectively, { $(\anti(\vartheta))_{ij}=-\epsilon_{ijk}\,\vartheta_k,
$
with $\epsilon_{ijk}$ the totally antisymmetric third order permutation tensor, see Section \ref{NS}}, while the elastic energy density of  classical linear elasticity is 
\begin{align}
W(\mathrm{D}u )=&\,  \mu_{\rm e} \,\lVert \dev_3\, \sym\,\mathrm{D}u \rVert ^{2}+\frac{2\,\mu_{\rm e} +3\,\lambda_{\rm e} }{6}\left[\mathrm{tr} \left(\mathrm{D}u \right)\right]^{2}.\qquad \qquad \qquad \qquad \qquad 
\end{align}

In the framework of the Cosserat theory, the  propagation of seismic waves in an isotropic Cosserat elastic half space  was studied in \cite{ChiritaGhiba3} using  the Stroh formalism \cite{destrade2007seismic,stroh1962steady}, see also \cite{lazar2005cosserat}.  The strong point of the approach  in \cite{ChiritaGhiba3}  is that  explicit
expressions of involved eigenvalue problems and explicit conditions upon the wave
speed were found, as well as the exact expressions of three linear independent
amplitude vectors. Then,  a simple form of the secular
equation is obtained, which,  by comparison  with other
generalized forms of the secular equation for Cosserat materials {\cite{Eringen99,Erofeyev,grekova2009waves,Koebke,kulesh2009problem,Kulesh01,Kulesh03,Kulesh05}}  does not involve the complex form of
the attenuating coefficients. For a specified
class of materials, i.e., for which the constitutive coefficients\footnote{We use different notations in comparison to  the Eringen notation in \cite{ChiritaGhiba3}, i.e. $\mu_{\rm c}=\frac{\kappa_{\rm Eringen}}{2}, \ \ \mu_{\rm e} =\mu_{\rm Eringen}+\frac{\kappa_{\rm Eringen}}{2}$, see also \cite{hassanpour2017micropolar}. In the Eringen notations
	 these are $\lambda_{\rm Eringen}+\mu_{\rm Eringen}>0$,  $\kappa_{\rm Eringen}>0$, $\alpha_1+\alpha_2>0$.} satisfy
\begin{align}\label{Chiritacond}
\mu_{\rm e}-\mu_{\rm c}+\lambda_{\rm e} >0, \qquad \quad  \mu_{\rm e} +\mu_{\rm c} >0, \qquad  \qquad    \alpha_1+\alpha_2>0,
\end{align}
Chiri\c ta and Ghiba \cite[Eq. (4.14)]{ChiritaGhiba3} have proved  the existence  of {the solution of their secular equation}. Using some
 illustrative graphics they conjectured that the solution should be also unique for this subclass of materials, but there does not exist a proof of the uniqueness in the Cosserat theory or an existence and uniqueness proof for a larger class of isotropic Cosserat materials.

It is also important to notice that in the Cosserat theory, too, it is not obvious how to avoid the  spurious roots of the secular equation, as long as the Stroh formalism is used.  Even if the relation between Mielke and Fu’s method  \cite{mielke2004uniqueness,fu2002new} and the Cosserat model is not evident, in our paper we  show  that it may be adapted to the study of propagation of seismic waves in Cosserat  materials. So,   our derivation will involve a matrix algebraic  Riccati equation which will provide a formula for the desired solution. 
We will show that the new form of the secular equations, written in terms of the impedance matrix, does not admit any spurious root, i.e. there exists {\it only one  subsonic surface wave}. 

In the current paper {\it we prove both the existence and the uniqueness} of the wave speed of the Rayleigh wave in Cosserat materials, for the first time, and this {\it under  weaker conditions}\footnote{In the sense that  the conditions $\mu_{\rm e}-\mu_{\rm c}+\lambda_{\rm e} >0,   \mu_{\rm e} +\mu_{\rm c} >0$ imply $2\, \mu_{\rm e}+\lambda_{\rm e} >0$
 but not vice versa, even when the extra conditions $\mu_{\rm c}>0$ and $\mu_{\rm e}>0$ are also assumed.} on the constitutive coefficients, i.e., we require only 
\begin{align}\label{cr}
2\,\mu_{\rm e} +\lambda_{\rm e} >0,\qquad \quad \mu_{\rm e} >0,\qquad \quad \mu_{\rm c} >0,\qquad \quad   \alpha_1+\alpha_2>0.
\end{align}
For a given direction ${\xi}\in \mathbb{R}^3$, $\xi\neq 0$  of the form $\xi=(\xi_1,\xi_2, 0)^T$, the  constitutive requirements \eqref{cr} are equivalent to   the existence of \textit{only real waves} propagating  in the plane normal to the direction of propagation and parallel to the direction of propagation. These conditions do not involve the constitutive parameter $\alpha_3$ as it would be the case for the requirement that only  real waves may propagate (in arbitrary planes) in any directions and for arbitrary wave numbers. Indeed, the propagation of  {\it real plane waves} is equivalent to the constitutive inequalities
\begin{align}\label{crw}
2\,\mu_{\rm e} +\lambda_{\rm e} >0,\qquad \quad \mu_{\rm e} >0,\qquad \quad \mu_{\rm c} >0,\qquad \quad   \alpha_1+\alpha_2>0,\qquad \quad 2\,\alpha_1+\alpha_3>0.
\end{align}
Let us notice that while in classical linear elasticity the existence of  seismic waves is guaranteed   for strongly elliptic materials, i.e.,
\begin{align}\label{LHce}
2\,\mu_{\rm e}+\lambda_{\rm e} >0, \quad \, \qquad \qquad \mu_{\rm e}>0,
\end{align}
in the Cosserat model 
 the \textit{strong ellipticity conditions}  (\textit{Legendre–Hadamard ellipticity})  \cite{Neff_JeongMMS08,shirani2020legendre,eremeyev2007constitutive} are equivalent to
\begin{align}\label{LHc}
2\,\mu_{\rm e}+\lambda_{\rm e} >0, \quad \, \qquad \qquad \mu_{\rm e}+\mu_{\rm c} >0,\qquad  \qquad \quad   \, \alpha_1+\alpha_2>0, \qquad \quad 2\,\alpha_1+\alpha_3>0.
\end{align}
 and the latter conditions are not sufficient to impose
 the existence of  seismic waves. The existence of  seismic waves  is  rather related  to the propagation of  {real plane waves}  than to  strong ellipticity, while the strong ellipticity conditions are useful in the study of accelerated waves \cite{eremeyev2007constitutive}. We mention that the existence for only real plane waves implies the strong ellipticity conditions but not vice versa. For the Cosserat theory,  strong ellipticity is  related only to the propagation of acceleration waves \cite{Eremeyev4,eremeyev2007constitutive}. 
 {	\begin{table}[h!]\begin{center}
 		\begin{tabular}{ |c| c | c |}
 			\hline 
 			Name&	Expression& \begin{minipage}{3.5cm} 
 				Dispersive waves/\\Non-dispersive waves	\end{minipage}\\
 			\hline 
 			\begin{minipage}{7cm}\medskip
 				the velocity of the acoustic branch of translational
 				compression (longitudinal) plane wave \medskip
 			\end{minipage}  & $\mathfrak{c}_p= \sqrt{\frac{\lambda_e+2\mu_e
 				}{\rho}}$ &  non-dispersive \\\hline 
 			\begin{minipage}{7cm}\medskip
 				the limit of the group/phase velocity of the acoustic
 				branch of the shear–rotational wave at $\omega\to 0$ ($k\to 0$)  \medskip
 			\end{minipage} & $\mathfrak{c}_{t} = \sqrt{\frac{\mu_e}{\rho}}$&  dispersive 
 			\\\hline 
 			\begin{minipage}{7cm}\medskip
 				the limit of the group/phase velocity of the acoustic
 				branch of the shear–rotational wave at $\omega\to \infty$ ($k\to \infty$)\medskip
 			\end{minipage} & $\mathfrak{c}_s=\sqrt{\frac{\mu_e+\mu_c}{\rho}}$ &  dispersive  
 			\\
 			\hline 
 			\begin{minipage}{7cm}\medskip
 				the group/phase velocity for the compressional rotational wave in the limit $\omega\to \infty$ ($k\to \infty$)\medskip
 			\end{minipage} & $\mathfrak{c}_{m,p}=\sqrt{\frac{L_c^2(2\alpha_1+\alpha_3)}{\rho \, j\, \tau_c^2}}$ &  dispersive 
 			\\\hline 
 			\begin{minipage}{7cm}\medskip
 				the limit  of the group/phase velocity of the acoustic
 				branch of the shear–rotational wave  at $\omega\to \infty$ ($k\to\infty$)\medskip
 			\end{minipage} & $\mathfrak{c}_{m,s}=\sqrt{\frac{L_c^2(\alpha_1+\alpha_2)}{\rho \, j\, \tau_c^2}}=\sqrt{\frac{L_c^2\,\gamma}{\rho \, j\, \tau_c^2}}$ &  dispersive \\\hline 
 			\begin{minipage}{7cm}\medskip
 				the limit of the optical branch (compressional-rotational and shear-rotational)  at the cut-off frequency $\omega=2\sqrt{\frac{\mu_c}{\rho j \mu_e \tau_c^2}}$, $k=0$\medskip
 			\end{minipage} & \begin{minipage}{3cm}$0$ (group velocity)\\ $/\infty$ (phase
 				velocity) \end{minipage}&  dispersive  \\\hline
 		\end{tabular}
 		\caption{Some group velocities $c=\frac{\omega}{k}$ or/and phase velocities   $\frac{d\,\omega}{d\,k}$ and the cut-off frequency in linear Cosserat elasticity.}\label{T1}\end{center} 
 \end{table}}

{
 \begin{table}[h!]\begin{center}
 		\begin{tabular}{ |c| c | c |}
 			\hline 
 			Name&	Expression& \begin{minipage}{3.5cm} 
 				Dispersive waves/\\Non-dispersive waves	\end{minipage}\\
 			\hline 
 			\begin{minipage}{7cm}\medskip
 				the velocity of the acoustic branch of translational
 				compression (longitudinal) plane wave \medskip
 			\end{minipage}  & ${c}_p= \sqrt{\frac{\lambda_e+2\mu_e
 				}{\rho}}$ &  non-dispersive  \\\hline 
 			\begin{minipage}{7cm}\medskip
 				the limit of the group/phase velocity of the acoustic
 				branch of the shear–rotational wave at $\omega\to 0$ ($k\to 0$)  \medskip
 			\end{minipage} & ${c}_{t} = \sqrt{\frac{\mu_e}{\rho}}$&  non-dispersive
 			\\\hline 
 			\begin{minipage}{7cm}\medskip
 				the limit of the group/phase velocity of the acoustic
 				branch of the shear–rotational wave at $\omega\to \infty$ ($k\to \infty$)\medskip
 			\end{minipage} & ${c}_{t}=\sqrt{\frac{\mu_e}{\rho}}$ &  non-dispersive  
 			\\
 			\hline 
 			\begin{minipage}{7cm}\medskip
 				the group/phase velocity for the compressional rotational wave in the limit $\omega\to \infty$ ($k\to \infty$)\medskip
 			\end{minipage} & $\times$ &  not present   
 			\\\hline 
 			\begin{minipage}{7cm}\medskip
 				the limit of the group/phase velocity of the acoustic
 				branch of the shear–rotational wave  at $\omega\to \infty$ ($k\to\infty$)\medskip
 			\end{minipage} & $\times$ &  not present  \\\hline 
 			\begin{minipage}{7cm}\medskip
 				cut-off frequency\medskip
 			\end{minipage} & $\times$ &  not present  \\\hline
 		\end{tabular}
 	\end{center} 
 	\caption{The group velocities $c=\frac{\omega}{k}$ or/and phase velocities   $\frac{d\,\omega}{d\,k}$ in classical linear elasticity.}\label{T2}
\end{table}}
 
 {Since different authors use different notation for elastic constants,   we will also interpret almost all the conditions upon the constitutive  parameters with the  help of some relations between the velocities of compression/transversal  acoustic/optical 
 	plane waves and with  the help of some cut-off
 	frequency of the optical branches and group velocities of plane bulk waves in the limit
 	of high and low frequencies. This will give a
 	better understanding of the physical sense of the formulae.
To this aim, we will use the  quantities from  Table \ref{T1}, see \cite[Figure 5.11.2., Page 150]{Eringen99} and \cite{neff2017real}. In Table \ref{T2} we list the corresponding informations and notations for classical linear elasticity.}

 	{With the help of the quantities presented in Table \ref{T1}, the
 		first two inequalities of the set of conditions \eqref{Chiritacond} considered by Chiri\c t\u a and Ghiba \cite{ChiritaGhiba3} imply that the translational compressional wave is real and  that the shear-rotational wave (optical branch) is real at high frequencies, the first inequality also implies that at the limit of high frequencies the translational compressional wave is faster than the shear–rotational wave (if they
 		both exist), while the third one means that the
 		shear–rotational wave (acoustic branch) is real at high frequencies. The inequalities \eqref{Chiritacond} do not imply that  the shear-rotational wave (optical branch) is real at low frequencies, i.e., $\omega\to 0$. In fact, the inequalities \eqref{Chiritacond} do not imply that the plane waves are real, i.e., that the propagation plane waves  are defined only by real frequencies.}
 		
 		{ To the contrary, the first implication of the set of conditions
 			\eqref{crw} means that all these waves (compressional/ shear-rotational  waves, acoustic/optical branch) are real. Then, we can treat and interpret further the propagation of plane waves.  It is clear that under conditions 	\eqref{crw} all branches of waves are real for the entire range $[0,\infty)$ of the frequency. However, we may also see directly from  the first condition  \eqref{crw}$_1$ that  the translational compressional wave
 		is real, the second condition \eqref{crw}$_2$ means that the acoustic branch of the shear–rotational
 		wave is real at low frequencies and together with the third condition \eqref{crw}$_3$ means that  the acoustic branch of the shear–rotational
 		wave is real at high frequencies, the fourth inequality \eqref{crw}$_4$ implies that the optical branch of the
 		shear–rotational wave is real at high frequencies. In addition, the third inequality \eqref{crw}$_3$ means that the
 		optical branch of the shear–rotational wave at high frequencies has a larger
 		velocity than the acoustic branch of the same wave at low frequencies (if they
 		both exist, which is the case due to other conditions). We have just given the interpretation of the first four inequalities, but
 		the fifth one expresses directly that  the compressional rotational
 		wave at high frequencies is real.}
 	
 	{ Considering the quantities from Tables \ref{T1} and \ref{T2}, 
 		 we may infer from \eqref{LHce} that in  classical elasticity the strong ellipticity conditions correspond
 		to the existence of real compressional and shear waves, while  in the
 		Cosserat case the corresponding conditions \eqref{LHc} (strong ellipticity conditions, Legendre-Hadamard ellipticity, the positive definiteness of the acoustic tensor) implies  the existence of the real translational compressional
 		wave on the entire range of real frequencies, of the real shear–rotational waves (both branches) at high frequencies, and of real
 		rotational compressional waves at high frequencies, but at lower frequencies the
 		latter waves may not be real since \eqref{LHc} does not guarantee that $\mathfrak{c}_t=\frac{\mu_{\rm e}}{\rho}$ is real. To the contrary, the conditions \eqref{crw} imply that all these branches and types of plane wave are real, i.e., the group/phase velocities are real on the entire range of possible frequencies.}
 
 The structure  of the present paper is now the following. In Section \ref{Sp}, after a short introduction of our notation,  we present the linear Cosserat model for isotropic elastic materials as a special case of the relaxed micromorphic model. This comparison establishes the relations between these models and it is also useful for further studies, where the results obtained in the linear relaxed micromorphic model will be compared with those established in the Cosserat theory of linear elastic materials. Then, since a self contained study of the propagation of real waves in isotropic Cosserat elasticity  is still missing, as well as the explicit form of the conditions on the constitutive coefficient which imply it, we dedicate  Subsection \ref{Rpw} to it. In Subsection \ref{setRw} we present the setup in the propagation of Rayleigh waves and we  discuss the  propagation of some special real plane wave which are related to  the propagation of seismic waves. In Section \ref{anzsec} we give the ansatz of the solution and we define the limiting speed, by making the relation between the seismic waves and the special real plane waves considered in Subsection \ref{setRw}. In Section \ref{comSec} we present the common method to construct the solution using the Stroh formalism and we put it in relation with our method, in order to conclude the section with some auxiliary results. In Section \ref{NSE} we establish the new form of the secular equation using the novel method which allow to give the main result of the paper, i.e., the first proof in the literature of the existence and uniqueness of the subsonic speed of the secular equation, in the framework of isotropic linear Cosserat elastic models. 
In Section \ref{Num1} we provide two numerical algorithms which can be implemented for any  material once   the constitutive coefficients are known. We present effective  numerical results for an alumunium-epoxy composite. In Section \ref{Classic}   we provide the explicit form of the secular equations for isotropic linear elastic materials, since its  explicit form  has not been derived elsewhere and we compare it with other forms from the literature. We conclude the paper with some final remarks.

\section{Statement of the problem}\label{Sp}\setcounter{equation}{0}

\subsection{Notation}\label{NS}
We consider that the mechanical behaviour of a body accupying  the unbounded regular region of three dimensional Euclidean space  is modelled with the help of the Cosserat theory of  linear isotropic elastic materials. We denote  by $n$ the outward unit normal on $\partial\Omega$. The body is referred to a fixed system of rectangular Cartesian axes  $Ox_i (i=1,2,3)$ , $\{e_1, e_2, e_3\}$ being the unit vectors of these axes.

In the following, we recall some useful notations for the present work. For $ a,b\in \mathbb{R}^{3 \times 3} $ we let $\langle{a,b}\rangle_{\mathbb{R}^3}$ denote the scalar product on $ \mathbb{R}^3$ with associated vector norm $\norm{a}^2=\langle {a,a}\rangle$. We denote by $\mathbb{R}^{3\times 3}$   the set of real $3 \times 3$ second order tensors, written with capital letters. Matrices will be denoted by bold symbols, e.g. $\mathbf{X}\in \mathbb{R}^{3\times 3}$, while $X_{ij}$ will denote its component. The standard Euclidean product on $\mathbb{R}^{3 \times 3}$ is given by $\langle{ \mathbf{X},\mathbf{Y}}\rangle_{\mathbb{R}^{3 \times 3}}=\tr( \mathbf{X}\,\mathbf{Y}^T)$, and thus, the Frobenious tensor norm is $\norm{ \mathbf{X}}^2=\langle{ \mathbf{X}, \mathbf{X}}\rangle_{\mathbb{R}^{3 \times 3}}$. In the following we omit the index $\mathbb{R}^3, \mathbb{R}^{3\times 3}$. The identity tensor on $\mathbb{R}^{3\times 3}$ will be denoted by $\id$, so that $\tr( \mathbf{X})=\langle{ \mathbf{X},\id}\rangle$. We let $\Sym$  denote the set of symmetric tensors. We adopt the usual abbreviations of  Lie-algebra theory, i.e., $\mathfrak{so}(3):=\{ \mathbf{A}\,\in \mathbb{R}^{3\times 3}| \mathbf{A}^T=- \mathbf{A}\}$ is the Lie-algebra of skew-symmetric tensors and $\mathfrak{sl}(3):=\{ \mathbf{X}\, \in \mathbb{R}^{3\times 3} |\tr( \mathbf{X})=0 \}$  is the Lie-algebra of traceless tensors. For all $ \mathbf{X} \in\mathbb{R}^{3\times 3}$  we set $\sym\,  \mathbf{X}=\frac{1}{2}( \mathbf{X}^T+ \mathbf{X}) \in\Sym,\,\skw  \mathbf{X}=\frac{1}{2}( \mathbf{X}- \mathbf{X}^T)\in \mathfrak{so}(3)$ and the deviatoric (trace-free) part $\dev\, \mathbf{X}= \mathbf{X}-\frac{1}{3}\tr( \mathbf{X})\in\, \mathfrak{sl}(3)$ and we have the orthogonal Cartan-decomposition of the Lie-algebra $
\mathfrak{gl}(3)=\{\mathfrak{sl}(3)\cap \Sym(3)\}\oplus\mathfrak{so}(3)\oplus\mathbb{R}\cdot\id,\ 
 \mathbf{X}= \dev\,\sym\, \mathbf{X}+\skw\, \mathbf{X}+\frac{1}{3}\tr( \mathbf{X})\,\id.\
$
 We use the canonical identification of $\mathbb{R}^3$ with $\so(3)$, and, for
 \begin{align}
 \mathbf{A}=	\begin{footnotesize}\begin{pmatrix}
 0 &-a_3&a_2\\
 a_3&0& -a_1\\
 -a_2& a_1&0
 	\end{pmatrix}\end{footnotesize}\in \so(3)
 \end{align}
 we consider the operators $\axl\,:\so(3)\rightarrow\mathbb{R}^3$ and $\anti:\mathbb{R}^3\rightarrow \so(3)$ through
 \begin{align}
 \axl\,( \mathbf{A}):&=\left(
 a_1,
 a_2,
 a_3
 \right)^T,\quad \quad  \mathbf{A}.\, v=(\axl\,  \mathbf{A})\times v, \qquad \qquad (\anti(v))_{ij}=-\epsilon_{ijk}\,v_k, \quad \quad \forall \, v\in\mathbb{R}^3,
 \notag \\(\axl\,  \mathbf{A})_k&=-\frac{1}{2}\, \epsilon_{ijk} \mathbf{A}_{ij}=\frac{1}{2}\,\epsilon_{kij} {A}_{ji}\,, \quad  {A}_{ij}=-\epsilon_{ijk}\,(\axl\,  \mathbf{A})_k=:\anti(\axl\,  \mathbf{A})_{ij},
 \end{align}
 where $\epsilon_{ijk}$ is the totally antisymmetric third order permutation tensor.
 
 For a  regular enough function $f(t,x_1,x_2,x_3)$,  $f_{,t}$ denotes the derivative with respect to the time $t$, while  $ \frac{\partial\, f}{\partial \,x_i}$ denotes the $i$-component of the gradient $\nabla f$.  For vector fields $u=\left(    u_1, u_2, u_3\right)^T$ with  $u_i\in 
   {\rm H}^1(\Omega)\,=\,\{u_i\in {\rm L}^2(\Omega)\, |\, \nabla\, u_i\in {\rm L}^2(\Omega)\}, $  $i=1,2,3$,
we define
$
\mathrm{D} u:=\left(
\nabla\,  u_1\,|\,
\nabla\, u_2\,|\,
\nabla\, u_3
\right)^T.
$
The corresponding Sobolev-space will be also denoted by
$
{\rm H}^1(\Omega)$.  In addition, for a tensor field
  $\mathbf{P}$ with rows in ${\rm H}({\rm div}\,; \Omega)$, i.e.,
$
 \mathbf{P}=\begin{footnotesize}\begin{footnotesize}\begin{pmatrix}
 \mathbf{P}^T.e_1\,|\,
 \mathbf{P}^T.e_2\,|\,
 \mathbf{P}^T\, e_3
\end{pmatrix}\end{footnotesize}\end{footnotesize}^T$ with $( \mathbf{P}^T.e_i)^T\in {\rm H}({\rm div}\,; \Omega):=\,\{v\in {\rm L}^2(\Omega)\, |\, {\rm div}\, v\in {\rm L}^2(\Omega)\}$, $i=1,2,3$,
we define
$
{\rm Div}\, \mathbf{P}:=\begin{footnotesize}\begin{footnotesize}\begin{pmatrix}
{\rm div}\, ( \mathbf{P}^T.e_1)^T\,|\,
{\rm div}\, ( \mathbf{P}^T.e_2)^T\,|\,
{\rm div}\, ( \mathbf{P}^T\, e_3)^T
\end{pmatrix}\end{footnotesize}\end{footnotesize}^T
$
while for tensor fields $ \mathbf{P}$ with rows in ${\rm H}({\rm curl}\,; \Omega)$, i.e.,
$
 \mathbf{P}=\begin{footnotesize}\begin{footnotesize}\begin{pmatrix}
 \mathbf{P}^T.e_1\,|\,
 \mathbf{P}^T.e_2\,|\,
 \mathbf{P}^T\, e_3
\end{pmatrix}\end{footnotesize}\end{footnotesize}^T$ with $( \mathbf{P}^T.e_i)^T\in {\rm H}({\rm curl}\,; \Omega):=\,\{v\in {\rm L}^2(\Omega)\, |\, {\rm curl}\, v\in {\rm L}^2(\Omega)\}
$, $i=1,2,3$,
we define
$
{\rm Curl}\, \mathbf{P}:=\begin{footnotesize}\begin{footnotesize}\begin{pmatrix}
{\rm curl}\, ( \mathbf{P}^T.e_1)^T\,|\,
{\rm curl}\, ( \mathbf{P}^T.e_2)^T\,|\,
{\rm curl}\, ( \mathbf{P}^T\, e_3)^T
\end{pmatrix}\end{footnotesize}\end{footnotesize}^T
.
$

\subsection{Cosserat theory of isotropic elastic solids as particular case of the relaxed micromorphic model}\label{subsC}

In this subsection we show that  the dynamic Cosserat model for isotropic materials \cite{Eringen99,Mindlin64} is  not only a special case of the most general micromorphic model, but also a special case of the {\it relaxed micromorphic model} \cite{NeffGhibaMicroModel,MadeoNeffGhibaW,MadeoNeffGhibaWZAMM,madeo2016reflection,NeffGhibaMadeoLazar}. In the micromorphic theory, the micro-distortion tensor $ \mathbf{P}=( \mathbf{P}_{ij}):\Omega\times [0,T]\rightarrow \mathbb{R}^{3 \times 3}$  describes the substructure of the material which can rotate, stretch, shear and shrink, while $u=(u_i) :\Omega\times [0,T]\rightarrow  \mathbb{R}^3$  is the displacement of the macroscopic material points.

In the relaxed micromorphic model, in which the  Cosserat modulus $\mu_{\rm c} >0$ is related to the isotropic Eringen-Claus  model for dislocation dynamics \cite{Eringen_Claus69,EringenClaus,Eringen_Claus71}, {the free energy is given by }
\begin{align}\label{XXXX}
W_{\rm relax}&=\mu_{\rm e}  \|\sym (\mathrm{D}u -\mathbf{P})\|^2+\mu_{\rm c} \|\skw(\mathrm{D}u -\mathbf{P})\|^2+ \frac{\lambda_{\rm e}}{2}\, [\tr(\mathrm{D}u -\mathbf{P})]^2+\mu_{\rm micro} \|\sym \, \mathbf{P}\|^2+ \frac{\lambda_{\rm micro}}{2} [\tr(\mathbf{P})]^2\notag\\&
\quad \quad  +\frac{\mu_{\rm e}L_{\rm c}^2}{2}\left[{a_1}\| \dev\,\sym \,\Curl\, \mathbf{P}\|^2 +{a_2}\| \skw \,\Curl\, \mathbf{P}\|+ \frac{a_3}{3}\, \tr(\Curl\, \mathbf{P})^2\right],
\end{align}
where $(\mu_{\rm e},\lambda_{\rm e}) $,  $(\mu_{\rm micro},\lambda_{\rm micro})$,  $\mu_{\rm c}, L_{\rm c} $  and $( a_1, a_2, a_3)$  are the elastic moduli  representing the parameters related to the meso-scale,  the
parameters related to the micro-scale the Cosserat couple modulus, the characteristic length, and the three
general isotropic curvature parameters (weights), respectively.  Formally,  letting  $L_{\rm c}\to \infty$  means a ``zoom" into the micro-structure while $L_{\rm c}\to 0$ means considering  arbitrary large bodies while retaining the size of the unit-cell or keeping the dimensions of the body fixed while reducing the dimensions of the unit cell to zero, or, in other words, ``no special effects of the microstructure taking into account" (classical elasticity).

In the  internal energy is positive definite in terms of the independent constitutive variables $\mathrm{D}u -\mathbf{P}$, $\sym \, \mathbf{P}$, $\Curl\, \mathbf{P}$ if and only if \begin{align}\mu_{\rm e}&>0,\qquad \qquad\quad \ \,  \kappa_{\rm e}:=\frac{2\,\mu_{\rm e}+3\,\lambda_{\rm e}}{3}>0,\qquad\qquad\qquad  \ \  \mu_{\rm c}>0, \notag \\\mu_{\rm micro}&>0, \qquad\qquad \kappa_{\rm micro}:=\frac{2\,\mu_{\rm micro}+3\,\lambda_{\rm micro}}{3}>0, \\ a_1&>0, \qquad \qquad a_2>0, \qquad\qquad  a_3> 0.\notag\end{align}

The complete system of linear partial differential equations in terms of the kinematical unknowns $u$ and $P$ is given by
\begin{align}\label{eqisoup}
\rho\,u_{,tt}&={\rm Div}[\underbrace{2\,\mu_{\rm e} \, \sym(\mathrm{D}u -\mathbf{P})+2\,\mu_{\rm c} \, \skw(\mathrm{D}u -\mathbf{P})+\lambda_{\rm e} \,\tr(\mathrm{D}u -\mathbf{P}){\cdot} \id}_{ \textrm{ the non-symmetric force-stress tensor}}]+f\, ,\\\notag
\rho\, \eta\,\tau_{\rm c}^2\,\,P_{,tt}&=-{\mu_{\rm e}L_{\rm c}^2}\,\Curl [\underbrace{a_1 \, \dev\,\sym \,\Curl\, \mathbf{P}+a_2\, \skw \,\Curl\, \mathbf{P} +\frac{a_3}{3}\, \tr(\Curl\, \mathbf{P}){\cdot} \id}_{\textrm{the second-order moment stress  tensor}}]\\&\quad\ +2\,\mu_{\rm e} \, \sym(\mathrm{D}u -\mathbf{P})+\lambda_{\rm e} \tr(\mathrm{D}u -\mathbf{P}){\cdot} \id-2\,\mu_{\rm micro}\, \sym \, \mathbf{P}-\lambda_{\rm micro} \tr (\mathbf{P}){\cdot} \id+\mathbf{M}\,  \ \ \ \text {in}\ \ \  \Omega\times [0,T],\notag
\end{align}
where $f :\Omega\times [0,T]\rightarrow  \mathbb{R}^3$ describes the external body force,  $\mathbf{M}:\Omega\times [0,T]\rightarrow \mathbb{R}^{3 \times 3}$ describes the external body moment, $\rho$ is the mass density and  $\eta\,\tau_{\rm c}^2$ is the inertia coefficient, with $\eta>0$ a weight parameter and $\tau_{\rm c}$ the internal characteristic time \cite[page 163]{Eringen99}. 

In the Cosserat theory we assume that the micro-distortion tensor is skew-symmetric, i.e.  $\mathbf{P}=\mathbf{A}\in \so(3)$. Using the  Curl-$\mathrm{D}\,\axl\,$ identities,  (see \cite{Neff_curl06}, Nye's formula \cite{Nye53})
\begin{align}\label{curlaxl}
-\Curl\, \mathbf{A}&=(\mathrm{D}\, \axl  \,\mathbf{A})^T-\tr[(\mathrm{D}\, \axl  \,\mathbf{A})^T]{\cdot} \id,\qquad \qquad 
\mathrm{D}\, \axl  \,\mathbf{A}  = -(\Curl\, \mathbf{A})^T+\frac{1}{2}\tr[(\Curl\, \mathbf{A})^T]{\cdot}\id,
\end{align}
for all matrix fields $\mathbf{A}\in \so(3)$, it is easy to obtain
that the total energies admits the form (identifying $\vartheta=\axl  \,\mathbf{A}$)
\begin{align}
\mathcal{L}&(u_{,t},(\axl\, \,\mathbf{A})_{,t},\mathrm{D}u -\mathbf{A},\axl\, \,\mathbf{A})\notag\\
&=\int_\Omega\bigg(\frac{1}{2}\,\rho\,\|u_{,t}\|^2+\,\rho\,\eta\,\tau_{\rm c}^2\,\,\,\|(\axl\, \,\mathbf{A})_{,t}\|^2
+\mu_{\rm e}  \|\sym \,\mathrm{D}u  \|^2+\mu_{\rm c} \|\skw(\mathrm{D}u -\mathbf{A})\|^2+ \frac{\lambda_{\rm e}}{2} [\tr(\mathrm{D}u )]^2
\\\notag&
\ \ \ \ \ \ \ \ \ \ \quad \quad  +\frac{\mu_{\rm e}L_{\rm c}^2}{2}\left[{a_1 } \|\dev\,\sym (\mathrm{D}\, \axl  \,\mathbf{A})\|^2+{a_2}\|\skw (\mathrm{D}\, \axl  \,\mathbf{A})\|^2+ \frac{4\,a_3}{3}\,[\tr(\mathrm{D}\, \axl  \,\mathbf{A})]^2\right]\bigg)\,dv\,
\end{align}
and lead to  Euler-Lagrange equations which are equivalent to those derived from the $\Curl$-formulation.

The power functional is given by
\begin{align}
\Pi(t)&=\int_\Omega (\dd\langle f,{u}_{,t}\rangle +\langle \mathbf{M},\mathbf{A}_{,t}\rangle)\, dv=\int_\Omega (\dd\langle f,{u}_{,t}\rangle +2\langle \axl\,\skw \,M,\axl\, \,\mathbf{A}_{,t}\rangle)\, dv\, .
\end{align}
\allowdisplaybreaks
We introduce the action functional of the considered system to be
	defined as
	\begin{align}
	\mathcal{A}=&\int_{0}^{T}\int_\Omega\bigg(\frac{1}{2}\,\rho\,\|u_{,t}\|^2+\,\rho\,\eta\,\tau_{\rm c}^2\,\,\,\|(\axl\, \,\mathbf{A})_{,t}\|^2
	-\left[\mu_{\rm e}  \|\sym \mathrm{D}u  \|^2+\mu_{\rm c} \|\skw(\mathrm{D}u -\mathbf{A})\|^2+ \frac{\lambda_{\rm e}}{2} [\tr(\mathrm{D}u )]^2\right]
	\notag\\ &\qquad \qquad -\frac{\mu_{\rm e}L_{\rm c}^2}{2}\left[{a_1 } \|\dev\,\sym (\mathrm{D}\, \axl  \,\mathbf{A})\|^2+{a_2} \|\skw (\mathrm{D}\, \axl  \,\mathbf{A})\|^2+ \frac{4\,a_3}{3}\,[\tr(\mathrm{D}\, \axl  \,\mathbf{A})]^2\right]\Bigg)dv\,dt\notag\\\qquad \qquad \qquad  &+\int_{0}^{T}\int_\Omega (\dd\langle f,{u}\rangle +2\langle \axl\,\skw \,\mathbf{M},\axl\, \,\mathbf{A}\rangle)\, dv\,dt.
	\end{align}
	The condition of vanishing first variation of the action functional can thus be written as
	\begin{align}\label{actionCos}
	-\int_0^T\int_\Omega &(\dd\langle f,\delta {u}\rangle +2\langle \axl\,\skw \,\mathbf{M},\axl\, (\delta \mathbf{A})\rangle)\, dv\,dt\notag\\
	=&\int_0^T\int_\Omega\bigg(\,\rho\,\bigl\langle u_{,t},(\delta u)_{,t}\bigr\rangle+2\,\rho\,\eta\,\tau_{\rm c}^2\,\,\,\bigl\langle(\axl\, \,\mathbf{A})_{,t}, (\axl\, (\delta \mathbf{A}))_{,t}\bigr\rangle\notag
	\\&\qquad \ \ \ -2\,\mu_{\rm e}  \bigl\langle\sym \mathrm{D}\,  u,\sym \mathrm{D}\, \delta u \bigr\rangle-\mu_{\rm c} \,\bigl\langle\skw(\mathrm{D}u -\mathbf{A}),\skw(\mathrm{D}\, \delta u-\delta \mathbf{A})\bigr\rangle- {\lambda_{\rm e}}[\tr(\mathrm{D}u )]\,[\tr(\mathrm{D}\, \delta u)]\notag
	\\\notag&
	\ \ \ \quad \quad  -\frac{\mu_{\rm e}L_{\rm c}^2}{2} \,a_1  \bigl\langle\dev\,\sym (\mathrm{D}\, \axl  \,\mathbf{A}),\dev\,\sym (\mathrm{D}\, \axl\, (\delta \mathbf{A}))\bigr\rangle-\frac{\mu_{\rm e}L_{\rm c}^2}{2} a_2 \bigl\langle\skw (\mathrm{D}\, \axl   \mathbf{A}),\skw (\mathrm{D}\, \axl\, (\delta \mathbf{A}))\bigr\rangle\\
	&\ \ \ \quad \quad - \frac{\mu_{\rm e}L_{\rm c}^2}{2}\,\frac{4\,a_3}{3}\,[\tr(\mathrm{D}\, \axl   \mathbf{A})]\,[\tr(\mathrm{D}\, \axl\, (\delta \mathbf{A}))]\bigg)\,dv\, dt\notag\\
	=&\int_0^T\int_\Omega\bigg(\,\rho\,\bigl\langle u_{,t},(\delta u)_{,t}\bigr\rangle+2\, \,\rho\,\eta\,\tau_{\rm c}^2\,\,\,\bigl\langle(\axl\, \,\mathbf{A})_{,t}, (\axl\, (\delta \mathbf{A}))_{,t}\bigr\rangle\notag
	\\&\qquad \ \ \ -\bigl\langle 2\,\mu_{\rm e}  \sym \,\mathrm{D}\,  u+\mu_{\rm c} \,\skw(\mathrm{D}u -\mathbf{A})+ {\lambda_{\rm e}}[\tr(\mathrm{D}u )]\cdot \id, \mathrm{D}\, \delta u\bigr\rangle-2\,\mu_{\rm c} \,\bigl\langle\skw(\mathrm{D}u -\mathbf{A}),\delta \mathbf{A}\bigr\rangle
	\\\notag&
	\ \ \ \quad \quad  -\frac{\mu_{\rm e}L_{\rm c}^2}{2}\,\bigl\langle {a_1 } \dev\,\sym (\mathrm{D}\, \axl  \,\mathbf{A})+{a_2} \skw (\mathrm{D}\, \axl   \mathbf{A}) +\frac{4\,a_3}{3}\,[\tr(\mathrm{D}\, \axl   \mathbf{A})]\cdot\id, \mathrm{D}\, \axl\, (\delta \mathbf{A})\bigr\rangle\bigr\rangle\bigg)\,dv\, dt\notag\\
	=&\int_0^T\int_\Omega\bigg(-\,\rho\,\bigl\langle u_{,tt},\delta u\bigr\rangle-2\,\rho\,\eta\,\tau_{\rm c}^2\,\,\,\bigl\langle(\axl\, \,\mathbf{A})_{,tt}, \axl\, (\delta \mathbf{A})\bigr\rangle\notag
	\\&\qquad \ \ \ +\bigl\langle {\rm Div}[2\,\mu_{\rm e} \, \sym \mathrm{D}\,  u+\mu_{\rm c} \,\skw(\mathrm{D}u -\mathbf{A})+ {\lambda_{\rm e}}[\tr(\mathrm{D}u )]\cdot \id],  \delta u\bigr\rangle-4\,\mu_{\rm c} \,\bigl\langle\axl\, \skw(\mathrm{D}u -\mathbf{A}),\axl\, (\delta \mathbf{A})\bigr\rangle\notag
	\\\notag&
	\ \ \ \quad \quad  +\frac{\mu_{\rm e}L_{\rm c}^2}{2}\,\bigl\langle {\rm Div}[{a_1} \dev\,\sym (\mathrm{D}\, \axl  \,\mathbf{A})+{a_2} \skw (\mathrm{D}\, \axl   \mathbf{A})+\frac{4\,a_3}{3}\,[\tr(\mathrm{D}\, \axl   \mathbf{A})]\cdot\id],  \axl\, (\delta \mathbf{A})\bigr\rangle\bigr\rangle\bigg)\,dv\,dt\notag
	\end{align}
	for all  virtual displacements $\delta u\in C_0^{\infty}([0,T],C_0^{\infty}(\Omega))$ and for all virtual axial vectors $ \axl\,(\delta \mathbf{A})\in C_0^{\infty}([0,T],C_0^{\infty}(\Omega))$.
	Summarizing, in view of \eqref{actionCos}, the Euler-Lagrange equation gives us the following system of partial differential equations for $u$ and $\mathbf{A}$
	\begin{align}\label{eqisaxl}
	\rho\,u_{,tt}&=\Div[2\,\mu_{\rm e} \, \sym\,\mathrm{D}u +2\,\mu_{\rm c} \,\skw(\mathrm{D}u -\mathbf{A})+ \lambda_{\rm e}\, \tr(\mathrm{D}u ){\cdot} \id]+f\, ,\notag\\
	2\,\rho\,\eta\,\tau_{\rm c}^2\,\,\,(\axl\, \,\mathbf{A})_{,tt}&=\frac{\mu_{\rm e}L_{\rm c}^2}{2}\,\bigg[{a_1 }\,\dev\,\sym (\mathrm{D}\, \axl  \,\mathbf{A})+{a_2}\,\skw (\mathrm{D}\, \axl  \,\mathbf{A})+ \frac{4\,a_3}{3}\,\tr(\mathrm{D}\, \axl  \,\mathbf{A}){\cdot } \id\bigg]
	\\&\ \ \ \ \ \ -4\,\mu_{\rm c} \,\axl\,(\skw\,\mathrm{D}u -\mathbf{A})+2\,\axl\,\skw \,\mathbf{M}\,  \ \ \ \text {in}\ \ \  \Omega\times [0,T],\notag
	\end{align}
	which is in complete agreement to the equations proposed in the Cosserat theory \cite{Eringen99}, but written in indices.

Therefore, in the Cosserat theory of linear elastic materials, two vector fields are used to describe the macro- and micro-behaviour of the solid body, i.e.,  
 the {displacement} $u:\Omega \subset\mathbb{R}^3\mapsto\mathbb{R}^3$ and the   microrotation vector field $ \vartheta   :\Omega \subset\mathbb{R}^3\mapsto\mathbb{R}^3$, $ \vartheta   ={\rm axl}\, \mathbf{A}$, where $\mathbf{A}\in \mathfrak{so}(3)$ represents the micro-distortion tensor in the  Cosserat theory, and  within the framework of the linear isotropic hyperelastic theory,  the elastic energy density  of the Cosserat model read
  \begin{align}
  W(\mathrm{D}u ,{\rm Anti}\, \vartheta , \mathrm{D} \vartheta    )=& \underbrace{\mu_{\rm e} \,\lVert \dev_3\, \sym\,\mathbf{e}\rVert ^{2}+\mu_{\rm c}\,\lVert \skw\,\mathbf{e}\rVert ^{2}+\frac{2\,\mu_{\rm e} +3\,\lambda_{\rm e} }{6}\left[\mathrm{tr} \left(\mathbf{e}\right)\right]^{2}}_{:=W_1(\mathbf{e})}\notag\\& +\underbrace{{\mu_{\rm e}L_{\rm c}^2}\,\left[\ {\alpha_1}\,\lVert \dev_3\,\sym\,\mathbf{\mathfrak{K}}\rVert ^{2}+{\alpha_2}\,\lVert \skw\,\mathbf{\mathfrak{K}}\rVert ^{2}+\frac{2\,\alpha_1+3\,\alpha_3}{6}\left[\mathrm{tr} \left(\mathbf{\mathfrak{K}}\right)\right]^{2}\right]}_{:=W_2(\mathbf{\mathfrak{K}})},
  \end{align}
  where  we have used the  notations for the  weight parameters $\alpha_1=\frac{a_1 }{2}$, $\alpha_2=\frac{a_2}{2}$  and $\alpha_3=4\,\frac{a_3}{3}$ and the following definitions for the independent constitutive  variables $\mathbf{e}$ and $\mathbf{\mathfrak{K}}$ (geometrical equations)
  \begin{align}
 \mathbf{e}:=\mathrm{D}u -\mathbf{A}=\mathrm{D}u -{\rm Anti}\, (\vartheta)   , \qquad \qquad \mathbf{\mathfrak{K}}:=\mathrm{D} \vartheta    .
  \end{align}

  Hence, the stress-strain relations for the homogeneous  isotropic Cosserat elastic solid are
 \begin{align}\label{01}
 \boldsymbol{\sigma}&:=\frac{\partial\, W}{\partial\, \mathbf{e}}=2\,\mu_{\rm e} \, \sym \,\mathbf{e}+2\,\mu_{\rm c} \, \skw\, \mathbf{e}+\lambda_{\rm e} \,\tr (\mathbf{e})\, \id,\notag\\ 
\mathbf{ m}&:=\frac{\partial\, W}{\partial\, \mathbf{\mathfrak{K}}} ={\mu_{\rm e}L_{\rm c}^2}\,\left[2\,{\alpha_1}\, \sym\,\mathbf{\mathfrak{K}} +2\,{\alpha_2}\,\skw\, \mathbf{\mathfrak{K}}+\alpha_3\,\tr(\mathbf{\mathfrak{K}})\,\id\right],
 \end{align}
 where $\boldsymbol{\sigma}$ is the non-symmetric force stress tensor and $\mathbf{m}$ is the second-order non-symmetric couple stress tensor. In the absence of  external  body forces and of   external  body moment,  the PDE-system of the model is
\begin{align}\label{PDE}
\rho\,\frac{\partial^2 \,u_i}{\partial\, t^2}&=(\mu_{\rm e} +\mu_{\rm c})\dd\sum_{l=1}^3\frac{\partial^2 u_i }{\partial x_l^2}+(\mu_{\rm e} -\mu_{\rm c}+\lambda_{\rm e} )\dd\sum_{l=1}^3\frac{\partial^2 u_l }{\partial x_l\partial x_i}+2\,\mu_{\rm c} \,\sum_{l,s=1}^3\varepsilon _{ils}\frac{\partial \vartheta _s }{\partial x_l},
\vspace{1.2mm}\\
\rho\, j\,\mu_{\rm e}\,\tau_{\rm c}^2\,\frac{\partial^2 \,\vartheta_i}{\partial\, t^2}&={\mu_{\rm e}L_{\rm c}^2}\,\left[(\alpha_1+\alpha_2)\dd\sum_{l=1}^3\frac{\partial^2 \vartheta _i }{\partial x_l^2}+(\alpha_1-\alpha_2+\alpha_3)\dd\sum_{l=1}^3\frac{\partial^2 \vartheta_l }{\partial x_l\partial x_i}\right]+2\,\mu_{\rm c} \,\sum_{l,s=1}^3\varepsilon _{isl}\frac{\partial u_l }{\partial x_s}-4\, \mu_{\rm c}\, \vartheta _i, \notag
\end{align}
with $i=1,2,3$, where $j=2\, \eta$ is an inertia weight parameter.

 Due to the orthogonal Cartan-decomposition of the Lie-algebra $\mathfrak{gl}(3)$, 	the strict positive definiteness of the potential energy is equivalent to the following simple relations for the introduced parameters
\begin{align}
 \quad\mu_{\rm e}  >0, \qquad \quad\mu_{\rm c}>0,\qquad \quad  2\,\mu_{\rm e} +3\,\lambda_{\rm e} >0,\qquad \quad \alpha_1>0, \qquad \quad\alpha_2>0, \qquad \quad 2\,\alpha_1+3\,\alpha_3>0 . \label{posCoss}
\end{align}
However, our entire subsequent analysis will be made under weaker conditions on the constitutive parameters.

In the following we assume $\rho>0$ and $j>0$ without mentioning these conditions in the hypothesis of our results.
\subsection{Real plane waves in isotropic Cosserat elastic solids}\label{Rpw}

We say that there exists  {\it real plane waves}  in the  direction $\xi=(\xi_1,\xi_2,\xi_3)$, $\lVert{\xi}\rVert^2=1$,  if for every  wave number $k>0$  the system of partial differential equations \eqref{PDE} admits a solution in the form:
\begin{align}\label{ansatzwp}
u(x_1,x_2,x_3,t)&=\underbrace{\begin{footnotesize}\begin{pmatrix}\widehat{u}_1\\\widehat{u}_2\\\widehat{u}_3\end{pmatrix}\end{footnotesize}}_{:=\,\widehat{u}}
 \, e^{{\rm i}\, \left(k\langle {\xi},\, x\rangle_{\mathbb{R}^3}-\,\omega \,t\right)}\,,\\ \vartheta (x_1,x_2,x_3,t)&=\underbrace{\begin{footnotesize}{\rm i}\,\begin{pmatrix}\widehat{\vartheta }_1\\\widehat{\vartheta }_2\\\widehat{\vartheta }_3\end{pmatrix}\end{footnotesize}}_{:=\,\widehat{\vartheta}} \, e^{{\rm i}\, \left(k\langle {\xi},\, x\rangle_{\mathbb{R}^3}-\,\omega \,t\right)},\quad 
 \widehat{u}, \widehat{ \vartheta   }\in\mathbb{C}^{3}, \quad (\widehat{u}, \widehat{ \vartheta   })^T\neq 0\,,\notag
\end{align}
only for  real frequencies $\omega\in \mathbb{R}$, where ${\rm i}\,=\sqrt{-1}$ is the complex unit.  The plane wave is called ``real'' since it is defined by real values of $\omega$. Note that we take ${\rm i}\,\widehat{\vartheta }$  since this choice will lead us in the end only to real valued matrices. Otherwise, we would have to deal with complex valued matrices in the linear Cosserat theory. 

The functions \eqref{ansatzwp} are a solution of \eqref{PDE} if and only if the following system is satisfied
\begin{align}\label{algPDE}
-\omega^2\rho\, \widehat{u}_i&=-k^2\,(\mu_{\rm e} +\mu_{\rm c})\dd {\widehat{u}_i} -k^2(\mu_{\rm e} -\mu_{\rm c}+\lambda_{\rm e} )\dd\sum_{l=1}^3\widehat{u}_l\, \xi_i\,\xi_l -2\,k\,\mu_{\rm c}\,\sum_{l,s=1}^3\varepsilon _{ils}\,\widehat{\vartheta }_s\, \xi_l,
\vspace{1.2mm}\notag\\
-{\rm i}\,\omega^2\rho\, j\,\mu_{\rm e}\,\tau_{\rm c}^2\,\widehat{ \vartheta }_i&={\mu_{\rm e}\,L_{\rm c}^2}[-{\rm i}\,k^2\,(\alpha_1+\alpha_2)\,{\widehat{ \vartheta }_i}\,\dd-{\rm i}\,k^2\,(\alpha_1-\alpha_2+\alpha_3)\dd\sum_{l=1}^3\widehat{\vartheta }_l \xi_l \xi_i]\\&\qquad -2\,{\rm i}\,k\,\mu_{\rm c}\,\sum_{l,s=1}^3\varepsilon _{ils}\widehat{u}_s \xi_l-4\,{\rm i}\, \mu_{\rm c}\, \widehat{\vartheta }_i, \quad  i=1,2,3.\notag
\end{align}

However, since our formulation is isotropic, by  demanding real plane waves in any direction $\xi=(\xi_1,\xi_2,\xi_3)$, $\lVert \xi\rVert=1$, it is equivalent to demand real plane waves in the direction $e_1=(1,0,0)$ which means that for all $k>0$ the system 
\begin{align}\label{ansatzwp1}
u(x_1,x_2,t)&=\underbrace{\begin{footnotesize}\begin{pmatrix}\widehat{u}_1\\\widehat{u}_2\\\widehat{u}_3\end{pmatrix}\end{footnotesize}}_{:=\,\widehat{u}}
\, e^{{\rm i}\, \left(k\,x_1-\,\omega \,t\right)}\,,\quad \vartheta (x_1,x_2,t)=\underbrace{\begin{footnotesize}{\rm i}\,\begin{pmatrix}\widehat{\vartheta }_1\\\widehat{\vartheta }_2\\\widehat{\vartheta }_3\end{pmatrix}\end{footnotesize}}_{:=\,\widehat{\vartheta}} \, e^{{\rm i}\, \left(k\,x_1-\,\omega \,t\right)},\quad 
\widehat{u}, \widehat{ \vartheta   }\in\mathbb{C}^{3}, \quad (\widehat{u}, \widehat{ \vartheta   })^T\neq 0\,
\end{align}   
admits non trivial solutions  only for real positive values $\omega^2$.

Inserting  \eqref{ansatzwp1} into \eqref{PDE} we see that  $\widehat{u}_1, \widehat{u}_2$ and $\widehat{\vartheta}_3$ have to satisfy the following linear algebraic equations
\begin{align}\label{algPDE1}
-\omega^2\rho\, \widehat{u}_1&=-k^2\,(\mu_{\rm e} +\mu_{\rm c})\dd \widehat{u}_1 -k^2(\mu_{\rm e} -\mu_{\rm c}+\lambda_{\rm e} )\dd\widehat{u}_1 ,\notag
\vspace{1.2mm}\\
-\omega^2\rho\, \widehat{u}_2&=-k^2\,(\mu_{\rm e} +\mu_{\rm c})\dd \widehat{u}_2 +2\,k\,\mu_{\rm c}\,\widehat{\vartheta }_3,
\vspace{1.2mm}\\
-{\rm i}\,\omega^2\rho\, j\,\mu_{\rm e}\,\tau_{\rm c}^2\,\widehat{ \vartheta }_3&=-{\rm i}\,k^2\,{\mu_{\rm e}\,L_{\rm c}^2}\,(\alpha_1+\alpha_2)\,\widehat{ \vartheta }_3-2\,{\rm i}\,k\,\mu_{\rm c}\,\widehat{u}_2 -4\,{\rm i}\, \mu_{\rm c}\, \widehat{\vartheta }_3, \notag
\end{align}
while $\widehat{u}_3, \widehat{\vartheta}_1$ and $\widehat{\vartheta}_2$ have to satisfy the system of linear equations

\begin{align}\label{algPDE2}
-\omega^2\rho\, \widehat{u}_3&=-k^2\,(\mu_{\rm e} +\mu_{\rm c})\dd \widehat{u}_3 -2\,k\,\mu_{\rm c}\,\widehat{\vartheta }_2 ,\notag
\vspace{1.2mm}\\
-{\rm i}\,\omega^2\rho\, j\,\mu_{\rm e}\,\tau_{\rm c}^2\,\widehat{ \vartheta }_1&=-{\rm i}\,k^2\,{\mu_{\rm e}\,L_{\rm c}^2}\,(\alpha_1+\alpha_2)\,\widehat{ \vartheta }_1-{\rm i}\,k^2\,{\mu_{\rm e}\,L_{\rm c}^2}\,(\alpha_1-\alpha_2+\alpha_3)\widehat{\vartheta }_1 -4\,{\rm i}\, \mu_{\rm c}\, \widehat{\vartheta }_1,\vspace{1.2mm}\\
-{\rm i}\,\omega^2\rho\, j\,\mu_{\rm e}\,\tau_{\rm c}^2\,\widehat{ \vartheta }_2&=-{\rm i}\,k^2\,{\mu_{\rm e}\,L_{\rm c}^2}\,(\alpha_1+\alpha_2)\,\widehat{ \vartheta }_2+2\,{\rm i}\,k\,\mu_{\rm c}\,\widehat{u}_3 -4\,{\rm i}\, \mu_{\rm c}\, \widehat{\vartheta }_2. \notag
\end{align}

Hence, there exist real plane wave if for every  wave number $k>0$ the following  systems of  equations \eqref{PDE} admit  non-trivial solutions:   
\begin{align}\label{fpm}
[\mathbf{\mathbf{Q}}_1(e_1,k)-\omega^2\widehat{\id}]\, w&=0 \qquad 
w=\begin{footnotesize}\begin{pmatrix}
\widehat{u}_1,\widehat{u}_2,\widehat{ \vartheta }_3
\end{pmatrix}\end{footnotesize}^T, \\
[\mathbf{\mathbf{Q}}_2(e_1,k)-\omega^2\widetilde{\id} ]\, w&=0 \qquad 
w=\begin{footnotesize}\begin{pmatrix}
\widehat{u}_3,\widehat{ \vartheta }_1,\widehat{ \vartheta }_2
\end{pmatrix}\end{footnotesize}^T\notag
\end{align}
 only for real frequencies $\omega\in \mathbb{R}$,
where
\begin{align}
\mathbf{\mathbf{Q}}_1(e_1,k)&=\begin{footnotesize}\begin{footnotesize}\begin{pmatrix}
k^2(2\, \mu_{\rm e} +\lambda_{\rm e} )& 0& 0\vspace{2mm}\\
0& k^2(\mu_{\rm e} +\mu_{\rm c})& -2\, k\, \mu_{\rm c}\vspace{2mm}\\
0& -2\, k\, \mu_{\rm c}&k^2\, {\mu_{\rm e}\,L_{\rm c}^2}\,(\alpha_1 +\alpha_2)+4\, \mu_{\rm c}
\end{pmatrix}\end{footnotesize}\end{footnotesize},\\
\mathbf{\mathbf{Q}}_2(e_1,k)&=\begin{footnotesize}\begin{footnotesize}\begin{pmatrix}
k^2(\mu_{\rm e} +\mu_{\rm c})& 0& 2\, k\, \mu_{\rm c}\vspace{2mm}\\
0&k^2\,{\mu_{\rm e}\,L_{\rm c}^2}\,(2\,\alpha_1+\alpha_3)+4 \mu_{\rm c}& 0\vspace{2mm}\\
2\, k\, \mu_{\rm c}& 0&k^2\, {\mu_{\rm e}\,L_{\rm c}^2}\,(\alpha_1 +\alpha_2)+4\, \mu_{\rm c}
\end{pmatrix}\end{footnotesize}\end{footnotesize},\\
\widehat{\id}&=\begin{footnotesize}\begin{pmatrix} \rho&0	&0
\\
0&\rho 
&0
\\0
& 0 & \rho\,j\,\mu_{\rm e}\,\tau_{\rm c}^2\, \end{pmatrix}\end{footnotesize}, \qquad \qquad 
\widetilde{\id} =\begin{footnotesize}\begin{pmatrix} \rho&0	&0
\\
0&\rho\,j\,\mu_{\rm e}\,\tau_{\rm c}^2
&0
\\0
& 0 & \rho\,j\,\mu_{\rm e}\,\tau_{\rm c}^2\end{pmatrix}\end{footnotesize}.
\end{align}

In this form, since $\widehat{\id} \neq \id$ and $\widetilde{\id}  \neq \id$, these are not eigenvalue problems. However,  the system \eqref{fpm} is equivalent to
\begin{align}\label{vz2}
\left[\widehat{\id}^{-1/2}\mathbf{\mathbf{Q}}_1(e_1,k)\widehat{\id}^{-1/2}-\omega^2\id\right]\, d=0,\qquad\qquad  d=\widehat{\id}^{1/2}\begin{footnotesize}\begin{pmatrix}
\widehat{u}_1\\\widehat{u}_2\\\widehat{ \vartheta }_3
\end{pmatrix}\end{footnotesize}.
\end{align}
Hence, the system \eqref{fpm} is equivalent to the eigenvalue problem
\begin{align}\label{q1}
\left[\underbrace{\begin{footnotesize}\begin{pmatrix}
k^2\frac{2\, \mu_{\rm e} +\lambda_{\rm e} }{\rho}& 0& 0\vspace{2mm}\\
0& k^2\frac{\mu_{\rm e} +\mu_{\rm c}}{\rho}& -2\, k\, \frac{\mu_{\rm c}}{\rho\, \sqrt{j\,\mu_{\rm e}\,\tau_{\rm c}^2\,}}\vspace{2mm}\\
0& -2\, k\, \frac{\mu_{\rm c}}{\rho\, \sqrt{j\,\mu_{\rm e}\,\tau_{\rm c}^2\,}}\ \ &\ \ k^2\, {\mu_{\rm e}\,L_{\rm c}^2}\,\frac{\alpha_1 +\alpha_2}{\rho\, j\,\mu_{\rm e}\,\tau_{\rm c}^2\,}+4\, \frac{\mu_{\rm c}}{\rho \, j\,\mu_{\rm e}\,\tau_{\rm c}^2\,}
\end{pmatrix}\end{footnotesize}}_{:=\,\widetilde{\mathbf{\mathbf{Q}}}_1(e_1,k)}-\omega^2\id\right]\, d=0.
\end{align}

Thus, asking that for all $k>0$  the system \eqref{fpm}$_1$
admits non trivial solutions $w=\begin{footnotesize}\begin{pmatrix}
\widehat{u}_1,\widehat{u}_2,\widehat{ \vartheta }_3
\end{pmatrix}\end{footnotesize}^T\neq 0$  only for  real positive values $\omega^2$   is equivalent to the positive definiteness of $\widetilde{\mathbf{\mathbf{Q}}}_1(e_1,k)$ for all $k>0$. Using the Sylvester criterium this means the following set of conditions
\begin{align}
&k^2\,(2\, \mu_{\rm e} +\lambda_{\rm e})>0, \qquad k^2(\mu_{\rm e} +\mu_{\rm c})>0, \\ &\left(\det \widetilde{\mathbf{\mathbf{Q}}}_1(e_1,k)>0\qquad \Leftrightarrow\qquad  {\mu_{\rm e}\,L_{\rm c}^2}\,(\alpha _1+\alpha _2 )\, k^4\, \left(\mu_{\rm c}+\mu _{\rm e}\right)+k^2\,4\, \mu_{\rm c} \,\mu _{\rm e}>0\right) \qquad\quad  \forall \ k>0, \notag
\end{align}
 but it also implies that all the diagonal elements are positive, i.e., we also have that 
\begin{align}
k^2\, {\mu_{\rm e}\,L_{\rm c}^2}\,(\alpha_1 +\alpha_2)+4\, {\mu_{\rm c}}>0 \qquad \qquad \forall \ \ k>0.
\end{align}
It is now easy to remark that the positive definiteness of $\widetilde{\mathbf{\mathbf{Q}}}_1(e_1,k)$ for all $k>0$ is equivalent to the non-redundant set of inequalities 
\begin{align}\label{rendc}
2\, \mu_{\rm e} +\lambda_{\rm e}>0, \qquad \qquad\mu_{\rm e}>0, \qquad\qquad  \mu_{\rm c}>0, \qquad\qquad \alpha _1+\alpha _2 >0.
\end{align}

In a similar way, we find that  for all $k>0$  the system \eqref{fpm}$_2$
admits non trivial solutions $w=\begin{footnotesize}\begin{pmatrix}
\widehat{u}_3,\widehat{ \vartheta }_1,\widehat{ \vartheta }_2
\end{pmatrix}\end{footnotesize}^T\neq 0$  only for  real positive values $\omega^2$  if and only if the following eigenvalue problem admits only real solutions
\begin{align}
&\left[\underbrace{\begin{footnotesize}\begin{pmatrix}
	k^2\frac{ \mu_{\rm e} +\mu_{\rm c} }{\rho}& 0& -2\, k\, \frac{\mu_{\rm c}}{\rho\, \sqrt{j\,\mu_{\rm e}\,\tau_{\rm c}^2\,}}\vspace{2mm}\\
	0& \ k^2\, {\mu_{\rm e}\,L_{\rm c}^2}\,\frac{2\,\alpha_1 +\alpha_3}{\rho\, j\,\mu_{\rm e}\,\tau_{\rm c}^2\,}+4\, \frac{\mu_{\rm c}}{\rho \, j\,\mu_{\rm e}\,\tau_{\rm c}^2}& 0\vspace{2mm}\\
	-2\, k\, \frac{\mu_{\rm c}}{\rho\, \sqrt{j\,\mu_{\rm e}\,\tau_{\rm c}^2\,}}& 0&\ \ k^2\, {\mu_{\rm e}\,L_{\rm c}^2}\,\frac{\alpha_1 +\alpha_2}{\rho\, j\,\mu_{\rm e}\,\tau_{\rm c}^2\,}+4\, \frac{\mu_{\rm c}}{\rho \, j\,\mu_{\rm e}\,\tau_{\rm c}^2\,}
	\end{pmatrix}\end{footnotesize}}_{:=\,\widetilde{\mathbf{\mathbf{Q}}}_2(e_1,k)\equiv\, \widetilde{\id}^{-1/2}\mathbf{\mathbf{Q}}_2(e_1,k)\,\widetilde{\id}^{-1/2}}-\omega^2\id\right]\, f=0, 
\end{align}
with $f=\widetilde{\id}^{1/2}\begin{footnotesize}\begin{pmatrix}
\widehat{u}_3\\\widehat{ \vartheta }_1\\\widehat{ \vartheta }_2
\end{pmatrix}\end{footnotesize}$, i.e., if and only if for all $k>0$
\begin{align}
&{k^2 \left(\mu_{\rm c}+\mu _{\rm e}\right)}>0,\qquad\qquad\qquad{4\, \mu_{\rm c}}+{\left(2\, \alpha _1+\alpha _3\right) k^2 L_{\rm c}^2}>0, \notag\\& \bigg(\det \widetilde{\mathbf{\mathbf{Q}}}_2(e_1,k)>0\qquad \Leftrightarrow\qquad k^6 \left(\alpha _1+\alpha _2\right) \left(2\, \alpha _1+\alpha _3\right) L_{\rm c}^4 \mu _{\rm e}\left(\mu _{\rm e}+\mu_{\rm c}\right) \\& \qquad \qquad\qquad \qquad\qquad \qquad\qquad  +4\,k^4 L_{\rm c}^2\mu_{\rm c}\left[ \left(\alpha _1+\alpha _2\right) \mu_{\rm c}  \left(\mu _{\rm e}+ \mu_{\rm c}\right)+ \left(2\, \alpha _1+\alpha _3\right)  \mu _{\rm e}\right]+16\, k^2 \mu_{\rm c}^2>0\bigg)\notag, 
\end{align}
but it also implies that all the diagonal elements are positive, i.e., we also have that 
\begin{align}
 k^2\, {\mu_{\rm e}\,L_{\rm c}^2}\,(\alpha_1 +\alpha_2)+4\, \mu_{\rm c}>0.
\end{align}
All together this shows that the positive definiteness of $\widetilde{\mathbf{\mathbf{Q}}}_2(e_1,k)$  is equivalent  to
\begin{align}\label{rw2}
 \mu_{\rm e}+  \mu_{\rm c}>0, \qquad \mu_{\rm c}>0,\qquad 2\, \alpha _1+\alpha _3 >0, \qquad\qquad \mu_{\rm e}(\alpha _1+\alpha _2) >0, \qquad \mu_{\rm e}>0. \qquad 
\end{align}
Therefore, the non-redundant inequalities from \eqref{rw2} are
\begin{align}\label{rw3}
\mu_{\rm e}>0, \qquad\qquad  \mu_{\rm c}>0,  \qquad\qquad 2\, \alpha _1+\alpha _3 >0, \qquad\qquad (\alpha _1+\alpha _2) >0.
\end{align}

The analysis presented in this subsection is similar to that used in \cite{neff2017real}. Then, for $k>0$ and due to the isotropy, extrapolating to all directions of propagation, we have
\begin{proposition}\label{proprealw} The necessary and sufficient conditions for existence of  real planar waves in any direction ${\xi}\in \mathbb{R}^3$, $\xi\neq 0$, in the framework of the linear isotropic elastic Cosserat theory  are
\begin{align}\label{d11}
2\,\mu_{\rm e} +\lambda_{\rm e} >0,\qquad \qquad \mu_{\rm e} >0,\qquad \qquad \mu_{\rm c} >0,\qquad \qquad   \alpha_1+\alpha_2>0, \qquad \qquad 2\,\alpha_1+\alpha_3>0.
\end{align}
\end{proposition}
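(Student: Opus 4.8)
The plan is to exploit that essentially all of the computation has already been carried out in this subsection for the single distinguished direction $e_1=(1,0,0)^T$, so that it only remains to (a) justify, by isotropy, the passage from $e_1$ to an arbitrary $\xi\neq 0$, and (b) assemble the two non-redundant sets \eqref{rendc} and \eqref{rw3} into the claimed list \eqref{d11}. First I would record the \emph{reduction step}. Since the isotropic Cosserat energy, and hence the PDE system \eqref{PDE}, is covariant under the rotation group, for $R\in SO(3)$ the change of variables $u\mapsto R\,u(R^T x,t)$, $\vartheta\mapsto R\,\vartheta(R^T x,t)$ carries a plane-wave solution of the form \eqref{ansatzwp} propagating in direction $\xi$ into one propagating in direction $R\xi$, \emph{with the same temporal frequency $\omega$}; the point is that $\anti$ is $SO(3)$-equivariant, $R\,\anti(v)\,R^T=\anti(Rv)$ (here $\det R=1$ is essential, which is why one cannot use all of $O(3)$), so that the gradient terms and the gyroscopic $\varepsilon_{ils}$-coupling terms of \eqref{PDE} are form-invariant. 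Consequently the property ``for every $k>0$, \eqref{ansatzwp} solves \eqref{PDE} only for real $\omega$'' depends on $\xi$ only through its direction, while replacing $\xi$ by $s\,\xi$ with $s>0$ merely rescales the wave number, which still sweeps all of $(0,\infty)$. Hence it suffices to test $\xi=e_1$, and whatever one proves there transfers to every $\xi\neq 0$.

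Next I would carry out the \emph{block analysis} for $\xi=e_1$, which is already displayed above. Substituting \eqref{ansatzwp1} into \eqref{PDE} decouples the six scalar equations into the two $3\times3$ problems \eqref{fpm}; conjugating by the positive-definite diagonal inertia matrices $\widehat{\id}^{-1/2}$ and $\widetilde{\id}^{-1/2}$ turns these generalized eigenvalue problems into genuine symmetric eigenvalue problems for $\widetilde{\mathbf{Q}}_1(e_1,k)$ and $\widetilde{\mathbf{Q}}_2(e_1,k)$, see \eqref{q1}. A real symmetric matrix always has real eigenvalues, and they are all strictly positive precisely when it is positive definite; hence the requirement that \eqref{fpm} admit non-trivial solutions only for real positive $\omega^2$, for every $k>0$, is equivalent to $\widetilde{\mathbf{Q}}_1(e_1,k)$ \emph{and} $\widetilde{\mathbf{Q}}_2(e_1,k)$ being positive definite for all $k>0$. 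Applying Sylvester's criterion to each, and clearing manifestly positive prefactors, the two leading principal minors and the determinant become polynomial inequalities in $k^2$ that must hold on all of $(0,\infty)$; one checks that for $\widetilde{\mathbf{Q}}_1$ these collapse to the non-redundant set \eqref{rendc}, and that the same computation for $\widetilde{\mathbf{Q}}_2$ yields \eqref{rw3}. Finally, the union of \eqref{rendc} and \eqref{rw3} — dropping $\mu_{\rm e}+\mu_{\rm c}>0$, which is implied by $\mu_{\rm e}>0$ together with $\mu_{\rm c}>0$ — is exactly \eqref{d11}; conversely \eqref{d11} makes both matrices positive definite for every $k>0$. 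Combined with the reduction step, this proves the proposition.

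The main obstacle — really the only place where care is needed — is twofold. First, making the $SO(3)$-covariance argument airtight: one must verify that $\vartheta$ transforms as a genuine vector under $R\in SO(3)$ (the sign of $\det R$ is exactly what blocks improper rotations) and that each term of \eqref{PDE}, including the gradient and curl-type coupling terms, is form-invariant under the simultaneous rotation of the field components and of the spatial argument. Second, the bookkeeping in the minor analysis: for each $3\times3$ block one has three conditions, each a polynomial in $k^2$ required to be positive on the whole ray $k>0$, and one must check that these are equivalent to sign conditions on the coefficients and that the merged list \eqref{d11} is both non-redundant and exhaustive — in particular that $2\alpha_1+\alpha_3>0$ enters only through the $\widetilde{\mathbf{Q}}_2$ block. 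Everything beyond this is the routine linear algebra already performed above.
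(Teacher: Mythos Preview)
Your proposal is correct and follows essentially the same route as the paper: the paper's own proof is just the line ``The proof is given by the previous calculations,'' referring to the isotropy reduction to $\xi=e_1$, the decoupling into the two $3\times 3$ blocks \eqref{fpm}, the Sylvester analysis of $\widetilde{\mathbf{Q}}_1$ and $\widetilde{\mathbf{Q}}_2$ leading to \eqref{rendc} and \eqref{rw3}, and the merge into \eqref{d11}. Your write-up is in fact more explicit than the paper about the $SO(3)$-covariance step (the paper simply asserts ``since our formulation is isotropic''), but the argument is the same.
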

\begin{proof}
The proof is given by the previous calculations.
\end{proof}
\begin{remark}\label{iterpretation1}
{The conditions \eqref{d11} from Proposition \ref{proprealw} have also some direct interpretations, i.e.,
	\begin{itemize}\item[i)] the first implication of the set of conditions
	\eqref{d11} means that all these waves (compressional/shear-rotational  waves, acoustic/optical branch) are real;
	Once this aspect  is clarified, we can treat and interpret further the propagation of plane waves;
	\item[ii)] under conditions 	\eqref{d11} all branches of waves are real for the entire range $[0,\infty)$ of the frequency;
	\item[iii)] we  can also see directly from  the first condition that the translational compressional wave
	is real;
	\item[iv)]  the second means that the acoustic branch of shear–rotational
	wave is real at low frequencies and together with the third means that  the acoustic branch of shear–rotational
	wave is real at high frequencies;
	\item[v)]  the fourth implies that the optical branch of the
	shear–rotational wave is real at high frequencies;
	\item[vi)]  the third one means that the
	optical branch of the shear–rotational wave at high frequencies has a larger
	velocity than the acoustic branch of the same wave at low frequencies (if they
	both exist, which is the case due to other conditions); 
	\item[vii)] 	the fifth one expresses directly that   the compressional rotational
	wave at high frequencies is real.
\end{itemize}}
\end{remark}
\begin{remark}\label{remarkLH}
\begin{itemize}
	\item[]
	\item[i)]
In  linear isotropic classical elasticity, the necessary and sufficient conditions for existence of  real planar waves in any direction ${\xi}\in \mathbb{R}^3$, $\xi\neq 0$ are
\begin{align}\label{d1ce}
2\,\mu_{\rm e} +\lambda_{\rm e} >0,\qquad \qquad \mu_{\rm e} >0,
\end{align}
and they are equivalent to the strong ellipticity conditions (Legendre-Hadamard ellipticity).

\item[ii)] The necessary and sufficient conditions for existence of a real planar wave are slightly different compared to  the    strong ellipticity conditions  (Legendre–Hadamard ellipticity) \eqref{d11} for the Cosserat (micropolar) model investigated in \cite{eremeyev2007constitutive,Eremeyev4,shirani2020legendre}
and which are  connected to acceleration waves.   In our notation,
the strong
ellipticity condition for  Cosserat media is represented by
the inequality \cite{eremeyev2007constitutive,Eremeyev4,shirani2020legendre}
\begin{align}
\frac{d\, }{d\, \tau}W(\mathbf{e} + \tau \, \xi \otimes \eta, \mathbf{\mathfrak{K}} + \tau \, \zeta\otimes \eta)\bigg|_{\tau=0}>0 \qquad \forall\  \eta, \xi, \zeta \in \mathbb{R}^3, \ \ \lVert \eta\rVert=\lVert \xi\rVert=\lVert \zeta\rVert=1.
\end{align}
and it is satisfied if and only if
\begin{align}\label{dse1}
2\,\mu_{\rm e} +\lambda_{\rm e} >0,\qquad \quad \mu_{\rm e} +\mu_{\rm c} >0,\qquad \quad   \alpha_1+\alpha_2>0, \qquad \quad 2\,\alpha_1+\alpha_3>0.
\end{align}
The explicit calculations in our notations are made in \cite{shirani2020legendre}.  {The absence of a coupling between
$\mathbf{e}$ and $\mathbf{\mathfrak{K}}$ in the strain energy  leads to a simplification of the calculations.} 
\item[iii)]{The conditions   \eqref{dse1} (strong ellipticity conditions, Legendre-Hadamard ellipticity, the positive definiteness of the acoustic tensor) imply  the existence of the real translational compressional
waves in the entire range of real frequencies, of the real shear rotational waves (both branches) at high frequencies, and of real
rotational compressional wave at high frequencies, but at lower frequencies the
latter waves may not be real since \eqref{dse1} does not guaranty that $\mathfrak{c}_t=\frac{\mu_{\rm e}}{\rho}$ is real. To the contrary, the conditions \eqref{d11} imply that all these branches and types of plane wave are real, i.e., the group/phase velocities are real on the entire range of possible frequencies.}
\end{itemize} \end{remark}

The strong ellipticity conditions \eqref{dse1} are weaker than the conditions \eqref{d11} in the sense that they are implied by the necessary and sufficient conditions for existence of a real planar wave {(i.e., they imply the strong ellipticity and, therefore, the considered PDEs system is not unstable)}  but not vice versa. However, the strong ellipticity conditions \eqref{dse1} are not sufficient for the application of our solution method,   and we believe that they are also not suitable for any approach regarding the propagation of  Rayleigh waves in Cosserat solids. 

In the end of this section we mention that Eringen \cite[pages 149-151]{Eringen99} affirmed that one has to impose (in addition) that 
\begin{align}\label{Eringencond}
\frac{L_{\rm c}^2}{\tau_{\rm c}^2}\frac{ (\alpha_1+\alpha_2)}{\rho\,j}>\frac{\mu_{\rm e}+\mu_{\rm c}}{\rho},
\end{align}
in order that there   exist four real $\omega$ which lead to plane waves solutions (real plane waves). This seems to be only a consequence of the representation formula and the method he used to construct the plane wave solution and seems to be not a necessary condition for real plane waves propagation. {Condition \eqref{Eringencond} required by Eringen means that the shearrotational wave at
	high frequencies is faster for the acoustic branch than for the optical branch.} Eringen \cite[pages 151]{Eringen99} also claimed that this condition is in accordance to  the lattice dynamical calculations but no further explanations are given.

Nevertheless,  this Eringen-type condition \eqref{Eringencond} seems to have sense  when we are going back to the classical linear elasticity model by considering $\mu_{\rm c}\to 0$ and large values of {$\frac{L_{\rm c}}{\tau_{\rm c}}$ (${L_{\rm c}}\to \infty$ or $\tau_{\rm c}\to 0$)}, since for $\xi=e_1$ the eigenvalue problems characterising the possible real values of $\omega$ become
\begin{align}
\left[\begin{footnotesize}\begin{pmatrix}
	k^2\frac{2\, \mu_{\rm e} +\lambda_{\rm e} }{\rho}& 0& 0\vspace{2mm}\\
	0& k^2\frac{\mu_{\rm e}}{\rho}& 0\vspace{2mm}\\
	0& 0 &\ \ k^2\, \frac{L_{\rm c}^2}{\tau_{\rm c}^2}\,\frac{\alpha_1 +\alpha_2}{\rho\, j}
	\end{pmatrix}\end{footnotesize}-\omega^2\id\right]\, d=0, \qquad\quad \quad d=\widehat{\id}^{1/2}\begin{footnotesize}\begin{pmatrix}
\widehat{u}_1\\\widehat{u}_2\\\widehat{ \vartheta }_3
\end{pmatrix}\end{footnotesize}
\end{align}
and
\begin{align}
\left[\begin{footnotesize}\begin{pmatrix}
	k^2\frac{ \mu_{\rm e}  }{\rho}& 0& 0\vspace{2mm}\\
	0& \ k^2\, {\mu_{\rm e}}\,\frac{2\,\alpha_1 +\alpha_3}{\rho\, j}& 0\vspace{2mm}\\
	0& 0&\ \ k^2\, \frac{L_{\rm c}^2}{\tau_{\rm c}^2}\,\frac{\alpha_1 +\alpha_2}{\rho\, j}
	\end{pmatrix}\end{footnotesize}-\omega^2\id\right]\, f=0,\quad \quad\quad \quad f=\widetilde{\id}^{-1/2}\begin{footnotesize}\begin{pmatrix}
\widehat{u}_3\\\widehat{ \vartheta }_1\\\widehat{ \vartheta }_2
\end{pmatrix}\end{footnotesize}.
\end{align}
In classical linear elasticity, it is natural to impose that  plane waves propagate only with speeds $\sqrt{\frac{2\, \mu_{\rm e} +\lambda_{\rm e} }{\rho}}$ and $\sqrt{\frac{ \mu_{\rm e}}{\rho}}$ and no other ``exotic'' plane wave arises. Indeed, for   $\mu_{\rm c}\to 0$ and large values  of  $\frac{L_{\rm c}^2}{\tau_{\rm c}^2}$, \   by imposing
\begin{align}\label{addrectr}
 &\frac{L_{\rm c}^2}{\tau_{\rm c}^2}\,\min\left\{\frac{2\,\alpha_1 +\alpha_3}{\rho\, j},\frac{\alpha_1 +\alpha_2}{\rho\, j}\right\}> \max\left\{\frac{2\, \mu_{\rm e} +\lambda_{\rm e} }{\rho},\frac{\mu_{\rm e}}{\rho}\right\},
\end{align}
the unique propagating speeds which lead to non-vanishing displacements for plane waves are only those from classical elasticity. {Condition \eqref{addrectr} means that both shear–rotational acoustic branch and rotational compressional wave at high frequencies are faster than the shear–
	rotational acoustic branch at low frequencies and the translational compressional wave.} More precisely, under this additional restriction upon the curvature coefficients, in the limit case $\mu_{\rm c}\to 0$, we determine the amplitudes  
\begin{align}\label{algPDE11}
 &{\rm for}\qquad  \omega^2=k^2\,\frac{2\, \mu_{\rm e} +\lambda_{\rm e} }{\rho}: & \widehat{u}\times e_1=0, &\qquad\qquad \widehat{\vartheta }=0,\notag
\\
&{\rm for}\qquad  \omega^2=k^2\,\frac{ \mu_{\rm e}  }{\rho}:& \langle \widehat{u}, e_1\rangle=0, &\qquad \qquad\widehat{\vartheta }=0,\notag
\\
& {\rm for}\qquad  \omega^2=k^2\,\frac{L_{\rm c}^2}{\tau_{\rm c}^2}\,\,\frac{2\,\alpha_1 +\alpha_3}{\rho\, j}:& \widehat{u}=0,& \qquad\qquad \widehat{\vartheta }\times e_1=0,
\\
&{\rm for}\qquad  \omega^2=k^2\,\frac{L_{\rm c}^2}{\tau_{\rm c}^2}\,\frac{\alpha_1 +\alpha_2}{\rho\, j}:&\widehat{u}=0, & \qquad\qquad \langle\widehat{\vartheta }, e_1\rangle=0.\notag
\end{align}
This means that real plane waves for which $\widehat{u}\neq 0$ (i.e., only what real plane waves in  classical elasticity means) are possible only  for $\omega^2=k^2\,\sqrt{\frac{2\, \mu_{\rm e} +\lambda_{\rm e} }{\rho}}$ and
 $\omega^2=k^2\,\sqrt{\frac{ \mu_{\rm e}}{\rho}}$, situation in which the microrotation vector $\widehat{\vartheta }$ vanishes. Moreover, the extremely high values of the frequency which lead to non-vanishing $\widehat{\vartheta }$ are beyond the framework of classical elasticity and belong rather to quantum mechanics or relativistic mechanics.
 
 It is important to note that when   studying  plane waves, we have to limit  the plausible domains of the frequency  (i.e.,  the speed is less than the  speed of light) in the framework of classical mechanics, for seismic waves this is already done once the ansatz is chosen,  since only  subsonic speeds are admissible. We will explain this aspect in more details in the following sections.

\subsection{The setup for the propagation of Rayleigh waves}\label{setRw}

 In the framework of the Rayleigh wave, we consider the region $\Omega$ to be the half space $$\Sigma:=\{(x_1,x_2,x_3)\,|\,x_1,x_3\in \mathbb{R},\,\, x_2\geq0\}.$$ The boundary of the homogeneous and isotropic half-space  is free of surface traction, i.e.,
 \begin{align}\label{03}
 \boldsymbol{\sigma}.\, n=0,\qquad \qquad \mathbf{m}.\, n=0 \qquad \qquad\textrm{for}\quad x_2=0.
 \end{align}
 \begin{figure}[h!]
 	\centering
 	\includegraphics[width=8.5cm]{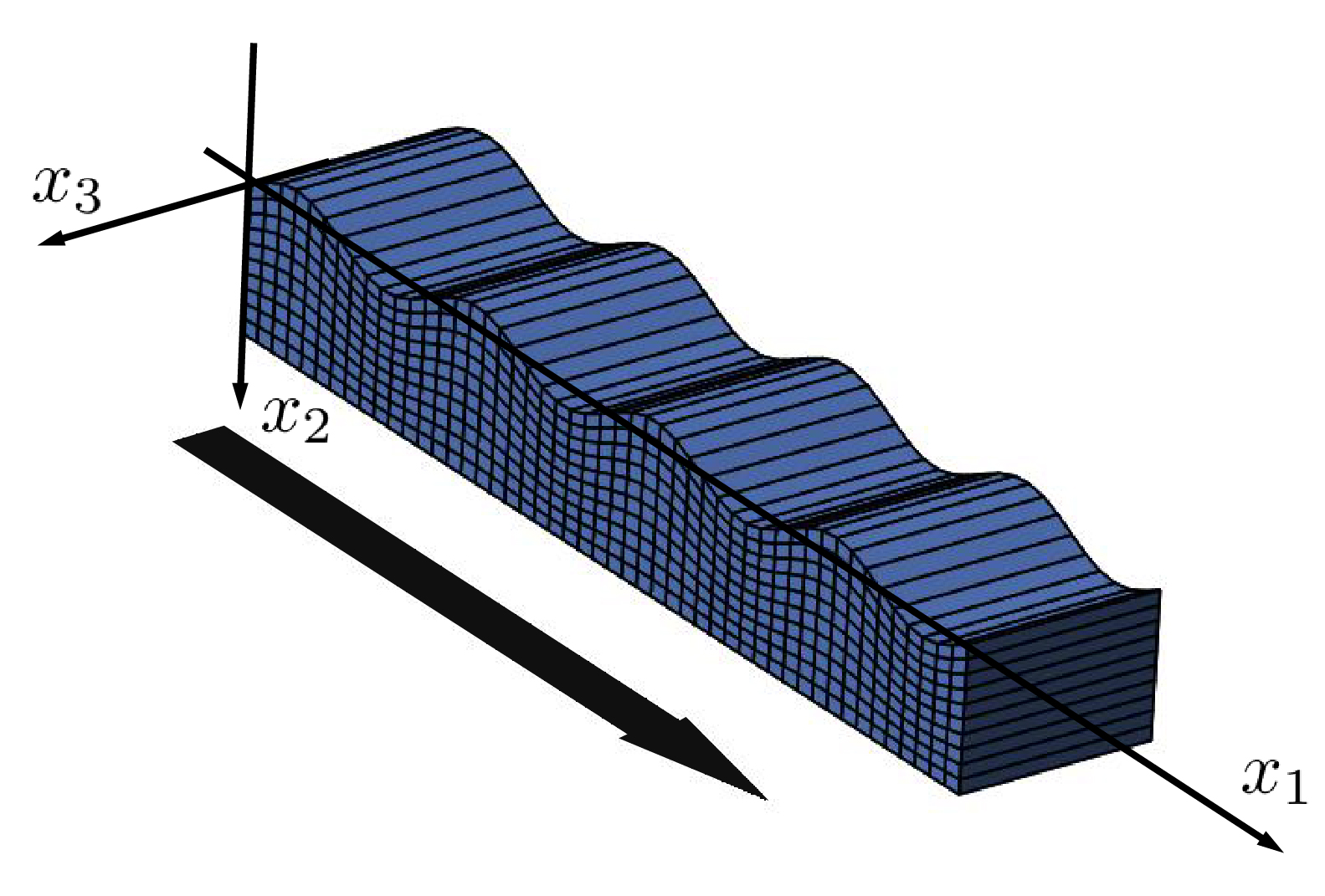}
 	\caption{\footnotesize The displacement of a Rayleigh wave, no traction on the upper free surface.}
 	\label{rayfig}
 \end{figure} 

 In addition, the solution has to satisfy the following   decay condition
 \begin{align}\label{04}
 \lim_{x_2 \rightarrow \infty}\{u_1,u_2,{\vartheta_3},\sigma_{12},\sigma_{21},\sigma_{22},m_{23}\} (x_1,x_2,t)=0 \qquad \quad \forall\, x_1\in\mathbb{R}, \quad \forall \, t\in[0,\infty).
 \end{align}

In isotropic solids and {in the context of Rayleigh wave propagation}, the surface particles move in the planes normal to the surface $x_2=0$ and parallel to the direction of propagation  $e_1=(1,0,0)^T$, see Figure \ref{rayfig}. Accordingly to these characteristics of the seismic waves, we consider the following  plain strain ansatz as a first step in our process of construction of the solution
 \begin{align}
 u(x_1,x_2,t)&=\begin{footnotesize}\begin{pmatrix} u_1(x_1,x_2,t)
 	\\
 	u_2(x_1,x_2,t)
 	\\0
 	\end{pmatrix}\end{footnotesize},\qquad \qquad \qquad\qquad \qquad{\rm D}u(x_1,x_2,t)=
 	\begin{footnotesize}\begin{pmatrix}
 	\frac{\partial\, u_{1}}{\partial\,x_1}(x_1,x_2,t)&\frac{\partial\, u_{2}}{\partial\,x_1}(x_1,x_2,t)&0\\
 	\frac{\partial\, u_{1}}{\partial\,x_2}(x_1,x_2,t)&\frac{\partial\, u_{2}}{\partial\,x_2}(x_1,x_2,t)&0\\
 	0&0&0
 	\end{pmatrix}\end{footnotesize}, \\ \mathbf{A}(x_1,x_2,t)&=
 	\begin{footnotesize}\begin{pmatrix}
 	0&-\vartheta_3(x_1,x_2,t)&0\\
 	\vartheta_3(x_1,x_2,t)&0&0\\
 	0&0&0
 	\end{pmatrix}\end{footnotesize},\qquad \quad
 	\vartheta (x_1,x_2,x_3,t)=\axl \,\mathbf{A}=\begin{footnotesize}\begin{pmatrix} 0
 	\\
 	0
 	\\ \vartheta   _3(x_1,x_2,t)
 		\end{pmatrix}\end{footnotesize}.\notag
 \end{align}
{Besides the Rayleigh waves, in the full isotropic Cosserat medium there is also another
	(transversal, ``Love-like'') surface wave decribed by $(u_1,u_2,\theta_3)^T= 0$ and $(u_3,\theta_1,\theta_2)^T\neq 0$, see \cite{Kulesh06},
	while in the reduced Cosserat medium ($L_{\rm c}\to 0$) there exist non-propagating transversal oscillations of the same kind, see \cite{kulesh2009problem}. However, this is not the purpose of the present work and the completed proof of the existence of these ``Love-like" surface waves will be considered in the future.}
 
Corresponding to our ansatz, we deduce 
the following form of the stress tensor 
 \begin{align}
 \boldsymbol{\sigma}=
\begin{footnotesize}\begin{footnotesize}\begin{pmatrix}
 (2\,\mu_{\rm e}  +\lambda_{\rm e} ) \,\frac{\partial\, u_{1}}{\partial\,x_1}+\lambda_{\rm e} \, \frac{\partial \,u_{2}}{\partial \,x_2}
 &
 (\mu_{\rm e} +\mu_{c})\,\frac{\partial\, u_{1}}{\partial \,x_2}+2\,\mu_{\rm c} \, \vartheta _3+(\mu_{\rm e} -\mu_{c})\,\frac{\partial\, u_{2}}{\partial\,x_1}
 &
 0
\vspace{2mm} \\
 (\mu_{\rm e} +\mu_{c})\,\frac{\partial\, u_{2}}{\partial \,x_1}-2\,\mu_{\rm c} \, \vartheta _3+(\mu_{\rm e} -\mu_{c})\,\frac{\partial\, u_{1}}{\partial \,x_2}
 &
 (2\,\mu_{\rm e}  +\lambda_{\rm e} ) \,u_{2,x_2}+\lambda_{\rm e} \, \frac{\partial\, u_{1}}{\partial \,x_1}
 &
 0\vspace{2mm} \\
 0
 &
 0
 &	
 \lambda_{\rm e} \, (\frac{\partial\, u_{1}}{\partial \,x_1}+\frac{\partial\, u_{2}}{\partial \,x_2})
 \end{pmatrix}\end{footnotesize}\end{footnotesize},
 \end{align}
 and of the couple stress tensor 
 \begin{align}
\mathbf{m}=
\begin{footnotesize}\begin{footnotesize}\begin{pmatrix}
 0
 &
 0
 &
{\mu_{\rm e}\,L_{\rm c}^2}\,({\alpha_1-\alpha_2})\,\frac{\partial \,\vartheta _{3}}{\partial \,x_1}\vspace{2mm}
 \\
 0
 &
 0
 &
{\mu_{\rm e}\,L_{\rm c}^2}\, ({\alpha_1-\alpha_2})\,\frac{\partial \,\vartheta _{3}}{\partial \,x_2}
 \vspace{2mm}\\
 {\mu_{\rm e}\,L_{\rm c}^2}\,({\alpha_1+\alpha_2})\,\frac{\partial \,\vartheta _{3}}{\partial \,x_1}
 &
{\mu_{\rm e}\,L_{\rm c}^2}\, ({\alpha_1+\alpha_2})\,\frac{\partial \,\vartheta _{3}}{\partial \,x_2}
 &	
0
 \end{pmatrix}\end{footnotesize}\end{footnotesize},
 \end{align}
 while the equation of motion are reduced to 
 \begin{align}\label{49}
 \rho\,\frac{\partial^2 \, {u}_{1}}{\partial\,t^2}&=\frac{\partial \,\sigma_{11}}{\partial \,x_1}+\frac{\partial \,\sigma_{12}}{\partial \,x_2},\notag\\
 \rho\, \frac{\partial^2 \,{u}_{2}}{\partial\,t^2}&= \frac{\partial \,\sigma_{22}}{\partial \,x_1}+ \frac{\partial \,\sigma_{22}}{\partial \,x_2},\\
 \rho\,j\,\mu_{\rm e}\,\tau_{\rm c}^2\, \,\frac{\partial^2 \,{\vartheta _{3}}}{\partial\,t^2}&= \frac{\partial \,m_{31}}{\partial \,x_1}+ \frac{\partial \,m_{32}}{\partial \,x_2}+2\,\mu_{\rm c} \,   \frac{\partial \,u_{2}}{\partial \,x_1}-2\,\mu_{\rm c} \,    \frac{\partial \,u_{1}}{\partial \,x_2}-4\mu_{\rm c} \, \vartheta _3,\notag
 \end{align}
 subjected to the aforementioned boundary conditions  \eqref{03}, which turn out to be
 \begin{align}\label{50}
 \sigma_{12}=0,\qquad \sigma_{22}=0, \qquad m_{32}=0 \qquad \text{at}\qquad x_2=0.
 \end{align}
 
 Therefore, the aim of this paper is to give an explicit solution  $(u, \vartheta   )$ of the following  system
 \begin{align}\label{x6}
 \rho\, \frac{\partial^2{u}_{1}}{ \partial \,t^2}&=(2\,\mu_{\rm e}  +\lambda_{\rm e} ) \,\frac{\partial^2 \,u_{1}}{\partial \,x_1^2}+\lambda_{\rm e} \,  \frac{\partial^2 \,u_{2}}{\partial \,x_2\partial\, x_1}+ (\mu_{\rm e} +\mu_{c})\,\frac{\partial^2 \,u_{1}}{\partial \,x_2^2}+2\,\mu_{\rm c} \, \frac{\partial \,\vartheta _{3}}{\partial \,x_2}+(\mu_{\rm e} -\mu_{c})\,\frac{\partial ^2\,u_{2}}{\partial \,x_1\,\partial \,x_2},\notag\\
\rho\, \frac{\partial^2{u}_{2}}{\partial \,t^2}&=(\mu_{\rm e} +\mu_{c})\,\frac{\partial^2 \,u_{2}}{\partial \,x_1^2}-2\,\mu_{\rm c} \, \frac{\partial \,\vartheta _{3}}{\partial \,x_1}+(\mu_{\rm e} -\mu_{c})\,\frac{\partial^2 \,u_{1}}{\partial \,x_2\,\partial\,x_1}+(2\,\mu_{\rm e}  +\lambda_{\rm e} ) \,\frac{\partial^2 \,u_{2}}{\partial \,x_2^2}+\lambda_{\rm e} \,  \frac{\partial^2 \,u_{1}}{\partial \,x_1\,\partial \,x_2},\\
\rho \,j\,\mu_{\rm e}\,\tau_{\rm c}^2\,\frac{\partial^2 \,{{\vartheta }_{3}}}{\partial \,t^2}&={\mu_{\rm e}\,L_{\rm c}^2}\,\gamma \,\frac{\partial^2 \,\vartheta _{3}}{\partial \,x_1^2}+{\mu_{\rm e}\,L_{\rm c}^2}\,\gamma \,\frac{\partial^2 \,\vartheta _{3}}{\partial \,x_2^2}+2\,\mu_{\rm c} \,  \frac{\partial \,u_{2}}{\partial \,x_1}-2\,\mu_{\rm c} \,  \frac{\partial\,u_{1}}{\partial \,x_2}-4\,\mu_{\rm c} \, \vartheta _3,\notag
 \end{align}
 which satisfies the boundary conditions at $x_2=0$
 \begin{align}\label{x17}
 (\mu_{\rm e} +\mu_{c})\,\frac{\partial\,u_{1}}{\partial \,x_2}+2\,\mu_{\rm c} \, \vartheta _3+(\mu_{\rm e} -\mu_{c})\,\frac{\partial\,u_{2}}{\partial \,x_1}=&\,0,\notag\\
 (2\,\mu_{\rm e}  +\lambda_{\rm e} ) \,\frac{\partial\,u_{2}}{\partial \,x_2}+\lambda_{\rm e} \,  \frac{\partial\,u_{1}}{\partial \,x_1}=&\,0,\\
 {\mu_{\rm e}\,L_{\rm c}^2}\,\gamma \,\frac{\partial\,\vartheta _{3}}{\partial \,x_2}=&\,0,\notag
 \end{align}
 where $\gamma={\alpha_1+\alpha_2}$, and which has the asymptotic behaviour \eqref{04}.
 
 {Even if until now we have considered the  propagation of  surface waves with the direction $e_1 =
 (1,0,0)^T$, i.e. some horizontal direction, $x_2$ being the vertical direction orthogonal
 to the surface along which the wave decays, in one point of our method (see Proposition \ref{lemmaGH} and its implications in Subsection \ref{NSE}) we need to consider a general direction of wave propagation $\xi = (\xi_1,\xi_2,0)^T$ and to characterize  where the wave is
 a “real” bulk wave. Therefore, it} is useful to {know} for which conditions on the constitutive parameters (for every  wave number $k>0$ and  in the  direction $\xi=(\xi_1,\xi_2,0)^T$ with $\lVert{\xi}\rVert^2=1$) the system of partial differential equations \eqref{PDE} admits a non trivial solution in the form
\begin{align}\label{rwr}
u(x_1,x_2,t)&=\begin{footnotesize}\begin{pmatrix}\widehat{u}_1\\\widehat{u}_2\\0\end{pmatrix}\end{footnotesize}
\, e^{{\rm i}\, \left(k\langle {\xi},\, x\rangle_{\mathbb{R}^3}-\,\omega \,t\right)}\,,\qquad\qquad  \vartheta (x_1,x_2,t)=\begin{footnotesize}{\rm i}\,\begin{pmatrix}0\\0\\\widehat{\vartheta }_3\end{pmatrix}\end{footnotesize} \, e^{{\rm i}\, \left(k\langle {\xi},\, x\rangle_{\mathbb{R}^3}-\,\omega \,t\right)},\\&
(\widehat{u}_1,\widehat{u}_2, \widehat{ \vartheta_3   })^T\in\mathbb{C}^{3}, \quad (\widehat{u}_1,\widehat{u}_2, \widehat{ \vartheta_3   })^T\neq 0\,\notag
\end{align}
only for real positive values $\omega^2$. According to the results given in Subsection \ref{Rpw}, see \eqref{algPDE1}, \eqref{q1} and \eqref{rendc}, we have the following result
\begin{proposition}\label{proprealw2} Let   ${\xi}\in \mathbb{R}^3$, $\xi\neq 0$ be any direction of the form $\xi=(\xi_1,\xi_2, 0)^T$. The necessary and sufficient conditions for existence of   a non trivial solution of the system of partial differential equations \eqref{PDE}  of the form given by \eqref{rwr} are
	\begin{align}\label{d12}
	2\,\mu_{\rm e} +\lambda_{\rm e} >0,\qquad \qquad \mu_{\rm e} >0,\qquad \qquad \mu_{\rm c} >0,\qquad \qquad   \alpha_1+\alpha_2>0.
	\end{align}
\end{proposition}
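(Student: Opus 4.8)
The plan is to reduce the assertion to the symmetric eigenvalue analysis already carried out in Subsection~\ref{Rpw}. Since the constitutive law is isotropic, I would first use a rotation about the $x_3$-axis to reduce to the distinguished direction $\xi=e_1=(1,0,0)^T$: every $\xi=(\xi_1,\xi_2,0)^T$ with $\lVert\xi\rVert=1$ arises this way, such a rotation maps the class of trial functions of the form \eqref{rwr} onto itself (the displacement remains in the $x_1x_2$-plane, and the microrotation vector, being axial with respect to the rotation axis $e_3$, is left invariant), and it leaves \eqref{PDE} form-invariant. Substituting \eqref{rwr} with $\xi=e_1$ into \eqref{PDE} and using $u_3\equiv 0$, $\vartheta_1\equiv\vartheta_2\equiv 0$, one checks that the three equations governing $u_3,\vartheta_1,\vartheta_2$ are satisfied identically, while the equations for $u_1,u_2,\vartheta_3$ reduce exactly to the linear algebraic system \eqref{algPDE1}, i.e.\ to $[\mathbf{Q}_1(e_1,k)-\omega^2\widehat{\id}]\,w=0$ with $w=(\widehat{u}_1,\widehat{u}_2,\widehat{\vartheta}_3)^T$. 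The key structural point here --- and the reason why $\alpha_3$ will be absent from the answer --- is that the restricted ansatz \eqref{rwr} activates only the block $\mathbf{Q}_1(e_1,k)$ and not $\mathbf{Q}_2(e_1,k)$, in contrast with the full plane-wave ansatz \eqref{ansatzwp}.

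Next I would convert this into a standard eigenvalue problem. When $\mu_{\rm e}>0$ the ``mass matrix'' $\widehat{\id}=\mathrm{diag}(\rho,\rho,\rho\,j\,\mu_{\rm e}\,\tau_{\rm c}^2)$ is symmetric positive definite, so the congruence $d=\widehat{\id}^{1/2}w$ brings the system to $[\widetilde{\mathbf{Q}}_1(e_1,k)-\omega^2\id]\,d=0$ as in \eqref{q1}, with $\widetilde{\mathbf{Q}}_1(e_1,k)=\widehat{\id}^{-1/2}\mathbf{Q}_1(e_1,k)\widehat{\id}^{-1/2}$ real symmetric. For each $k>0$ its eigenvalues $\omega^2$ are then automatically real, so requiring that a non-trivial solution exist only for real positive $\omega^2$ for every $k>0$ is equivalent to the positive definiteness of $\widetilde{\mathbf{Q}}_1(e_1,k)$ for all $k>0$. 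Applying Sylvester's criterion to the leading principal minors and discarding redundant inequalities --- precisely the computation performed around \eqref{rendc} --- yields that this positive definiteness holds for all $k>0$ if and only if \eqref{d12} holds. It then remains to exclude $\mu_{\rm e}\le 0$: in that case $\widehat{\id}$ is singular or indefinite, and inspecting the $(\widehat{u}_2,\widehat{\vartheta}_3)$-subsystem shows that it admits $\omega^2\le 0$ or non-real $\omega^2$ for suitable $k$, so the ``only real positive'' requirement already forces $\mu_{\rm e}>0$ and the displayed analysis covers all remaining cases.

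I expect the work to be essentially bookkeeping rather than conceptual, the two places needing a little care being: (i) verifying the consistency of the reduced plane-strain ansatz \eqref{rwr} with the full $6\times 6$ system \eqref{PDE}, so that only the $3\times 3$ block $\mathbf{Q}_1$ is relevant; and (ii) distilling the three $k$-dependent Sylvester minors down to the minimal $k$-independent list \eqref{d12} --- in particular observing that ``$\mu_{\rm e}L_{\rm c}^2(\alpha_1+\alpha_2)\,k^4(\mu_{\rm e}+\mu_{\rm c})+4k^2\mu_{\rm c}\mu_{\rm e}>0$ for all $k>0$'', combined with $\mu_{\rm e}>0$ and $2\mu_{\rm e}+\lambda_{\rm e}>0$, is equivalent to $\mu_{\rm c}>0$ and $\alpha_1+\alpha_2>0$ and renders the remaining inequalities (such as $\mu_{\rm e}+\mu_{\rm c}>0$) redundant. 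Since both steps are already contained in Subsection~\ref{Rpw}, the proof reduces to assembling those computations.
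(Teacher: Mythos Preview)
Your proposal is correct and follows essentially the same route as the paper: the paper's proof simply refers back to the calculations in Subsection~\ref{Rpw} (equations \eqref{algPDE1}, \eqref{q1}, \eqref{rendc}), which are exactly the reduction to $\xi=e_1$ by isotropy, the observation that only the $\mathbf{Q}_1$-block is activated, the congruence to a symmetric eigenvalue problem via $\widehat{\id}^{1/2}$, and the Sylvester analysis leading to \eqref{rendc}. Your explicit handling of the degenerate case $\mu_{\rm e}\le 0$ (where $\widehat{\id}$ fails to be positive definite) is a bit more careful than the paper, which tacitly works under that hypothesis, but this does not constitute a different approach.
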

These  restriction \eqref{d12} are the restrictions on the  constitutive parameters that we will impose for the rest of the paper. {For an interpretation of the conditions \eqref{d12}, see Remark \ref{iterpretation1}.}. These conditions do not involve the constitutive parameter $\alpha_3$ and do not imply the existence of real waves in any directions. {However,  for a given direction ${\xi}\in \mathbb{R}^3$, $\xi\neq 0$ of the form $\xi=(\xi_1,\xi_2, 0)^T$,  the restrictions \eqref{d12} imply  the existence of only real waves defined by expressions of the form \eqref{rwr}.} 

 \section{The ansatz for the solution and the limiting speed}\label{anzsec}\setcounter{equation}{0}
 We look for a solution of \eqref{x6} and \eqref{x17} having the form\footnote{We take ${\rm i}\,z_3$ since this choice leads us, in the end, only to real matrices.}
 \begin{align}\label{x5}
 \mathcal{U}(x_1,x_2,t)=\begin{footnotesize}\begin{pmatrix}u_1(x_1,x_2,t)
 	\\
 	u_2(x_1,x_2,t)
 	\\\vartheta _3(x_1,x_2,t)
 	\end{pmatrix}\end{footnotesize}={\rm Re}\left[\begin{footnotesize}\begin{pmatrix} z_1(x_2)
 	\\
 	z_2(x_2)
 	\\{\rm i}\,z_3(x_2)
 \end{pmatrix}\end{footnotesize} e^{ {\rm i}\, k\, (  x_1-vt)}\right],
 \end{align}
 where  $v$ is the propagation speed {(the phase  velocity)}. If $z_i$, $i=1,2,3$, are solutions of the following systems
 \begin{align}&
\begin{footnotesize}\begin{pmatrix} \mu_{\rm e} +\mu_{\rm c} &0	&0
 	\\
 	0& 2\,\mu_{\rm e}  +\lambda_{\rm e} 
 	&0
 	\\0
 	& 0 & {\mu_{\rm e}\,L_{\rm c}^2}\,\gamma \end{pmatrix}\end{footnotesize}\begin{footnotesize}\begin{pmatrix}	z_1''(x_2)
 	\\
 	z_2''(x_2)
 	\\\	z_3''(x_2)
 	\end{pmatrix}\end{footnotesize}+ {\rm i}\, \begin{footnotesize}\begin{pmatrix} 0& k\,(\mu_{\rm e} -\mu_{\rm c} +\lambda_{\rm e} )& 2\,\mu_{\rm c}  
 	\\
 	k\,(\mu_{\rm e} -\mu_{\rm c} +\lambda_{\rm e} )&0 
 	&0
 	\\2\,\mu_{\rm c}  
 	& 0 & 0 \end{pmatrix}\end{footnotesize}\begin{footnotesize}\begin{pmatrix} 	z_1'(x_2)
 	\\
 	z_2'(x_2)
 	\\\	z_3'(x_2)
 	\end{pmatrix}\end{footnotesize}\notag\vspace{2mm}\\&\qquad\qquad \ \ -\begin{footnotesize}\begin{pmatrix} k^2\,(2\,\mu_{\rm e}  +\lambda_{\rm e} )-\rho \,k^2v^2&0	&0
 	\\
 	0&k^2\,(\mu_{\rm e} +\mu_{\rm c} )-\rho \,k^2v^2
 	&-2\,\mu_{\rm c} \, k
 	\\0
 	&-2\,\mu_{\rm c} \, k &k^2{\mu_{\rm e}\,L_{\rm c}^2}\,\gamma+ 4\,\mu_{\rm c} -j\,\mu_{\rm e}\,\tau_{\rm c}^2\, k^2v^2 \end{pmatrix}\end{footnotesize}\begin{footnotesize}\begin{pmatrix} 	z_1(x_2)
 	\\
 	z_2(x_2)
 	\\	z_3(x_2)
 	\end{pmatrix}\end{footnotesize}=0,
 \end{align}
 and (from the boundary conditions)
 \begin{align}
 &\qquad \qquad \qquad\begin{footnotesize}\begin{pmatrix} \mu_{\rm e} +\mu_{\rm c} &0	&0
 	\\
 	0& 2\,\mu_{\rm e}  +\lambda_{\rm e}  
 	&0
 	\\0
 	& 0 & {\mu_{\rm e}\,L_{\rm c}^2}\,\gamma \end{pmatrix}\end{footnotesize}\begin{footnotesize}\begin{pmatrix} 	z_1'(0)
 	\\
 	z_2'(0)
 	\\	z_3'(0)
 	\end{pmatrix}\end{footnotesize}+ {\rm i}\, \begin{footnotesize}\begin{pmatrix} 0& k\, (  \mu_{\rm e} -\mu_{\rm c} )& 2\,\mu_{\rm c}  
 	\\
 	k\,\lambda_{\rm e} &0 
 	&0
 	\\0
 	& 0 & 0  \end{pmatrix}\end{footnotesize}\begin{footnotesize}\begin{pmatrix}	z_1(0)
 	\\
 	z_2(0)
 	\\	z_3(0)
 \end{pmatrix}\end{footnotesize}=0,\notag
 \end{align}\normalsize
 where $\cdot '$ denotes the derivative with respect to $x_2$, then $\mathcal{U}$ given by the ansatz \eqref{x5} satisfies \eqref{x6} and \eqref{x17}.
In a more compact notation, the above equations admit the following equivalent form \begin{align}\label{11}
\frac{1}{k^2}\,{\mathbf{T}}\,z''(x_2)+ {\rm i}\,\frac{1}{k}\, ({\mathbf{R}}+{\mathbf{R}}^T)\,z'(x_2)-{\mathbf{Q}}\,z(x_2)+k^2\, v^2 \, \hat\id\,z(x_2)=&\,0,\\
\frac{1}{k^2}\,{\mathbf{T}}\,z'(0)+ {\rm i}\, \frac{1}{k}\,{\mathbf{R}}^T\,z(0)=&\,0,\notag
\end{align} where
     the matrices  ${\mathbf{T}}\,\,,{\mathbf{R}}$ and $ {\mathbf{Q}}$  are defined by
 \begin{align}\label{x47}
 {\mathbf{T}}&=k^2\,\begin{footnotesize}\begin{pmatrix}  \mu_{\rm e} +\mu_{\rm c} &0	&0
 	\\
 	0& 2\,\mu_{\rm e}  +\lambda_{\rm e} 
 	&0
 	\\0
 	& 0 & {\mu_{\rm e}\,L_{\rm c}^2}\,\gamma \end{pmatrix}\end{footnotesize},\qquad\quad  {\mathbf{R}}=k\,\begin{footnotesize}\begin{pmatrix}  0& k\lambda_{\rm e} & 0
 	\\
 	k\, (  \mu_{\rm e} -\mu_{\rm c} )&0 
 	&0
 	\\2\,\mu_{\rm c}  
 	& 0 & 0 \end{pmatrix}\end{footnotesize},\\
 {\mathbf{Q}}&=\begin{footnotesize}\begin{pmatrix} k^2\,(2\,\mu_{\rm e}  +\lambda_{\rm e} )&0	&0
 	\\
 	0&k^2\,(\mu_{\rm e} +\mu_{\rm c} )
 	&-
 	2\,\mu_{\rm c} \, k
 	\\0
 	&-2\,\mu_{\rm c} \, k &k^2{\mu_{\rm e}\,L_{\rm c}^2}\,\gamma+4\mu_{\rm c} \, \end{pmatrix}\end{footnotesize}.
 \end{align}
 The system \eqref{11} has a similar structure to that from classical linear elasticity \cite{fu2002new} but we still have to rewrite it in order to make it  manageable for our analysis.  In this respect, the  solution $z$ of \eqref{11} is equivalent to find a  solution $y$ of 
 \begin{align}\label{11t}
 \frac{1}{k^2}\,\widehat{\id}^{-1/2}\,{\mathbf{T}}\,\widehat{\id}^{-1/2}\,y''(x_2)+ {\rm i}\,\frac{1}{k}\, \widehat{\id}^{-1/2}\,({\mathbf{R}}+{\mathbf{R}}^T)\widehat{\id}^{-1/2}\,y'(x_2) -\widehat{\id}^{-1/2}\,{\mathbf{Q}}\,\widehat{\id}^{-1/2}\,y(x_2)+k^2\, v^2 \, \id\,y(x_2)&=\,0,\\
  \frac{1}{k^2}\,\widehat{\id}^{-1/2}\,{\mathbf{T}}\,\widehat{\id}^{-1/2}\,y'(0)+ {\rm i}\, \frac{1}{k}\,\widehat{\id}^{-1/2}\,{\mathbf{R}}^T\,\widehat{\id}^{-1/2}\,y(0)&=\,0,\notag
 \end{align}
 where  $y(x_2):=\widehat{\id}^{1/2}\,z(x_2)$. Therefore we use the modified matrices
 \begin{align}\label{nTQ}
 \boldsymbol{\mathcal{T}}:=&\,k^2\,\begin{footnotesize}\begin{pmatrix} \frac{\mu_{\rm e} +\mu_{\rm c}}{\rho} &0	&0
 \\
 0& \frac{2\,\mu_{\rm e}  +\lambda_{\rm e} }{\rho}
 &0
 \\0
 & 0 & \frac{{\mu_{\rm e}\,L_{\rm c}^2}\,\gamma}{\rho\,j\,\mu_{\rm e}\,\tau_{\rm c}^2\,} \end{pmatrix}\end{footnotesize}, \qquad \qquad 
\boldsymbol{\mathcal{R}}:= k\,\begin{footnotesize}\begin{pmatrix} 0& k\,\frac{\lambda_{\rm e} }{\rho}& 0
 \\
 k\,\frac{\mu_{\rm e} -\mu_{\rm c}}{\rho}&0 
 &0
 \\2\,\frac{\mu_{\rm c}}{\rho\,\sqrt{ j\,\mu_{\rm e}\,\tau_{\rm c}^2\,}}  
 & 0 & 0 \end{pmatrix}\end{footnotesize},\notag\\
\boldsymbol{\mathcal{Q}}:=&\begin{footnotesize}\begin{pmatrix} k^2\frac{2\,\mu_{\rm e}  +\lambda_{\rm e}}{\rho}-k^2\,v^2&0	&0
 \\
 0&k^2\,\frac{\mu_{\rm e} +\mu_{\rm c}}{\rho}- k^2\,v^2
 &-2\,k\,\frac{\mu_{\rm c}}{\rho\,\sqrt{ j\,\mu_{\rm e}\,\tau_{\rm c}^2\,}}
 \\0
 &-2\,k\,\frac{\mu_{\rm c}}{\rho\,\sqrt{ j\,\mu_{\rm e}\,\tau_{\rm c}^2\,}} &k^2\,\frac{{\mu_{\rm e}\,L_{\rm c}^2}\,\gamma}{\rho\,j\,\mu_{\rm e}\,\tau_{\rm c}^2\,}+ 4\,\frac{\mu_{\rm c}}{\rho\,j\,\mu_{\rm e}\,\tau_{\rm c}^2\,} - k^2\,v^2 \end{pmatrix}\end{footnotesize},
 \end{align}
 and the following equivalent form of the system \eqref{11}
 \begin{align}\label{n11}
 \frac{1}{k^2}\boldsymbol{\mathcal{T}}\,y''(x_2)+ {\rm i}\,\frac{1}{k} (\boldsymbol{\mathcal{R}}+\boldsymbol{\mathcal{R}}^T)\,y'(x_2)-\boldsymbol{\mathcal{Q}}\,y(x_2)+k^2\, v^2 \, \id\,y(x_2)=&\,0,\\
 \frac{1}{k^2}\boldsymbol{\mathcal{T}}\,y'(0)+ {\rm i}\,\frac{1}{k} \,\boldsymbol{\mathcal{R}}^T\,y(0)=&\,0.\notag
 \end{align}

 \begin{lemma}If  the constitutive coefficients satisfy the conditions 	\eqref{d12},
 then the	 matrices $\boldsymbol{\mathcal{Q}}$ and $\boldsymbol{\mathcal{T}}$ are symmetric and positive definite.	
 \end{lemma}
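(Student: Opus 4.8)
The plan is to get symmetry by inspection and positive definiteness from Sylvester's criterion, exploiting that $\boldsymbol{\mathcal{T}}$ and $\boldsymbol{\mathcal{Q}}$ coincide, up to the congruence by $\widehat{\id}^{-1/2}$ and a shift by $k^2v^2\,\id$, with the matrices $\mathbf{T}$ and $\mathbf{\mathbf{Q}}_1(e_1,k)$ already analysed in Subsection~\ref{Rpw}. Symmetry is immediate from \eqref{nTQ}: $\boldsymbol{\mathcal{T}}$ is diagonal, while the only off-diagonal entries of $\boldsymbol{\mathcal{Q}}$, in positions $(2,3)$ and $(3,2)$, are both equal to $-2k\,\mu_{\rm c}/(\rho\sqrt{j\,\mu_{\rm e}\,\tau_{\rm c}^2})$.

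For $\boldsymbol{\mathcal{T}}$ I would simply observe that a diagonal matrix is positive definite if and only if all its diagonal entries are positive; since $\rho>0$, $j>0$ and $L_{\rm c},\tau_{\rm c}\neq 0$, the three entries $\frac{\mu_{\rm e}+\mu_{\rm c}}{\rho}$, $\frac{2\mu_{\rm e}+\lambda_{\rm e}}{\rho}$, $\frac{L_{\rm c}^2\gamma}{\rho\,j\,\tau_{\rm c}^2}$ are all positive under \eqref{d12} (the first because $\mu_{\rm e}>0$ and $\mu_{\rm c}>0$, the last because $\gamma=\alpha_1+\alpha_2>0$). Equivalently, $\boldsymbol{\mathcal{T}}=\widehat{\id}^{-1/2}\mathbf{T}\,\widehat{\id}^{-1/2}$ is congruent to the positive diagonal matrix $\mathbf{T}$ of \eqref{x47}.

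For $\boldsymbol{\mathcal{Q}}$, comparing \eqref{nTQ} with \eqref{q1} and \eqref{x47} shows $\boldsymbol{\mathcal{Q}}=\widetilde{\mathbf{\mathbf{Q}}}_1(e_1,k)-k^2v^2\,\id$, where $\widetilde{\mathbf{\mathbf{Q}}}_1(e_1,k)=\widehat{\id}^{-1/2}\mathbf{\mathbf{Q}}_1(e_1,k)\,\widehat{\id}^{-1/2}$ was proved symmetric positive definite under \eqref{d12} in Subsection~\ref{Rpw}. Hence $\boldsymbol{\mathcal{Q}}$ is positive definite exactly when $k^2v^2$ lies strictly below the least eigenvalue of $\widetilde{\mathbf{\mathbf{Q}}}_1(e_1,k)$, i.e.\ exactly when the phase speed $v$ is below the limiting speed introduced in this section. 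The same conclusion follows from a direct Sylvester computation on \eqref{nTQ}: with $c_p^2:=(2\mu_{\rm e}+\lambda_{\rm e})/\rho$ and $c_s^2:=(\mu_{\rm e}+\mu_{\rm c})/\rho$, the leading principal minors are $M_1=k^2(c_p^2-v^2)$, $M_2=M_1\,k^2(c_s^2-v^2)$ and $M_3=\det\boldsymbol{\mathcal{Q}}$, which factors as $M_1$ times an expression quadratic in $v^2$; asking $M_1,M_2,M_3>0$ reproduces exactly the inequalities that cut out the subsonic interval.

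The argument is essentially computational and I expect no genuine obstacle. The only point deserving care --- and the reason the statement sits in this section --- is that, unlike $\boldsymbol{\mathcal{T}}$, positive definiteness of $\boldsymbol{\mathcal{Q}}$ is not unconditional: it relies on $v$ lying in the subsonic range fixed together with the limiting speed, so the lemma is to be read under that standing assumption; indeed it is this very lemma that makes explicit when the decaying surface-wave ansatz \eqref{x5} is admissible.
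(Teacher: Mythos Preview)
Your argument for $\boldsymbol{\mathcal{T}}$ is fine and matches the paper. The issue is with $\boldsymbol{\mathcal{Q}}$: you have been misled by what is evidently a typo in the display \eqref{nTQ}. Compare \eqref{11t} with \eqref{n11}: the term $-\widehat{\id}^{-1/2}\mathbf{Q}\,\widehat{\id}^{-1/2}y+k^2v^2\,\id\,y$ in \eqref{11t} becomes $-\boldsymbol{\mathcal{Q}}\,y+k^2v^2\,\id\,y$ in \eqref{n11}, so the intended definition is $\boldsymbol{\mathcal{Q}}=\widehat{\id}^{-1/2}\mathbf{Q}\,\widehat{\id}^{-1/2}$ \emph{without} the $-k^2v^2$ shift (otherwise the $v^2$ term would be counted twice in \eqref{n11}). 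With that reading, $\boldsymbol{\mathcal{Q}}$ is exactly the matrix $\widetilde{\mathbf{Q}}_1(e_1,k)$ of \eqref{q1}, and its positive definiteness follows from \eqref{d12} alone --- no subsonic condition is involved. This is also consistent with the placement of the lemma: the limiting speed $\widehat{v}$ is only introduced \emph{after} this lemma, so the lemma cannot be invoking it.

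The paper's own proof confirms this: it computes the Sylvester minors of $\mathbf{T}$ and $\mathbf{Q}$ from \eqref{x47} (no $v$ appears anywhere), checks they are positive under \eqref{d12}, and then passes to $\boldsymbol{\mathcal{T}},\boldsymbol{\mathcal{Q}}$ by the congruence $\boldsymbol{\mathcal{T}}=\widehat{\id}^{-1/2}\mathbf{T}\,\widehat{\id}^{-1/2}$, $\boldsymbol{\mathcal{Q}}=\widehat{\id}^{-1/2}\mathbf{Q}\,\widehat{\id}^{-1/2}$. Your route via $\widetilde{\mathbf{Q}}_1(e_1,k)$ is the same argument, just citing the computation already done in Subsection~\ref{Rpw} rather than repeating Sylvester. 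The place where the subsonic hypothesis genuinely enters is later, in Proposition~\ref{propQp}, where one needs $\boldsymbol{\mathcal{Q}}_\theta-k^2v^2\cos^2\theta\,\id$ (not $\boldsymbol{\mathcal{Q}}$ itself) to be positive definite.
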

 \begin{proof}
 	Symmetry is clear. It is easy to see that	\begin{align}
 	{\mathbf{T}}_{11}&= k^2(\mu_{\rm e} +\mu_{\rm c}) \,,\qquad 
 	{\mathbf{T}}_{11}{\mathbf{T}}_{22}-{\mathbf{T}}_{12}{\mathbf{T}}_{21}=k^4(2\,\mu_{\rm e}  +\lambda_{\rm e} )( \mu_{\rm e} +\mu_{\rm c} )\,,\\ 
 	\det {{\mathbf{T}}}&=k^6(2\,\mu_{\rm e}  +\lambda_{\rm e} )( \mu_{\rm e} +\mu_{\rm c} ){\mu_{\rm e}\,L_{\rm c}^2}\,\gamma.\notag
 	\end{align}
 	Therefore, our constitutive  hypothesis imply that ${\mathbf{T}}$ is positive-definite.
 	In addition,  ${\mathbf{Q}}$ is positive-definite if and only if the principal minors are positive, namely
 	\begin{align}
 	{\mathbf{Q}}_{11}&=k^2(2\,\mu_{\rm e}  +\lambda_{\rm e} ),\qquad
 	{\mathbf{Q}}_{11}{\mathbf{Q}}_{22}-{\mathbf{Q}}_{12}{\mathbf{Q}}_{21}=k^4(2\,\mu_{\rm e}  +\lambda_{\rm e} )(\mu_{\rm e} +\mu_{\rm c} )\,,\\
 	\det({\mathbf{Q}})&=k^4(2\,\mu_{\rm e}  +\lambda_{\rm e} )[(\mu_{\rm e}+\mu_{\rm c})\,   k^2{\mu_{\rm e}\,L_{\rm c}^2}\,\gamma+4\,\mu_{\rm e}\mu_{\rm c} ]\,,\notag
 	\end{align}
 i.e. under the hypothesis of the lemma. 
 Since $\mathbf{T}$ and $\mathbf{Q}$ are positive definite, so there are $\boldsymbol{\mathcal{T}}$ and $\boldsymbol{\mathcal{Q}}$ defined by \eqref{nTQ}, and the proof is complete.
 \end{proof}

We now  seek  a solution $y$ of the differential system \eqref{n11} in the form 
 \begin{align}\label{x7}
 y(x_2)=\begin{footnotesize}\begin{pmatrix} 	d_1
 	\\
 	d_2
 	\\d_3
 	\end{pmatrix}\end{footnotesize} \,e^{{\rm i}\,r\,k\,x_2},\qquad \text{Im}\,r>0,
 \end{align}
 where $r \, \in \mathbb{C}$ is a complex parameter, $d=\begin{footnotesize}\begin{pmatrix} 	d_1,
 &
 d_2,
 &d_3
 \end{pmatrix}\end{footnotesize}^T \in \mathbb{C}^3 $, $a
 \neq 0$ is the amplitude and $ \text{Im}\,r$ is the coefficient of the imaginary part of $r$. The  condition $\text{Im}\,r>0$ ensures the asymptotic decay condition \eqref{04}. 
 Inserting \eqref{x7} in $\eqref{11}_1$ we obtain the  systems of algebraic equations
 \begin{align}\label{x8}
 [r^2\boldsymbol{\mathcal{T}}+r\,(\boldsymbol{\mathcal{R}}+\boldsymbol{\mathcal{R}}^T)+\boldsymbol{\mathcal{Q}}-\ k^2 v^2 {\id}]\,d=0,\qquad 
 [r\,\boldsymbol{\mathcal{T}}+\boldsymbol{\mathcal{R}}^T]\,d=0.
 \end{align}
 The characteristic equation corresponding to the eigenvalue problem \eqref{x8}$_1$, i.e., the condition to have a nontrivial solution of $d=\begin{footnotesize}\begin{pmatrix} 	d_1,
&
 d_2,
&d_3
 \end{pmatrix}\end{footnotesize}^T$, is 
 \begin{align}\label{x9}
 \det\,[r^2\boldsymbol{\mathcal{T}}+r(\boldsymbol{\mathcal{R}}+\boldsymbol{\mathcal{R}}^T)+\boldsymbol{\mathcal{Q}}-k^2 v^2 {\id}]=0,
 \end{align}
 which gives six roots of the eigenvalue $r$. The associated eigenvectors $a$ can be determined for the corresponding eigenvalues. 
\begin{definition}
	By the {\bf limiting speed} we understand a speed $\widehat{v}>0$, such that for all wave speeds satisfying 	$0\le v<\widehat{v}$  (subsonic speeds) the roots  of the characteristic equation \eqref{x9} are not real\footnote{{Since the quadratic equation does not have real solutions, there is a complex solution $r$ for which ${\rm Im}\, r>0$, since the complex solutions are pair-conjugated. Therefore, the existence of such a solution $r$, i.e., ${\rm Im}\, r>0$, implies the existence of a  wave propagating in the direction $x_1$ with the
		phase velocity $v$ and decaying exponentially in the direction $x_2$.}} and vice versa, i.e., if the roots  of the characteristic equation \eqref{x9} are not real then they correspond to wave speeds $v$ satisfying 	$0\le v<\widehat{v}$.
\end{definition} 
 \begin{proposition}\label{lemmaGH} If  the constitutive coefficients satisfy the conditions 	\eqref{d12},
 	then there	exists a limiting speed $\widehat{v}>0$. Furthermore,   if one root   $r_v$ of the characteristic equation \eqref{x9} is real then it corresponds to a speed $v\geq \widehat{v}$ (non-admissible).
 \end{proposition}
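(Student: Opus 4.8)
The plan is to pin down, as $v$ ranges over $[0,\infty)$, exactly when the sextic \eqref{x9} possesses a real root, and then to read off $\widehat v$ as the threshold value. Set $\mathbf{N}(r):=r^2\boldsymbol{\mathcal{T}}+r\,(\boldsymbol{\mathcal{R}}+\boldsymbol{\mathcal{R}}^T)+\widehat{\id}^{-1/2}\,{\mathbf{Q}}\,\widehat{\id}^{-1/2}$ for the $v$-independent symmetric part of the matrix appearing in \eqref{x9}, so that \eqref{x9} reads $\det\big[\mathbf{N}(r)-k^2v^2\,\id\big]=0$. For real $r$ the matrix $\mathbf{N}(r)$ is real symmetric, hence a real number $r$ solves \eqref{x9} if and only if $k^2v^2$ is one of its (automatically real) eigenvalues. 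Therefore \eqref{x9} has a real root if and only if $k^2v^2$ lies in the set $S:=\bigcup_{r\in\mathbb{R}}\{\text{eigenvalues of }\mathbf{N}(r)\}$, and the whole proposition will follow once $S$ is shown to be a half-line $[\,m_\ast,\infty)$ with $m_\ast>0$.

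First I would record the algebraic identity $r^2{\mathbf{T}}+r\,({\mathbf{R}}+{\mathbf{R}}^T)+{\mathbf{Q}}=\mathbf{Q}_1\!\big(\xi(r),\widetilde k(r)\big)$, valid for all real $r$, where $\mathbf{Q}_1(\xi,\widetilde k)$ is the symmetric coefficient matrix (the generalization of $\mathbf{\mathbf{Q}}_1(e_1,k)$ from \eqref{fpm}--\eqref{q1}, symmetric owing to the factor ${\rm i}$ built into \eqref{rwr}) governing a plane-strain wave \eqref{rwr} of wave number $\widetilde k$ propagating in the direction $\xi(r):=(1,r,0)^T/\sqrt{1+r^2}$ with $\widetilde k(r):=k\sqrt{1+r^2}$; this is checked entrywise by substituting $\xi(r),\widetilde k(r)$ and comparing with \eqref{x47}. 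Conjugating by $\widehat{\id}^{-1/2}$ then gives $\mathbf{N}(r)=\widehat{\id}^{-1/2}\,\mathbf{Q}_1\!\big(\xi(r),\widetilde k(r)\big)\,\widehat{\id}^{-1/2}$. Since the hypotheses \eqref{d12} are assumed, Proposition \ref{proprealw2} (via the reduction of Subsection \ref{Rpw}) tells us that for every direction $(\xi_1,\xi_2,0)^T$ and every wave number the admissible $\omega^2$ are real and strictly positive; equivalently, $\mathbf{N}(r)$ is positive definite for every real $r$. (Alternatively one could verify directly that the three leading principal minors of $\mathbf{N}(r)$ remain positive for all $r\in\mathbb{R}$, exactly as the Lemma above does for $r=0$, but going through Proposition \ref{proprealw2} makes transparent why \eqref{d12} is the relevant set of conditions.)

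Next I would study the three ordered eigenvalue branches $\lambda_1(r)\le\lambda_2(r)\le\lambda_3(r)$ of $\mathbf{N}(r)$. Each $\lambda_i$ is continuous on $\mathbb{R}$; each is strictly positive by the previous step; and since $\mathbf{N}(r)=r^2\boldsymbol{\mathcal{T}}+O(|r|)$ with $\boldsymbol{\mathcal{T}}$ positive definite, each satisfies $\lambda_i(r)\to+\infty$ as $|r|\to\infty$. Consequently every branch attains a strictly positive minimum $m_i:=\min_{r\in\mathbb{R}}\lambda_i(r)>0$, and by the intermediate value theorem its image equals exactly $[m_i,\infty)$. Hence $S=\bigcup_{i=1}^{3}[m_i,\infty)=[\,m_\ast,\infty)$ with $m_\ast:=\min\{m_1,m_2,m_3\}>0$, as needed.

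Combining the steps, \eqref{x9} has a real root if and only if $k^2v^2\ge m_\ast$, i.e.\ if and only if $v\ge\widehat v:=\sqrt{m_\ast}/k$. Since $m_\ast>0$ this produces a genuine limiting speed $\widehat v>0$ (automatically unique, the preceding being an equivalence), and it shows in particular that a real root $r_v$ of \eqref{x9} forces $v\ge\widehat v$, which is the second assertion. I expect the crux to be the strict positivity of $\mathbf{N}(r)$ for every real $r$: this is precisely where \eqref{d12} enters, because without a uniform spectral gap above $0$ the set $S$ could reach down to $0$ and $\widehat v$ would collapse; the remaining ingredients — continuity and coercivity of the eigenvalue branches, and the intermediate value theorem — are routine.
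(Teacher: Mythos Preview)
Your proof is correct and rests on the same key observation as the paper's: for real $r$, the matrix $r^2{\mathbf{T}}+r({\mathbf{R}}+{\mathbf{R}}^T)+{\mathbf{Q}}$ coincides with the plane-wave matrix $\mathbf{Q}_1$ for propagation in the in-plane direction $(1,r,0)^T/\sqrt{1+r^2}$, so that Proposition~\ref{proprealw2} delivers positive definiteness of $\mathbf{N}(r)$ under \eqref{d12}. The paper reaches this identification via the trigonometric substitution $r=\tan\theta$, interprets the resulting equation as the body-wave condition in the direction $n_\theta=(\cos\theta,\sin\theta,0)$, and \emph{defines} $\widehat v$ as the infimum over $\theta$ of the associated body-wave speeds $v_\theta$. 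Your route bypasses the angle parametrization and instead tracks the ordered eigenvalue branches $\lambda_i(r)$ of $\mathbf{N}(r)$ directly, using continuity, strict positivity, and coercivity ($\mathbf{N}(r)=r^2\boldsymbol{\mathcal{T}}+O(|r|)$) together with the intermediate value theorem to show that the spectrum sweeps out exactly a half-line $[m_\ast,\infty)$. This buys you the full biconditional (a real root exists \emph{if and only if} $v\ge\widehat v$) in one stroke, whereas the paper's proof explicitly argues only the forward direction and leaves the converse implicit in the infimum construction; your formulation also makes the uniqueness of $\widehat v$ automatic. The two $\widehat v$'s agree, of course, since $m_\ast=\min_r\lambda_{\min}(\mathbf{N}(r))=k^2\inf_\theta v_\theta^2$ under $r=\tan\theta$.
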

 \begin{proof}
Assume that there exists a real $r_v$ as solution  of the characteristic equation \eqref{x9}, then $ \exists \,\theta \in (-\frac{\pi}{2},\frac{\pi}{2})$ such that $r_v=\tan \theta$. Therefore, corresponding to \eqref{x7},  $\mathcal{U}$  given by \eqref{x5}  and defined by $r_v$	turns into
 \begin{align}\label{46}
 \mathcal{U}(x_1,x_2,t)=\begin{footnotesize}
\begin{pmatrix} u_1(x_1,x_2,t)
 	\\
 	u_2(x_1,x_2,t)
 	\\\vartheta _3(x_1,x_2,t)
 	\end{pmatrix}\end{footnotesize}&=
\begin{footnotesize}
\begin{pmatrix} d_1
 	\\
 	d_2
 	\\ {\rm i}\, \,d_3
 		\end{pmatrix}\end{footnotesize}e^{ik\, (  x_1+\tan\theta x_2-vt)}=\begin{footnotesize}
 		\begin{pmatrix} d_1
 	\\
 	d_2
 	\\ {\rm i}\, \,d_3
 	\end{pmatrix}\end{footnotesize}e^{\frac{ik}{\cos\theta}(\cos\theta x_1+\sin\theta x_2-\cos\theta vt)},
 \end{align}
 which means that $\mathcal{U}(x_1,x_2,t)$ is   a non-trivial  plane body wave solution with  wave number $\frac{k}{\cos\theta}$, the speed $\widetilde{v}_\theta=v\cos\theta$ and propagation in the direction ${n}_\theta$ where 
 $	{n}_\theta=(\cos\theta,\sin\theta,0)$. A direct substitution of \eqref{46} into \eqref{algPDE} implies the existence of a non-trivial solution  $\begin{footnotesize}\begin{pmatrix}d_1,&d_2,&d_3\end{pmatrix}\end{footnotesize}\neq 0$ of the algebraic system
 \begin{footnotesize}
 \begin{align}\label{47}
 k^2\,\sin^2\theta\,(\mu_{\rm e} +\mu_{\rm c} )\,d_1+k\,\sin\theta \,\cos\theta[k\,\lambda_{\rm e} +k\, (  \mu_{\rm e} -\mu_{\rm c} )]\,d_2+2\,\mu_{\rm c} \,  \sin\theta\,\cos\theta\, d_3+\cos^2\theta\,[k^2(2\,\mu_{\rm e}  +\lambda_{\rm e} )-k^2\,\rho v^2]\,d_1=0,\notag\\
 k^2\,\sin^2\theta\,(2\,\mu_{\rm e}  +\lambda_{\rm e} )\,d_2+k\,\sin\theta \,\cos\theta[k\,\lambda_{\rm e} +k\, (  \mu_{\rm e} -\mu_{\rm c} )]\,d_1-2\,\mu_{\rm c} \,  k\cos^2\theta\,  d_3+\cos^2\theta[k^2\,(\mu_{\rm e} +\mu_{\rm c} )-k^2\,\rho\, v^2]\,d_2=0,\\
 k^2\,\sin^2\theta\,{\mu_{\rm e}\,L_{\rm c}^2}\,\gamma\, d_3+2\,\mu_{\rm c} \,k\, \sin\theta\, \cos\theta \,d_1-2\,\mu_{\rm c} \,  k \cos^2\theta\, d_2 +\cos^2\theta[k^2\,{\mu_{\rm e}\,L_{\rm c}^2}\,\gamma+4\,\mu_{\rm c} - \rho\,j\,\mu_{\rm e}\,\tau_{\rm c}^2\,k^2\, v^2]\,d_3=0,\notag
 \end{align}
 \end{footnotesize}
 which  in  matrix form gives
\begin{footnotesize}
 \begin{align}\label{x09}
 &\sin^2\theta\,\,\underbrace{\left({\begin{array}{ccc} \mu_{\rm e} +\mu_{\rm c} &0	&0
 	\\
 	0& 2\,\mu_{\rm e}  +\lambda_{\rm e} 
 	&0
 	\\0
 	& 0 & {\mu_{\rm e}\,L_{\rm c}^2}\,\gamma \end{array}}\right)}_{{\mathbf{T}}}\left({\begin{array}{ccc} 	d_1
 	\\
 	d_2
 	\\	d_3
 	\end{array}}\right)+\sin\theta\cos\theta\underbrace{\left({\begin{array}{ccc} 0& k\,(\mu_{\rm e} -\mu_{\rm c} +\lambda_{\rm e} )& 2\,\mu_{\rm c}  
 	\\
 	k\,(\mu_{\rm e} -\mu_{\rm c} +\lambda_{\rm e} )&0 
 	&0
 	\\2\,\mu_{\rm c}  
 	& 0 & 0 \end{array}}\right)}_{{\mathbf{R}}+{\mathbf{R}}^T}\left({\begin{array}{ccc} 	d_1
 	\\
 	d_2
 	\\	d_3
 	\end{array}}\right)\\&\qquad \qquad\qquad \quad  +\cos^2\theta\,\underbrace{\left({\begin{array}{ccc} k^2(2\,\mu_{\rm e}  +\lambda_{\rm e} )-\rho\, k^2\,v^2&0	&0
 	\\
 	0&k^2\,(\mu_{\rm e} +\mu_{\rm c} )-\rho\, k^2\,v^2
 	&-2\,\mu_{\rm c} \, k
 	\\0
 	&-2\,\mu_{\rm c} \, k &k^2\,{\mu_{\rm e}\,L_{\rm c}^2}\,\gamma+ 4\,\mu_{\rm c} -\rho\,j\,\mu_{\rm e}\,\tau_{\rm c}^2\, k^2\,v^2 \end{array}}\right)}_{{\mathbf{Q}}-k^2\, v^2\, \widehat{\id}}\left({\begin{array}{ccc} 	d_1
 	\\
 	d_2
 	\\	d_3
 	\end{array}}\right)=0,\notag
 \end{align}
\end{footnotesize}
and this is equivalent to
\begin{footnotesize}
 \begin{align}\label{x092}
&\Bigg[k^2\,\sin^2\theta\,\,\widehat{\id}^{-1/2}\begin{footnotesize}\begin{pmatrix} \mu_{\rm e} +\mu_{\rm c} &0	&0
		\\
		0& 2\,\mu_{\rm e}  +\lambda_{\rm e} 
		&0
		\\0
		& 0 & {\mu_{\rm e}\,L_{\rm c}^2}\,\gamma \end{pmatrix}\end{footnotesize}\widehat{\id}^{-1/2}+k\,\sin\theta\cos\theta\, \widehat{\id}^{-1/2}\begin{footnotesize}\begin{pmatrix} 0& k\,(\mu_{\rm e} -\mu_{\rm c} +\lambda_{\rm e} )& 2\,\mu_{\rm c}  
		\\
		k\,(\mu_{\rm e} -\mu_{\rm c} +\lambda_{\rm e} )&0 
		&0
		\\2\,\mu_{\rm c}  
		& 0 & 0 \end{pmatrix}\end{footnotesize}\widehat{\id}^{-1/2}\notag\\&+\cos^2\theta\,\widehat{\id}^{-1/2}\begin{footnotesize}\begin{pmatrix} k^2(2\,\mu_{\rm e}  +\lambda_{\rm e} )-\rho\, k^2\,v^2&0	&0
		\\
		0&k^2\,(\mu_{\rm e} +\mu_{\rm c} )-\rho\, k^2\,v^2
		&-2\,\mu_{\rm c} \, k
		\\0
		&-2\,\mu_{\rm c} \, k &k^2\,{\mu_{\rm e}\,L_{\rm c}^2}\,\gamma+ 4\,\mu_{\rm c} -\rho\,j\,\mu_{\rm e}\,\tau_{\rm c}^2\, k^2\,v^2 \end{pmatrix}\end{footnotesize}\widehat{\id}^{-1/2}\Bigg]\begin{footnotesize}\begin{pmatrix} 	f_1
	\\
	f_2
	\\	f_3
	\end{pmatrix}\end{footnotesize}=0,
\end{align}
\end{footnotesize}
where  $\begin{footnotesize}\begin{pmatrix}	f_1
\\
f_2
\\	f_3
\end{pmatrix}\end{footnotesize}=\widehat{\id}^{1/2}\begin{footnotesize}\begin{pmatrix}	d_1
\\
d_2
\\	d_3
\end{pmatrix}\end{footnotesize}\neq 0$.

 Therefore, the assumption from the beginning of the proof implies that
there exists  a solution $\begin{footnotesize}\begin{pmatrix}	f_1,
&
	f_2,
	&	f_3
\end{pmatrix}\end{footnotesize}\neq 0$ of the algebraic system written in matrix format
\begin{align}\label{x093}
\left[	\sin^2\theta\,\boldsymbol{\mathcal{T}}+\sin\theta\cos\theta (\boldsymbol{\mathcal{R}}+\boldsymbol{\mathcal{R}}^T)+\cos^2\theta\,\boldsymbol{\mathcal{Q}}-k^2 v^2_\theta \cos^2\theta \,{\id}\right]\,\begin{footnotesize}\begin{pmatrix} 	f_1
\\
f_2
\\	f_3
\end{pmatrix}\end{footnotesize}=0.
\end{align}

Let us observe that  equation \eqref{x09}  is actually the propagation condition for plane waves in isotropic Cosserat materials, in the fixed direction $	{n}_\theta=(\cos\theta,\sin\theta,0)$. Since  the constitutive coefficients satisfy the conditions 	\eqref{d12}, according to Proposition \ref{proprealw2}, for the direction $	{n}_\theta=(\cos\theta,\sin\theta,0)$ in particular, the system of partial differential equations \eqref{PDE} admits a non trivial solution in the form
\begin{align}
u(x_1,x_2,t)&=\begin{footnotesize}\begin{pmatrix}\widehat{u}_1\\\widehat{u}_2\\0\end{pmatrix}\end{footnotesize}
\, e^{{\rm i}\, \left(k\langle {\xi},\, x\rangle_{\mathbb{R}^3}-\,\omega \,t\right)}\,,\qquad\qquad  \vartheta (x_1,x_2,t)=\begin{footnotesize}{\rm i}\,\begin{pmatrix}0\\0\\\widehat{\vartheta }_3\end{pmatrix}\end{footnotesize} \, e^{{\rm i}\, \left(k\langle {\xi},\, x\rangle_{\mathbb{R}^3}-\,\omega \,t\right)},\\&
(\widehat{u}_1,\widehat{u}_2, \widehat{ \vartheta_3   })^T\in\mathbb{C}^{3}, \quad (\widehat{u}_1,\widehat{u}_2, \widehat{ \vartheta_3   })^T\neq 0\,\notag
\end{align}
only for real positive values $\omega^2$.
To each  $\theta \in(-\frac{\pi}{2},\frac{\pi}{2})$  we associate these  real frequencies $\omega_\theta$ satisfying 
\begin{align}
\det\,\{	\sin^2\theta\boldsymbol{\mathcal{T}}+\sin\theta\cos\theta (\boldsymbol{\mathcal{R}}+\boldsymbol{\mathcal{R}}^T)+\cos^2\theta\boldsymbol{\mathcal{Q}}-\omega^2_\theta \,{\id}\}=0.
\end{align}
Then, each $\omega_\theta$ defines a $v_\theta$ such that $ \omega_\theta=v_\theta\cos\theta$. We define $\widehat{v}$ as the minimum of the values of $v_\theta\cos\theta$ for all $\theta\in(-\frac{\pi}{2},\frac{\pi}{2})$. Hence,  this means that we find all  solutions $v_\theta$ of 
 \begin{align}\label{x11}
 \det\,\{	\sin^2\theta\boldsymbol{\mathcal{T}}+\sin\theta\cos\theta (\boldsymbol{\mathcal{R}}+\boldsymbol{\mathcal{R}}^T)+\cos^2\theta\boldsymbol{\mathcal{Q}}-k^2 v^2_\theta \cos^2\theta \,{\id}\}=0,
 \end{align}
 and we define\footnote{In others words, we can take the solution $v_b(\theta)$ of all possible plane body waves propagating in the direction $\widehat{n}$ and we take
 	\begin{align*}
 	\widehat{v}=\inf_{\theta\in(-\frac{\pi}{2},\frac{\pi}{2})}\frac{v_b(\theta)}{\cos\theta}.
 	\end{align*}}
 \begin{align}
 \widehat{v}=\inf_{\theta\in(-\frac{\pi}{2},\frac{\pi}{2})} v_\theta.\label{lims}
 \end{align}
 
 In conclusion,  if  there exists a value of $v$ such that the equation \eqref{x9} admits a real solution $r_v$, then $v$  must satisfy 
 $
 v\geq \widehat{v}.
$
Thus,  if $v$ is such that
 $
 0\le v<\widehat{v},
 $
 then $r$ can not be real and if $r$ is real, then $v\geq\widehat{v}$.
\end{proof}

\begin{proposition} If  the constitutive coefficients satisfy the conditions 	\eqref{d12}, then
	for all $\theta\in \left(-\frac{\pi}{2},\frac{\pi}{2}\right)$ and $k>0$, the tensor $\boldsymbol{\mathcal{Q}}_\theta:=\sin^2\theta\boldsymbol{\mathcal{T}}+\sin\theta\cos\theta (\boldsymbol{\mathcal{R}}+\boldsymbol{\mathcal{R}}^T)+\cos^2\theta\boldsymbol{\mathcal{Q}}$ is positive definite.
\end{proposition}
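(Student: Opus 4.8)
The plan is to recognise $\boldsymbol{\mathcal{Q}}_\theta$ as a positive multiple of the symmetrised propagation (acoustic) tensor for plane body waves travelling in the in-plane direction $n_\theta=(\cos\theta,\sin\theta,0)^T$, and then to reduce the claim to the real-wave analysis already carried out in Subsection~\ref{Rpw}. First I would note that $\boldsymbol{\mathcal{Q}}_\theta$ is real symmetric, being the real linear combination $\sin^2\theta\,\boldsymbol{\mathcal{T}}+\sin\theta\cos\theta\,(\boldsymbol{\mathcal{R}}+\boldsymbol{\mathcal{R}}^T)+\cos^2\theta\,\boldsymbol{\mathcal{Q}}$ of the symmetric matrices $\boldsymbol{\mathcal{T}}$, $\boldsymbol{\mathcal{R}}+\boldsymbol{\mathcal{R}}^T$ and $\boldsymbol{\mathcal{Q}}$ (here $\boldsymbol{\mathcal{Q}}$ being the speed--independent stiffness matrix that is positive definite by the lemma above, not the shifted $\boldsymbol{\mathcal{Q}}-k^2v^2\,\id$). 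Comparing with the computation \eqref{x09}--\eqref{x093} performed in the proof of Proposition~\ref{lemmaGH}, one sees that for each $\theta$ the matrix $\boldsymbol{\mathcal{Q}}_\theta$ equals $\cos^2\theta$ times the symmetrised in-plane acoustic tensor governing the polarisation $(\widehat u_1,\widehat u_2,\widehat\vartheta_3)$ for propagation along $n_\theta$ at wave number $k/\cos\theta$. Since $\theta\in(-\tfrac{\pi}{2},\tfrac{\pi}{2})$ forces $\cos\theta>0$, $n_\theta$ is a genuine unit vector of the admissible form $(\xi_1,\xi_2,0)^T$, so Proposition~\ref{proprealw2} applies to it: under the hypothesis \eqref{d12}, for \emph{every} wave number that acoustic tensor admits non-trivial solutions only for real positive $\omega^2$, i.e.\ it is positive definite. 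Multiplying by the positive scalar $\cos^2\theta$ preserves this, so $\boldsymbol{\mathcal{Q}}_\theta$ is positive definite, which is the assertion.

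As a fully self-contained alternative I would simply check Sylvester's criterion on $\boldsymbol{\mathcal{Q}}_\theta$ directly. Writing $s=\sin\theta$, $c=\cos\theta$ and abbreviating $p:=k^2\tfrac{2\mu_{\rm e}+\lambda_{\rm e}}{\rho}$, $q:=k^2\tfrac{\mu_{\rm e}+\mu_{\rm c}}{\rho}$, $m:=k^2\tfrac{\lambda_{\rm e}+\mu_{\rm e}-\mu_{\rm c}}{\rho}$, $a:=k^2\tfrac{L_{\rm c}^2(\alpha_1+\alpha_2)}{\rho\,j\,\tau_{\rm c}^2}$, $b:=\tfrac{4\mu_{\rm c}}{\rho\,j\,\mu_{\rm e}\,\tau_{\rm c}^2}$ and $g:=2k\tfrac{\mu_{\rm c}}{\rho\sqrt{j\,\mu_{\rm e}\,\tau_{\rm c}^2}}$, the matrices \eqref{nTQ} give
\[
\boldsymbol{\mathcal{Q}}_\theta=\begin{pmatrix} s^2q+c^2p & scm & scg\\ scm & s^2p+c^2q & -c^2g\\ scg & -c^2g & a+c^2b\end{pmatrix}.
\]
The computation collapses thanks to the elementary identity $m=p-q$: the first leading minor is $s^2q+c^2p$; the second equals $(s^2q+c^2p)(s^2p+c^2q)-s^2c^2m^2=(s^2+c^2)^2\,pq=pq$; and a short reduction using $m=p-q$ repeatedly yields $\det\boldsymbol{\mathcal{Q}}_\theta=p\,\bigl[aq+c^2(qb-g^2)\bigr]$ with $qb-g^2=\tfrac{4k^2\mu_{\rm c}}{\rho^2 j\,\tau_{\rm c}^2}$. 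Each of the three is strictly positive for every $\theta\in(-\tfrac{\pi}{2},\tfrac{\pi}{2})$ and $k>0$ precisely because $2\mu_{\rm e}+\lambda_{\rm e}>0$, $\mu_{\rm e}>0$, $\mu_{\rm c}>0$ and $\alpha_1+\alpha_2>0$, i.e.\ because of \eqref{d12}; note in passing that the second minor is even independent of $\theta$.

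I do not expect a genuine obstacle; the only care needed is bookkeeping. One must make sure that the $\boldsymbol{\mathcal{Q}}$ entering $\boldsymbol{\mathcal{Q}}_\theta$ is the speed--independent stiffness matrix, since with the $-k^2v^2$ shift the diagonal of $\boldsymbol{\mathcal{Q}}_\theta$ would become negative for large $v$ and the statement would fail; and, on the structural route, that the factor $\cos^2\theta$ together with the wave-number rescaling $k\mapsto k/\cos\theta$ relating $\boldsymbol{\mathcal{Q}}_\theta$ to the in-plane acoustic tensor indeed does not affect definiteness (both being positive). I would present the structural argument as the proof and append the explicit leading minors as a remark, since the latter makes transparent that the assertion is an elementary consequence of \eqref{d12}.
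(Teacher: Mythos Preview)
Your structural argument is the paper's own: recognise $\boldsymbol{\mathcal{Q}}_\theta$ as (a positive multiple of) the in-plane propagation tensor for the direction $n_\theta=(\cos\theta,\sin\theta,0)$ and invoke Proposition~\ref{proprealw2}. The paper phrases this more tersely, simply taking $\xi=n_\theta$ in $\widetilde{\mathbf{Q}}_1(\xi,k)$; your care with the $\cos^2\theta$ prefactor and the rescaled wave number $k/\cos\theta$ is in fact what is needed to match the $(3,3)$ entry exactly, since the $4\mu_c$ term there does not scale like $k^2$.

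Your second route, the explicit Sylvester computation, does not appear in the paper. It is more elementary and has the merit of making transparent which of the four inequalities in \eqref{d12} controls each leading minor (and that the $2\times 2$ minor is $\theta$-independent), whereas the structural argument hides this inside the earlier real-wave analysis of Subsection~\ref{Rpw}. Both arguments are correct; your bookkeeping remark that the $\boldsymbol{\mathcal{Q}}$ entering $\boldsymbol{\mathcal{Q}}_\theta$ must be the speed-independent stiffness matrix is well taken and consistent with how the paper uses $\boldsymbol{\mathcal{Q}}_\theta$ in the subsequent Proposition~\ref{propQp}.
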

\begin{proof}
	Since  the constitutive coefficients satisfy the conditions 	\eqref{d12}, according to Proposition \ref{proprealw2}, it follows that there exists \textit{only} real numbers $\omega$ such that the following  system\footnote{This system follows directly from \eqref{algPDE}. Inserting  \eqref{ansatzwp} into \eqref{PDE} we see that if $\widehat{u}_1, \widehat{u}_2$ and $\widehat{\vartheta}_3$ have to satisfy the following partial differential equations
		\begin{align*}
		-\omega^2\rho\, \widehat{u}_i&=-\,k^2\,(\mu_{\rm e} +\mu_{\rm c})\dd \widehat{u}_i \sum_{l=1}^2 \xi_l^2-k^2(\mu_{\rm e} -\mu_{\rm c}+\lambda_{\rm e} )\dd\sum_{l=1}^2\widehat{u}_l\, \xi_i\,\xi_l -2\,k\,\mu_{\rm c}\,\sum_{l=1}^2\varepsilon _{il3}\,\widehat{\vartheta }_3\, \xi_l, \quad  i=1,2,
		\vspace{1.2mm}\\
		-{\rm i}\,\omega^2\rho\, j\,\mu_{\rm e}\,\tau_{\rm c}^2\,\widehat{ \vartheta }_3&=-{\rm i}\,k^2\,{\mu_{\rm e}L_{\rm c}^2}\,(\alpha_1+\alpha_2)\dd\sum_{l=1}^2\widehat{ \vartheta }_3\xi_l^2-2\,{\rm i}\,\,k\,\mu_{\rm c}\,\sum_{l,s=1}^2\varepsilon _{3ls}\widehat{u}_s \xi_l-4\,{\rm i}\, \mu_{\rm c} \widehat{\vartheta }_3\sum_{l=1}^2\,\xi_l^2.\notag
		\end{align*}}  admits non trivial solutions $w=\begin{footnotesize}\begin{pmatrix}
	\widehat{u}_1,\widehat{u}_2,\widehat{ \vartheta }_3
	\end{pmatrix}\end{footnotesize}^T\neq 0$  for  real positive numbers $\omega^2$, i.e.,
	\begin{align}\label{vz1}
	[\mathbf{\mathbf{Q}}_1(\xi,k)-\omega^2\hat\id]\, w=0,
	\end{align}
	where  
	\begin{align}\label{vz3}
	\mathbf{\mathbf{Q}}_1(\xi,k)&=\begin{footnotesize}\begin{footnotesize}\begin{pmatrix}
	k^2(2\, \mu_{\rm e} +\lambda_{\rm e} )\xi_1^2+k^2(\mu_{\rm e} +\mu_{\rm c})\xi_2^2& k^2\,( \mu_{\rm e} -\mu_{\rm c}+\lambda_{\rm e} )\xi_1\xi_2& 2\, k\, \mu_{\rm c}\, \xi_2\vspace{2mm}\\
	k^2\,( \mu_{\rm e} -\mu_{\rm c}+\lambda_{\rm e} )\dd\xi_1\xi_2& k^2(\mu_{\rm e} +\mu_{\rm c})\xi_1^2+k^2(2\, \mu_{\rm e} +\lambda_{\rm e} )\xi_2^2& -2\, k\, \mu_{\rm c}\, \xi_1\vspace{2mm}\\
	2\, k\, \mu_{\rm c}\, \xi_2& -2\, k\, \mu_{\rm c}\, \xi_1&k^2\,{\mu_{\rm e}L_{\rm c}^2}\,(\alpha_1 +\alpha_2)(\xi_1^2+\xi_2^2)+4\, \mu_{\rm c}(\xi_1^2+\xi_2^2)
	\end{pmatrix}\end{footnotesize}\end{footnotesize}.
	\end{align}

	But this is equivalent to  the positive definiteness of the matrix
	\begin{align}
	\widetilde{\mathbf{\mathbf{Q}}}_1(\xi,k)
	&=\widehat{\id}^{-1/2}\begin{footnotesize}\begin{pmatrix}
	k^2(2\, \mu_{\rm e} +\lambda_{\rm e} )\xi_1^2+(\mu_{\rm e} +\mu_{\rm c})\xi_2^2& k^2\,( \mu_{\rm e} -\mu_{\rm c}+\lambda_{\rm e} )\xi_1\xi_2& 2\, k\, \mu_{\rm c}\, \xi_2\vspace{2mm}\\
	k^2\,( \mu_{\rm e} -\mu_{\rm c}+\lambda_{\rm e} )\dd\xi_1\xi_2& (\mu_{\rm e} +\mu_{\rm c})\xi_1^2+k^2(2\, \mu_{\rm e} +\lambda_{\rm e} )\xi_2^2& -2\, k\, \mu_{\rm c}\, \xi_1\vspace{2mm}\\
	2\, k\, \mu_{\rm c}\, \xi_2& -2\, k\, \mu_{\rm c}\, \xi_1&k^2(\alpha_1 +\alpha_2)+4\, \mu_{\rm c}
	\end{pmatrix}\end{footnotesize}\widehat{\id}^{-1/2}.
	\end{align}
	Since the conditions \eqref{d12} imply the positive definiteness of the matrix $\widetilde{\mathbf{\mathbf{Q}}}_1(\xi,k)$ for all $\xi=(\xi_1,\xi_2,0)\in \mathbb{R}^3$, $\lVert \xi\rVert=1$, taking $\xi=(\cos\,\theta, \sin\,\theta,0)$ we deduce  the desired result.
\end{proof}
\begin{remark}
	The Legendre–Hadamard ellipticity condition is not sufficient for the positive definiteness of $\boldsymbol{\mathcal{Q}}_\theta$, since they are equivalent to \eqref{LHc}
which does not imply \eqref{d11}.  {Instead, we will see that  the   Legendre–Hadamard ellipticity condition leads to the  acoustic tensor and its positive definiteness.} {Note that there does not exist a direction $\xi\in \mathbb{R}^3$ such that  $\boldsymbol{\mathcal{Q}}_\theta$  equals the acoustic tensor.} Moreover, even if in classical linear elasticity this is well known, in the Cosserat theory it seems to  be impossible (or at least it is not obvious to us) to arrive from the Legendre–Hadamard ellipticity condition {(the positive definiteness of the acoustic tensor) to the structure of the tensor  $\boldsymbol{\mathcal{Q}}_\theta$ or to its positive definiteness.}
\end{remark}
\begin{proof}
	The Legendre–Hadamard ellipticity condition \cite{eremeyev2007constitutive,Eremeyev4,shirani2020legendre} demands that the {\it acoustic tensor} 
	\begin{align}
	\widehat{\mathbf{Q}}=\begin{footnotesize}\begin{pmatrix}
	\widehat{\mathbf{Q}}_1&0\\0&\widehat{\mathbf{Q}}_2
	\end{pmatrix}\end{footnotesize}
	\end{align}
	defined through
	\begin{align}
	D^2W(\mathbf{e},\mathbf{\mathfrak{K}}).((\xi\otimes \eta, \zeta\otimes \eta),(\xi\otimes \eta, \zeta\otimes \eta))&=
	D^2W_1(\mathbf{e}).(\xi\otimes \eta,\xi\otimes \eta)+D^2W_2(\mathbf{\mathfrak{K}}).( \zeta\otimes \eta, \zeta\otimes \eta)\notag\\&=\bigl\langle \eta, \widehat{\mathbf{Q}}_1(\xi)\, \eta\bigl\rangle+\bigl\langle \eta, \widehat{\mathbf{Q}}_2(\zeta)\, \eta\bigl\rangle
	\end{align}
	is strictly positive definite for any
	nonzero wave directions $\xi\in \mathbb{R}^3$ and $\zeta\in \mathbb{R}^3$. Hence, let us identify the matrix $\widehat{\mathbf{Q}}_1$ and $\widehat{\mathbf{Q}}_2$. First identify $\widehat{\mathbf{Q}}_1$.
	Since 
	\begin{align}
	D^2W_1(\mathbf{e}).(\xi\otimes \eta,\xi\otimes \eta)
	&=
	\mu_{\rm e} \,\lVert  \sym\,\xi\otimes \eta\rVert ^{2}+\mu_{\rm c}\,\lVert \skw\,\xi\otimes \eta\rVert ^{2}+\frac{\lambda_{\rm e} }{2}\left[\mathrm{tr} \left(\xi\otimes \eta\right)\right]^{2}\notag\\
	&=
	\frac{\mu_{\rm e} +\mu_{\rm c}}{2}\lVert \xi\rVert^2\, \lVert \eta\rVert^2+\frac{\mu_{\rm e} -\mu_{\rm c}+\lambda_{\rm e} }{2}\bigl\langle \xi,\eta\bigr\rangle^2
	\\
	&=
	\frac{\mu_{\rm e} +\mu_{\rm c}}{2}\sum_{i=1}^3\xi_i^2 \sum_{i=1}^3\eta_i^2+\frac{\mu_{\rm e} -\mu_{\rm c}+\lambda_{\rm e} }{2}\bigg(\sum_{i=1}^3 \xi_i^2\eta_i^2+2\sum_{\substack{i,k=1\\i\neq k}}^3 \xi_i\xi_k\eta_i\eta_k \bigg),\notag
	\end{align}
	the matrix $\widehat{\mathbf{Q}}_1$ having the property
	\begin{align}
	D^2W_1(\mathbf{e}).(\xi\otimes \eta,\xi\otimes \eta)=\bigl\langle \eta, \widehat{\mathbf{Q}}_1(\xi)\, \eta\bigl\rangle
	\end{align}
	is given by
	\begin{align}
	\widehat{\mathbf{Q}}_1(\xi)
	=&\frac{1}{2}\begin{footnotesize}\begin{footnotesize}\begin{pmatrix}
	(2\,\mu_{\rm e} +\lambda_{\rm e} )\dd\xi_1^2+(\mu_{\rm e} +\mu_{\rm c})(\xi_2^2+\xi_3^2)& (\mu_{\rm e} -\mu_{\rm c}+\lambda_{\rm e} )\xi_1\xi_2 &(\mu_{\rm e} -\mu_{\rm c}+\lambda_{\rm e} )\xi_1\xi_3\vspace{2mm}\\
	(\mu_{\rm e} -\mu_{\rm c}+\lambda_{\rm e} )\xi_1\xi_2& 	(2\,\mu_{\rm e} +\lambda_{\rm e} )\dd\xi_2^2+(\mu_{\rm e} +\mu_{\rm c})(\xi_3^2+\xi_1^2)&(\mu_{\rm e} -\mu_{\rm c}+\lambda_{\rm e} )\xi_2\xi_3\vspace{2mm}\\
	(\mu_{\rm e} -\mu_{\rm c}+\lambda_{\rm e} )\xi_1\xi_3& (\mu_{\rm e} -\mu_{\rm c}+\lambda_{\rm e} )\xi_2\xi_3&	(2\,\mu_{\rm e} +\lambda_{\rm e} )\dd\xi_3^2+(\mu_{\rm e} +\mu_{\rm c})(\xi_1^2+\xi_2^2)
	\end{pmatrix}\end{footnotesize}\end{footnotesize}.\notag
	\end{align}

Next, we identify $\widehat{\mathbf{Q}}_2$.
Since 
\begin{align}
D^2W_2(\mathbf{\mathfrak{K}}).(\zeta\otimes \eta,\zeta \otimes \eta)
&=\ {\alpha_1}\,\lVert \sym\,\zeta\otimes \eta\rVert ^{2}+{\alpha_2}\,\lVert \skw\,\zeta\otimes \eta\rVert ^{2}+\frac{\alpha_3}{2}\left[\mathrm{tr} \left(\zeta\otimes \eta\right)\right]^{2}
\notag\\
&=
\frac{\alpha_1+\alpha_2}{2}\sum_{i=1}^3\zeta_i^2 \sum_{i=1}^3\eta_i^2+\frac{\alpha_1 -\alpha_2+\alpha_3 }{2}\bigg(\sum_{i=1}^3 \zeta_i^2\eta_i^2+2\sum_{\substack{i,k=1\\i\neq k}}^3 \zeta_i\zeta_k\eta_i\eta_k \bigg),\notag
\end{align}
the matrix $\widehat{\mathbf{Q}}_2$  which satisfies
\begin{align}
D^2W_2(\mathbf{\mathfrak{K}}).(\zeta\otimes \eta,\zeta\otimes \eta)=\bigl\langle \eta, \widehat{\mathbf{Q}}_2(\zeta)\, \eta\bigl\rangle
\end{align}
is given by
\begin{align}
\widehat{\mathbf{Q}}_2(\zeta)
=&\frac{1}{2}\begin{footnotesize}\begin{pmatrix}
(2\,\alpha_1 +\alpha_3)\zeta_1^2+(\alpha_1+\alpha_2 )(\zeta_1^2+\zeta_3^2)& (\alpha_1 -\alpha_2+\alpha_3)\zeta_1\zeta_2 &(\alpha_1 -\alpha_2+\alpha_3 )\zeta_1\zeta_3\\
(\alpha_1 -\alpha_2+\alpha_3 )\zeta_1\zeta_2& (2\,\alpha_1 +\alpha_3)\zeta_2^2+(\alpha_1+\alpha_2 )(\zeta_3^2+\zeta_1^2)&(\alpha_1 -\alpha_2+\alpha_3 )\zeta_2\zeta_3\\
(\alpha_1 -\alpha_2+\alpha_3)\zeta_1\zeta_3& (\alpha_1-\alpha_2+\alpha_3 )\zeta_2\zeta_3&(2\,\alpha_1 +\alpha_3)\zeta_3^2+(\alpha_1+\alpha_2 )(\zeta_1^2+\zeta_2^2)
\end{pmatrix}\end{footnotesize}.\notag
\end{align}

Actually, our entire approach is based on the assumption that the  matrix $\widetilde{\mathbf{\mathbf{Q}}}_1(\xi,k)$  defined by \eqref{vz3} is positive definite. So, if the Legendre-Hadamard condition implies this fact, or it is equivalent with, then the Legendre-Hadamard condition would be also suitable for our approach. However, it seems that this is not possible because we did not find   appropriate values for $\xi$ and $\zeta$ such that, for some particular values of them the positive definiteness of $\widehat{\mathbf{Q}}_1(\xi)$  $\widehat{\mathbf{Q}}_2(\xi)$ would imply the  positive definiteness of the matrix $\widetilde{\mathbf{\mathbf{Q}}}_1(\xi,k)$, as we need in our approach.

Even when we consider the Legendre-Hadamard ellipticity condition on the set of admissible solutions of the form
\begin{align}
u(x_1,x_2,t)=\begin{footnotesize}\begin{pmatrix} u_1(x_1,x_2,t)
\\
u_2(x_1,x_2,t)
\\0
\end{pmatrix}\end{footnotesize},\qquad  \vartheta   (x_1,x_2,x_3,t)=\begin{footnotesize}\begin{pmatrix} 0
\\
0
\\\vartheta _3(x_1,x_2,t)
\end{pmatrix}\end{footnotesize},
\end{align}
it is not clear how the
positive definiteness of the  matrix $\widetilde{\mathbf{\mathbf{Q}}}_1(\xi,k)$ can possibly  be a consequence of the Legendre-Hadamard ellipticity condition.
\end{proof}

\begin{proposition}\label{propQp}
If  the constitutive coefficients satisfy the conditions 	\eqref{d12}, then
for all $\theta\in \left(-\frac{\pi}{2},\frac{\pi}{2}\right)$, $k>0$ and $0\le v<\widehat{v}$, the tensor $\widetilde{\mathbf{Q}}_\theta:=\sin^2\theta\,\boldsymbol{\mathcal{T}}+\sin\theta\cos\theta (\boldsymbol{\mathcal{R}}+\boldsymbol{\mathcal{R}}^T)+\cos^2\theta\,\boldsymbol{\mathcal{Q}}-k^2 v^2 \cos^2\theta \,{\id}$ is positive definite.
\end{proposition}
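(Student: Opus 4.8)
The plan is to work at a fixed $\theta\in\left(-\frac{\pi}{2},\frac{\pi}{2}\right)$ and to recognise $\widetilde{\mathbf{Q}}_\theta$, up to the positive scalar $\cos^2\theta$, as the symmetric matrix whose determinant is the characteristic polynomial \eqref{x9}. Dividing the defining expression of $\widetilde{\mathbf{Q}}_\theta$ by $\cos^2\theta>0$ and setting $r=\tan\theta$ yields
\begin{align*}
\widetilde{\mathbf{Q}}_\theta=\cos^2\theta\;\mathbf{M}(r),\qquad \mathbf{M}(r):=r^2\,\boldsymbol{\mathcal{T}}+r\,(\boldsymbol{\mathcal{R}}+\boldsymbol{\mathcal{R}}^T)+\boldsymbol{\mathcal{Q}}-k^2 v^2\,\id .
\end{align*}
For real $r$ the matrix $\mathbf{M}(r)$ is real symmetric (each of $\boldsymbol{\mathcal{T}}$, $\boldsymbol{\mathcal{R}}+\boldsymbol{\mathcal{R}}^T$, $\boldsymbol{\mathcal{Q}}$ and $\id$ is), its entries depend polynomially on $r$, and the equation $\det\mathbf{M}(r)=0$ is exactly \eqref{x9}. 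As $\theta$ runs through $\left(-\frac{\pi}{2},\frac{\pi}{2}\right)$, $r=\tan\theta$ runs through all of $\mathbb{R}$, so it is enough to prove that $\mathbf{M}(r)$ is positive definite for every $r\in\mathbb{R}$.

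I would then run a connectedness argument on the line, using two facts. First, by Proposition \ref{lemmaGH} together with the definition of the limiting speed, for $0\le v<\widehat{v}$ the characteristic equation \eqref{x9} has no real root; hence $\det\mathbf{M}(r)\neq 0$ for every $r\in\mathbb{R}$, i.e. $\mathbf{M}(r)$ is invertible on the whole line. Second, writing $\mathbf{M}(r)=r^{2}\bigl(\boldsymbol{\mathcal{T}}+\tfrac1r(\boldsymbol{\mathcal{R}}+\boldsymbol{\mathcal{R}}^T)+\tfrac1{r^{2}}(\boldsymbol{\mathcal{Q}}-k^{2}v^{2}\,\id)\bigr)$ and using the positive definiteness of $\boldsymbol{\mathcal{T}}$ from the lemma above, $\mathbf{M}(r)$ is positive definite once $|r|$ is large enough. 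Consider $S:=\{\,r\in\mathbb{R}:\mathbf{M}(r)\text{ is positive definite}\,\}$. Then $S$ is open (positive definiteness is an open condition on symmetric matrices and $r\mapsto\mathbf{M}(r)$ is continuous), and $S$ is closed: if $r_n\to r_\ast$ with $r_n\in S$, then $\mathbf{M}(r_\ast)$ is positive semidefinite by continuity and invertible by the first fact, hence positive definite. Since $S$ is non-empty by the second fact, connectedness of $\mathbb{R}$ forces $S=\mathbb{R}$. Taking $r=\tan\theta$ and recalling $\widetilde{\mathbf{Q}}_\theta=\cos^2\theta\,\mathbf{M}(\tan\theta)$ with $\cos^2\theta>0$ finishes the argument.

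The one point that needs care is that the root-freeness of \eqref{x9} must hold on the whole line $\mathbb{R}$, not merely on some subinterval of it; this is precisely what the construction of $\widehat{v}$ as the infimum over $\theta$ of the plane-wave speeds $v_\theta$ guarantees, via Proposition \ref{lemmaGH}. If one prefers to avoid the topological language, the same conclusion follows by noting that $\det\mathbf{M}(r)$ is a degree-six polynomial in $r$ with leading coefficient $\det\boldsymbol{\mathcal{T}}>0$, hence $\det\mathbf{M}(r)\to+\infty$ as $|r|\to\infty$ and, being nowhere zero, stays positive; applying the same continuity and sign argument to the smallest eigenvalue $\lambda_{\min}(\mathbf{M}(r))$ --- a continuous, nowhere-vanishing function of $r$ that is positive for $|r|$ large --- gives $\lambda_{\min}(\mathbf{M}(r))>0$ for all $r$. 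Finally, for $v=0$ the statement also follows directly from the preceding proposition, since then $\widetilde{\mathbf{Q}}_\theta=\boldsymbol{\mathcal{Q}}_\theta$.
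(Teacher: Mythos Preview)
Your argument is correct, and it proceeds along a different line from the paper. The paper first invokes the preceding proposition to know that $\boldsymbol{\mathcal{Q}}_\theta=\sin^2\theta\,\boldsymbol{\mathcal{T}}+\sin\theta\cos\theta(\boldsymbol{\mathcal{R}}+\boldsymbol{\mathcal{R}}^T)+\cos^2\theta\,\boldsymbol{\mathcal{Q}}$ is positive definite, and then argues by contradiction at the level of eigenvalues: if some eigenvalue $\lambda_{\theta_0}$ of $\boldsymbol{\mathcal{Q}}_{\theta_0}$ were $\le k^2 v_0^2\cos^2\theta_0$ for some $v_0<\widehat v$, then $v_{\theta_0}:=\sqrt{\lambda_{\theta_0}/(k^2\cos^2\theta_0)}\le v_0<\widehat v$ would satisfy \eqref{x11}, contradicting the construction of $\widehat v$ as the infimum of the $v_\theta$'s.

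Your route bypasses the previous proposition entirely. The factorisation $\widetilde{\mathbf Q}_\theta=\cos^2\theta\,\mathbf M(\tan\theta)$ reduces the question to positive definiteness of the symmetric pencil $\mathbf M(r)$ for all real $r$, and you then combine the two ingredients that are already available in the paper --- nonvanishing of $\det\mathbf M(r)$ on $\mathbb R$ for $v<\widehat v$ (the very definition of the limiting speed), and positive definiteness of $\boldsymbol{\mathcal T}$ giving the behaviour at $|r|\to\infty$ --- via a clean connectedness argument. This is slightly more elementary and self-contained; the paper's version, on the other hand, makes the link to the body-wave speeds $v_\theta$ and the infimum characterisation of $\widehat v$ more transparent, which is useful later when the rotated matrices $\boldsymbol{\mathcal T}_{\!\theta}$, $\widetilde{\boldsymbol{\mathcal Q}}_\theta$ are introduced.
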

\begin{proof}
Since $\boldsymbol{\mathcal{Q}}_\theta:=\sin^2\theta\boldsymbol{\mathcal{T}}+\sin\theta\cos\theta (\boldsymbol{\mathcal{R}}+\boldsymbol{\mathcal{R}}^T)+\cos^2\theta\boldsymbol{\mathcal{Q}}$ is positive definite, it admits only positive eigenvalues. We need to prove that for all $\theta\in \left(-\frac{\pi}{2},\frac{\pi}{2}\right)$, $k>0$ and for all $0\le v<\widehat{v}$ the matrix $\widetilde{\mathbf{Q}}_\theta=\boldsymbol{\mathcal{Q}}_\theta-k^2 v^2 \cos^2\theta \,{\id}$ is positive definite, which is equivalent  to the property that   all eigenvalues $\boldsymbol{\mathcal{Q}}_\theta$ are larger than those of the matrix $k^2 v^2 \cos^2\theta \,{\id}$, i.e., than $k^2 v^2 \cos^2\theta $.   Assuming that there exist  $\theta_0\in \left(-\frac{\pi}{2},\frac{\pi}{2}\right)$ and  $v_0\in[0,\widehat{v})$ for which there is an eigenvalue $\lambda_{\theta_0}$   of $\boldsymbol{\mathcal{Q}}_\theta$
such that $\lambda_{\theta_0}<k^2 v^2_0 \cos^2\theta_0 $, then 
\begin{align}
v_{\theta_0}:=\sqrt{\frac{\lambda_{\theta_0}}{k^2\,\cos^2\theta_0}}<v_0<\widehat{v}
\end{align}
is solution of \eqref{x11}, i.e., for fixed $\theta_0$ we have that $v_{\theta_0}<\widehat{v}$  verifies
 \begin{align}\label{x11n}
\det\,\{	\sin^2{\theta_0}\,\boldsymbol{\mathcal{T}}+\sin{\theta_0}\,\cos{\theta_0} \,(\boldsymbol{\mathcal{R}}+\boldsymbol{\mathcal{R}}^T)+\cos^2{\theta_0}\,\boldsymbol{\mathcal{Q}}-k^2 v^2_{{\theta_0}}\, \cos^2{\theta_0} \,{\id}\}=0.
\end{align} 
This is in contradiction to the definition of the limiting speed and Proposition \ref{lims}, since $\widehat{v}$ is the smallest speed having this property. Therefore, it remains that for all  $\theta\in \left(-\frac{\pi}{2},\frac{\pi}{2}\right)$, $k>0$ and for all $0\le v<\widehat{v}$, all the eigenvalues of $\boldsymbol{\mathcal{Q}}_\theta$ are larger than $k^2 v^2 \cos^2\theta$ and the proof is complete.
\end{proof}

\section{The common method to construct the solution using the Stroh formalism}\setcounter{equation}{0}\label{comSec}

In this subsection we present the main steps of the common method using the Stroh formalism, since in this  point of our approach it is still possible to switch to the Stroh formalism and vice versa, without using conceptually  different methods.

On one hand, using the ansatz $z(x_2):=\widehat{\id}^{1/2}\,\begin{footnotesize}\begin{pmatrix} 	d_1
\\
d_2
\\d_3
\end{pmatrix}\end{footnotesize} \,e^{{\rm i}\,r\,k\,x_2}$ in \eqref{11}$_2$, we are {led} to   define a vector $b\neq 0 \in \mathbb{C}^3 $ as
\begin{align}\label{61}
b=[r\,{\mathbf{T}}+{\mathbf{R}}^T]d.
\end{align}
Since ${\mathbf{T}}$ is symmetric and positive definite there exists ${\mathbf{T}}^{-1}$. Hence from  \eqref{61} we get
\begin{align}\label{63}
r\,d=-{\mathbf{T}}^{-1}{\mathbf{R}}^Td+{\mathbf{T}}^{-1}b
\end{align}
and 
\begin{align}
r\,b=[r^2{\mathbf{T}}+r\,{\mathbf{R}}^T]d.
\end{align}

On the other hand,  from \eqref{x7}$_1$ and considering the same ansatz for $z(x_2)$ as before, we deduce
\begin{align}\label{62}
\overbrace{(r^2\,{\mathbf{T}}+r\,{\mathbf{R}}^T)d}^{rb}+(r\,{\mathbf{R}}+\widetilde{\,{\mathbf{Q}}})d=0\qquad \Leftrightarrow\qquad 
r\,b=-[r\,{\mathbf{R}}+\widetilde{{\mathbf{Q}}}]d,
\end{align}
where $\widetilde{{\mathbf{Q}}}=\,{\mathbf{Q}}-\ k^2 v^2 \widehat{\id}$. Making use of \eqref{63} in \eqref{62} we obtain
\begin{align}\label{67}
rb&=-\widetilde{{\mathbf{Q}}}\,d-{\mathbf{R}}\{-{\mathbf{T}}^{-1}{\mathbf{R}}^da+{\mathbf{T}}^{-1}b\}=[-\widetilde{{\mathbf{Q}}}+{\mathbf{R}}\,{\mathbf{T}}^{-1}\,{\mathbf{R}}^T]d-{\mathbf{R}}\,{\mathbf{T}}^{-1} b.
\end{align}
From \eqref{63} and $\eqref{67}_2$ it is possible to  indicate the relation of our analysis with the study given in \cite{ChiritaGhiba3}, i.e., the scalar variable $r$ appearing in our ansatz is a solution of the following eigenvalue problem	\begin{align}\label{y13}
	\underbrace{\begin{footnotesize}\begin{pmatrix}
	-{\mathbf{T}}^{-1}{\mathbf{R}}^T  &	{\mathbf{T}}^{-1}\\
	{\mathbf{R}}\,{\mathbf{T}}^{-1}\,{\mathbf{R}}^T-\widetilde{{\mathbf{Q}}} & \ \ \ -{\mathbf{R}} \,{\mathbf{T}}^{-1}    
	\end{pmatrix}\end{footnotesize}}_{:=\mathcal{N}}\underbrace{\begin{footnotesize}\begin{pmatrix}d\\b\end{pmatrix}\end{footnotesize}}_{:=\mathcal{V}}=r\begin{footnotesize}\begin{pmatrix}d\\b\end{pmatrix}\end{footnotesize}\qquad \Leftrightarrow\qquad 
	\mathcal{N}\,\mathcal{V}=r\,\mathcal{V},
	\end{align}
	where $\mathcal{N}$ $\in \mathbb{R}^{6\times 6}$ is called the {\bf Stroh matrix} \cite{ting1996anisotropic,ChiritaGhiba3}.

For Cosserat elastic materials  \cite{ChiritaGhiba3} it is possible to find a suitable structure of the characteristic equation corresponding to the above eigenvalue problem in the form\footnote{We have rewritten everything in the notation of the present paper.}
\begin{equation}
r^{6}+P_{1}r^{4}+P_{2}r^{2}+P_{3}=0, \label{v25}%
\end{equation}
where
\begin{align}\label{defpC}
 P_{1}&=C_{1}+C_{2}+C_{3},\qquad  \qquad
 P_{2}=C_{1}C_{2}+C_{2}C_{3}+C_{3}C_{1}-\displaystyle\frac{4\,\rho\, \mu_{\rm c} ^2 v^2}{{\mu_{\rm e}\,L_{\rm c}^2}\,\gamma \,(\mu_{\rm e} +\mu_{\rm c} )^2},\notag\\ P_{3}&=C_{1}C_{2}C_{3}-\frac{4\,\rho\, \mu_{\rm c} ^2 v^2}{{\mu_{\rm e}\,L_{\rm c}^2}\,\gamma \,(\mu_{\rm e} +\mu_{\rm c} )^2}C_{1},\\
 {c}_{l}&=1-\frac{\rho\, v^2}{2\,\mu_{\rm e}  +\lambda_{\rm e} },\quad \quad\qquad {c}_{t}=1-\frac{\rho \,v^2}{\mu_{\rm e} +\mu_{\rm c} }, \quad  \quad\qquad {c}_{m}=1+\frac{4\,\mu_{\rm c} \,\mu_{\rm e} }{{\mu_{\rm e}\,L_{\rm c}^2}\,\gamma\, (\mu_{\rm e} +\mu_{\rm c} )}-\frac{\rho\,j\,\mu_{\rm e}\,\tau_{\rm c}^2\, v^2}{{\mu_{\rm e}\,L_{\rm c}^2}\,\gamma}.\notag
\end{align}
Then, due to this structure, it is possible to 
factorise  it as
\begin{align}
(r^2+{c}_{l})[(r^2+{c}_{t})(r^2+{c}_{m})-\frac{4\,\rho\, \mu_{\rm c} ^2 v^2}{{\mu_{\rm e}\,L_{\rm c}^2}\,\gamma \,(\mu_{\rm e} +\mu_{\rm c} )^2}]=0,	
\end{align}
and to find the analytical form of its  solutions
\begin{align}
&r_1^2=-{c}_{l},\notag\\
&r_2^2=\frac{1}{2}\left[-({c}_{t}+{c}_{m})+\sqrt{({c}_{t}-{c}_{m})^2+\frac{8\,\rho \,\mu_{\rm c} ^2\, v^2}{{\mu_{\rm e}\,L_{\rm c}^2}\,\gamma\, (\mu_{\rm e} +\mu_{\rm c} )^2}}\,\right],\\
&r_3^2=\frac{1}{2}\left[-({c}_{t}+{c}_{m})-\sqrt{({c}_{t}-{c}_{m})^2+\frac{8\,\rho \,\mu_{\rm c} ^2\, v^2}{{\mu_{\rm e}\,L_{\rm c}^2}\,\gamma\, (\mu_{\rm e} +\mu_{\rm c} )^2}}\,\right].\notag
\end{align}

Since  the explicit analytical form of the roots  $r_v$  of the characteristic equation \eqref{v25} as function of $v$ are known,
in \cite{ChiritaGhiba3} it is shown that 
\begin{proposition}\label{lemmaCG1} If  the constitutive coefficients satisfy the conditions 
	\begin{align}\label{d11n}
	2\,\mu_{\rm e} +\lambda_{\rm e} >0,\qquad \qquad \mu_{\rm e}+ \mu_{\rm c}>0,\qquad \qquad \alpha_1+\alpha_2>0,
	\end{align}
	then 	 the roots  $r_v$  of the characteristic equation \eqref{v25} are not real if and only if  
	\begin{align}
	0\le v<\min\left\{ \mathfrak{c}_p,\mathfrak{c}_s, \mathfrak{c}_{m_1},\mathfrak{c}_{m_2}\right\}\,,
	\end{align}	
	where
	\begin{align}\label{v36} \mathfrak{c}_{p}&=\sqrt{\displaystyle\frac{2\,\mu_{\rm e}  +\lambda_{\rm e}}{\rho}}, \qquad\qquad \mathfrak{c}_{m_1}=\sqrt{\displaystyle\frac{{\mu_{\rm e}\,L_{\rm c}^2}\,\gamma}{\rho\, j\,\mu_{\rm e}\,\tau_{\rm c}^2\,}\left[
	1+\displaystyle\frac{4\, \mu_{\rm c}\,\mu_{\rm e}}{{\mu_{\rm e}\,L_{\rm c}^2}\,\gamma\, k^{2}(\mu_{\rm e}+ \mu_{\rm c})}\right]}, \notag\\
	\mathfrak{c}_{s}&=\sqrt{\displaystyle\frac{\mu_{\rm e}+ \mu_{\rm c}}{\rho}}, \qquad\ \, \qquad
	\mathfrak{c}_{m_2}=\sqrt{\frac{1}{2}\left(  \mathfrak{c}_{s}^2+\mathfrak{c}_{m_1}^2%
	+\frac{4\, \mu_{\rm c}^{2}}{k^{2}\rho \, j\,\mu_{\rm e}\,\tau_{\rm c}^2\,(\mu_{\rm e}+ \mu_{\rm c})}-\sqrt{\Delta}\right)}  ,\vspace{1.2mm}\\
	\Delta&=\left(  \mathfrak{c}_{s}^2-\mathfrak{c}_{m_1}^2\right)  ^{2}+2\left(
	\mathfrak{c}_{s}^2+\mathfrak{c}_{m_1}^2\right)  \frac{4\, \mu_{\rm c}^{2}}{k^{2}\rho\,j\,\mu_{\rm e}\,\tau_{\rm c}^2\,(\mu_{\rm e}+ \mu_{\rm c})%
	}+\left(  \frac{4\, \mu_{\rm c}^{2}}{k^{2}\rho\,j\,\mu_{\rm e}\,\tau_{\rm c}^2\,(\mu_{\rm e}+ \mu_{\rm c})}\right)  ^{2}>0.
\notag 
	\end{align}
\end{proposition}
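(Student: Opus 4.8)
The plan is to read everything off from the explicit analytic form of the roots $r_1^2=-c_l$, $r_2^2$, $r_3^2$ recorded just before the statement, bypassing the Stroh matrix entirely. Put $s:=r^2$, so that the characteristic equation \eqref{v25} becomes the real cubic
\begin{equation*}
\widetilde{p}(s):=s^3+P_1 s^2+P_2 s+P_3=(s+c_l)\,\bigl(s^2+(c_t+c_m)\,s+(c_t c_m-\beta)\bigr),\qquad \beta:=\frac{4\,\rho\,\mu_{\rm c}^2\,v^2}{\mu_{\rm e}\,L_{\rm c}^2\,\gamma\,(\mu_{\rm e}+\mu_{\rm c})^2}\ge 0 ,
\end{equation*}
where $\beta$ is the last term of $P_2$ in \eqref{defpC}. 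The quadratic factor has discriminant $(c_t-c_m)^2+4\beta\ge0$, so all three roots of $\widetilde p$ are real: they are $-c_l$ and the two roots $r_2^2,r_3^2$ of the quadratic. Since the six roots of \eqref{v25} are the three conjugate pairs $\pm\sqrt{r_j^2}$, such a pair consists of two real numbers when $r_j^2\ge0$ and of two purely imaginary numbers when $r_j^2<0$. Hence \emph{all} roots of \eqref{v25} are non-real if and only if $-c_l<0$, $r_2^2<0$ and $r_3^2<0$; and by the elementary criterion for a monic real quadratic with nonnegative discriminant (both roots negative iff linear coefficient and constant term are positive), this is equivalent to the three inequalities $c_l>0$, $c_t+c_m>0$ and $c_t c_m-\beta>0$.

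Next I would translate these into speed bounds. The first, $c_l=1-\rho v^2/(2\mu_{\rm e}+\lambda_{\rm e})>0$, is plainly $v<\mathfrak{c}_p$. For the other two, the key remark is that, because $\beta\ge0$, the pair ``$c_t+c_m>0$ and $c_t c_m>\beta$'' is equivalent to ``$c_t>0$, $c_m>0$ and $c_t c_m>\beta$'': indeed $c_t c_m>\beta\ge0$ forces $c_t,c_m$ to have a common sign, which must be positive since $c_t+c_m>0$. Inspecting \eqref{defpC} gives $c_t>0\Leftrightarrow v<\mathfrak{c}_s$ and $c_m>0\Leftrightarrow v<\mathfrak{c}_{m_1}$. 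It then remains to locate $\{c_t c_m>\beta\}$: a direct computation (tracking the wave number $k$, whose powers are suppressed in \eqref{defpC} but retained in \eqref{v36}) shows that $c_t c_m-\beta$ is a strictly positive constant times
\begin{equation*}
\bigl(\mathfrak{c}_s^2-v^2\bigr)\bigl(\mathfrak{c}_{m_1}^2-v^2\bigr)-D\,v^2,\qquad D:=\frac{4\,\mu_{\rm c}^2}{k^2\,\rho\,j\,\mu_{\rm e}\,\tau_{\rm c}^2\,(\mu_{\rm e}+\mu_{\rm c})}\ge0 ,
\end{equation*}
a quadratic in $v^2$, opening upward, equal to $\mathfrak{c}_s^2\mathfrak{c}_{m_1}^2>0$ at $v=0$, with roots $\tfrac12\bigl[(\mathfrak{c}_s^2+\mathfrak{c}_{m_1}^2+D)\mp\sqrt{(\mathfrak{c}_s^2+\mathfrak{c}_{m_1}^2+D)^2-4\mathfrak{c}_s^2\mathfrak{c}_{m_1}^2}\bigr]$; a one-line algebraic identity shows that the $\Delta$ of \eqref{v36} equals $(\mathfrak{c}_s^2+\mathfrak{c}_{m_1}^2+D)^2-4\mathfrak{c}_s^2\mathfrak{c}_{m_1}^2$, so the smaller of these two roots is exactly $\mathfrak{c}_{m_2}^2$.

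To finish I would show $\mathfrak{c}_{m_2}\le\min\{\mathfrak{c}_s,\mathfrak{c}_{m_1}\}$, so that the larger root $\widetilde{w}$ of the above quadratic, which satisfies $\mathfrak{c}_{m_2}^2\,\widetilde{w}=\mathfrak{c}_s^2\mathfrak{c}_{m_1}^2$, obeys $\widetilde{w}\ge\min\{\mathfrak{c}_s^2,\mathfrak{c}_{m_1}^2\}$. Since at $x=\mathfrak{c}_{m_2}^2$ one has $(\mathfrak{c}_s^2-x)(\mathfrak{c}_{m_1}^2-x)=D\,x\ge0$, the two factors have the same sign; if both were negative, then $\mathfrak{c}_{m_2}^2>\max\{\mathfrak{c}_s^2,\mathfrak{c}_{m_1}^2\}\ge\sqrt{\mathfrak{c}_s^2\mathfrak{c}_{m_1}^2}$ and also $\widetilde{w}\ge\mathfrak{c}_{m_2}^2>\sqrt{\mathfrak{c}_s^2\mathfrak{c}_{m_1}^2}$, contradicting $\mathfrak{c}_{m_2}^2\widetilde{w}=\mathfrak{c}_s^2\mathfrak{c}_{m_1}^2$. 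Consequently, on the region $v^2<\min\{\mathfrak{c}_s^2,\mathfrak{c}_{m_1}^2\}$ the branch ``$v^2>\widetilde{w}$'' is empty, so there ``$c_t>0,\ c_m>0,\ c_t c_m>\beta$'' reduces to simply $v<\mathfrak{c}_{m_2}$ --- which in turn already entails $v<\mathfrak{c}_s$ and $v<\mathfrak{c}_{m_1}$. Combining with $c_l>0\Leftrightarrow v<\mathfrak{c}_p$, the three inequalities hold iff $0\le v<\min\{\mathfrak{c}_p,\mathfrak{c}_{m_2}\}=\min\{\mathfrak{c}_p,\mathfrak{c}_s,\mathfrak{c}_{m_1},\mathfrak{c}_{m_2}\}$, which is the assertion.

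The step I expect to be the main obstacle is the last one: identifying $\mathfrak{c}_{m_2}^2$ as the smaller root of $(\mathfrak{c}_s^2-v^2)(\mathfrak{c}_{m_1}^2-v^2)=D\,v^2$ and establishing the ordering $\mathfrak{c}_{m_2}\le\min\{\mathfrak{c}_s,\mathfrak{c}_{m_1}\}$, which is precisely what rules out a spurious high-speed interval (the ``$v^2>\widetilde{w}$'' branch) on which the roots would again become non-real. Everything else --- the quadratic sign criteria, the evaluations at $v=0$, the signs of $\beta$ and $D$, and the bookkeeping of the powers of $k$ --- is elementary and uses only \eqref{d11n} together with $\rho,j>0$ (and $\mu_{\rm e}>0$, which guarantees $\beta\ge0$ and $D\ge0$).
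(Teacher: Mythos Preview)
Your argument is correct and follows essentially the same route that the paper indicates (and that \cite{ChiritaGhiba3} carries out): factor the sextic as a cubic in $s=r^2$, use the explicit roots $r_1^2=-c_l$ and the quadratic pair $r_2^2,r_3^2$, and translate the three negativity conditions into the speed bounds. Your Vieta-based verification that $\mathfrak{c}_{m_2}\le\min\{\mathfrak{c}_s,\mathfrak{c}_{m_1}\}$ (hence that the upper branch $v^2>\widetilde w$ is irrelevant) is a clean alternative to the paper's direct estimate $-\sqrt{\Delta}\le -\bigl|(\mathfrak{c}_s^2-\mathfrak{c}_{m_1}^2)+D\bigr|$, and your remark that $\mu_{\rm e}>0$ is implicitly used (to secure $\beta\ge 0$ and $D\ge 0$) is accurate and worth flagging, since \eqref{d11n} alone does not provide it.
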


\begin{remark}{The inequalities \eqref{d11n} from Proposition \ref{lemmaCG1} have the following interpretations:
\begin{itemize}
\item[i)] the first two inequalities of the set of conditions \eqref{Chiritacond} considered by Chiri\c t\u a and Ghiba \cite{ChiritaGhiba3} imply that the translational compressional wave is real and  that the shear-rotational wave (optical branch) is real at high frequencies;
\item[ii)] the first inequality also implies that at the limit of high frequencies the translational compressional wave is faster than the shear–rotational wave (if they
both exist);
\item[iii)]  the third one means that the
shear–rotational wave (acoustic branch) is real at high frequencies. 
\end{itemize} 
In addition, 
\begin{itemize} \item[iv)] the inequalities \eqref{Chiritacond} do not imply that  the shear-rotational wave (optical branch) is real at low frequencies, i.e., $\omega\to 0$;
	\item[v)] the inequalities \eqref{Chiritacond} do not imply that the plane waves are real, i.e., that the propagating plane waves  are defined only by real frequencies.
\end{itemize}}
\end{remark}

Notice that we have slightly changed the results obtained in \cite{ChiritaGhiba3}, and we do not assume that $\mathfrak{c}_{p}\geq\mathfrak{c}_{s}$ because all the calculations from \cite{ChiritaGhiba3} are indeed valid without this additional assumption. Actually,  the inequality $\mathfrak{c}_{p}\geq\mathfrak{c}_{s}$ holds true once the internal energy is assumed to be positive definite, i.e., when $2\,\mu_{\rm e} +3\,\lambda_{\rm e} >0, \mu_{\rm e}  >0$. However, as we show in the present paper, the existence of the seismic waves is  true for weaker conditions on the constitutive parameters.

We also mention that under the hypothesis of the above proposition, the following inequalities are valid 
\begin{align} 
\mathfrak{c}_{m_2}<\mathfrak{c}_{m_1}, \qquad \qquad \text{\rm and}\qquad \qquad \mathfrak{c}_{m_2}<\mathfrak{c}_s,
\end{align}
since
\begin{align}
	-\sqrt{\Delta}&\leq -\left|\left(  \mathfrak{c}_{s}^2-\mathfrak{c}_{m_1}^2\right)  + \frac{4\, \mu_{\rm c}^{2}}{k^{2}\rho\,j\,\mu_{\rm e}\,\tau_{\rm c}^2\,(\mu_{\rm e}+ \mu_{\rm c})%
}\right|,\notag\\
	-\sqrt{\Delta}&\leq -\left|\left(  \mathfrak{c}_{m_1}^2-\mathfrak{c}_{s}^2\right)  + \frac{4\, \mu_{\rm c}^{2}}{k^{2}\rho\,j\,\mu_{\rm e}\,\tau_{\rm c}^2\,(\mu_{\rm e}+ \mu_{\rm c})%
}\right|\notag,\\
\mathfrak{c}_{m_2}^2-\mathfrak{c}_{m_1}^2&=\frac{1}{2}\left(  \mathfrak{c}_{s}^2-\mathfrak{c}_{m_1}^2%
+\frac{4\, \mu_{\rm c}^{2}}{k^{2}\rho \, j\,\mu_{\rm e}\,\tau_{\rm c}^2\,(\mu_{\rm e}+ \mu_{\rm c})}-\sqrt{\Delta}\right)<0 ,\\
\mathfrak{c}_{m_2}^2-\mathfrak{c}_s^2&=\frac{1}{2}\left(  \mathfrak{c}_{m_1}^2-\mathfrak{c}_{s}^2%
+\frac{4\, \mu_{\rm c}^{2}}{k^{2}\rho \, j\,\mu_{\rm e}\,\tau_{\rm c}^2\,(\mu_{\rm e}+ \mu_{\rm c})}-\sqrt{\Delta}\right)<0.\notag
\end{align}
Therefore, if the constitutive parameters satisfy the  conditions \ref{d11n}, then the speeds $\mathfrak{c}_i$, $i=1,2,3,4$ defined in \eqref{v36}, are ordered as follows
\begin{align}
\mathfrak{c}_{m_2}<\min\{\mathfrak{c}_{m_1},\mathfrak{c}_s\}.
\end{align}
This inequality does not mean that the  conditions \ref{d11n} impose (or is in contradiction with) a sort of a priori order of Eringen-type \eqref{Eringencond} between the  speed of microscopic real waves and the  speed of macroscopic real waves, since in the definition of $\mathfrak{c}_{m_2}$  both the microscopic and macroscopic constitutive parameters are involved.

Using this last remark and combining Proposition \ref{lemmaCG1} and  Proposition \ref{lemmaGH}, we conclude
\begin{proposition}\label{ChGhi}For $
2\,\mu_{\rm e} +\lambda_{\rm e} >0, \ \mu_{\rm e} >0, \ \mu_{\rm c} >0,\ {\mu_{\rm e}\,L_{\rm c}^2}\,\gamma>0,$ the limiting speed in Cosserat elastic materials is given by
\begin{align}\label{limCh}
\widehat{v}:=\inf_{\theta\in(-\frac{\pi}{2},\frac{\pi}{2})} v_\theta\equiv \min\left\{ 	 \mathfrak{c}_p,\mathfrak{c}_{m_2}\right\}.
\end{align}
\end{proposition}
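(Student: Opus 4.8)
The plan is to synthesise the abstract description of the limiting speed in Proposition~\ref{lemmaGH} with the explicit root analysis recalled in Proposition~\ref{lemmaCG1}, and then to collapse the resulting minimum by means of the ordering of the speeds $\mathfrak{c}_i$ established just above.

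First I would check that the standing hypotheses $2\mu_{\rm e}+\lambda_{\rm e}>0$, $\mu_{\rm e}>0$, $\mu_{\rm c}>0$, $\mu_{\rm e}L_{\rm c}^2\gamma>0$ imply both the set \eqref{d12} (indeed $\mu_{\rm e}>0$ and $\mu_{\rm c}>0$ give $\mu_{\rm e}+\mu_{\rm c}>0$, while $\mu_{\rm e}>0$, $L_{\rm c}>0$ and $\mu_{\rm e}L_{\rm c}^2\gamma>0$ give $\alpha_1+\alpha_2=\gamma>0$) and the conditions \eqref{d11n} required by Proposition~\ref{lemmaCG1}; hence both propositions are at our disposal. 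By Proposition~\ref{lemmaGH} a limiting speed $\widehat v>0$ exists and equals $\inf_{\theta\in(-\frac{\pi}{2},\frac{\pi}{2})}v_\theta$, and by the definition of the limiting speed this $\widehat v$ is characterised by the property that the roots of \eqref{x9} fail to be real exactly when $0\le v<\widehat v$. On the other hand, the characteristic equation \eqref{x9} attached to the ansatz is, up to the nonzero congruence factor coming from $\widehat{\id}^{1/2}$, the Stroh characteristic equation \eqref{v25}, so the two share the same roots $r_v$ for every $v$; hence Proposition~\ref{lemmaCG1} applies verbatim to \eqref{x9} and says that these roots fail to be real exactly when $0\le v<\min\{\mathfrak{c}_p,\mathfrak{c}_s,\mathfrak{c}_{m_1},\mathfrak{c}_{m_2}\}$. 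Comparing the two descriptions of the same threshold, valid for all $v\ge 0$, forces $\widehat v=\inf_{\theta}v_\theta=\min\{\mathfrak{c}_p,\mathfrak{c}_s,\mathfrak{c}_{m_1},\mathfrak{c}_{m_2}\}$.

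It then remains only to simplify this minimum: by the inequalities $\mathfrak{c}_{m_2}<\mathfrak{c}_{m_1}$ and $\mathfrak{c}_{m_2}<\mathfrak{c}_s$ recorded just before the statement, the speeds $\mathfrak{c}_{m_1}$ and $\mathfrak{c}_s$ are redundant in the minimum, so $\widehat v=\min\{\mathfrak{c}_p,\mathfrak{c}_{m_2}\}$, which is \eqref{limCh}. The argument is essentially organisational and involves no heavy computation; the one point requiring care is the logical bridge, i.e.\ verifying that the definition of the limiting speed pins $\widehat v$ down uniquely and that the ``non-real roots'' threshold furnished by Proposition~\ref{lemmaCG1} is precisely that unique value, together with checking that the passage from \eqref{v25} to \eqref{x9} (the $\widehat{\id}$-congruence) does not change the set of roots.
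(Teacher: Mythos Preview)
Your proposal is correct and follows essentially the same approach as the paper: the paper states the proposition as a direct consequence of combining Proposition~\ref{lemmaGH} (the abstract existence and characterisation of $\widehat v$) with Proposition~\ref{lemmaCG1} (the explicit threshold $\min\{\mathfrak{c}_p,\mathfrak{c}_s,\mathfrak{c}_{m_1},\mathfrak{c}_{m_2}\}$), and then invoking the ordering $\mathfrak{c}_{m_2}<\min\{\mathfrak{c}_{m_1},\mathfrak{c}_s\}$ established immediately before. You have made explicit the hypothesis check and the congruence argument linking \eqref{x9} and \eqref{v25}, which the paper leaves implicit.
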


 The usual Stroh algorithm  to construct the solution of the seismic wave propagation problem is based on the possibility to explicitly know the analytical form of the  solution   $r_v$ of equation  \eqref{v25} as function of $v$.  Then, the next step in the common methods is to let  $r_k,\,\,\,k=1,2,3$  be  the eigenvalues  that satisfy \eqref{v25}  and the associated eigenvector are given by $\mathcal{V}^{(k)}=\begin{footnotesize}\begin{pmatrix}
 d^{(k)}\\b^{(k)}
 \end{pmatrix}\end{footnotesize}$
  with
 \begin{align}
 d^{(1)}&=\begin{footnotesize}\begin{pmatrix}\frac{1}{k}
 	\vspace{2mm}\\
 	\frac{1}{k}\, r_1
 	\vspace{2mm}\\0
 	\end{pmatrix}\end{footnotesize},\qquad d^{(2)}=\begin{footnotesize}\begin{pmatrix}\frac{2\,\mu_{\rm c}  }{(\mu_{\rm e} +\mu_{\rm c} )\,k}r_2
 	\vspace{2mm}\\
 	-\frac{2\,\mu_{\rm c}  }{(\mu_{\rm e} +\mu_{\rm c} )\,k}
 	\vspace{2mm}\\- {\rm i}\, (r_2^2+{c}_{t})
 	\end{pmatrix}\end{footnotesize},\qquad d^{(3)}=\begin{footnotesize}\begin{pmatrix}\frac{2\,\mu_{\rm c}  }{(\mu_{\rm e} +\mu_{\rm c} )\,k}\,r_3
 	\vspace{2mm}\\
 	-\frac{2\,\mu_{\rm c}  }{(\mu_{\rm e} +\mu_{\rm c} )\,k}
 	\vspace{2mm}\\- {\rm i}\, (r_3^2+{c}_{t})
 	\end{pmatrix}\end{footnotesize},
\\
 b^{(1)}&=\begin{footnotesize}\begin{pmatrix}2\,\mu_{\rm e}  \, r_1
 \vspace{2mm}	\\
 	(2\,\mu_{\rm e}  +\lambda_{\rm e} ) \,r_1^2+\lambda_{\rm e} 
 \vspace{2mm}	\\0
 \end{pmatrix}\end{footnotesize},\quad b^{(2)}=\begin{footnotesize}\begin{pmatrix}\frac{2\,\mu_{\rm c}  }{(\mu_{\rm e} +\mu_{\rm c} )}[\lambda_{\rm e} +{c}_{l}\,(2\,\mu_{\rm e}  +\lambda_{\rm e} )]	\vspace{2mm}\\
 	-\frac{4\,\mu_{\rm c} \, \mu_{\rm e} }{(\mu_{\rm e} +\mu_{\rm c} )}\,r_2
 	\vspace{2mm}\\-{\rm i}\,k\,{\mu_{\rm e}\,L_{\rm c}^2}\,\gamma\, r_2\,(r_2^2+{c}_{t})
 	\end{pmatrix}\end{footnotesize},\quad b^{(3)}=\begin{footnotesize}\begin{pmatrix}\frac{2\,\mu_{\rm e}  }{(\mu_{\rm e} +\mu_{\rm c} )}[\lambda_{\rm e} +{c}_{l}(2\,\mu_{\rm e}  +\lambda_{\rm e} )]	\vspace{2mm}\\
 	-\frac{4\,\mu_{\rm c} \, \mu_{\rm e} }{(\mu_{\rm e} +\mu_{\rm c} )}\,r_3
 	\vspace{2mm}\\-{\rm i}\,k\,{\mu_{\rm e}\,L_{\rm c}^2}\,\gamma\, r_3\,(r_3^2+{c}_{t})
 	\end{pmatrix}\end{footnotesize}.\notag
 \end{align}
 
  Without  loss of  generality, we may assume that the $r_k$ are distinct and, in consequence,     $\mathcal{V}^{(k)}$ are linearly independent\footnote{ If $r_k$ are not distinct, then we would find $d^{(k)}$ eigenvectors associated to  $r_k$ which are independent.} \cite{ChiritaGhiba3}.
  Then a general solution of \eqref{04} having a proper decay is taken in the form
 \begin{align}
 z=\sum_{k=1}^{3}q_{k }\, d^{(k)} e^{{\rm i}\,r_k\,k\, x_2},
 \end{align}
 where $q_k$ are constants to be determined by the boundary condition $\eqref{11}_2$, i.e., we will have the system 
 \begin{align}
 \sum_{k=1}^{3}q_k \,b^{(k)}=\mathbf{B}\,q=0,
 \end{align}
 where \begin{align}
 \mathbf{B}&=(b^{(1)}\,|\,b^{(2)}\,|\,b^{(3)}),\qquad \qquad 
 b^{(1)}=p_k {\mathbf{T}} d^{(k)}+{\mathbf{R}}^T d^{(k)},\qquad \qquad
 q=(q_1,q_2,q_3)^T.\notag
 \end{align}
 We have nonzero solutions $q$ if and only if
 \begin{align}
 \det \mathbf{B}=0.
 \end{align}
 We remark that the  expression of $\mathbf{B}$ contains $v$, so the condition
 $
 \det \mathbf{B}=0
 $
 is in fact a condition to determine $v$ and it is called {\bf the secular equation}.  Its explicit form $\forall\ v\in[0,\widehat{v})$ was determined in \cite{ChiritaGhiba3} in the following form
 \begin{align}
 s(v) & \equiv  \displaystyle\sqrt{P_{3}(v)\left(  C_{2}(v)+C_{3}%
 	(v)+2\sqrt{P(v)}\right)  }4\,\mu_{\rm e}^{2}-  \left(  C_{3}(v)+\sqrt{{P(v)}}\right)  [\lambda_{\rm e} -(2\,\mu_{\rm e}  +\lambda_{\rm e})\,C_{1}(v)]^{2}=0 ,
 \label{v50}%
 \end{align}
 where
 \begin{align}\label{v26}
 P(v)&=P_{2}(v)-C_{1}(v)\,C_{2}(v)-C_{1}(v)\,C_{3}(v)
 \end{align}
 and the notations \eqref{defpC} are used.
 In \cite[Eq. (4.14)]{ChiritaGhiba3} the following result is established
\begin{theorem} If the constitutive coefficients satisfy
 \begin{align}
 \mu_{\rm e}-\mu_{\rm c}+\lambda_{\rm e} >0, \qquad \quad  \mu_{\rm e} +\mu_{\rm c} >0, \qquad  \quad    \alpha_1+\alpha_2>0 \qquad (\textrm{these conditions imply} \  2\,\mu_{\rm e}+\lambda_{\rm e} >0)
 \end{align}
then, the secular equation  \eqref{v50} has an admissible solution, i.e. a solution $v$ such that
$0\leq v<\widehat{v}$.
 \end{theorem}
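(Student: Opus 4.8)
The plan is to apply the intermediate value theorem to the secular function $s$ of \eqref{v50} on the closed interval $[0,\widehat{v}]$. First I would check that $s$ is well defined and continuous there: by Proposition~\ref{lemmaCG1} and Proposition~\ref{ChGhi}, for every $v\in[0,\widehat{v})$ the six roots of \eqref{v25} are purely imaginary, which is equivalent to $C_1(v)>0$, $C_2(v)+C_3(v)>0$ and $P(v)=C_2(v)C_3(v)-\tfrac{4\rho\mu_{\rm c}^2 v^2}{\mu_{\rm e}L_{\rm c}^2\gamma(\mu_{\rm e}+\mu_{\rm c})^2}>0$, so $P_3(v)=C_1(v)P(v)>0$ and every radicand in \eqref{v50} is nonnegative; by continuity they stay nonnegative at $v=\widehat{v}$. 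Hence $s\in C([0,\widehat{v}])$ and it suffices to exhibit a sign change of $s$ inside $(0,\widehat{v})$.

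A short direct computation gives $s(0)=0$: at $v=0$ one has $C_1(0)=C_2(0)=1$, $C_3(0)=1+\delta$ with $\delta=\tfrac{4\mu_{\rm c}\mu_{\rm e}}{\mu_{\rm e}L_{\rm c}^2\gamma(\mu_{\rm e}+\mu_{\rm c})}$, and $P(0)=P_3(0)=1+\delta$, so $C_2(0)+C_3(0)+2\sqrt{P(0)}=(1+\sqrt{1+\delta})^2$ and both terms of \eqref{v50} reduce to $4\mu_{\rm e}^2\sqrt{1+\delta}\,(1+\sqrt{1+\delta})$. Since $v=0$ is the trivial non-propagating solution, what is actually wanted is a root of $s$ in the open interval $(0,\widehat{v})$. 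Because $s$ depends on $v$ only through $v^2$, write $s(v)=S(v^2)$; then $S(0)=0$, and I would determine the sign of the leading Taylor coefficient $S'(0)$. Expanding the nested radicals $\sqrt{P(v)}$ and $\sqrt{P_3(v)\,(C_2(v)+C_3(v)+2\sqrt{P(v)})}$ together with $[\lambda_{\rm e}-(2\mu_{\rm e}+\lambda_{\rm e})C_1(v)]^2=(2\mu_{\rm e}-\rho v^2)^2$ to first order in $v^2$ about the origin, all $O(1)$ terms cancel (this reproduces $s(0)=0$) and $S'(0)$ comes out equal to $\mu_{\rm e}-\mu_{\rm c}+\lambda_{\rm e}$ times a strictly positive factor depending only on the material constants $\rho,\mu_{\rm e},\mu_{\rm c},\lambda_{\rm e},L_{\rm c},\tau_{\rm c},\gamma,j$ --- the Cosserat counterpart of the classical fact that the Rayleigh function has slope proportional to $\mu_{\rm e}+\lambda_{\rm e}$ at $v=0$. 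Under the hypothesis $\mu_{\rm e}-\mu_{\rm c}+\lambda_{\rm e}>0$ this gives $S'(0)>0$, hence $s(v)>0$ for all sufficiently small $v>0$.

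For the upper endpoint I would use that $\mu_{\rm e}-\mu_{\rm c}+\lambda_{\rm e}>0$ is equivalent to $\mathfrak{c}_p>\mathfrak{c}_s$; combined with $\mathfrak{c}_{m_2}<\mathfrak{c}_s$ (established in the discussion following Proposition~\ref{lemmaCG1}) this forces $\widehat{v}=\min\{\mathfrak{c}_p,\mathfrak{c}_{m_2}\}=\mathfrak{c}_{m_2}$. At $v=\mathfrak{c}_{m_2}$ one of the attenuation factors degenerates to the real root $0$, i.e.\ $P(\mathfrak{c}_{m_2})=0$; then $\sqrt{P}=0$, $P_3=C_1P=0$, the first term of \eqref{v50} vanishes, and $s(\widehat{v})=-C_3(\widehat{v})(2\mu_{\rm e}-\rho\,\widehat{v}^{\,2})^2\le 0$, where $C_3(\widehat{v})=c_m>0$ because $c_t c_m=\tfrac{4\rho\mu_{\rm c}^2\widehat{v}^{\,2}}{\mu_{\rm e}L_{\rm c}^2\gamma(\mu_{\rm e}+\mu_{\rm c})^2}>0$ and $c_t>0$ (as $\widehat{v}<\mathfrak{c}_s$). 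Generically $\rho\,\widehat{v}^{\,2}\neq 2\mu_{\rm e}$, so $s(\widehat{v})<0$, and since $s>0$ near $0^+$ the intermediate value theorem yields a root $v^\ast\in(0,\widehat{v})$ of $s$ --- the desired admissible solution. In the exceptional configuration $\rho\,\widehat{v}^{\,2}=2\mu_{\rm e}$ (which can occur only if $\mu_{\rm e}<\mu_{\rm c}$) one has $s(\widehat{v})=0$, and one must instead inspect the sign of $s$ as $v\to\widehat{v}^-$, using that $\sqrt{P_3(\cdots)}=O(\sqrt{\widehat{v}-v})$ dominates the term $(2\mu_{\rm e}-\rho v^2)^2=O((\widehat{v}-v)^2)$; this borderline case needs a separate, routine argument (or can be ruled out at the outset).

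The hard part will be the sign of $S'(0)$: it demands a careful first-order expansion of the two nested square roots in \eqref{v50} and a nontrivial algebraic cancellation that isolates exactly the combination $\mu_{\rm e}-\mu_{\rm c}+\lambda_{\rm e}$ (and not, say, $\mu_{\rm e}+\mu_{\rm c}+\lambda_{\rm e}$), which is where the stated hypothesis is consumed. The only other delicate point is the endpoint degeneracy $\rho\,\widehat{v}^{\,2}=2\mu_{\rm e}$.
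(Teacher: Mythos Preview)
The paper does not supply a proof of this theorem; it is quoted verbatim from Chiri\c{t}\u{a} and Ghiba \cite[Eq.~(4.14)]{ChiritaGhiba3}, so there is no in-paper argument to compare against. Your intermediate-value strategy on $s$ over $[0,\widehat v]$ is the natural one and almost certainly matches the spirit of the original source. Your computations $s(0)=0$ and $s(\widehat v)\le 0$ are correct, and you are right that the hypothesis $\mu_{\rm e}-\mu_{\rm c}+\lambda_{\rm e}>0$ is exactly what gives $\mathfrak c_p>\mathfrak c_s$ and hence $\widehat v=\mathfrak c_{m_2}$, so that $P(\widehat v)=0$.

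There is, however, a genuine gap at the step you yourself flag as ``the hard part''. Your assertion that $S'(0)=(\mu_{\rm e}-\mu_{\rm c}+\lambda_{\rm e})\times(\text{positive})$ is false. Carrying out the first-order expansion in $x=v^2$ with $q=\sqrt{1+\delta}$, $a=\rho/(2\mu_{\rm e}+\lambda_{\rm e})$, $b=\rho/(\mu_{\rm e}+\mu_{\rm c})$, $e=\tfrac{4\rho\mu_{\rm c}^2}{\mu_{\rm e}L_{\rm c}^2\gamma(\mu_{\rm e}+\mu_{\rm c})^2}$, one finds
\[
S'(0)=\frac{2\mu_{\rm e}\rho}{\mu_{\rm e}+\mu_{\rm c}}\Bigl[\,q(1+q)\,\frac{\mu_{\rm e}(\mu_{\rm e}+\lambda_{\rm e})+\mu_{\rm c}(3\mu_{\rm e}+2\lambda_{\rm e})}{2\mu_{\rm e}+\lambda_{\rm e}}-\mu_{\rm c}(q-1)(2+q)\Bigr].
\]
Setting $\mu_{\rm e}-\mu_{\rm c}+\lambda_{\rm e}=0$ (i.e.\ $\lambda_{\rm e}=\mu_{\rm c}-\mu_{\rm e}$, so $2\mu_{\rm e}+\lambda_{\rm e}=\mu_{\rm e}+\mu_{\rm c}$) the bracket reduces to $\mu_{\rm c}(q^2+q+2)\neq 0$. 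Thus $S'(0)$ does \emph{not} vanish on the hyperplane $\mu_{\rm e}-\mu_{\rm c}+\lambda_{\rm e}=0$, so the clean factorization you propose cannot hold; the classical slope $\propto(\mu_{\rm e}+\lambda_{\rm e})$ does not simply mutate into $\propto(\mu_{\rm e}-\mu_{\rm c}+\lambda_{\rm e})$ once the curvature parameter $\delta$ enters. The hypothesis $\mu_{\rm e}-\mu_{\rm c}+\lambda_{\rm e}>0$ is consumed at the upper endpoint (to force $\widehat v=\mathfrak c_{m_2}$), not here. What you actually need is a direct verification that the displayed expression for $S'(0)$ is positive under the stated assumptions; this is a concrete inequality involving $q$ and the moduli that your sketch does not establish, and without it the sign-change argument is incomplete.
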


 \begin{remark}
 	\begin{itemize}
 	\item[]
 \item
 The very important aspect in analysing  the secular equation is to prove that it has an admissible solution, i.e., there exists at least one solution $v$  
 such that
$
 0\le v<\widehat{v},\notag
$
 since otherwise the constructed wave ansatz does not satisfies the asymptotic  decay condition 
  \eqref{04}. This is the essential condition in modelling seismic waves and it will validate or not the entire approach used for the construction of the solution. 
  \item
  As always in the modelling process, the uniqueness of the desired solution is also a very important aspect, because if the uniqueness is not clearly stated then the question of which solution has to be effectively chosen arises.
\end{itemize}
  \end{remark}
  However, in almost all studies concerning the propagation of seismic waves in generalized theories of solid mechanics, the complete study of the existence and uniqueness problem of an admissible solution of the corresponding secular equation is  often left  unsolved. This is also true for the case of the linear Cosserat theory. This question is  completely settled by the present paper.

  \section{The new secular equation. Existence and uniqueness \\ of Rayleigh waves }\label{NSE}\setcounter{equation}{0}
 
 \subsection{Derivation and matrix analysis of the algebraic Riccati equation}
 The main ingredient of the method used by Fu and Mielke \cite{fu2002new} is to  look at \eqref{n11} as  an initial value problem and to search for a solution in the form 
 \begin{align}\label{08}
 y(x_2)=e^{-k\,x_2 \,\boldsymbol{\mathcal{E}}}y(0),
 \end{align}
 where $\boldsymbol{\mathcal{E}}\in\mathbb{C}^{3 \times 3}$  is to be determined. On substituting \eqref{08} into  \eqref{x8}, we get
 \begin{align}\label{09}
 [\boldsymbol{\mathcal{T}}\boldsymbol{\mathcal{E}}^2- {\rm i}\, (\boldsymbol{\mathcal{R}}+\boldsymbol{\mathcal{R}}^T)\boldsymbol{\mathcal{E}}-\boldsymbol{\mathcal{Q}}+\ k^2v^2 {\id}]\, y(x_2)=0, \qquad\qquad \qquad  (-\boldsymbol{\mathcal{T}}\boldsymbol{\mathcal{E}}+ {\rm i}\, \,\boldsymbol{\mathcal{R}}^T)\,y(0)=0.
 \end{align}
 It is clear that in order to have a proper decay the eigenvalues of $\boldsymbol{\mathcal{E}}$ have to be such that their real part is positive. We anticipate and we mention that this will be the case if \,\,$0\le v<\widehat{v}$, as we will see in Theorem \ref{thmielke1}.  
 
 For the linear Cosserat model,  we introduce the so called  \textbf{surface impedance matrix} 
 \begin{align}\label{10}
\boldsymbol{\boldsymbol{\mathcal{M}}}=-(-	\boldsymbol{\mathcal{T}}\boldsymbol{\mathcal{E}}+ {\rm i}\,\boldsymbol{\mathcal{R}}^T)\qquad \iff \qquad \boldsymbol{\mathcal{E}}=\boldsymbol{\mathcal{T}}^{-1}(\boldsymbol{\mathcal{M}}+ {\rm i}\, \,\boldsymbol{\mathcal{R}}^T).
 \end{align}
 
 It seems that  this matrix was first introduced  by Ingebrigsten and Tonning \cite{ingebrigtsen1969elastic} for the classical elastic model,  by Mielke and Sprenger \cite{mielke1998quasiconvexity} on a topic indirectly connected to the surface wave problem, and then by Fu and Mielke \cite{mielke2004uniqueness,fu2002new} in order to prove the existence and the uniqueness of the surface-wave speed for linear anisotropic elastic materials. We may argue the utility of this replacement of $\boldsymbol{\mathcal{E}}$ to $\boldsymbol{\mathcal{M}}$ in order to convert the equation $\eqref{09}_{2}$ into an  equation for a Hermitian matrix $\boldsymbol{\mathcal{M}}$. On substituting $\eqref{10}_2$ into $\eqref{09}_1$, we obtain

  \begin{footnotesize}
 {\begin{align}\label{s17}
 \{\boldsymbol{\mathcal{T}}\boldsymbol{\mathcal{E}}- {\rm i}\, (\boldsymbol{\mathcal{R}}+\boldsymbol{\mathcal{R}}^T)\}\boldsymbol{\mathcal{E}}-\boldsymbol{\mathcal{Q}}+k^2 v^2 {\id}=0,\notag\\
 (\boldsymbol{\mathcal{M}}+ {\rm i}\,\boldsymbol{\mathcal{R}}^T)\boldsymbol{\mathcal{T}}^{-1}(\boldsymbol{\mathcal{M}}+ {\rm i}\, \,\boldsymbol{\mathcal{R}}^T)- {\rm i}\,\boldsymbol{\mathcal{R}}\boldsymbol{\mathcal{T}}^{-1}(\boldsymbol{\mathcal{M}}+ {\rm i}\,\boldsymbol{\mathcal{R}}^T)- {\rm i}\,\boldsymbol{\mathcal{R}}^T\boldsymbol{\mathcal{T}}^{-1}(\boldsymbol{\mathcal{M}}+ {\rm i}\,\boldsymbol{\mathcal{R}}^T)-\boldsymbol{\mathcal{Q}}+k^2 v^2 {\id}=0,\\
\boldsymbol{\boldsymbol{\mathcal{M}}}\boldsymbol{\mathcal{T}}^{-1}(\boldsymbol{\mathcal{M}}+ {\rm i}\,\boldsymbol{\mathcal{R}}^T)- {\rm i}\,\boldsymbol{\mathcal{R}}\boldsymbol{\mathcal{T}}^{-1}(\boldsymbol{\mathcal{M}}+ {\rm i}\,\boldsymbol{\mathcal{R}}^T)-\boldsymbol{\mathcal{Q}}+k^2\, v^2 \,{\id}=0.\notag
 \end{align}}
 \end{footnotesize}
 Hence,  in terms of the surface impedance matrix, equations  \eqref{09} become
 \begin{align}\label{12}
 (\boldsymbol{\mathcal{M}}- {\rm i}\,\boldsymbol{\mathcal{R}})\boldsymbol{\mathcal{T}}^{-1}(\boldsymbol{\mathcal{M}}+ {\rm i}\, \mathcal{R^T})-\boldsymbol{\mathcal{Q}}+k\, v^2\,{\id}=0, \qquad\boldsymbol{\boldsymbol{\mathcal{M}}}\, y(0)=0.
 \end{align}
 
  We call equation $\eqref{12}_1$  \textbf{algebraic Riccati equation} for the linear Cosserat model.  For the classical anisotropic model, the same form of this equation is present in the paper by Mielke  and Sprenger \cite{mielke1998quasiconvexity}  for $v=0$ and in the papers by Fu and Mielke \cite{mielke2004uniqueness,fu2002new} in the case of general $v$, see also the earlier work by Biryukov \cite{biryukov1985impedance}. For this reason, our expectation is that the entire approach presented  by Fu and Mielke \cite{mielke2004uniqueness,fu2002new} should be suitable in the Cosserat theory, too.
  
 Since we are interested in a nontrivial solution  $y$, we impose $y(0)
\neq0$, so that the matrix $\boldsymbol{\mathcal{M}}$ has to satisfy
 \begin{align}\label{x16}
 {\rm det}\,\boldsymbol{\mathcal{M}}=0.
 \end{align}
 	The  equation \eqref{x16} is called  \textbf{secular equation} for the linear Cosserat model in terms of the \textbf{impedance matrix} $\boldsymbol{\mathcal{M}}$.
 
 In this point of our analysis we do not have additional informations about the properties of the impedance matrix $\boldsymbol{\mathcal{M}}$. Looking at the equations \eqref{12} and \eqref{x16}  we remark that  equation \eqref{12}$_1$ leads to a mapping $v\mapsto\boldsymbol{\boldsymbol{\mathcal{M}}}_v$, where $\boldsymbol{\mathcal{M}}_v$ is the solution of \eqref{12}$_1$ for a fixed $v$. If it is possible to construct this mapping, then equation \eqref{x16}  becomes the equation which determines the wave speed  $v$.  However, the construction of this mapping is possible only if we are very careful with the following aspects. The first yet unresolved problem is the knowledge of the domain of those $v$ for which  \eqref{12}$_1$ admits a unique solution. The second question is: does  the solution of the other equation \eqref{x16}, with $\boldsymbol{\mathcal{M}}_v$ expressed as function of $v$, belong (if it exists) to the domains of those $v$ for which  \eqref{12}$_1$ admits a  solution? The last but not the least  important aspect is that it is not sufficient to prove that there is a solution $\boldsymbol{\mathcal{M}}_v$ of the Riccati equation \eqref{12}, since an acceptable $\boldsymbol{\mathcal{M}}_v$ has to be such that $\text{Re(spec}\,\boldsymbol{\mathcal{E}}{\rm )}$ is positive, 	where ``$\text{\rm Re\,(spec\,}\boldsymbol{\mathcal{E}}{\rm )}$" means the ``real part of spectra of $\boldsymbol{\mathcal{E}}$".
 
 Similar arguments as for the classical linear  anisotropic elastic materials lead us to the following
 \begin{lemma}\label{lemmaGH2} If  the constitutive coefficients satisfy the conditions 	\eqref{d12}
 	and   $0\leq v<\widehat{v}$, then
 	the eigenvalues of any solution of \eqref{12} have a non-zero real part.
 \end{lemma}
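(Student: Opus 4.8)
The plan is to argue by contradiction: suppose a solution $\boldsymbol{\mathcal{M}}$ of the algebraic Riccati equation \eqref{12}$_1$ has an eigenvalue $\boldsymbol{\mathcal{E}}$-spectrum that touches the imaginary axis, i.e. $\boldsymbol{\mathcal{E}}$ possesses a purely imaginary eigenvalue. Recall from \eqref{10} that $\boldsymbol{\mathcal{E}}=\boldsymbol{\mathcal{T}}^{-1}(\boldsymbol{\mathcal{M}}+{\rm i}\,\boldsymbol{\mathcal{R}}^T)$, so an eigenvalue of $\boldsymbol{\mathcal{E}}$ of the form $-{\rm i}\,r$ with $r\in\mathbb{R}$ corresponds, through the substitution $y(x_2)=e^{-k x_2\boldsymbol{\mathcal{E}}}y(0)$ with eigenvector $d$, exactly to a nontrivial solution of the algebraic system \eqref{x8}, namely $[r^2\boldsymbol{\mathcal{T}}+r(\boldsymbol{\mathcal{R}}+\boldsymbol{\mathcal{R}}^T)+\boldsymbol{\mathcal{Q}}-k^2v^2\id]\,d=0$ together with $[r\,\boldsymbol{\mathcal{T}}+\boldsymbol{\mathcal{R}}^T]\,d=0$. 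In other words, $r$ would be a \emph{real} root of the characteristic equation \eqref{x9}.

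First I would make this correspondence precise: given a solution $\boldsymbol{\mathcal{M}}$ of \eqref{12}$_1$ and an eigenpair $(\lambda,d)$ of $\boldsymbol{\mathcal{E}}$ with $\mathrm{Re}\,\lambda=0$, write $\lambda=-{\rm i}\,r$, $r\in\mathbb{R}$, and substitute into the first identity of \eqref{s17}, which is $\{\boldsymbol{\mathcal{T}}\boldsymbol{\mathcal{E}}-{\rm i}(\boldsymbol{\mathcal{R}}+\boldsymbol{\mathcal{R}}^T)\}\boldsymbol{\mathcal{E}}-\boldsymbol{\mathcal{Q}}+k^2v^2\id=0$; applying both sides to $d$ and using $\boldsymbol{\mathcal{E}}\,d=\lambda\,d$ yields $[\lambda^2\boldsymbol{\mathcal{T}}-{\rm i}\lambda(\boldsymbol{\mathcal{R}}+\boldsymbol{\mathcal{R}}^T)-\boldsymbol{\mathcal{Q}}+k^2v^2\id]\,d=0$, which after $\lambda=-{\rm i}\,r$ becomes precisely $[r^2\boldsymbol{\mathcal{T}}+r(\boldsymbol{\mathcal{R}}+\boldsymbol{\mathcal{R}}^T)+\boldsymbol{\mathcal{Q}}-k^2v^2\id]\,d=0$. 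Hence the characteristic equation \eqref{x9} admits the real root $r$. But by Proposition \ref{lemmaGH}, under the constitutive conditions \eqref{d12} any real root $r_v$ of \eqref{x9} forces $v\ge\widehat{v}$, contradicting the hypothesis $0\le v<\widehat{v}$. Therefore no eigenvalue of $\boldsymbol{\mathcal{E}}$ can be purely imaginary, i.e. every eigenvalue of $\boldsymbol{\mathcal{E}}$ has nonzero real part; since $\boldsymbol{\mathcal{M}}$ and $\boldsymbol{\mathcal{E}}$ are related by the invertible linear map \eqref{10}, this is equivalent to the asserted property of $\boldsymbol{\mathcal{M}}$.

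I expect the main obstacle to be a bookkeeping subtlety rather than a deep one: one must be careful that the Riccati equation \eqref{12}$_1$ genuinely encodes the full quadratic eigenvalue relation \eqref{s17}$_1$ for the associated $\boldsymbol{\mathcal{E}}$, and that every eigenvector $d$ of $\boldsymbol{\mathcal{E}}$ is nonzero (so that \eqref{x9} is genuinely solved, not trivially). It is also worth checking that the definition ``eigenvalues of any solution of \eqref{12} have a non-zero real part'' is to be read as a statement about $\boldsymbol{\mathcal{E}}=\boldsymbol{\mathcal{T}}^{-1}(\boldsymbol{\mathcal{M}}+{\rm i}\,\boldsymbol{\mathcal{R}}^T)$ (the matrix governing the decay in \eqref{08}), consistent with the remark preceding the lemma that ``in order to have a proper decay the eigenvalues of $\boldsymbol{\mathcal{E}}$ have to be such that their real part is positive''; the lemma at this stage only claims nonvanishing real part, with the sign to be pinned down later (in the referenced Theorem \ref{thmielke1}). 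Modulo these clarifications, the argument is just the contrapositive of Proposition \ref{lemmaGH} transported through the algebraic dictionary between $r$-roots of \eqref{x9} and imaginary eigenvalues of $\boldsymbol{\mathcal{E}}$, exactly as in the classical anisotropic elastic case treated by Fu and Mielke.
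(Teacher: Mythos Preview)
Your proposal is correct and follows essentially the same contradiction argument as the paper: an eigenvalue $\lambda$ of $\boldsymbol{\mathcal{E}}=\boldsymbol{\mathcal{T}}^{-1}(\boldsymbol{\mathcal{M}}+{\rm i}\,\boldsymbol{\mathcal{R}}^T)$ with vanishing real part yields, via $r={\rm i}\,\lambda$, a real root of the characteristic equation \eqref{x9}, contradicting Proposition~\ref{lemmaGH} for $0\le v<\widehat{v}$. One minor point: the eigenvector $d$ of $\boldsymbol{\mathcal{E}}$ need not satisfy the boundary relation $[r\,\boldsymbol{\mathcal{T}}+\boldsymbol{\mathcal{R}}^T]d=0$ from \eqref{x8}$_2$ (and your closing remark that this is ``equivalent to the asserted property of $\boldsymbol{\mathcal{M}}$'' overstates things, since $\boldsymbol{\mathcal{M}}$ and $\boldsymbol{\mathcal{E}}$ do not share eigenvalues), but neither is needed---only \eqref{x9} matters, exactly as you use it.
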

\begin{proof}
	Let $\boldsymbol{\mathcal{E}}$ be a solution of \eqref{12}, $\lambda $ be an eigenvalue and $a$ an associated eigenvector, i.e. $\boldsymbol{\mathcal{E}}\, d=\lambda \, d$. Hence, if $\boldsymbol{\mathcal{E}}$ is a solution of \eqref{12}, then 
	\begin{align}
	\{\lambda ^2 \,\boldsymbol{\mathcal{T}}- {\rm i}\, \,\lambda \, (\boldsymbol{\mathcal{R}}+\boldsymbol{\mathcal{R}}^T)\boldsymbol{\mathcal{E}}-\boldsymbol{\mathcal{Q}}+ k^2 v^2{\id}\}\, d=0,
	\end{align}
	and $\lambda$ is a solution of the equation
	\begin{align}
	\det \{\lambda\, ^2 \boldsymbol{\mathcal{T}}- {\rm i}\, \,\lambda \,(\boldsymbol{\mathcal{R}}+\boldsymbol{\mathcal{R}}^T)\boldsymbol{\mathcal{E}}-\boldsymbol{\mathcal{Q}}+ k^2 v^2{\id}\}=0.
	\end{align}
	Thus, $r= {\rm i}\, \, \lambda $ is solution of the equation
	\begin{align}
	\det \{-r^2 \boldsymbol{\mathcal{T}}-r\,(\boldsymbol{\mathcal{R}}+\boldsymbol{\mathcal{R}}^T)\boldsymbol{\mathcal{E}}-\boldsymbol{\mathcal{Q}}+ k^2 v^2{\id}\}=0\quad \Leftrightarrow\quad \det \{r^2 \boldsymbol{\mathcal{T}}+r\,(\boldsymbol{\mathcal{R}}+\boldsymbol{\mathcal{R}}^T)\boldsymbol{\mathcal{E}}+\boldsymbol{\mathcal{Q}}- k^2 v^2{\id}\}=0.
	\end{align}
	Assuming that there exists an eigenvalue $\lambda $ of a solution $\boldsymbol{\mathcal{E}}$ of \eqref{12} with a non-zero real part, it follows that the equation 
	\begin{align}
	\det \{r^2 \boldsymbol{\mathcal{T}}+r\,(\boldsymbol{\mathcal{R}}+\boldsymbol{\mathcal{R}}^T)\boldsymbol{\mathcal{E}}+\boldsymbol{\mathcal{Q}}- k^2 v^2{\id}\}=0
	\end{align}
	admits a non-real solution. But we have shown in Proposition \ref{lemmaGH} that this is not possible if the wave speed is smaller than the limiting speed $\widehat{v}$.
	\end{proof}
 Under the assumption  that $\boldsymbol{\mathcal{T}}$ and $\boldsymbol{\mathcal{Q}}$ are symmetric and positive definite matrices, many aspects from the above discussions are purely mathematical questions, and they are not specific to Cosserat elastic materials. Notice that we were careful to obtain a specific formulation such that many mathematical results obtained by Mielke and Fu \cite{mielke2004uniqueness,fu2002new} can be directly applied in the Cosserat theory, too. For instance, since $\boldsymbol{\mathcal{T}}$,  $\boldsymbol{\mathcal{Q}}$ and $\widehat{\id}$ are symmetric real matrices,  the following result established in \cite{mielke2004uniqueness,fu2002new} remains valid in the  framework of the Cosserat elastic materials
  \begin{theorem}\label{thmielke1} If  the constitutive coefficients satisfy the conditions 	\eqref{d12}
  	and   $0\leq v<\widehat{v}$, the matrix problem 
 	\begin{align}\label{13E}
 	\boldsymbol{\mathcal{T}}\boldsymbol{\mathcal{E}}^2- {\rm i}\, \,(\boldsymbol{\mathcal{R}}+\boldsymbol{\mathcal{R}}^T)\boldsymbol{\mathcal{E}}-\boldsymbol{\mathcal{Q}}+ k^2 v^2{\id}=0,\qquad \text{\rm Re\,(spec\,}\boldsymbol{\mathcal{E}}{\rm )}>0,
 	\end{align}
  has a unique solution for $\boldsymbol{\mathcal{E}}_v$  and the corresponding matrix $\boldsymbol{\mathcal{M}}_v$ obtained from $\eqref{10}_2$ is Hermitian.\qedhere
 	\end{theorem}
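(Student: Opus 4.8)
The plan is to recast the matrix problem \eqref{13E} in the purely algebraic framework of Mielke and Fu \cite{mielke2004uniqueness,fu2002new} and to check that all of its hypotheses are already at our disposal in the Cosserat setting. Those hypotheses are: $\boldsymbol{\mathcal{T}}$ and $\boldsymbol{\mathcal{Q}}$ symmetric and positive definite (the Lemma following \eqref{nTQ}, valid under \eqref{d12}), $\boldsymbol{\mathcal{R}}$ real, the right-hand side matrix already normalized to $\id$ in \eqref{n11}, and the strict hyperbolicity of the characteristic equation in the subsonic range, i.e.\ that for $0\le v<\widehat v$ the sextic \eqref{x9} has no real root (Proposition \ref{lemmaGH}), together with the companion non-degeneracy statements of Proposition \ref{propQp} and Lemma \ref{lemmaGH2}. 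Since all the matrices in \eqref{x9} are real, that equation is a polynomial in $r$ with real coefficients, so for $0\le v<\widehat v$ its six roots split into a triple $r_1,r_2,r_3$ with $\text{Im}\,r_j>0$ and the three complex conjugates.

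For \textbf{existence} I would pass from \eqref{x9} to the companion Stroh eigenvalue problem of the type \eqref{y13} written with $\boldsymbol{\mathcal{T}},\boldsymbol{\mathcal{R}},\boldsymbol{\mathcal{Q}}$, take the eigenvectors $\mathcal{V}^{(j)}=(d^{(j)},b^{(j)})^{T}$ belonging to $r_1,r_2,r_3$, and first establish that $d^{(1)},d^{(2)},d^{(3)}$ form a basis of $\mathbb{C}^{3}$, equivalently that the stable (decaying) invariant subspace of the Stroh matrix is a graph over its first component. A nontrivial relation $\sum_j c_j d^{(j)}=0$ would make $y(x_2)=\sum_j c_j d^{(j)}e^{{\rm i}\,r_j k x_2}$ a nonzero solution of $\eqref{n11}_1$ that decays at $x_2\to\infty$ and has $y(0)=0$, which is ruled out for $0\le v<\widehat v$ by the argument of \cite{fu2002new} (based on the symplectic structure of the Stroh eigenspaces and the positive definiteness of $\boldsymbol{\mathcal{T}},\boldsymbol{\mathcal{Q}}$, i.e.\ ultimately on Proposition \ref{propQp}). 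Granting graphicality, one defines $\boldsymbol{\mathcal{E}}_v$ by $\boldsymbol{\mathcal{E}}_v\,d^{(j)}:=-\,{\rm i}\,r_j\,d^{(j)}$ and extends it linearly; a direct substitution then shows $\boldsymbol{\mathcal{E}}_v$ solves the quadratic equation in \eqref{13E}, with $\text{spec}\,\boldsymbol{\mathcal{E}}_v=\{-{\rm i}\,r_1,-{\rm i}\,r_2,-{\rm i}\,r_3\}$ and all real parts equal to $\text{Im}\,r_j>0$. Finally $\boldsymbol{\mathcal{M}}_v:=\boldsymbol{\mathcal{T}}\boldsymbol{\mathcal{E}}_v-{\rm i}\,\boldsymbol{\mathcal{R}}^{T}$ (that is, $\eqref{10}_2$ solved for $\boldsymbol{\mathcal{M}}$) solves the Riccati equation $\eqref{12}_1$.

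For \textbf{uniqueness}, let $\boldsymbol{\mathcal{E}}$ be any solution of \eqref{13E}. By Lemma \ref{lemmaGH2} no eigenvalue of $\boldsymbol{\mathcal{E}}$ is purely imaginary, and, exactly as in the proof of that lemma, an eigenvalue $\lambda$ of $\boldsymbol{\mathcal{E}}$ yields a root $r={\rm i}\lambda$ of \eqref{x9} with $\text{Re}\,\lambda=\text{Im}\,r$; the side condition $\text{Re}(\text{spec}\,\boldsymbol{\mathcal{E}})>0$ therefore forces $\text{spec}\,\boldsymbol{\mathcal{E}}\subset\{-{\rm i}r_1,-{\rm i}r_2,-{\rm i}r_3\}$, so the invariant subspace generating $\boldsymbol{\mathcal{E}}$ is the stable one, which is unique, and hence $\boldsymbol{\mathcal{E}}=\boldsymbol{\mathcal{E}}_v$, $\boldsymbol{\mathcal{M}}=\boldsymbol{\mathcal{M}}_v$. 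For \textbf{Hermitian symmetry} I would take the Hermitian conjugate of $\eqref{12}_1$: since $\boldsymbol{\mathcal{T}}^{*}=\boldsymbol{\mathcal{T}}$, $\boldsymbol{\mathcal{Q}}^{*}=\boldsymbol{\mathcal{Q}}$ and $({\rm i}\,\boldsymbol{\mathcal{R}})^{*}=-{\rm i}\,\boldsymbol{\mathcal{R}}^{T}$ (all these matrices being real), one obtains $(\boldsymbol{\mathcal{M}}_v^{*}-{\rm i}\,\boldsymbol{\mathcal{R}})\boldsymbol{\mathcal{T}}^{-1}(\boldsymbol{\mathcal{M}}_v^{*}+{\rm i}\,\boldsymbol{\mathcal{R}}^{T})-\boldsymbol{\mathcal{Q}}+k^{2}v^{2}\id=0$, so $\boldsymbol{\mathcal{M}}_v^{*}$ solves the same Riccati equation. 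Subtracting the two copies yields a Sylvester-type equation $(\boldsymbol{\mathcal{M}}_v-{\rm i}\,\boldsymbol{\mathcal{R}})\boldsymbol{\mathcal{T}}^{-1}(\boldsymbol{\mathcal{M}}_v-\boldsymbol{\mathcal{M}}_v^{*})+(\boldsymbol{\mathcal{M}}_v-\boldsymbol{\mathcal{M}}_v^{*})\boldsymbol{\mathcal{T}}^{-1}(\boldsymbol{\mathcal{M}}_v^{*}+{\rm i}\,\boldsymbol{\mathcal{R}}^{T})=0$; a spectral-separation argument — the two coefficient matrices have spectra in complementary open half-planes, which is where $\text{Re}(\text{spec}\,\boldsymbol{\mathcal{E}}_v)>0$ is used — makes the associated Sylvester operator invertible, whence $\boldsymbol{\mathcal{M}}_v=\boldsymbol{\mathcal{M}}_v^{*}$ (equivalently, the same conclusion follows from the classical energy-flux identity applied to $y(x_2)=e^{-k x_2\,\boldsymbol{\mathcal{E}}_v}\,y(0)$).

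The step I expect to be the genuine obstacle is the graphicality claim in the existence part, namely the linear independence of $d^{(1)},d^{(2)},d^{(3)}$: this is the only place where the subsonic bound $\widehat v$ and the positive definiteness of $\boldsymbol{\mathcal{Q}}$ must be used jointly, and the only point at which a feature specific to the linear Cosserat model (the structure of the tensor $\widetilde{\mathbf{Q}}_\theta$, equivalently of $\mathbf{Q}_1(\xi,k)$) enters. Everything else is either bookkeeping or a verbatim transcription of the algebraic machinery of \cite{mielke2004uniqueness,fu2002new}, which applies precisely because we have arranged $\boldsymbol{\mathcal{T}},\boldsymbol{\mathcal{Q}}$ to be symmetric positive definite and $\widehat\id$ to be replaced by $\id$.
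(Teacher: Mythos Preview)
Your proposal is correct and follows the same Fu--Mielke route the paper takes (and reproduces in its Appendix): real-coefficient sextic, three roots with positive imaginary part, build $\boldsymbol{\mathcal{E}}_v$ on the decaying eigenspace, then a Lyapunov/Sylvester argument for Hermitian symmetry. Two small points of comparison are worth noting. First, in the Hermitian step the paper telescopes the subtracted Riccati identity the other way, obtaining directly the Lyapunov form $X\boldsymbol{\mathcal{E}}_v+\overline{\boldsymbol{\mathcal{E}}}_v^{\,T}X=0$ with $X=\boldsymbol{\mathcal{M}}_v-\overline{\boldsymbol{\mathcal{M}}}_v^{\,T}$; since $\mathrm{Re\,spec}\,\boldsymbol{\mathcal{E}}_v>0$, the integral formula $X=\int_0^\infty e^{-t\overline{\boldsymbol{\mathcal{E}}}_v^{\,T}}\,0\,e^{-t\boldsymbol{\mathcal{E}}_v}\,dt=0$ is immediate. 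Your Sylvester form is algebraically equivalent, but the spectral-separation claim for the two coefficient matrices $(\boldsymbol{\mathcal{M}}_v-{\rm i}\boldsymbol{\mathcal{R}})\boldsymbol{\mathcal{T}}^{-1}$ and $\boldsymbol{\mathcal{T}}^{-1}(\boldsymbol{\mathcal{M}}_v^{*}+{\rm i}\boldsymbol{\mathcal{R}}^{T})$ is not transparent without first relating them back to $\boldsymbol{\mathcal{E}}_v$ and $\overline{\boldsymbol{\mathcal{E}}}_v^{\,T}$; the other telescoping avoids that detour. Second, your caution about graphicality (independence of the $d^{(j)}$) is well placed: the paper's Appendix simply writes ``collecting the corresponding eigenspaces defines $\boldsymbol{\mathcal{E}}$ uniquely'' and does not spell this step out, so you are being more careful there than the paper, not less.
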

  \begin{proof}
 	The reader may consult the paper by Fu and Mielke \cite{fu2002new}  or the Appendix, where we have rewritten the proof in our notation.
 \end{proof}

Hence,  Theorem \ref{thmielke1} proves that for all $0\leq v<\widehat{v}$ there exists a unique solution $\boldsymbol{\mathcal{M}}_v$ of the Riccati equation \eqref{12}, defined by  the unique matrix $\boldsymbol{\mathcal{E}}_v$ indicated  in Theorem \ref{thmielke1}.

 Thus, we know that we can consider the mapping which associates to each  $0\leq v<\widehat{v}$ the Hermitian matrix $\boldsymbol{\mathcal{M}}_v$ satisfying the equation \eqref{12}. However, the pair $(v,\boldsymbol{\boldsymbol{\mathcal{M}}}_v)$ must  also be a solution of the secular equation  \eqref{x16}, i.e., 
 \begin{align}\label{x16n}
 {\rm det}\,\boldsymbol{\mathcal{M}}_v=0.
 \end{align}
 Therefore, we have to check if  after replacing  the solution $\boldsymbol{\mathcal{M}}_v$ of \eqref{12} into  \eqref{x16n}, the resulting equation will lead to a unique wave speed belonging to the interval $[0,\widehat{v})$. Without explaining this aspect and having a clear answer, the analyses would be incomplete. 

 In the context of linear anisotropic elasticity   Barnett and Lothe \cite{barnett1985free} have proven that the secular equation has a unique subsonic solution and their proof is based  on the following properties of the  impedance matrix $\boldsymbol{\mathcal{M}}_v$ from the classical linear elastic model, that hold for a subsonic solution  $v$:
 	\begin{enumerate}
 		\item The surface impedance matrix $\boldsymbol{\mathcal{M}}_v$ is Hermitian,
 		\item The matrix $\frac{d\boldsymbol{\boldsymbol{\mathcal{M}}}_v}{d v}$ is negative definite,
 	\item $\langle\boldsymbol{\boldsymbol{\mathcal{M}}}_v,\id\rangle\geq 0$, and $\langle  w,\boldsymbol{\boldsymbol{\mathcal{M}}}_v\,w\rangle \geq 0$ for all real vectors $w$.
 \end{enumerate}

In the following we show that for subsonic wave speeds  $v$, the  impedance matrix $\boldsymbol{\mathcal{M}}_v$, solution of \eqref{12}, satisfies the above properties 1--3 for isotropic elastic Cosserat materials, too. It is straightforward to prove the second property using  the same arguments as in  the proof given by 
  Fu and Mielke \cite{fu2002new} in the context of  anisotropic classical elastic materials,   since the main problem is in fact a purely mathematical question, independent of the considered theory. Indeed, the proof  remains unchanged in the context of isotropic elastic Cosserat materials considered in our paper
  \begin{theorem}\label{thmielke3}
 	Assume  the constitutive coefficients satisfy the conditions 	\eqref{d12}
 	and   $0\leq v<\widehat{v}$. Let $\boldsymbol{\mathcal{M}}_v$ and $\boldsymbol{\mathcal{E}}_v$ be the same as in the conclusion of Theorem \ref{thmielke1}. Then the matrix $\frac{d\boldsymbol{\boldsymbol{\mathcal{M}}}_v}{dv}$ is negative definite.\qedhere
 \end{theorem}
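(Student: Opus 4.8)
The plan is to reproduce, in the present notation, the differentiation argument of Fu and Mielke \cite{fu2002new}; the point is that, once Theorem \ref{thmielke1} is granted, nothing in this step is specific to the Cosserat structure and the computation is purely matrix-theoretic. First I would record that, under \eqref{d12} and for $0\le v<\widehat{v}$, the map $v\mapsto\boldsymbol{\mathcal{M}}_v$ is of class $C^{1}$: this follows from the implicit function theorem applied to the algebraic Riccati equation $\eqref{12}_{1}$, whose Fr\'echet derivative with respect to $\boldsymbol{\mathcal{M}}$ at the solution $\boldsymbol{\mathcal{M}}_v$ is the Lyapunov map $X\mapsto \boldsymbol{\mathcal{E}}_v^{*}X+X\boldsymbol{\mathcal{E}}_v$, which is invertible precisely because all eigenvalues of $\boldsymbol{\mathcal{E}}_v$ have strictly positive real part (Theorem \ref{thmielke1}, together with Lemma \ref{lemmaGH2} and the decay requirement built into \eqref{08}). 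Since $\boldsymbol{\mathcal{M}}_v$ is Hermitian for every $v$ in this range, its derivative $\dot{\boldsymbol{\mathcal{M}}}_v:=\tfrac{d}{dv}\boldsymbol{\mathcal{M}}_v$ is Hermitian as well.

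Next I would differentiate $\eqref{12}_{1}$ with respect to $v$. Using $\boldsymbol{\mathcal{E}}_v=\boldsymbol{\mathcal{T}}^{-1}(\boldsymbol{\mathcal{M}}_v+{\rm i}\,\boldsymbol{\mathcal{R}}^{T})$ from $\eqref{10}_{2}$ and its adjoint $\boldsymbol{\mathcal{E}}_v^{*}=(\boldsymbol{\mathcal{M}}_v-{\rm i}\,\boldsymbol{\mathcal{R}})\boldsymbol{\mathcal{T}}^{-1}$ (valid since $\boldsymbol{\mathcal{M}}_v$ is Hermitian and $\boldsymbol{\mathcal{T}},\boldsymbol{\mathcal{R}}$ are real), the only $v$-dependent term of $\eqref{12}_{1}$ being quadratic in $v$, differentiation gives a Lyapunov equation
\[
\boldsymbol{\mathcal{E}}_v^{*}\,\dot{\boldsymbol{\mathcal{M}}}_v+\dot{\boldsymbol{\mathcal{M}}}_v\,\boldsymbol{\mathcal{E}}_v=-\,2\,k^{2}v\,\id .
\]
Because $\mathrm{spec}(\boldsymbol{\mathcal{E}}_v^{*})$ lies in the open right half--plane, one has $\mathrm{spec}(\boldsymbol{\mathcal{E}}_v^{*})\cap\mathrm{spec}(-\boldsymbol{\mathcal{E}}_v)=\emptyset$, so this equation has a unique solution, which admits the absolutely convergent integral representation
\[
\dot{\boldsymbol{\mathcal{M}}}_v=-\,2\,k^{2}v\int_{0}^{\infty}\bigl(e^{-s\,\boldsymbol{\mathcal{E}}_v}\bigr)^{*}\,e^{-s\,\boldsymbol{\mathcal{E}}_v}\;ds ;
\]
this is checked by inserting it into the Lyapunov equation and integrating $\tfrac{d}{ds}\!\bigl[(e^{-s\boldsymbol{\mathcal{E}}_v})^{*}e^{-s\boldsymbol{\mathcal{E}}_v}\bigr]$ from $0$ to $\infty$, the boundary term at $\infty$ vanishing because $e^{-s\boldsymbol{\mathcal{E}}_v}\to 0$ as $s\to\infty$ (again by $\mathrm{Re}\,\mathrm{spec}\,\boldsymbol{\mathcal{E}}_v>0$).

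Finally, for every $w\in\mathbb{C}^{3}$ with $w\neq 0$ and every $v>0$,
\[
\langle w,\dot{\boldsymbol{\mathcal{M}}}_v\,w\rangle=-\,2\,k^{2}v\int_{0}^{\infty}\norm{e^{-s\boldsymbol{\mathcal{E}}_v}\,w}^{2}\,ds<0 ,
\]
since $e^{-s\boldsymbol{\mathcal{E}}_v}$ is invertible for each $s$, so the integrand is strictly positive and the integral finite; hence $\dot{\boldsymbol{\mathcal{M}}}_v$ is negative definite on $(0,\widehat{v})$ (at the endpoint $v=0$ the right--hand side is $0$, which is immaterial for the subsequent uniqueness argument, where only $v>0$ enters). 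I do not expect a genuine obstacle here: the substantive facts — the $C^{1}$--dependence of $\boldsymbol{\mathcal{M}}_v$ on $v$ and the positivity of $\mathrm{Re}\,\mathrm{spec}\,\boldsymbol{\mathcal{E}}_v$ for subsonic $v$ — are exactly what Theorem \ref{thmielke1} (and Lemma \ref{lemmaGH2}) supplies, so the work is entirely linear--algebraic; the only thing to watch is the sign bookkeeping in the $v$--derivative of the Riccati equation, which is what pins down the minus sign in the two displayed formulas and therefore the negative definiteness.
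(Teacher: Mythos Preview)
Your proof is correct and follows essentially the same approach as the paper's own argument (reproduced in the Appendix after Fu and Mielke): differentiate the Riccati equation \eqref{12}$_1$ with respect to $v$, recognise the resulting equation $\boldsymbol{\mathcal{E}}_v^{*}\dot{\boldsymbol{\mathcal{M}}}_v+\dot{\boldsymbol{\mathcal{M}}}_v\boldsymbol{\mathcal{E}}_v=-2k^{2}v\,\id$ as a Lyapunov equation with a unique solution given by the exponential integral, and read off negative definiteness from the integrand. Your additional remarks on the $C^{1}$-dependence via the implicit function theorem and on the endpoint $v=0$ are welcome bits of care that the paper leaves implicit.
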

 \begin{proof}
 	The reader may consult the paper by Fu and Mielke \cite{fu2002new}  or the Appendix, where we have rewritten the proof in our notations.
 \end{proof}
 
  In the following, we  prove that  the impedance matrix $\boldsymbol{\mathcal{M}}_v$ satisfies also the 3rd property. To this aim we  follow again Fu and Mielke's technique \cite{fu2002new} and we show that this method is also  applicable  to the linear isotropic  elastic Cosserat model. Hence,  in order to establish the  3rd property, we first define matrices $\widetilde{\boldsymbol{\mathcal{Q}}}_\theta$,${\boldsymbol{\mathcal{T}}}_\theta$ and ${\boldsymbol{\mathcal{R}}}_\theta\in \mathbb{R}^{3\times 3}$ by
\begin{align}
\begin{bmatrix}
\widetilde{\boldsymbol{\mathcal{Q}}}_\theta  &	\boldsymbol{\mathcal{R}}_\theta 
\vspace{2mm} \\
\boldsymbol{\mathcal{R}}_\theta  ^{\mathbf{T}}&  \boldsymbol{\mathcal{T}}_{\!\! \theta}     
\end{bmatrix}=\begin{bmatrix}
\cos\theta\, \id  &	\sin\theta\, \id\vspace{2mm}\\
-\sin\theta\, \id&  \cos\theta\, \id    
\end{bmatrix}\begin{bmatrix}
\widetilde{\boldsymbol{\mathcal{Q}} } &	\boldsymbol{\mathcal{R}}\vspace{2mm}\\
\boldsymbol{\mathcal{R}}^{\mathbf{T}}&  \boldsymbol{\mathcal{T}}    
\end{bmatrix}\begin{bmatrix}
\cos\theta\, \id  &	-\sin\theta\, \id\vspace{2mm}\\
\sin\theta\, \id&  \cos\theta\, \id    
\end{bmatrix},
\end{align}
where we write  $\widetilde{\boldsymbol{\mathcal{Q}}}=\boldsymbol{\mathcal{Q}}-k\,v^2\,{\id}$, and $\theta$ is an arbitrary angle. These matrices may be seen as the counterpart of $\boldsymbol{\mathcal{T}},\widetilde{\boldsymbol{\mathcal{Q}}}$ and $\boldsymbol{\mathcal{R}}$ and they are obtained by rotation of the old coordinate system about $e_3$ by an angle $\theta$
\begin{align}\label{21}
\boldsymbol{\mathcal{T}}_{\!\! \theta} =\cos^2\theta\,\boldsymbol{\mathcal{T}}-\sin\theta\cos\theta\,(\boldsymbol{\mathcal{R}}+\boldsymbol{\mathcal{R}}^T)+\sin^2\theta\,\widetilde{\boldsymbol{\mathcal{Q}}},\notag\vspace{2mm}\\
\boldsymbol{\mathcal{R}}_\theta  =\cos^2\theta\,\boldsymbol{\mathcal{R}}+\sin\theta\cos\theta
\,(\boldsymbol{\mathcal{T}}-\widetilde{\boldsymbol{\mathcal{Q}}})-\sin^2\theta\,\boldsymbol{\mathcal{R}}^T,\vspace{2mm}\\
\widetilde{\boldsymbol{\mathcal{Q}}}_\theta=\cos^2\theta\,\widetilde{\boldsymbol{\mathcal{Q}}}+\sin\theta\cos\theta\,(\boldsymbol{\mathcal{R}}+\boldsymbol{\mathcal{R}}^T)+\sin^2\theta\,\boldsymbol{\mathcal{T}}.\notag
\end{align}
The new matrices  $\boldsymbol{\mathcal{T}}$ and $\boldsymbol{\mathcal{Q}}$ remain symmetric and 
 $\widetilde{\boldsymbol{\mathcal{Q}}}_\theta$, $\boldsymbol{\mathcal{T}}_{\!\! \theta} $ and $\boldsymbol{\mathcal{R}}_\theta  $ are periodic in $\theta$ with periodicity $\pi$ and
\begin{align}\label{RTQ}
\widetilde{\boldsymbol{\mathcal{Q}}}_\theta(\theta+\frac{\pi}{2})=\boldsymbol{\mathcal{T}}_{\!\! \theta} ,\qquad \quad\boldsymbol{\mathcal{R}}_\theta(\theta+\frac{\pi}{2})=-\boldsymbol{\mathcal{R}}_\theta  ^T,\qquad\quad  \boldsymbol{\mathcal{T}}_{\!\! \theta}(\theta+\frac{\pi}{2})=\widetilde{\boldsymbol{\mathcal{Q}}}_\theta.
\end{align}
Moreover, according to the definition of the limiting speed, regarding Proposition \ref{lemmaGH}  and  Proposition  \ref{propQp},
the limiting velocity $\widehat{v}$ is in fact the lowest velocity for which the matrices $\widetilde{\boldsymbol{\mathcal{Q}}}_\theta$ and $T(\theta)$  become singular for some angle $\theta$ and  $\widetilde{\boldsymbol{\mathcal{Q}}}_\theta$ is positive definite for $0\leq v<\widehat{v}$. In view of \eqref{RTQ} so is $\boldsymbol{\mathcal{T}}_{\!\! \theta} $, too. 
	Thus by the definition of the limiting speed $\widehat{v}$ both $\widetilde{\boldsymbol{\mathcal{Q}}}_\theta$ and $\boldsymbol{\mathcal{T}}_{\!\! \theta} $ are positive definite or positive semi-definite depending on  $\theta$ (for $v=\widehat{v}$ there is at least one $\theta$ at which $\boldsymbol{\mathcal{T}}(\theta)$ has an eigenvalue 0, and likewise $\widetilde{\boldsymbol{\mathcal{Q}}}_\theta$).

We shall use $\boldsymbol{\mathcal{E}}$ exclusively to denote the unique solution of \eqref{09}, and likewise  we define $ \boldsymbol{\mathcal{E}}_{\!\theta}   $ to be the unique solution of the matrix problem\footnote{This solution exists since $\boldsymbol{\mathcal{T}}_{\!\! \theta} $ and $\widetilde{\boldsymbol{\mathcal{Q}}}_\theta$ are positive definite. Then,  Theorem \ref{thmielke1} will be used since it is valid for $v=0$, too.}
\begin{align}\label{17}
\boldsymbol{\mathcal{T}}_{\!\! \theta} \,  \boldsymbol{\mathcal{E}}^2_\theta   - {\rm i}\, (\boldsymbol{\mathcal{R}}_\theta  +\boldsymbol{\mathcal{R}}_\theta  ^T) \,\boldsymbol{\mathcal{E}}_{\!\theta}   -\widetilde{\boldsymbol{\mathcal{Q}}}_\theta=0\qquad \qquad\qquad\text{Re}\,\, \text{spec}\,  \boldsymbol{\mathcal{E}}_{\!\theta}   >0,
\end{align}
while the matrix $\boldsymbol{\mathcal{M}}_\theta$ has a form such that 
\begin{align}
 \boldsymbol{\mathcal{E}}_{\!\theta}   =\boldsymbol{\mathcal{T}}^{-1}_{\!\! \theta}\, (\boldsymbol{\mathcal{M}}_\theta + {\rm i}\, \,\boldsymbol{\mathcal{R}}_\theta  ^T).
\end{align}
In terms of the matrix $\boldsymbol{\mathcal{M}}_\theta$ the equation \eqref{17} reads
\begin{align}\label{eqMt}
(\boldsymbol{\mathcal{M}}_\theta - {\rm i}\, \,\boldsymbol{\mathcal{R}}_\theta )\boldsymbol{\mathcal{T}}^{-1}_{\!\! \theta}(\boldsymbol{\mathcal{M}}_\theta + {\rm i}\, \,\boldsymbol{\mathcal{R}}_\theta  ^T)-\boldsymbol{\mathcal{Q}}^T_\theta=0.
\end{align}
The following results established in \cite{fu2002new}  remain valid in our framework, too.
\begin{theorem}
	\label{Mielketh}
\begin{itemize}Assume  the constitutive coefficients satisfy the conditions 	\eqref{d12}
	and   $0\leq v<\widehat{v}$. Then,
	\item[i)] The Hermitian matrix $\boldsymbol{\mathcal{M}}_\theta $ defined above  is independent of $\theta$.
\item [ii)]  Denoting by $\boldsymbol{\mathcal{M}}$ and $\boldsymbol{\mathcal{E}}$ the corresponding values of   $\boldsymbol{\mathcal{M}}_\theta$, and $\boldsymbol{\mathcal{E}}_{\!\theta}$ for $\theta=0$, respectively,  then \linebreak
$
	 \boldsymbol{\mathcal{E}}_{\!\theta}   =(\cos\theta\, \id+ {\rm i}\, \sin\theta \boldsymbol{\mathcal{E}})^{-1}(\cos\theta \,\boldsymbol{\mathcal{E}}+ {\rm i}\, \,\sin\theta \,\id).
$
\item [iii)] 
	$
	\dd\int_{0}^{\pi} 
	 \boldsymbol{\mathcal{E}}_{\!\theta}   \, d\theta=\pi\, \id.
	$
\item[iv)] 
	The unique solution of the algebraic Riccati equation \eqref{12} that satisfies $\text{\rm Re\,[spec}\,(\boldsymbol{\mathcal{T}}^{-1}(\boldsymbol{\mathcal{M}}+{\rm i}\,R^T))]>0$ is given explicitly by
	\begin{align}\label{explMielke}
	\boldsymbol{\mathcal{M}}_v=\mathbf{H}_v^{-1}+ {\rm i}\, \, \mathbf{H}_v^{-1}\, \mathbf{S}_v, \qquad \text{with} \qquad \mathbf{H}_v=\frac{1}{\pi}\dd\int_{0}^{\pi}	\boldsymbol{\mathcal{T}}_{\!\! \theta} ^{-1}\, d\theta, \qquad  \mathbf{S}_v=-\frac{1}{\pi}\int_{0}^{\pi}\boldsymbol{\mathcal{T}}_{\!\! \theta} ^{-1}\boldsymbol{\mathcal{R}}_\theta  ^T\, d\theta.
\end{align}
	\end{itemize}
\end{theorem}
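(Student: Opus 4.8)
The plan is to show that the four-part argument of Fu and Mielke \cite{fu2002new} transfers to the Cosserat setting essentially without change, since that argument relies only on three structural inputs, all of which are now in place: that $\boldsymbol{\mathcal{T}}$, $\boldsymbol{\mathcal{Q}}$, $\widehat{\id}$ are real symmetric positive definite matrices; the uniqueness and spectral positivity of the solution $\boldsymbol{\mathcal{E}}$ of \eqref{09} for $0\le v<\widehat v$ (Theorem~\ref{thmielke1}, Lemma~\ref{lemmaGH2}); and the characterisation of the limiting speed $\widehat v$ as the lowest speed at which some $\boldsymbol{\mathcal{T}}_{\!\!\theta}$ or $\widetilde{\boldsymbol{\mathcal{Q}}}_\theta$ degenerates (Proposition~\ref{lemmaGH} and Proposition~\ref{propQp} together with \eqref{RTQ}). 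Accordingly I would only record the skeleton and flag the one place where the Cosserat-specific structure must actually be checked.

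First I would prove (ii). The matrices $\boldsymbol{\mathcal{T}}_{\!\!\theta},\boldsymbol{\mathcal{R}}_\theta,\widetilde{\boldsymbol{\mathcal{Q}}}_\theta$ of \eqref{21} are precisely the coefficient matrices obtained from \eqref{n11} after a rotation of the $(x_1,x_2)$-axes about $e_3$ by the angle $\theta$, a symmetry of the isotropic Cosserat system; hence the decaying ansatz $y(x_2)=e^{-k x_2 \boldsymbol{\mathcal{E}}}y(0)$ is carried to the analogous ansatz for the rotated system, and a direct substitution into \eqref{17} shows that $\boldsymbol{\mathcal{F}}_\theta:=(\cos\theta\,\id+{\rm i}\sin\theta\,\boldsymbol{\mathcal{E}})^{-1}(\cos\theta\,\boldsymbol{\mathcal{E}}+{\rm i}\sin\theta\,\id)$ solves it. That $\cos\theta\,\id+{\rm i}\sin\theta\,\boldsymbol{\mathcal{E}}$ is invertible and that $\mathrm{Re}\,\mathrm{spec}\,\boldsymbol{\mathcal{F}}_\theta>0$ follow from the scalar computation that the Möbius map $\lambda\mapsto(\lambda\cos\theta+{\rm i}\sin\theta)/(\cos\theta+{\rm i}\lambda\sin\theta)$ sends the open right half-plane into itself with no pole there — which is exactly where $\mathrm{Re}\,\mathrm{spec}\,\boldsymbol{\mathcal{E}}>0$ (so that $-{\rm i}\cot\theta\notin\mathrm{spec}\,\boldsymbol{\mathcal{E}}$) enters, the value $\theta=\pi/2$ being a removable limiting case. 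Since $\boldsymbol{\mathcal{T}}_{\!\!\theta}$ and $\widetilde{\boldsymbol{\mathcal{Q}}}_\theta$ are positive definite for $0\le v<\widehat v$ by Proposition~\ref{propQp} and \eqref{RTQ}, Theorem~\ref{thmielke1}, which is valid also for $v=0$, applies to the rotated system and forces $\boldsymbol{\mathcal{F}}_\theta=\boldsymbol{\mathcal{E}}_{\!\theta}$, which is (ii).

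Part (i) is then an algebraic corollary: inserting the formula for $\boldsymbol{\mathcal{E}}_{\!\theta}$ together with \eqref{21} into $\boldsymbol{\mathcal{M}}_\theta=\boldsymbol{\mathcal{T}}_{\!\!\theta}\boldsymbol{\mathcal{E}}_{\!\theta}-{\rm i}\boldsymbol{\mathcal{R}}_\theta^T$ and simplifying, the $\theta$-dependence cancels and leaves $\boldsymbol{\mathcal{M}}_\theta=\boldsymbol{\mathcal{T}}\boldsymbol{\mathcal{E}}-{\rm i}\boldsymbol{\mathcal{R}}^T=\boldsymbol{\mathcal{M}}$. For (iii), I would reduce $\int_0^\pi\boldsymbol{\mathcal{E}}_{\!\theta}\,d\theta$ via the Jordan form of $\boldsymbol{\mathcal{E}}$ (equivalently the holomorphic functional calculus) to the scalar identity $\int_0^\pi(\lambda\cos\theta+{\rm i}\sin\theta)/(\cos\theta+{\rm i}\lambda\sin\theta)\,d\theta=\pi$ valid for every $\lambda$ with $\mathrm{Re}\,\lambda>0$; here the integrand equals $\tfrac{1}{{\rm i}}\tfrac{d}{d\theta}\log(\cos\theta+{\rm i}\lambda\sin\theta)$, and since $\mathrm{Re}\,\lambda>0$ forces the curve $\theta\mapsto\cos\theta+{\rm i}\lambda\sin\theta$ to stay in the closed upper half-plane while running from $1$ to $-1$, its argument increases by exactly $\pi$ and the integral is $\pi$. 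Finally (iv): using (i), integrate $\boldsymbol{\mathcal{E}}_{\!\theta}=\boldsymbol{\mathcal{T}}_{\!\!\theta}^{-1}(\boldsymbol{\mathcal{M}}+{\rm i}\boldsymbol{\mathcal{R}}_\theta^T)$ over $[0,\pi]$; by (iii) the left side is $\pi\,\id$, the right side is $\pi\,\mathbf{H}_v\boldsymbol{\mathcal{M}}-{\rm i}\pi\,\mathbf{S}_v$, and since $\mathbf{H}_v$ is an average of positive definite matrices it is invertible, whence $\boldsymbol{\mathcal{M}}=\mathbf{H}_v^{-1}+{\rm i}\,\mathbf{H}_v^{-1}\mathbf{S}_v$, which is \eqref{explMielke}, the side condition $\mathrm{Re}\,\mathrm{spec}\,[\boldsymbol{\mathcal{T}}^{-1}(\boldsymbol{\mathcal{M}}+{\rm i}\boldsymbol{\mathcal{R}}^T)]>0$ being just $\mathrm{Re}\,\mathrm{spec}\,\boldsymbol{\mathcal{E}}>0$, already secured. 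The one genuinely non-routine point is (ii): one must verify that the isotropic Cosserat plane-strain system is covariant under rotations about $e_3$ in the precise matrix form \eqref{21} and that the induced Möbius action preserves $\mathrm{Re}\,\mathrm{spec}>0$, including the degenerate case $\theta=\pi/2$; after that, everything reduces to linear algebra and a single residue (logarithmic-derivative) computation.
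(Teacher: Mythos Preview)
Your proposal is correct, and for parts (iii) and (iv) it coincides with the paper's argument (functional calculus plus the logarithmic-derivative identity, then integration of $\boldsymbol{\mathcal{E}}_{\!\theta}=\boldsymbol{\mathcal{T}}_{\!\!\theta}^{-1}(\boldsymbol{\mathcal{M}}+{\rm i}\boldsymbol{\mathcal{R}}_\theta^T)$). For (i) and (ii), however, you and the paper proceed in opposite orders. The paper establishes (i) \emph{first}: it differentiates the $\theta$-rotated Riccati equation \eqref{eqMt} with respect to $\theta$, uses the structural identities $\boldsymbol{\mathcal{T}}_{\!\!\theta}'=-(\boldsymbol{\mathcal{R}}_\theta+\boldsymbol{\mathcal{R}}_\theta^T)$, $\boldsymbol{\mathcal{R}}_\theta'=\boldsymbol{\mathcal{T}}_{\!\!\theta}-\widetilde{\boldsymbol{\mathcal{Q}}}_\theta$, $\widetilde{\boldsymbol{\mathcal{Q}}}_\theta'=\boldsymbol{\mathcal{R}}_\theta+\boldsymbol{\mathcal{R}}_\theta^T$, and arrives at the homogeneous Lyapunov equation $\boldsymbol{\mathcal{M}}_\theta'\boldsymbol{\mathcal{E}}_{\!\theta}+\overline{\boldsymbol{\mathcal{E}}}_\theta^{\,T}\boldsymbol{\mathcal{M}}_\theta'=0$, forcing $\boldsymbol{\mathcal{M}}_\theta'=0$; only then, with $\boldsymbol{\mathcal{M}}$ constant, does it differentiate $\boldsymbol{\mathcal{E}}_{\!\theta}=\boldsymbol{\mathcal{T}}_{\!\!\theta}^{-1}(\boldsymbol{\mathcal{M}}+{\rm i}\boldsymbol{\mathcal{R}}_\theta^T)$ to obtain the matrix Riccati ODE $\boldsymbol{\mathcal{E}}_{\!\theta}'={\rm i}(\id-\boldsymbol{\mathcal{E}}_{\!\theta}^2)$ and integrate it via the substitution $\boldsymbol{\mathcal{E}}_{\!\theta}=-{\rm i}\,\mathbf{J}^{-1}\mathbf{J}'$ to get the M\"obius formula (ii). Your route---verify (ii) by direct substitution of the M\"obius expression into \eqref{17} and invoke uniqueness from Theorem~\ref{thmielke1}, then read off (i) from $\boldsymbol{\mathcal{M}}_\theta'=-{\rm i}\bigl[\boldsymbol{\mathcal{T}}_{\!\!\theta}\boldsymbol{\mathcal{E}}_{\!\theta}^2-{\rm i}(\boldsymbol{\mathcal{R}}_\theta+\boldsymbol{\mathcal{R}}_\theta^T)\boldsymbol{\mathcal{E}}_{\!\theta}-\widetilde{\boldsymbol{\mathcal{Q}}}_\theta\bigr]=0$---is equally valid and arguably more geometric, but the ``direct substitution'' step is a genuine polynomial identity in $\boldsymbol{\mathcal{E}}$ (after right-multiplying by $(\cos\theta\,\id+{\rm i}\sin\theta\,\boldsymbol{\mathcal{E}})^2$ and expanding via \eqref{21}) whose bookkeeping is heavier than the paper's Lyapunov shortcut. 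The trade-off: the paper's order avoids that expansion entirely at the price of one extra Lyapunov argument, while your order makes the rotational origin of the M\"obius formula transparent.
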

 \begin{proof}
	The reader may consult the paper by Fu and Mielke \cite{fu2002new}  or the Appendix, where we have rewritten the proof in our notations.
\end{proof}
From a computational point of view, the decisive advantage given by the above result is that, since $\boldsymbol{\mathcal{T}}_{\!\! \theta}$ and $\boldsymbol{\mathcal{R}}_\theta$ depend on the wave speed $v$, we obtain the explicit   form of the secular equation $\det \boldsymbol{\mathcal{M}}_v=0$ without a priori knowing the analytical expressions (as function of the wave speed ) of the eigenvalues  that satisfy \eqref{x9}  and the associated eigenvector  $d^{(k)}$, which is the main difficulty in almost all the generalised models, with exception of some models for which this task is straightforward, e.g., classical isotropic linear elasticity \cite{Achenbach,hayes1962note},  or the theory of materials with voids \cite{NunziatoCowin79,ChiritaGhiba2,straughan2008stability} after imposing restrictive conditions upon the constitutive coefficients.  After the secular equation is solved, the task of finding the eigenvalues  that satisfy \eqref{x9}  and the associated eigenvector  $d^{(k)}$ becomes a purely numerical task, avoiding symbolic (analytical) computations.

\subsection{The main result: Existence and uniqueness of Rayleigh waves}\label{MR}

Moreover,  Fu and Mielke's method has another   advantage since we are able to show the existence and uniqueness of a subsonic wave speed, solution of the secular equation, which ensures in the end that there exists an acceptable $\boldsymbol{\mathcal{M}}_v$  such that $\text{Re spec}\,\boldsymbol{\mathcal{E}}$ is positive, i.e. the solution satisfies both the boundary conditions \eqref{03} and the decay conditions \eqref{04}. As we will explain in the following, the matrix $\boldsymbol{\mathcal{M}}_v$ determined by \eqref{explMielke} satisfies the condition 3., i.e. $\tr(\boldsymbol{\boldsymbol{\mathcal{M}}}_v)\geq 0$, and $\langle  w,\boldsymbol{\boldsymbol{\mathcal{M}}}_v\,w\rangle \geq 0$ for all real vectors $w$, and this will imply the existence of a unique subsonic solution of the secular equation. 

\begin{theorem}{\rm {\bf [The main result of this paper]}}
	Assume  the constitutive coefficients satisfy the conditions 	\begin{align}\label{d11wa}
	2\,\mu_{\rm e} +\lambda_{\rm e} >0,\qquad \qquad \mu_{\rm e} >0,\qquad \qquad \mu_{\rm c} >0,\qquad \qquad   \alpha_1+\alpha_2>0,
	\end{align}
	then the secular equation
	\begin{align}
	\det\boldsymbol{\boldsymbol{\mathcal{M}}}_v=0,
	\end{align}
	where $\boldsymbol{\mathcal{M}}_v$ is given by  $\eqref{explMielke}$, has a unique admissible solution $0\leq v<\widehat{v}$. In other words there {\bf exists a unique Rayleigh wave propagating in the Cosserat medium}.
\end{theorem}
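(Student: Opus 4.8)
The plan is to run the Barnett--Lothe/Fu--Mielke scheme, using the three structural properties of the impedance matrix. First I would record property~3 for the Cosserat model directly from the explicit representation $\boldsymbol{\mathcal{M}}_v=\mathbf{H}_v^{-1}+{\rm i}\,\mathbf{H}_v^{-1}\mathbf{S}_v$ of Theorem~\ref{Mielketh}(iv): by the propositions of Section~\ref{anzsec} the matrix $\boldsymbol{\mathcal{T}}_{\!\! \theta}$ is positive definite for every $\theta$ whenever $0\le v<\widehat v$, hence $\mathbf{H}_v=\frac{1}{\pi}\int_0^\pi\boldsymbol{\mathcal{T}}_{\!\! \theta}^{-1}\,d\theta$ is real, symmetric and positive definite, so $\mathbf{H}_v^{-1}$ is positive definite. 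Since $\boldsymbol{\mathcal{M}}_v$ is Hermitian (Theorem~\ref{thmielke1}), $\mathbf{H}_v^{-1}=\mathrm{Re}\,\boldsymbol{\mathcal{M}}_v$ is symmetric and $\mathbf{H}_v^{-1}\mathbf{S}_v=\mathrm{Im}\,\boldsymbol{\mathcal{M}}_v$ is skew-symmetric; therefore $\langle w,\boldsymbol{\mathcal{M}}_v w\rangle=\langle w,\mathbf{H}_v^{-1}w\rangle>0$ for every real $w\neq0$, and $\tr\boldsymbol{\mathcal{M}}_v=\tr\mathbf{H}_v^{-1}>0$.

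Next I would extract the key algebraic consequence: a $3\times3$ Hermitian matrix which is strictly positive on the real subspace $\mathbb{R}^3\subset\mathbb{C}^3$ has at most one non-positive eigenvalue. Indeed, two non-positive eigenvalues would span a complex subspace of real dimension $\ge4$, which must meet $\mathbb{R}^3$ non-trivially, producing a nonzero real vector on which the quadratic form is $\le0$, a contradiction. Hence $\det\boldsymbol{\mathcal{M}}_v$ has the same sign as the smallest eigenvalue $\lambda_{\min}(v)$: $\det\boldsymbol{\mathcal{M}}_v>0\iff\boldsymbol{\mathcal{M}}_v$ is positive definite, $\det\boldsymbol{\mathcal{M}}_v=0\iff\lambda_{\min}(v)=0$, $\det\boldsymbol{\mathcal{M}}_v<0\iff\lambda_{\min}(v)<0$. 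By Theorem~\ref{thmielke3}, $d\boldsymbol{\mathcal{M}}_v/dv$ is negative definite, so for $v<v'$ in $[0,\widehat v)$ one has $\boldsymbol{\mathcal{M}}_{v'}=\boldsymbol{\mathcal{M}}_v-D$ with $D$ positive definite, and Weyl's inequality gives $\lambda_{\min}(v')<\lambda_{\min}(v)$; thus $v\mapsto\lambda_{\min}(v)$ is continuous and strictly decreasing. This already yields \emph{uniqueness}: $\det\boldsymbol{\mathcal{M}}_v=0$ can hold for at most one $v\in[0,\widehat v)$.

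For \emph{existence} I would show $\det\boldsymbol{\mathcal{M}}_v$ changes sign on $[0,\widehat v)$. At $v=0$ the Riccati equation is $\boldsymbol{\mathcal{N}}^{*}\boldsymbol{\mathcal{T}}^{-1}\boldsymbol{\mathcal{N}}=\boldsymbol{\mathcal{Q}}$ with $\boldsymbol{\mathcal{N}}=\boldsymbol{\mathcal{M}}_0+{\rm i}\,\boldsymbol{\mathcal{R}}^T$, and the energy identity of Ingebrigtsen--Tonning/Fu--Mielke identifies $\langle\hat u,\boldsymbol{\mathcal{M}}_0\hat u\rangle$, up to a positive factor, with the stored elastic energy of the associated decaying static field, which is positive definite under \eqref{d11wa} (as $\boldsymbol{\mathcal{T}},\boldsymbol{\mathcal{Q}}$ are then positive definite); hence $\boldsymbol{\mathcal{M}}_0$ is positive definite and $\det\boldsymbol{\mathcal{M}}_0>0$. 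At the other end, $\widehat v$ is by construction the lowest speed at which $\widetilde{\boldsymbol{\mathcal{Q}}}_\theta$ (equivalently $\boldsymbol{\mathcal{T}}_{\!\! \theta}$) becomes singular for some angle; analysing the integrals $\mathbf{H}_v,\mathbf{S}_v$ near that angle (where the vanishing eigenvalue of $\boldsymbol{\mathcal{T}}_{\!\! \theta}$ is quadratic in the angle) shows that $\mathbf{H}_v$ blows up in a rank-one direction while $\boldsymbol{\mathcal{M}}_v$ stays bounded and $\mathrm{Re}\,\boldsymbol{\mathcal{M}}_v=\mathbf{H}_v^{-1}$ degenerates, as $v\uparrow\widehat v$, to a positive semidefinite matrix with a real null direction $n$. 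Since the skew imaginary part drops out of the diagonal form, $\langle n,\boldsymbol{\mathcal{M}}_{\widehat v}\,n\rangle=0$, hence $\lambda_{\min}(v)\le0$ for $v$ near $\widehat v$, so $\det\boldsymbol{\mathcal{M}}_v\le0$ there. Combining with $\det\boldsymbol{\mathcal{M}}_0>0$, the intermediate value theorem and strict monotonicity of $\lambda_{\min}$ give exactly one $v\in[0,\widehat v)$ with $\det\boldsymbol{\mathcal{M}}_v=0$; for this $v$, $\mathrm{Re}\,\mathrm{spec}\,\boldsymbol{\mathcal{E}}_v>0$ by Theorem~\ref{thmielke1}, so it yields a genuine decaying Rayleigh wave.

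The main obstacle is precisely this last step: controlling $\boldsymbol{\mathcal{M}}_v$ as $v\uparrow\widehat v$ and ruling out the degenerate scenario $\lambda_{\min}(v)\to0^{+}$, in which no admissible solution would exist. Here the explicit isotropic plane-strain form of $\boldsymbol{\mathcal{T}}_{\!\! \theta},\boldsymbol{\mathcal{R}}_\theta$ and the concrete value $\widehat v=\min\{\mathfrak{c}_p,\mathfrak{c}_{m_2}\}$ from Proposition~\ref{ChGhi} make the limiting matrix $\boldsymbol{\mathcal{M}}_{\widehat v}$ computable, so that one can verify directly that its smallest eigenvalue is strictly negative (equivalently, that $\det\boldsymbol{\mathcal{M}}_v<0$ just below $\widehat v$). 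All the remaining ingredients --- the structural properties~1--3, the sign dictionary for $\det\boldsymbol{\mathcal{M}}_v$, and uniqueness --- are adaptations of Fu--Mielke's reasoning that carry over verbatim because $\boldsymbol{\mathcal{T}},\boldsymbol{\mathcal{Q}}$ are real symmetric positive definite under \eqref{d11wa}.
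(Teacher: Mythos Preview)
Your proposal is correct and follows essentially the same Fu--Mielke route as the paper: establish the three structural properties of $\boldsymbol{\mathcal{M}}_v$ from the representation $\boldsymbol{\mathcal{M}}_v=\mathbf{H}_v^{-1}+{\rm i}\,\mathbf{H}_v^{-1}\mathbf{S}_v$, obtain uniqueness from strict monotonicity of the eigenvalues, and argue existence from positivity at $v=0$ together with the degeneration of $\mathbf{H}_v^{-1}$ at $v=\widehat v$. You are in fact more explicit than the paper in supplying the dimension-counting lemma (at most one non-positive eigenvalue when the form is positive on $\mathbb{R}^3$) and in invoking the energy identity for $\boldsymbol{\mathcal{M}}_0>0$, and you are candid about the residual issue of ruling out the degenerate case $\lambda_{\min}(\widehat v)=0$, a point the paper's own proof likewise does not address in detail.
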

\begin{proof}
First, we explain why,	if $\boldsymbol{\mathcal{E}}_v$ solves \eqref{13E},  then the corresponding $\boldsymbol{\mathcal{M}}_v$ obtained from $\eqref{10}_2$ has the following properties
\begin{enumerate}
	\item $\boldsymbol{\mathcal{M}}_v$ is Hermitian,
	\item  $\frac{d\boldsymbol{\boldsymbol{\mathcal{M}}}_v}{d v}$ is negative definite,
	\item  $\tr(\boldsymbol{\boldsymbol{\mathcal{M}}}_v)\geq 0$, and $\langle  w,\boldsymbol{\boldsymbol{\mathcal{M}}}_v\,w\rangle \geq 0$ for all real vectors $w$ for  all $0\leq v\leq  \widehat{v}$,
	\item $\boldsymbol{\mathcal{M}}_v$ is and positive definite for  all $0\leq v< \widehat{v}$.
\end{enumerate}
To this aim, we can use the  arguments explained in \cite[page 13]{mielke2004uniqueness}. Since due to Theorem \ref{thmielke1} we know that  $\boldsymbol{\mathcal{M}}_v$ is Hermitian, $\mathbf{H}_v$ and $\mathbf{S}_v$ are both real matrices and $\mathbf{H}_v$ is symmetric, it follows that $\mathbf{H}_v^{-1}\mathbf{S}_v$ is skew-symmetric. Hence, ${\rm tr}(\boldsymbol{\mathcal{M}}_v)={\rm tr}(\mathbf{H}_v^{-1})$ and  $\bigl\langle\boldsymbol{\boldsymbol{\mathcal{M}}}_v\, w, w\bigr\rangle=\bigl\langle \mathbf{H}_v^{-1}\, w, w\bigr\rangle
$ $\forall\, w\in\mathbb{R}^3$, since $\mathbf{H}_v^{-1}\mathbf{S}_v$ is skew-symmetric. Since $\mathbf{H}_v^{-1}$ is positive semi-definite\footnote{Note that $\mathbf{H}_v$ is not well defined for $v=\widehat{v}$ since for this value of the wave speed there exists an angle $\theta\in[0,\pi]$ such that $\boldsymbol{\mathcal{T}}_{\!\! \theta}$ has a zero eigenvalue. However, $\mathbf{H}_v^{-1}$ is well-defined for the limit $v\to \widehat{v}$ and at this limit it admits a zero eigenvalue, since considering the contrary it follows that $\mathbf{H}_v$ is well-defined for the limit $v\to \widehat{v}$.  This will imply that $\boldsymbol{\mathcal{T}}_{\!\! \theta}^{-1}$ is defined for the limit $v\to \widehat{v}$ and that  $\boldsymbol{\mathcal{T}}_{\!\! \theta}$ and $\boldsymbol{\mathcal{Q}}_\theta$ are positive definite for the limit $v\to \widehat{v}$, too, a fact that is contrary to the property of the limiting speed.} for all $0\leq v\leq \widehat{v}$ and positive definite for  all $0\leq v< \widehat{v}$, it follows that $\boldsymbol{\mathcal{M}}_v$ determined by \eqref{explMielke} satisfies the condition 3 and moreover $\boldsymbol{\mathcal{M}}_v$ is  positive definite for  all $0\leq v< \widehat{v}$. Note that at $v=\widehat{v}$ at least one of the eigenvalues of $\mathbf{H}_v^{-1}$ must vanish.

\begin{figure}[h!]
	\centering
	\includegraphics[width=8cm]{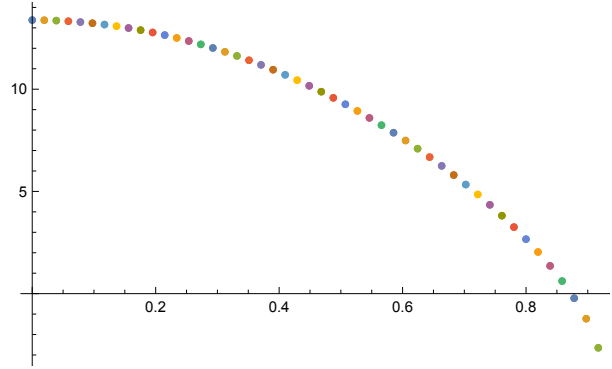}
	\caption{\footnotesize A plot of $\det\boldsymbol{\boldsymbol{\mathcal{M}}}_v$ with respect to the surface waves speed $v$ for the  aluminum-epoxy composite for a set of equidistant values in the interval $[0,\widehat{v})$. This curve illustrates that $\det\boldsymbol{\mathcal{M}}_v$ is  a decreasing function of the wave speed $v$.}
	\label{fig:method}
\end{figure} 

The rest of the proof is  clearly explained in \cite[page 2531]{fu2002new} as in the following.  Since $\frac{d\boldsymbol{\boldsymbol{\mathcal{M}}}_v}{d v}$ is negative definite, the eigenvalues of $\boldsymbol{\mathcal{M}}_v$ are monotone decreasing functions of $v$ defined on  $[0,\widehat{v})$. Let us remark that at $v=0$ the eigenvalues of $\boldsymbol{\mathcal{M}}_0$ are positive, since $\boldsymbol{\mathcal{M}}_v$ is positive definite at $v=0$, that $\det\boldsymbol{\boldsymbol{\mathcal{M}}}_v=\lambda_1\,\lambda_2\,\lambda_3$, where $\lambda_1,\,\lambda_2,\,\lambda_3$ denote the eigenvalues of $\boldsymbol{\mathcal{M}}_v$,  that $\det\boldsymbol{\boldsymbol{\mathcal{M}}}_0>0$ and that the map $v\mapsto \det\boldsymbol{\boldsymbol{\mathcal{M}}}_v$ is monotone decreasing on $[0,\widehat{v})$, too. 
 Thus, there  exists a solution of the secular equation $\det\boldsymbol{\boldsymbol{\mathcal{M}}}_v=0 $ only if an eigenvalue of $\boldsymbol{\mathcal{M}}_v$ decreases at zero at $0<v=v_R<\widehat{v}$. Moreover, if such a $v_R$ exists it is unique, in the sense that only one eigenvalues  of $\boldsymbol{\mathcal{M}}_v$ may decrease to zero for a value $0<v=v_R<\widehat{v}$ of the wave speed, since if two eigenvalues would share this property, then at $v=\widehat{v}$ the matrix $\boldsymbol{\mathcal{M}}_v$ should have  two\footnote{There cannot exist three negative eigenvalues since $\tr(\boldsymbol{\mathcal{M}}_v)\geq 0$.} eigenvalues which are negative which will violate the positive semi-definiteness\footnote{We recall that the positive semi-definiteness of $\mathbf{H}_v^{-1}$ is equivalent to the  positive semi-definiteness of $\boldsymbol{\mathcal{M}}_v$.}   of $\mathbf{H}_v^{-1}$ at $v=\widehat{v}$ since at least one eigenvalue of $\mathbf{H}_v^{-1}$ at $v=\widehat{v}$ is zero. In the same manner we argue that at $v=v_R$ zero there is not a repeated eigenvalue of $\boldsymbol{\mathcal{M}}_v$. We conclude the proof  by pointing out  that there exists a unique $v_R$ such that an eigenvalue of $\boldsymbol{\mathcal{M}}_v$ decreases to zero at $0<v=v_R<\widehat{v}$, and therefore that $\det\boldsymbol{\boldsymbol{\mathcal{M}}}_{v_R}=0 $, while $\det\boldsymbol{\boldsymbol{\mathcal{M}}}_{v}>0 $ for all $0<v<v_R$ and $\det\boldsymbol{\boldsymbol{\mathcal{M}}}_{v}<0 $ for all $v_R<v<\widehat{v}$.
 \end{proof}
 
Here, we  also illustrate the statements from the above paragraph numerically for the   aluminum-epoxy composite considered by Gauthier \cite{Gauthier82} and  Eringen \cite{Eringen99}. Since the integral representation \eqref{explMielke} is an explicit expression for the surface impedance matrix, we simply increase $v$ in small steps from $0$ to  the determined $\widehat{v}$ at every step. Plotting the secular equation $\det\boldsymbol{\mathcal{M}}_v$ with respect to the  wave speed $v$ gives the curve shown in Figure \ref{fig:method}. We observe how simple it is to find an approximation of the wave speed, once the integral representation \eqref{explMielke} is given.

Comparing with the left hand side of the secular equation presented in \cite[Eq. (4.7)]{ChiritaGhiba3} the left hand side of our new secular equation involve a strictly decreasing function, see Figure \ref{fig:method} and \cite[Fig. 1]{ChiritaGhiba3}. Moreover, we have analytically proven that the left hand side defines a decreasing function for all  materials.

{In our analysis we have considered  $L_{\rm c}>0$. Thus,  the results  are
valid for the full Cosserat medium. Part of our results and estimates may be immediately applicable to the case of the reduced Cosserat model (considered in \cite{grekova2009waves,kulesh2009problem}) by simply taking $L_{\rm c}\to 0$. However,  since we do not know explicitly how the analytical form of the solution $v$ of the secular equation depends on $L_{\rm c}$, we are not able to obtain the form of the solution $v$ for the reduced Cosserat model by simply letting $L_{\rm c}\to 0$ in the expression of the solution $v$ for the full Cosserat model. But,  all our  calculations may be adapted to the case $L_{\rm c}=0 $.}
 
\section{Numerical implementation}\label{Num1}\setcounter{equation}{0}

In this section we consider $k=1\, {\rm mm}^{-1}$ and the constitutive coefficients obtained by  Gauthier \cite{Gauthier82}, see also \cite{Eringen99}, for  aluminum-epoxy composite.  According to \cite[pages 164-165]{Eringen99}, in the Eringen notations \cite{hassanpour2017micropolar}, for such a material we have
\begin{align}
\lambda_{\rm Eringen} &= 7.59 \, \text{GPa}, 
\qquad \qquad 
\mu_{\rm Eringen} = 1.89 \, \text{GPa}, \qquad \qquad \kappa_{\rm Eringen}= 0.00788\, \mu_{\rm Eringen}, \\
\rho &= \frac{\kappa_{\rm Eringen}}{0.0067}, \qquad \qquad \ 
j_{\rm Eringen}\, = 0.0196 \, {\rm mm}^2, \qquad \quad 
\gamma_{\rm Eringen} = 7.11\, j_{\rm Eringen}\,\, \mu_{\rm Eringen},\notag
\end{align}
 while in our notation the same onstitutive coefficients are represented by
\begin{align}
&\lambda_{\rm e} =7.59\, \text{GPa},\qquad \qquad \mu_{\rm c} =\frac{\kappa}{2}=0.0074466\, \text{GPa},\qquad \qquad\ \  \mu_{\rm e} =\mu_{\rm Eringen}+\frac{\kappa}{2}=1.89745 \, \text{GPa},\\
&\rho=2.22287\, \frac{g}{{\rm mm}^3},\qquad\ \   j\,\mu_{\rm e}\,\tau_{\rm c}^2\,=0.0196 \,{\rm mm}^2,\qquad\qquad  {\mu_{\rm e}\,L_{\rm c}^2}\,\gamma=0.263383 \, \text{GPa}\times {\rm mm}.\notag
\end{align}

According to the results presented in the previous sections, for known numerical values of all constitutive parameters, we identify an algorithm to approximate numerically the problem of the propagation of seismic waves:

	\begin{enumerate}
	\item[I.] 	\texttt{A first algorithm:}\begin{enumerate}
			\item[	\texttt{Step 1:}] 	\texttt{Identify the limiting speed $\widehat{v}$.}     Since the constitutive coefficients satisfy \eqref{d11n}, using Proposition \ref{lemmaCG1} and after direct substitution in its analytical form, we find the value of the limiting speed $\boldsymbol{\widehat{v}=0.925507}$. Let us notice that it defers from that established in \cite{ChiritaGhiba3}, because after reverification we have remarked that all the numerical computations in \cite{ChiritaGhiba3} are done for other values of the constitutive parameters, which do not match those proposed   by Gauthier \cite{Gauthier82} for aluminum-epoxy composite. Due to a common misunderstanding of the notations, in \cite{ChiritaGhiba3} it is considered that ${\mu_{\rm e}\,L_{\rm c}^2}\,\gamma= 5.8546$ and $J=\rho\, j\,\mu_{\rm e}\,\tau_{\rm c}^2\,=0.4357$.  However, the numerical calculation  given in \cite{ChiritaGhiba3} are correct, but for the  before mentioned  values of ${\mu_{\rm e}\,L_{\rm c}^2}\,\gamma$ and $J$. 
			We have repeated  the calculations in the sense of the approach given in \cite{ChiritaGhiba3} and we have found  a complete agreement with the value of the limiting speed considered in the current paper, i.e., $\boldsymbol{\widehat{v}=0.925507}$.
			\item[	\texttt{Step 2:}] \texttt{Find the solution of the secular equation.}   We have applied formula \eqref{explMielke} to compute $\boldsymbol{\mathcal{M}}_v$ on a set of 50 values in the interval $[0,\widehat{v})$. We have to  compute numerically the needed integrals from \eqref{explMielke} since we did not reach the symbolic values of them. For these values we have computed $\det\boldsymbol{\boldsymbol{\mathcal{M}}}_v$, too. 
			We consider these values of $\det\boldsymbol{\boldsymbol{\mathcal{M}}}_v$ for a set of equidistant values in $[0,\widehat{v})$ and we use  interpolation to find an approximation function $v\mapsto f(v)$ of the function  $v\mapsto \det\boldsymbol{\boldsymbol{\mathcal{M}}}_v$, on $[0,\widehat{v})$. 
			
			\begin{figure}[h!]
				\centering
				\includegraphics[width=10cm]{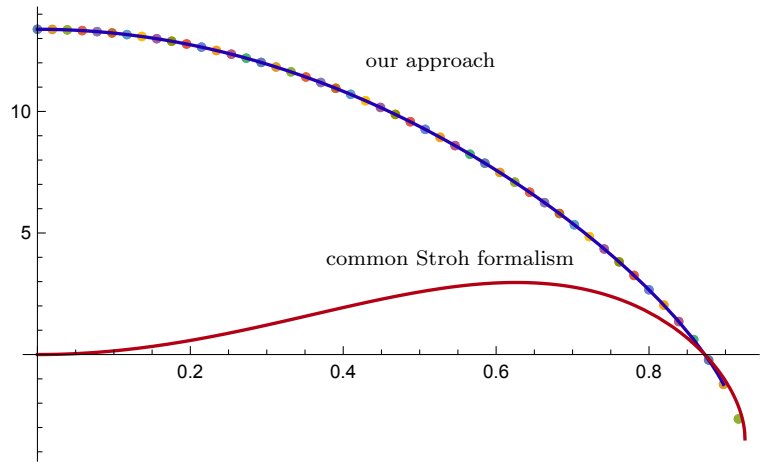}
			\put(-165,75){\footnotesize common Stroh formalism}
			\put(-150,150){\footnotesize our approach}	\caption{\footnotesize The plots of the approximation of $v\mapsto \det\boldsymbol{\boldsymbol{\mathcal{M}}}_v$ (blue curve) and of $v\mapsto s(v)$ (red curve), where $s(v)$ from \eqref{v50} defines the secular equations in the Stroh formalism approach \cite{ChiritaGhiba3}, with respect to the surface waves speed $v$ for the  aluminum-epoxy composite. These two function have the same (unique) root in the interval $[0,\widehat{v})$.}
				\label{fig:method2}
			\end{figure} 
		
			After that, we find the root  of $f$ on $[0,\widehat{v})$. Since this root given by mathematical software does not lead to a vanishing  $\det\boldsymbol{\boldsymbol{\mathcal{M}}}_v$,  we are looking in the neighbourhood of this root for a $v$ such that $\det\boldsymbol{\boldsymbol{\mathcal{M}}}_v$ is close to zero. Such a value is $\boldsymbol{v_R=0.8730352}$. This is the approximate value  we have found for the wave speed, i.e. the approximate solution of our  secular equation. We work with 7 decimals since the values obtained for the approximations of $\det\boldsymbol{\boldsymbol{\mathcal{M}}}_v$ are very sensitive to small changes of $v$. 
			
			We have considered the same coefficients in the secular equation established in \cite{ChiritaGhiba3} and we have remarked, see Figure \ref{fig:method2}, that both  functions defining the secular equation in our form and the form given in \cite{ChiritaGhiba3}, respectively, vanish in the same value in the interval $[0, \widehat{v})$.  We have approximated also the solution of the secular equation $s(v)=0$ from \cite{ChiritaGhiba3} and we have found the approximation of the corresponding admissible wave speed to be $0.87296$, which is not far from the approximate wave speed given by our secular equation $\det\boldsymbol{\boldsymbol{\mathcal{M}}}_v=0$, i.e., $\boldsymbol{v_R=0.8730352}$.  We mention we cannot obtain the precise value of $v_R$ and that a new numerical strategy or a better mathematical software  could lead to a better accuracy.

			\item[	\texttt{Step 3:}] \texttt{Construct the amplitudes and the solution}. For $v=v_R$, we find $y(0)$ as solution of the algebraic system \eqref{12}. Then we 	 construct $y$ from \eqref{08}. Finally, we 
			construct the solution $\mathcal{U}$ from \eqref{x5}. 
			We find the approximate solution of \eqref{12}$_2$ to be
			\begin{align}
			y(0)=\left(
			\begin{array}{c}
			\varsigma\\
			-1.66731\, {\rm i}\, 	\varsigma \\
			-0.0120298\, {\rm i}\, 	\varsigma \\
			\end{array}
			\right), \qquad 	\varsigma\in \mathbb{C}.
			\end{align}
		Since $\boldsymbol{\mathcal{M}}_{v_R}$  is approximated by $\boldsymbol{\mathcal{M}}_{v_R}=\left(
		\begin{array}{ccc}
		1.01413 &  -0.608012 \, {\rm i}\, & 0.00513355 \, {\rm i}\, \\
		0.608012 \, {\rm i}\, & 0.365425 & -0.0463072 \\
		 -0.00513355 \, {\rm i}\, & -0.0463072 & 6.00576 
		\end{array}
		\right)$, the matrix $\boldsymbol{\mathcal{E}}_{v_R}=\boldsymbol{\mathcal{T}}^{-1}(\boldsymbol{\mathcal{M}}_{v_R}+ {\rm i}\, \,\boldsymbol{\mathcal{R}}^T)$ is numerically approximated by
		\begin{align}
		\boldsymbol{\mathcal{E}}_{v_R}=\left(
		\begin{array}{ccc}
		1.18322 & 0.282484 \, {\rm i}\,  & 0.00598908 \, {\rm i}\,  \\
		0.785418 \, {\rm i}\,  & 0.0712845 & -0.00904174 \\
		0.00706743 \, {\rm i}\,  & -0.00766037 & 0.993447 
		\end{array}
		\right)
		\end{align}
		and, using \eqref{08}, the function $y(x_2)$ is determined. Then, using $y(x_2):=\begin{footnotesize}\begin{pmatrix} \frac{1}{\sqrt{\rho}} &0	&0
		\\
		0&\frac{1}{\sqrt{\rho}} 
		&0
		\\0
		& 0 & \frac{1}{\sqrt{\rho\,j\,\mu_{\rm e}\,\tau_{\rm c}^2\,}} \end{pmatrix}\end{footnotesize}^{-1}\,z(x_2)$ we find $z(x_2)$. 
		In the end, from \eqref{x5} we find the solution  \begin{align}
		\mathcal{U}(x_1,x_2,t)=\begin{footnotesize}\begin{pmatrix}u_1(x_1,x_2,t)
		\\
		u_2(x_1,x_2,t)
		\\\vartheta _3(x_1,x_2,t)
		\end{pmatrix}\end{footnotesize}={\rm Re}\left[\begin{footnotesize}\begin{pmatrix} z_1(x_2)
		\\
		z_2(x_2)
		\\{\rm i}\,z_3(x_2)
		\end{pmatrix}\end{footnotesize} e^{ {\rm i}\, k\, (  x_1-vt)}\right],
		\end{align}
		represented in Figure \ref{u1u2}.

		\end{enumerate}
	
	\begin{figure}[h!]
		\centering 
		\begin{subfigure}{.31\textwidth}
			\includegraphics[width=\linewidth]{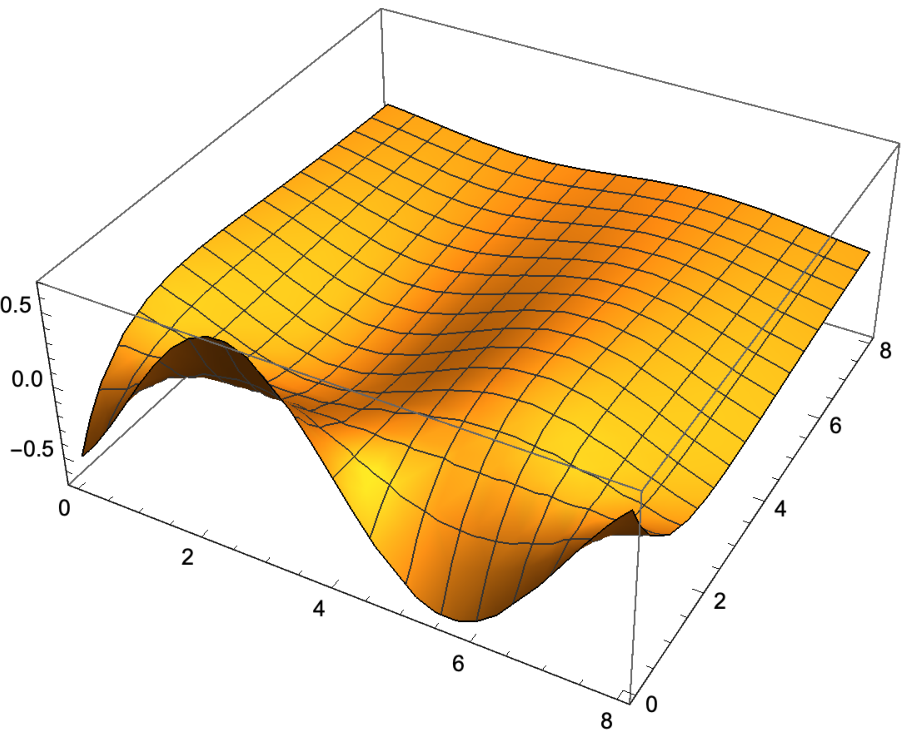}
			\caption{Plot of the $u_1$-component of the displacement.}
		\end{subfigure}\quad\  
		\begin{subfigure}{.31\textwidth}
			\includegraphics[width=\linewidth]{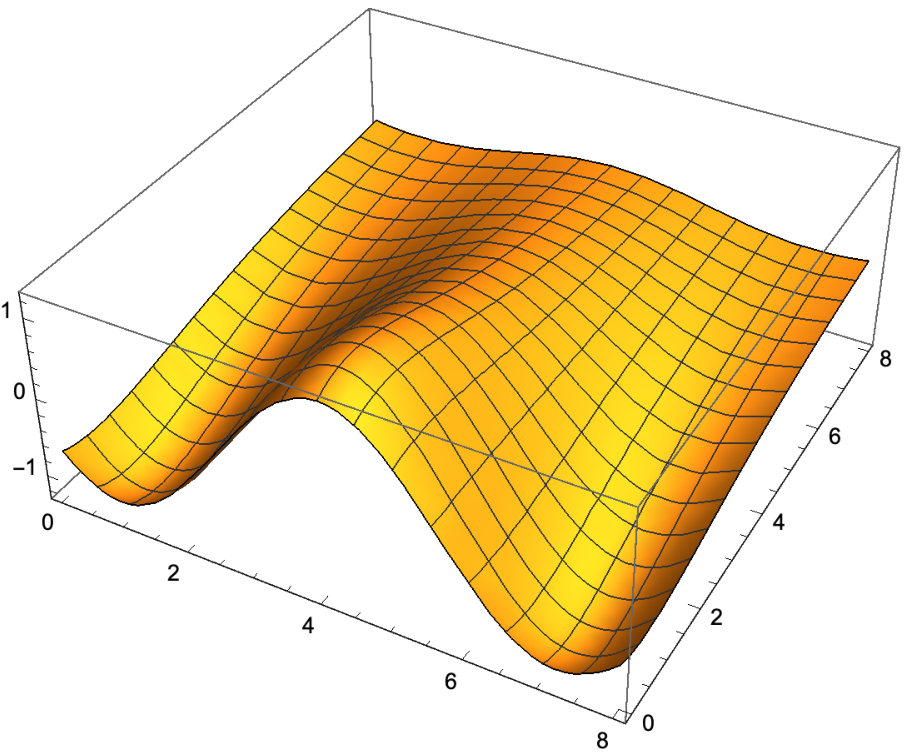}
			\caption{Plot of the $u_2$-component of the displacement.}
		\end{subfigure}\quad \ 
		\begin{subfigure}{.31\textwidth}
			\includegraphics[width=\linewidth]{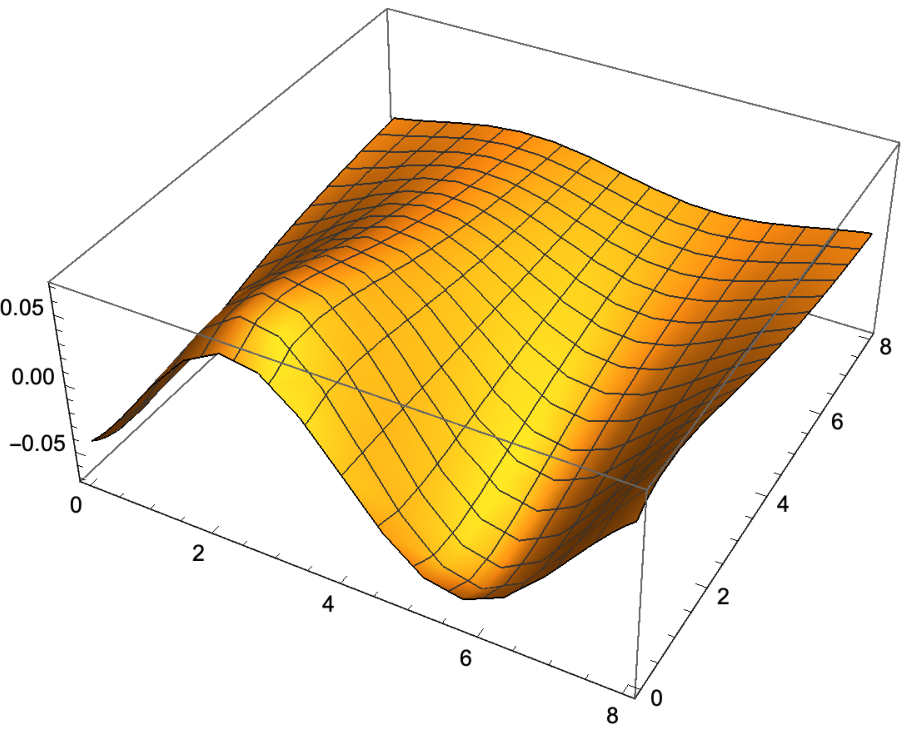}
			\caption{Plot of $\vartheta_3$-component of the micro-rotation vector.}
		\end{subfigure}
		\caption{The plot of the solution at time $t=1$ and for the choice $	\varsigma={\rm i}\,$.}\label{u1u2}
	\end{figure}
\end{enumerate}

In Figure \ref{kCe} we present the dependence of the wave speed on the wave number in the framework of the linear Cosserat theory and classical linear elasticity. In contrast to  classical elasticity where the wave speed does not depend on the wave number, in the linear Cosserat model there is a dispersion curve describing such a dependency. Moreover, it seems that the speed goes asymptotically to a finite  value of the wave speed for large values of the wave number. The computations for linear classical elasticity as limit case of the approach which is done in this paper for linear Cosserat elasticity are presented in more details in  Section \ref{Classic}.

\begin{figure}[h!]
	\centering 
		\includegraphics[width=0.5\linewidth]{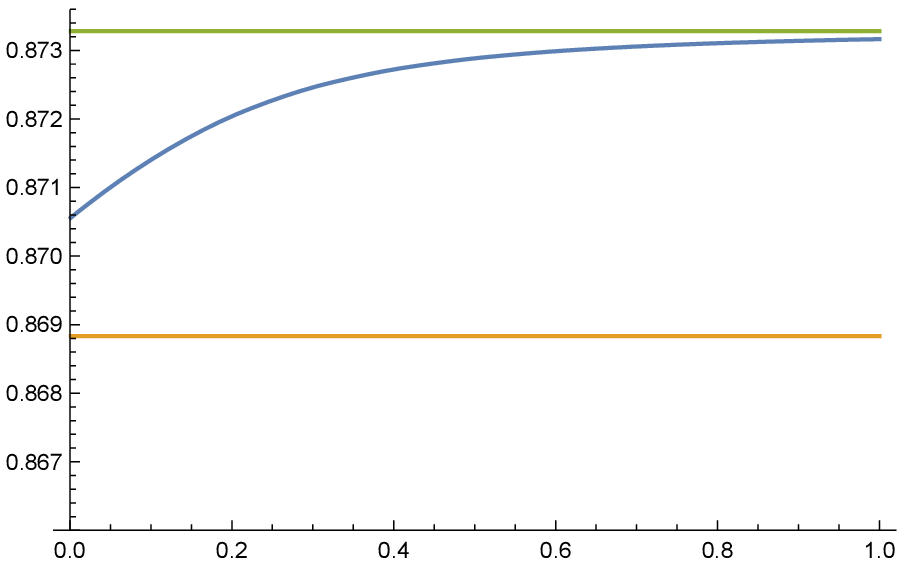}
			\put(-220,150){\footnotesize the upper bound value of the wave speed in linear Cosserat elasticity}
		\put(-150,65){\footnotesize linear classical elasticity (no dispersion)}
			\put(-150,120){\footnotesize linear Cosserat elasticity (dispersion curve)}
				\put(-220,150){\footnotesize the upper bound value of the wave speed in linear Cosserat elasticity}
				\put(-120,-14){\footnotesize {\bf wave number}}
				\put(-260,50){\begin{turn}{90}\footnotesize {\bf wave speed}\end{turn}}
	\caption{\footnotesize The dependence of the wave speed on the wave number for aluminum-epoxy composite ($\lambda_{\rm e} =7.59\, \text{GPa}, \ \mu_{\rm c} =\frac{\kappa}{2}=0.0074466\, \text{GPa},\  \mu_{\rm e} =\mu_{\rm Eringen}+\frac{\kappa}{2}=1.89745 \, \text{GPa},\ 
		\rho=2.22287\, \frac{g}{{\rm mm}^3}, j\,\mu_{\rm e}\,\tau_{\rm c}^2\,=0.0196 \,{\rm mm}^2, \ {\mu_{\rm e}\,L_{\rm c}^2}\,\gamma=0.263383 \, \text{GPa}\times {\rm mm} $) vs. for a linear elastic material having the parameters ($\lambda =7.59\, \text{GPa},\   \mu =1.89745 \, \text{GPa},
			\rho=2.22287\, \frac{g}{{\rm mm}^3}.$) For the considered material parameters, the speed goes to $0.87327989$ for large values of the wave speed.}\label{kCe} 
\end{figure}

\begin{figure}[h!]
	\centering 
	\includegraphics[width=0.55\linewidth]{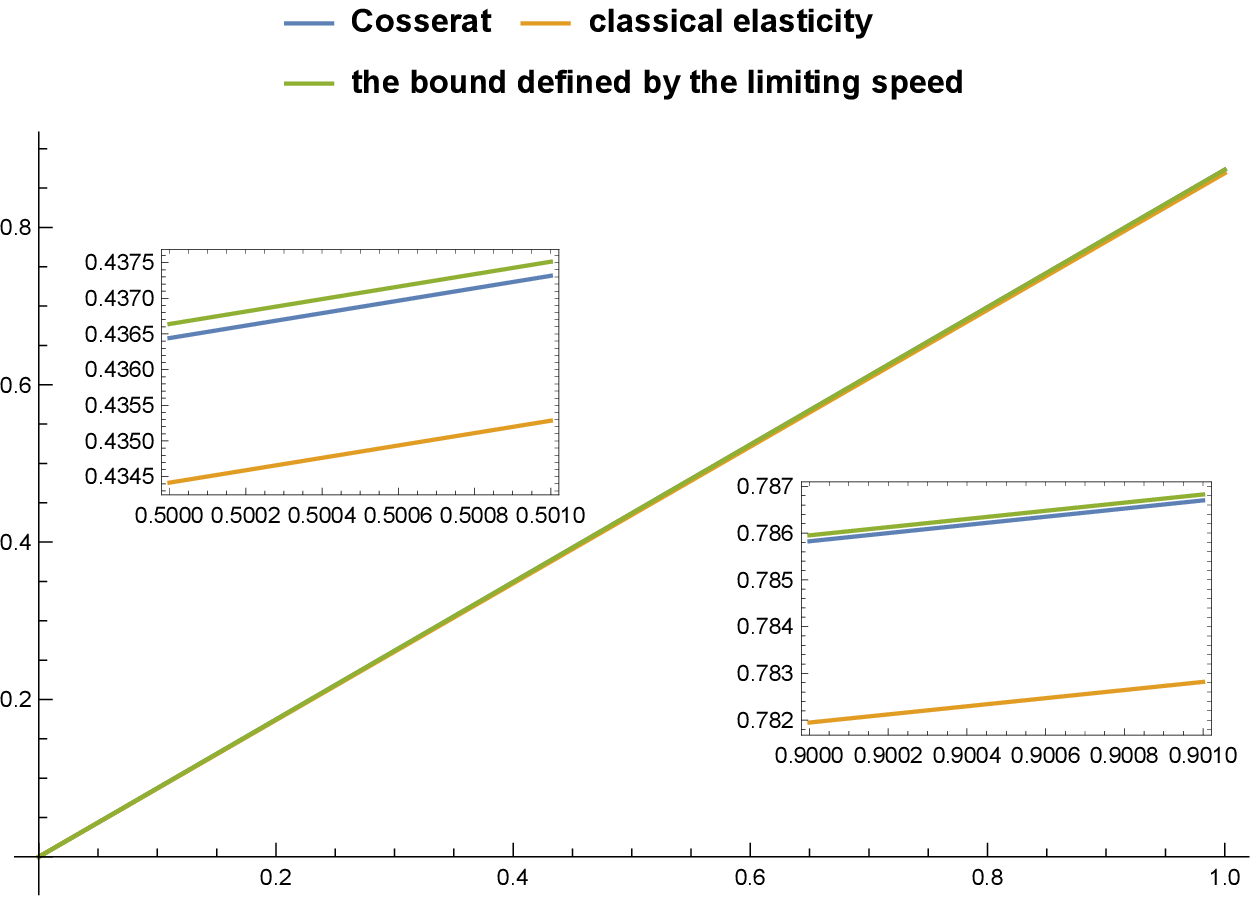}
	\put(-120,-14){\footnotesize {\bf wave number}}
	\put(-280,50){\begin{turn}{90}\footnotesize {\bf wave frequency}\end{turn}}
	\caption{\footnotesize The dependence of the wave frequency on the wave number for aluminum-epoxy composite ($\lambda_{\rm e} =7.59\, \text{GPa}, \ \mu_{\rm c} =\frac{\kappa}{2}=0.0074466\, \text{GPa},\  \mu_{\rm e} =\mu_{\rm Eringen}+\frac{\kappa}{2}=1.89745 \, \text{GPa},\ 
		\rho=2.22287\, \frac{g}{{\rm mm}^3}, j\,\mu_{\rm e}\,\tau_{\rm c}^2\,=0.0196 \,{\rm mm}^2, \ {\mu_{\rm e}\,L_{\rm c}^2}\,\gamma=0.263383 \, \text{GPa}\times {\rm mm} $) vs. for a linear elastic material having the parameters ($\lambda =7.59\, \text{GPa},\   \mu =1.89745 \, \text{GPa},
		\rho=2.22287\, \frac{g}{{\rm mm}^3}.$) From the magnified windows and since the orange curve is linear (expressing the dependency on the wave frequency as function of the wave number in classical elasticity) we observe that the wave frequency as function of the wave number is not linear and we have dispersion. However, there are no band-gap. It is clear that dispersion curve lie under (on the right side of) all the dispersion curves of the real plane waves, since the $\omega(k)=k \, v(k)$ with $0\leq v(k)<\widehat{v}(k)$ and the limiting speed $\widehat{v}(k)$ represent the minimum slope of all the possible dispersion curve at $k$, for the real plane waves.} \label{wk} 
\end{figure}

{From Figure \ref{wk} we observe that the frequency increases as function of the wave number since the  wave speed increases as function of the wave number (see Figure \ref{kCe}) and $\omega(k)=k\, v(k)$. Moreover, since $0\leq v(k)<\widehat{v}$, the mapping $k\to v(k)$ is bounded. In addition, the mapping $k\to v(k)$ is smooth. Therefore, the range of the mapping $k\to \omega(k)$ is $[0,\infty)$ and we conclude that for this material there is no band-gap. Even if, at a first look from Figure \ref{wk}, it seems  that the wave frequency depends linearly as function of the wave number (as in classical elasticity), from the magnified picture we see that this is not true. This is also clear since the wave speed is not constant as function of the wave number and therefore the group velocity $\frac{d\omega(k)}{dk}$ is not constant. Even if we have seen this aspect only for this material (we do not have an analytical proof yet), we extrapolate this remark and we conjecture that, in Cosserat elasticity  the wave frequency is not linear as function of the wave number  and we have dispersion. This can also be observed from  Figure \ref{dwk} in which the group velocity is given.}

\begin{figure}[h!]
	\centering 
	\includegraphics[width=0.55\linewidth]{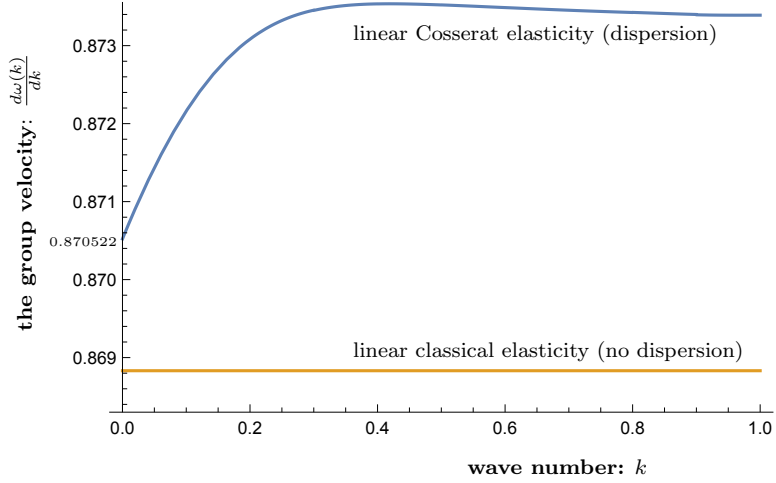}
\put(-290,40){\begin{turn}{90}	\footnotesize {\bf the group velocity}: $\frac{d\omega(k)}{dk}$\end{turn}}
\put(-120,-14){	\footnotesize {\bf wave number:} $k$}
\put(-160,30){\footnotesize linear classical elasticity (no dispersion)}
\put(-160,150){\footnotesize linear Cosserat elasticity (dispersion)}
\put(-275,72){\tiny 0.870522}
	\caption{\footnotesize The dependence of the wave frequency on the wave number for aluminum-epoxy composite ($\lambda_{\rm e} =7.59\, \text{GPa}, \ \mu_{\rm c} =\frac{\kappa}{2}=0.0074466\, \text{GPa},\  \mu_{\rm e} =\mu_{\rm Eringen}+\frac{\kappa}{2}=1.89745 \, \text{GPa},\ 
		\rho=2.22287\, \frac{g}{{\rm mm}^3}, j\,\mu_{\rm e}\,\tau_{\rm c}^2\,=0.0196 \,{\rm mm}^2, \ {\mu_{\rm e}\,L_{\rm c}^2}\,\gamma=0.263383 \, \text{GPa}\times {\rm mm} $) vs. for a linear elastic material having the parameters ($\lambda =7.59\, \text{GPa},\   \mu =1.89745 \, \text{GPa},
		\rho=2.22287\, \frac{g}{{\rm mm}^3}.$) From the magnified windows and since the orange curve is linear (expressing the dependency on the wave frequency as function of the wave number in classical elasticity) we observe that the wave frequency as function of the wave number is not linear and we have dispersion. However, there are no band-gap.} \label{dwk} 
\end{figure}

We will not explicit it here, but another numerical approach is also possible. We indicate it in the following:
\texttt{
\begin{enumerate}
	\item[II.] The second algorithm:\begin{enumerate}
	\item[Step 1:] Identify the limiting speed $\widehat{v}$: from \eqref{limCh} in our case, since this limiting speed was previously found by Chiri\c t\u a and Ghiba in \cite{ChiritaGhiba3}.
	\item[Step 2:] Consider  the hermitian matrix  $\boldsymbol{\mathcal{M}}_v$ in the form\begin{align}
	\boldsymbol{\mathcal{M}}_v=	\left({\begin{array}{ccc}\boldsymbol{\mathcal{M}}_1&
		\boldsymbol{\mathcal{M}}_3+ {\rm i}\,\boldsymbol{\boldsymbol{\mathcal{M}}}_4	&\boldsymbol{\mathcal{M}}_5+ {\rm i}\,\boldsymbol{\boldsymbol{\mathcal{M}}}_6\\\boldsymbol{\mathcal{M}}_3- {\rm i}\,\boldsymbol{\boldsymbol{\mathcal{M}}}_4&\boldsymbol{\boldsymbol{\mathcal{M}}}_2&\boldsymbol{\mathcal{M}}_7+ {\rm i}\,\boldsymbol{\boldsymbol{\mathcal{M}}}_8\\\boldsymbol{\mathcal{M}}_5- {\rm i}\, \,\boldsymbol{\mathcal{M}}_6&\boldsymbol{\mathcal{M}}_7- {\rm i}\,\boldsymbol{\boldsymbol{\mathcal{M}}}_8&\boldsymbol{\mathcal{M}}_9
		\end{array}}\right) 
	\end{align} 
	 and solve both the  Riccati equation and the secular equation 
	\begin{align}\label{76}
	(\boldsymbol{\mathcal{M}}- {\rm i}\,\boldsymbol{\mathcal{R}})\boldsymbol{\mathcal{T}}^{-1}(\boldsymbol{\mathcal{M}}+ {\rm i}\,\boldsymbol{\mathcal{R}}^T)-\boldsymbol{\mathcal{Q}}+k^2\, v^2\,{\id}=0,\qquad \det\boldsymbol{\mathcal{M}}_v=0.
	\end{align}
	With the condition $\eqref{76}_2$ this matrix has two positive eigenvalues and  one is a zero \linebreak eigenvalue. Writing separately the real and imaginary part of the equation yields \linebreak nine quadratic equation for the $\boldsymbol{\mathcal{M}}_i\,\, (i=1,2...9)$. Together with the secular equation we have an algebraic system of 10 equations for the 10 unknowns $\boldsymbol{\mathcal{M}}_1,\boldsymbol{\mathcal{M}}_2,...,\boldsymbol{\mathcal{M}}_9, v$. 
	\begin{enumerate}\item Solve this nonlinear system. \item Choose only those solutions $(v,\boldsymbol{\boldsymbol{\mathcal{M}}}_{v})$ for which $0<v<\widehat{v}$, $\tr\boldsymbol{\boldsymbol{\mathcal{M}}}_{v}>0$ and  $\boldsymbol{\mathcal{M}}_{v}$ is positive definite. 
	\item Take $(v_R,\boldsymbol{\boldsymbol{\mathcal{M}}}_{v_R})$ as solution of the nonlinear system of equations. It has to be  \linebreak only one solution $(v_R,\boldsymbol{\boldsymbol{\mathcal{M}}}_{v_R})$ with these properties.
	\item To avoid possible numerical errors, check again  that the corresponding matrix $\boldsymbol{\mathcal{E}}$ \linebreak computed with \eqref{10}  satisfies $\text{\rm Re\,spec\,}\boldsymbol{\mathcal{E}}>0$. 
\end{enumerate}
	\item[Step 3:] For $v=v_R$ and $\boldsymbol{\mathcal{M}}=\boldsymbol{\mathcal{M}}_{v_R}$, find $y(0)$ as solution of the algebraic system \eqref{12} and  \linebreak
construct $y(x_2)$ from \eqref{08}.
	\item[Step 4:]	 Construct the solution $\mathcal{U}$ from \eqref{x5}. 
	\end{enumerate}
\end{enumerate}}

\section{From linear Cosserat theory to classical linear elasticity: A consistency check and comparison of the results}\label{Classic}\setcounter{equation}{0}

In this section we rediscover the results from classical linear elasticity as a limit case of the results obtained in linear Cosserat theory. First, we remark that  
\begin{align}\label{vancond}  \qquad \|\mathrm{D}\, \axl  \,\mathbf{A}\|^2=0\qquad 
\text{
implies that}
\qquad 
\axl\, \,\mathbf{A}=\textrm{constant} \quad \textrm{in} \quad \Omega, 
\end{align}
which corresponds to a time dependent rigid (macroscopic) movement of the entire body. In addition, under Dirichlet homogeneous boundary conditions on $ \axl  \,\mathbf{A}$ we find that $ \axl  \,\mathbf{A}$ vanishes in the entire body at any time.

In fact, by looking at the expression of the total energy, we observe that the energy due to the microrotation is
\begin{align}&\,\rho\,j\,\mu_{\rm e}\,\tau_{\rm c}^2\,\|(\axl\, \,\mathbf{A})_{,t}\|^2+\mu_{\rm c} \|\skw(\mathrm{D}u -\mathbf{A})\|^2
\\&+\frac{\mu_{\rm e}L_{\rm c}^2}{2}\left[{a_1 } \|\dev\,\sym (\mathrm{D}\, \axl  \,\mathbf{A})\|^2+{a_2}\|\skw (\mathrm{D}\, \axl  \,\mathbf{A})\|^2+ \frac{4\,a_3}{3}\,[\tr(\mathrm{D}\, \axl  \,\mathbf{A})]^2\right] 
\end{align}
and that  the situation described by \eqref{vancond}  is formally equivalent to the case 
\begin{align}
L_{\rm c}\to \infty,
\end{align}
 since the total energy  has to remain finite.
  Moreover, assuming also that 
 \begin{align}
\mu_{\rm c}\to 0 \qquad\text{and}\qquad  \tau_{\rm c}\to 0, \ \ \text{too},
 \end{align}
 the entire energy due to the microrotation vanishes and we are  back in the framework of classical linear elasticity, i.e., the  elastic energy density is
 \begin{align}
 W(\mathrm{D}u )=&\,  \mu_{\rm e} \,\lVert \dev_3\, \sym\,\mathrm{D}u \rVert ^{2}+\frac{2\,\mu_{\rm e} +3\,\lambda_{\rm e} }{6}\left[\mathrm{tr} \left(\mathrm{D}u \right)\right]^{2}.\qquad \qquad \qquad \qquad \qquad 
 \end{align}
 
In the following we explain that all the results concerning the propagation of the seismic wave in the framework of Cosserat theory are still valid for Cosserat couple modulus $\mu_{\rm c}\to 0$,
and we rediscover the results concerning the propagation of the seismic waves from classical linear elasticity without assuming further conditions on the internal length scale $L_{\rm c}$ or on  the internal time scale $\tau_{\rm c}$ (as was the case for real plane waves), since the domain of the admissible speeds is already bounded and cannot reach large values defined by large values of the internal  length scale $L_{\rm c}$ or small values of the internal time scale $\tau_{\rm c}$.

Letting $\mu_{\rm c}\to 0$, the matrices involved in the Riccati equation become
\begin{align}
\boldsymbol{\mathcal{T}}:=&\,k^2\,\begin{footnotesize}\begin{pmatrix} \frac{\mu_{\rm e} }{\rho} &0	&0
\\
0& \frac{2\,\mu_{\rm e}  +\lambda_{\rm e} }{\rho}
&0
\\0
& 0 & \frac{{\mu_{\rm e}\,L_{\rm c}^2}\,\gamma}{\rho\,j\,\mu_{\rm e}\,\tau_{\rm c}^2\,} \end{pmatrix}\end{footnotesize}, \qquad \qquad 
\boldsymbol{\mathcal{R}}:= k\,\begin{footnotesize}\begin{pmatrix} 0& k\,\frac{\lambda_{\rm e} }{\rho}& 0
\\
k\,\frac{\mu_{\rm e}}{\rho}&0 
&0
\\0  
& 0 & 0 \end{pmatrix}\end{footnotesize},\notag\\
\boldsymbol{\mathcal{Q}}:=&\begin{footnotesize}\begin{pmatrix} k^2\frac{2\,\mu_{\rm e}  +\lambda_{\rm e}}{\rho}-k^2\,v^2&0	&0
\\
0&k^2\,\frac{\mu_{\rm e} }{\rho}- k^2\,v^2
&0
\\0
&0 &k^2\,\frac{{\mu_{\rm e}\,L_{\rm c}^2}\,\gamma}{\rho\,j\,\mu_{\rm e}\,\tau_{\rm c}^2\,}- k^2\,v^2 \end{pmatrix}\end{footnotesize}.
\end{align}
Thus, we have
\begin{align}
\boldsymbol{\mathcal{T}}_{\!\! \theta}&=
\begin{footnotesize}\begin{pmatrix}
\frac{k^2 \,\mu _{\rm e} \cos ^2(\theta )}{\rho }+\frac{k^2 \sin ^2(\theta ) \left(\lambda _{\rm e}+2 \,\mu _{\rm e}-\rho \, v^2\right)}{\rho } & -\frac{1}{2}\sin (2\,\theta )  \left(\frac{k^2 \lambda _{\rm e}}{\rho }+\frac{k^2 \,\mu _{\rm e}}{\rho }\right) & 0 \\
-\frac{1}{2}\sin (2\,\theta )  \left(\frac{k^2 \lambda _{\rm e}}{\rho }+\frac{k^2 \,\mu _{\rm e}}{\rho }\right) & \frac{k^2 \cos ^2(\theta ) \left(\lambda _{\rm e}+2 \,\mu _{\rm e}\right)}{\rho }+\frac{k^2 \sin ^2(\theta ) \left(\mu _{\rm e}-\rho \, v^2\right)}{\rho } & 0 \\
0 & 0 & \frac{ L_{\rm c}^2 \, \gamma   k^2  }{ \tau_{\rm c}^2 \, j  \rho }-\sin^2 (\theta ){k^2   v^2} \\
 \end{pmatrix}\end{footnotesize},\\
\boldsymbol{\mathcal{R}}_\theta&=
\begin{footnotesize}\begin{pmatrix}
\frac{1}{2}\sin (2\,\theta ) \left(\frac{k^2 \,\mu _{\rm e}}{\rho }-\frac{k^2 \left(\lambda _{\rm e}+2 \,\mu _{\rm e}-\rho \, v^2\right)}{\rho }\right) & \frac{k^2 \lambda _{\rm e} \cos ^2(\theta )}{\rho }-\frac{k^2 \,\mu _{\rm e} \sin ^2(\theta )}{\rho } & 0 \\
\frac{k^2 \,\mu _{\rm e} \cos ^2(\theta )}{\rho }-\frac{k^2 \lambda _{\rm e} \sin ^2(\theta )}{\rho } & \frac{1}{2}\sin (2\,\theta )  \left(\frac{k^2 \left(\lambda _{\rm e}+2 \,\mu _{\rm e}\right)}{\rho }-\frac{k^2 \left(\mu _{\rm e}-\rho \, v^2\right)}{\rho }\right) & 0 \\
0 & 0 & \frac{1}{2}\sin (2\,\theta ) k^2   v^2 \\
 \end{pmatrix}\end{footnotesize},\notag\\
\boldsymbol{\mathcal{Q}}_\theta&=
\begin{footnotesize}\begin{pmatrix}
\frac{k^2 \,\mu _{\rm e} \sin ^2(\theta )}{\rho }+\frac{k^2 \cos ^2(\theta ) \left(\lambda _{\rm e}+2 \,\mu _{\rm e}-\rho \, v^2\right)}{\rho } & \frac{1}{2}\sin (2\,\theta )  \left(\frac{k^2 \lambda _{\rm e}}{\rho }+\frac{k^2 \,\mu _{\rm e}}{\rho }\right) & 0 \\
\frac{1}{2}\sin (2\,\theta )  \left(\frac{k^2 \lambda _{\rm e}}{\rho }+\frac{k^2 \,\mu _{\rm e}}{\rho }\right) & \frac{k^2 \sin ^2(\theta ) \left(\lambda _{\rm e}+2 \,\mu _{\rm e}\right)}{\rho }+\frac{k^2 \cos ^2(\theta ) \left(\mu _{\rm e}-\rho \, v^2\right)}{\rho } & 0 \\
0 & 0 & \frac{ L_{\rm c}^2 \, \gamma   k^2 }{ \tau_{\rm c}^2 \, j  \rho }-\cos^2 (\theta )\,k^2  v^2 
 \end{pmatrix}\end{footnotesize}.\notag
\end{align}
Consequently, we deduce 
\begin{align}
\boldsymbol{\mathcal{T}}_{\!\! \theta} ^{-1}&=\begin{footnotesize}\begin{pmatrix}
\frac{\rho  \left(\lambda _{\rm e} \cos ^2(\theta )+\frac{1}{2} \mu _{\rm e} (\cos (2 \theta )+3)-\rho \, v^2 \sin ^2(\theta )\right)}{k^2 \left(\rho \, v^2 \sin ^2(\theta )-\mu _{\rm e}\right) \left(-\lambda _{\rm e}-2 \,\mu _{\rm e}+\rho \, v^2 \sin ^2(\theta )\right)} & \frac{\rho  \sin (\theta ) \cos (\theta ) \left(\lambda _{\rm e}+\mu _{\rm e}\right)}{k^2 \left(\rho \, v^2 \sin ^2(\theta )-\mu _{\rm e}\right) \left(-\lambda _{\rm e}-2 \,\mu _{\rm e}+\rho \, v^2 \sin ^2(\theta )\right)} & 0 \\
\frac{\rho  \sin (\theta ) \cos (\theta ) \left(\lambda _{\rm e}+\mu _{\rm e}\right)}{k^2 \left(\rho \, v^2 \sin ^2(\theta )-\mu _{\rm e}\right) \left(-\lambda _{\rm e}-2 \,\mu _{\rm e}+\rho \, v^2 \sin ^2(\theta )\right)} & \frac{\rho  \left(\lambda _{\rm e} \sin ^2(\theta )-\frac{1}{2} \mu _{\rm e} (\cos (2 \theta )-3)+\rho  \left(-v^2\right) \sin ^2(\theta )\right)}{k^2 \left(\rho \, v^2 \sin ^2(\theta )-\mu _{\rm e}\right) \left(-\lambda _{\rm e}-2 \,\mu _{\rm e}+\rho \, v^2 \sin ^2(\theta )\right)} & 0 \\
0 & 0 & \frac{ \tau_{\rm c}^2 \, j  \rho }{k^2 \left( L_{\rm c}^2 \, \gamma   - \tau_{\rm c}^2 \, j  \rho \, v^2 \sin ^2(\theta )\right)} \\
 \end{pmatrix}\end{footnotesize},\\\notag
\boldsymbol{\mathcal{T}}_{\!\! \theta} ^{-1}\boldsymbol{\mathcal{R}}_\theta  ^T&=
\begin{footnotesize}\begin{pmatrix}
\sin (2 \theta ) \left(\frac{\mu _{\rm e}}{\lambda _{\rm e}+2 \,\mu _{\rm e}-\rho \, v^2 \sin ^2(\theta )}+\frac{\rho \, v^2-2 \,\mu _{\rm e}}{2 \,\mu _{\rm e}-2 \rho \, v^2 \sin ^2(\theta )}\right) & \frac{2 \,\mu _{\rm e}^2 \cos ^2(\theta )+\lambda _{\rm e} \left(\mu _{\rm e} \cos (2 \theta )+\rho \, v^2 \sin ^2(\theta )\right)}{\left(\rho \, v^2 \sin ^2(\theta )-\mu _{\rm e}\right) \left(-\lambda _{\rm e}-2 \,\mu _{\rm e}+\rho \, v^2 \sin ^2(\theta )\right)} & 0 \\
\frac{\mu _{\rm e} \left(\lambda _{\rm e} \cos (2 \theta )+\sin ^2(\theta ) \left(\rho \, v^2-2 \,\mu _{\rm e}\right)\right)}{\left(\rho \, v^2 \sin ^2(\theta )-\mu _{\rm e}\right) \left(-\lambda _{\rm e}-2 \,\mu _{\rm e}+\rho \, v^2 \sin ^2(\theta )\right)} & \frac{\sin (2 \theta ) \left(\mu _{\rm e} \left(2 \lambda _{\rm e}+2 \,\mu _{\rm e}+\rho \, v^2\right)-\rho ^2 v^4 \sin ^2(\theta )\right)}{2 \left(\rho \, v^2 \sin ^2(\theta )-\mu _{\rm e}\right) \left(-\lambda _{\rm e}-2 \,\mu _{\rm e}+\rho \, v^2 \sin ^2(\theta )\right)} & 0 \\
0 & 0 & \frac{1}{\frac{ L_{\rm c}^2 \, \gamma   \csc (\theta ) \sec (\theta )}{ \tau_{\rm c}^2 \, j  \rho \, v^2}-\tan (\theta )} \\
 \end{pmatrix}\end{footnotesize}.
\end{align}
Hence, the matrices $\mathbf{H}_v$ and $\mathbf{S}_v$ from \eqref{explMielke} are
\begin{align}
\mathbf{H}_v&=\begin{footnotesize}\begin{pmatrix}
\frac{\frac{1}{\sqrt{\frac{\rho  \left(\frac{\lambda _{\rm e}+2 \,\mu _{\rm e}}{\rho }-v^2\right)}{\lambda _{\rm e}+2 \,\mu _{\rm e}}}}-\sqrt{\frac{\rho  \left(\frac{\mu _{\rm e}}{\rho }-v^2\right)}{\mu _{\rm e}}}}{k^2 v^2} & 0 & 0 \\
0 & \frac{\frac{1}{\sqrt{\frac{\rho  \left(\frac{\mu _{\rm e}}{\rho }-v^2\right)}{\mu _{\rm e}}}}-\sqrt{\frac{\rho  \left(\frac{\lambda _{\rm e}+2 \,\mu _{\rm e}}{\rho }-v^2\right)}{\lambda _{\rm e}+2 \,\mu _{\rm e}}}}{k^2 v^2} & 0 \\
0 & 0 & \frac{ \tau_{\rm c}^2 \, j  \rho }{2 k^2 \sqrt{ L_{\rm c}^2 \, \gamma    \tau_{\rm c}^2 \, \rho\,  j \left(\frac{ L_{\rm c}^2 \, \gamma   }{ \tau_{\rm c}^2 \, j  \rho }-v^2\right)}} \\
 \end{pmatrix}\end{footnotesize},\notag\\
&=\begin{footnotesize}\begin{pmatrix}
\frac{\frac{{c}_{l}}{\sqrt{{c}_{l}^2-v^2}}-\frac{\sqrt{{c}_{t}^2-v^2}}{{c}_{t}}}{k^2 v^2} & 0 & 0 \\
0 & \frac{\frac{{c}_{t}}{\sqrt{{c}_{t}^2-v^2}}-\frac{\sqrt{{c}_{l}^2-v^2}}{{c}_{l}}}{k^2 v^2} & 0 \\
0 & 0 & \frac{{c}_{m}}{2 k^2 \sqrt{{c}_{m}^2-v^2}} \\
 \end{pmatrix}\end{footnotesize},\\
\mathbf{S}_v&=\begin{footnotesize}\begin{pmatrix}
0 & \frac{2 \sqrt{\mu _{\rm e} \left(\frac{\mu _{\rm e}}{\rho }-v^2\right) \left(\frac{\lambda _{\rm e}+2 \,\mu _{\rm e}}{\rho }-v^2\right)}-\sqrt{\lambda _{\rm e}+2 \,\mu _{\rm e}} \left(\frac{2 \,\mu _{\rm e}}{\rho }-v^2\right)}{\sqrt{\rho } v^2 \sqrt{\frac{\lambda _{\rm e}+2 \,\mu _{\rm e}}{\rho }-v^2}} & 0 \\
-\frac{2 \sqrt{\mu _{\rm e} \left(\frac{\mu _{\rm e}}{\rho }-v^2\right) \left(\frac{\lambda _{\rm e}+2 \,\mu _{\rm e}}{\rho }-v^2\right)}-\sqrt{\lambda _{\rm e}+2 \,\mu _{\rm e}} \left(\frac{2 \,\mu _{\rm e}}{\rho }-v^2\right)}{v^2 \sqrt{\lambda _{\rm e}+2 \,\mu _{\rm e}} \sqrt{\frac{\rho }{\mu _{\rm e}}} \sqrt{\frac{\mu _{\rm e}}{\rho }-v^2}} & 0 & 0 \\
0 & 0 & 0 \\
 \end{pmatrix}\end{footnotesize}\notag\\
&=\begin{footnotesize}\begin{pmatrix}
0 & \frac{2\, {c}_{t} \sqrt{\left({c}_{t}^2-v^2\right) \left({c}_{l}^2-v^2\right)}-{c}_{l} \left(2\, {c}_{t} ^2-v^2\right)}{v^2 \sqrt{{c}_{l}^2-v^2}} & 0 \\
-\frac{2\, {c}_{t} \sqrt{\left({c}_{t}^2-v^2\right) \left({c}_{l}^2-v^2\right)}-{c}_{l} \left(2\, {c}_{t} ^2-v^2\right)}{\frac{{c}_{l} v^2 \sqrt{{c}_{t}^2-v^2}}{{c}_{t}}} & 0 & 0 \\
0 & 0 & 0 \\
 \end{pmatrix}\end{footnotesize},\notag
\end{align}
where
\begin{align} {c}_{l}&=\sqrt{\displaystyle\frac{2\,\mu_{\rm e}  +\lambda_{\rm e}}{\rho}}, \qquad \qquad
{c}_{t}=\sqrt{\displaystyle\frac{\mu_{\rm e}}{\rho}}, \qquad\qquad\ \,  {c}_{m}=\sqrt{\displaystyle\frac{{L_{\rm c}^2}\,\gamma}{\rho\, j\,\tau_{\rm c}^2\,}}
\notag 
\end{align}
Note that for $\mu_{\rm c}\to 0$ we have
\begin{align}
\mathfrak{c}_{p}={c}_{l}, \qquad\qquad \mathfrak{c}_{s}={c}_{t}, \qquad\qquad \mathfrak{c}_{m_1}={c}_{m}, \qquad\qquad \mathfrak{c}_{m_2}=\min\{\mathfrak{c}_{s}, \mathfrak{c}_{m_1}\}=\min\{{c}_{t}, {c}_{m}\},
\end{align}
and that a similar proof with  that of Proposition \ref{ChGhi} holds true in the case $\mu_{\rm c}\to 0$, too.
\begin{proposition}For $
	2\,\mu_{\rm e} +\lambda_{\rm e} >0,  \mu_{\rm e} >0,  \gamma>0,$ the limiting speed in Cosserat elastic materials is given by
	\begin{align}\label{limChel}
	\widehat{v}:=\inf_{\theta\in(-\frac{\pi}{2},\frac{\pi}{2})} v_\theta\equiv \min\left\{ 	 {c}_{l}, {c}_{t},{c}_{m}\right\}.
	\end{align}
\end{proposition}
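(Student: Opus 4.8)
The plan is to mirror the proof of Proposition \ref{ChGhi} exactly, replacing the Cosserat limiting speed characterization by the $\mu_{\rm c}\to 0$ versions of the auxiliary results. First I would invoke Proposition \ref{lemmaGH}, which remains valid under the conditions $2\,\mu_{\rm e}+\lambda_{\rm e}>0$, $\mu_{\rm e}>0$, $\gamma>0$ (these are precisely \eqref{d12} with $\mu_{\rm c}=0$, since the third inequality $\mu_{\rm c}>0$ was used in the earlier propositions only to guarantee positive-definiteness of $\boldsymbol{\mathcal{T}}$ and $\boldsymbol{\mathcal{Q}}$, and for $\mu_{\rm c}=0$ one checks directly from the explicit matrices $\boldsymbol{\mathcal{T}}$ and $\boldsymbol{\mathcal{Q}}$ displayed above that they are still positive definite under $2\,\mu_{\rm e}+\lambda_{\rm e}>0$, $\mu_{\rm e}>0$, $\gamma>0$). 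This gives the existence of a limiting speed $\widehat{v}>0$ defined as $\widehat{v}=\inf_{\theta\in(-\frac\pi2,\frac\pi2)} v_\theta$, where $v_\theta$ ranges over the solutions of the characteristic determinantal equation \eqref{x11}.

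Next I would observe that the factorisation structure behind Proposition \ref{lemmaCG1} persists for $\mu_{\rm c}\to 0$: the polynomial $r^6+P_1 r^4+P_2 r^2+P_3=0$ degenerates, with $P_2\to C_1C_2+C_2C_3+C_3C_1$ and $P_3\to C_1C_2C_3$ (the coupling term $\frac{4\rho\mu_{\rm c}^2 v^2}{\mu_{\rm e}L_{\rm c}^2\gamma(\mu_{\rm e}+\mu_{\rm c})^2}$ vanishes), so the factorisation becomes $(r^2+c_l)(r^2+c_t)(r^2+c_m)=0$ with $c_l=1-\rho v^2/(2\mu_{\rm e}+\lambda_{\rm e})$, $c_t=1-\rho v^2/\mu_{\rm e}$, $c_m=1-\rho j\tau_{\rm c}^2 v^2/(L_{\rm c}^2\gamma)$ (here $c_m$ is rewritten using $\mathfrak{K}$-block constants). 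Hence the three squared roots are $r_1^2=-c_l$, $r_2^2=-c_t$, $r_3^2=-c_m$, and these are all non-real precisely when $0\le v<\min\{c_l,c_t,c_m\}=\min\{\mathfrak c_p,\mathfrak c_s,\mathfrak c_{m_1}\}$, which, in the $\mu_{\rm c}\to 0$ notation introduced just before the statement, equals $\min\{c_l,c_t,c_m\}$. This is the $\mu_{\rm c}\to 0$ analogue of Proposition \ref{lemmaCG1}. Combining with Proposition \ref{lemmaGH} (a root of \eqref{x9} is real iff $v\ge\widehat v$, and conversely the characteristic equation has a non-real root exactly in the subsonic range) yields $\widehat v=\min\{c_l,c_t,c_m\}$, which is the claimed \eqref{limChel} since $\mathfrak c_{m_2}=\min\{\mathfrak c_s,\mathfrak c_{m_1}\}=\min\{c_t,c_m\}$ in this limit (observed in the displayed equations just above the statement), so $\min\{\mathfrak c_p,\mathfrak c_{m_2}\}=\min\{c_l,c_t,c_m\}$.

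Concretely, I would write: \emph{By the same argument as in the proof of Proposition \ref{ChGhi}, Proposition \ref{lemmaGH} (whose hypotheses are satisfied for $\mu_{\rm c}=0$ since $\boldsymbol{\mathcal{T}}$ and $\boldsymbol{\mathcal{Q}}$ remain positive definite under $2\mu_{\rm e}+\lambda_{\rm e}>0$, $\mu_{\rm e}>0$, $\gamma>0$) guarantees the existence of the limiting speed $\widehat v=\inf_{\theta}v_\theta$ and the equivalence between subsonic speeds and non-real roots $r_v$ of \eqref{x9}. Passing to $\mu_{\rm c}\to 0$ in the factorised characteristic equation gives $r_1^2=-c_l$, $r_2^2=-c_t$, $r_3^2=-c_m$ with $c_l,c_t,c_m$ as above, hence all roots are non-real iff $0\le v<\min\{c_l,c_t,c_m\}$; the $\mu_{\rm c}\to 0$ analogue of Proposition \ref{lemmaCG1} follows. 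Combining the two characterisations of the subsonic range and using $\mathfrak c_{m_2}=\min\{c_t,c_m\}$ for $\mu_{\rm c}=0$, we obtain $\widehat v=\min\{\mathfrak c_p,\mathfrak c_{m_2}\}=\min\{c_l,c_t,c_m\}$.}

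The main obstacle, and the only place requiring care, is the degeneracy of the coupled block: for $\mu_{\rm c}>0$ the speed $\mathfrak c_{m_2}$ is a genuinely coupled quantity, whereas at $\mu_{\rm c}=0$ the $u_3$–$\vartheta$ system decouples and one must verify that the limit $\lim_{\mu_{\rm c}\to 0}\mathfrak c_{m_2}$ really equals $\min\{c_t,c_m\}$ rather than some spurious intermediate value — this is the content of the $\Delta$-computation shown in the excerpt (one checks $-\sqrt\Delta\to -|c_t^2-c_m^2|$ as $\mu_{\rm c}\to 0$, so $\mathfrak c_{m_2}^2\to\frac12(c_t^2+c_m^2-|c_t^2-c_m^2|)=\min\{c_t^2,c_m^2\}$). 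Everything else is a verbatim transcription of the proof of Proposition \ref{ChGhi}, so no new analytic ideas are needed; the bookkeeping of which constants survive the limit $\mu_{\rm c}\to 0$ is the only item that deserves an explicit line.
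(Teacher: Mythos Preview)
Your proposal is correct and follows exactly the route the paper itself indicates: the paper simply states that ``a similar proof with that of Proposition \ref{ChGhi} holds true in the case $\mu_{\rm c}\to 0$, too,'' together with the observation that $\mathfrak{c}_{m_2}=\min\{\mathfrak{c}_{s},\mathfrak{c}_{m_1}\}=\min\{c_t,c_m\}$ in this limit. You supply considerably more detail than the paper does (the explicit degeneration of the factorisation, the verification that $\boldsymbol{\mathcal{T}}$ and $\boldsymbol{\mathcal{Q}}$ remain positive definite without $\mu_{\rm c}>0$, and the $\Delta$-limit computation for $\mathfrak{c}_{m_2}$), but the underlying argument is identical.
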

By considering $\mu_{\rm c} \to 0$, Theorem \ref{Mielketh} allows us to affirm that  if  the constitutive coefficients satisfy the conditions $
2\,\mu_{\rm e} +\lambda_{\rm e} >0, \ \mu_{\rm e} >0,  \gamma>0,$
	and   $0\leq v<\widehat{v}=\inf_{\theta\in(-\frac{\pi}{2},\frac{\pi}{2})} v_\theta\equiv \min\left\{ 	 {c}_{l}, {c}_{t},{c}_4\right\}$, then
the unique solution of the algebraic Riccati equation \eqref{12} that satisfies $\text{\rm Re\,[spec}\,(\boldsymbol{\mathcal{T}}^{-1}(\boldsymbol{\mathcal{M}}+{\rm i}\,R^T))]>0$  given  by \eqref{explMielke}
takes  the form
\begin{align}\boldsymbol{\mathcal{M}}_v=\left(
\begin{array}{ccc}
\frac{{c}_{t}\,k^2 \, v^2 \sqrt{{c}_{l}^2-v^2}}{{c}_{l} {c}_{t}-\sqrt{{c}_{l}^2-v^2} \sqrt{{c}_{t}^2-v^2}} & \frac{{\rm i}\, {c}_{t} k^2 \left(2\, {c}_{t} \sqrt{\left(v^2-{c}_{l}^2\right) \left(v^2-{c}_{t}^2\right)}+{c}_{l} \left(v^2-2\, {c}_{t} ^2\right)\right)}{{c}_{l} {c}_{t}-\sqrt{{c}_{l}^2-v^2} \sqrt{{c}_{t}^2-v^2}} & 0 \\
\frac{{\rm i}\, {c}_{t} k^2 \left(2\, {c}_{t} \sqrt{\left(v^2-{c}_{l}^2\right) \left(v^2-{c}_{t}^2\right)}+{c}_{l} \left(v^2-2\, {c}_{t} ^2\right)\right)}{\sqrt{{c}_{l}^2-v^2} \sqrt{{c}_{t}^2-v^2}-{c}_{l} {c}_{t}} & \frac{{c}_{l} k^2 v^2 \sqrt{{c}_{t}^2-v^2}}{{c}_{l} {c}_{t}-\sqrt{{c}_{l}^2-v^2} \sqrt{{c}_{t}^2-v^2}} & 0 \\
0 & 0 & \frac{2\, k^2 \sqrt{{c}_{m}^2-v^2}}{{c}_{m}} \\
\end{array}
\right).
\end{align}
The secular equation in the Mielke-Fu's form is
\begin{align}\label{secM}
0=\det\boldsymbol{\boldsymbol{\mathcal{M}}}_v=&
\frac{2 \,k^2 \sqrt{{c}_{m}^2-v^2}}{{c}_{m}}\frac {{c}_{l} {c}_{t} v^4 \sqrt{{c}_{l}^2-v^2} \sqrt{{c}_{t}^2-v^2}-{c}_{t}^2 \left(2\, {c}_{t} \sqrt{\left({c}_{l}^2-v^2\right) \left({c}_{t}^2-v^2\right)}+{c}_{l} \left(v^2-2\, {c}_{t} ^2\right)\right)^2}{\left({c}_{l} {c}_{t}-\sqrt{{c}_{l}^2-v^2} \sqrt{{c}_{t}^2-v^2}\right)^2}.
\end{align}
 An admissible wave speed has to satisfy $0\leq v<\widehat{v}=\inf_{\theta\in(-\frac{\pi}{2},\frac{\pi}{2})} v_\theta\equiv \min\left\{ 	 {c}_{l}, {c}_{t},{c}_{m}\right\}$. Therefore, the secular equation in the Mielke-Fu's form is equivalent to 
 \begin{align}\label{Mielkes2}
 0=s_{\rm Mielke-Fu}(v)\equiv\frac {{c}_{l} {c}_{t} v^4 \sqrt{{c}_{l}^2-v^2} \sqrt{{c}_{t}^2-v^2}-{c}_{t}^2 \left(2\, {c}_{t} \sqrt{\left({c}_{l}^2-v^2\right) \left({c}_{t}^2-v^2\right)}+{c}_{l} \left(v^2-2\, {c}_{t} ^2\right)\right)^2}{\left({c}_{l} {c}_{t}-\sqrt{{c}_{l}^2-v^2} \sqrt{{c}_{t}^2-v^2}\right)^2}.
 \end{align}
 We mention that Mielke and Fu did not write explicitly  this equation for elastic isotropic material. Since 
 \begin{align}
 \lim_{v\to 0}\frac {{c}_{l} {c}_{t} v^4 \sqrt{{c}_{l}^2-v^2} \sqrt{{c}_{t}^2-v^2}-{c}_{t}^2 \left(2\, {c}_{t} \sqrt{\left({c}_{l}^2-v^2\right) \left({c}_{t}^2-v^2\right)}+{c}_{l} \left(v^2-2\, {c}_{t} ^2\right)\right)^2}{\left({c}_{l} {c}_{t}-\sqrt{{c}_{l}^2-v^2} \sqrt{{c}_{t}^2-v^2}\right)^2}=-4\, {c}_{t}^4\, k^4
 \end{align}
 the value $v=0$ is  not a singular point of $\det\boldsymbol{\boldsymbol{\mathcal{M}}}_v$
and the secular equation reduces to
\begin{align}\label{secM2}
{c}_{l} {c}_{t} v^4 \sqrt{{c}_{l}^2-v^2} \sqrt{{c}_{t}^2-v^2}-{c}_{t}^2 \left(2\, {c}_{t} \sqrt{\left({c}_{l}^2-v^2\right) \left({c}_{t}^2-v^2\right)}+{c}_{l} \left(v^2-2\, {c}_{t} ^2\right)\right)^2=0.
\end{align}

The secular equation \eqref{Mielkes2} and its equivalent form \eqref{secM2} do not coincide with  the classical well known form of the secular equation, i.e.
\begin{align}
0=s_{\rm classic}\equiv\, \sqrt{\left(1-\frac{v^2}{{c}_{l}^2}\right)\,\left(1-\frac{v^2}{{c}_{t}^2}\right)}-\left(2-\frac{v^2}{{c}_{t}^2}\right)^2.
\end{align}
 Indeed, even when the Stroh formalism is used \cite{Destrade,destrade2007seismic}, the obtained  secular equation does not coincide with the classical form, and it is given by
 \begin{align}
s_{\rm Stroh}\equiv\frac{\rho \, v^2 \sqrt{c_{11}-\rho \, v^2} \sqrt{c_{66}-\rho \, v^2}}{\sqrt{c_{11} c_{66}}}-\frac{\left(\rho \, v^2-c_0\right) \left(\rho v^2-c_{66}\right)}{c_{66}}
 \end{align}
 where
 \begin{align}
 c_{12}=\lambda_{\rm e}, \qquad c_{66}=\mu_{\rm e},  \qquad  c_{11}=\lambda_{\rm e} +2 \mu_{\rm e}, \qquad  c_0=c_{11}-\frac{c_{12}^2}{c_{11}} .
 \end{align}
 However, all these three forms of the secular equations are equivalent in the sense that \textit{they predict the same wave speed in the same domain of admissible wave speeds}. We illustrate this numerically, see Figure \ref{Figsec123}, for a specific material which is related to aluminium-epoxy.
\begin{figure}[h!]
	\centering 
			\includegraphics[width=0.6\linewidth]{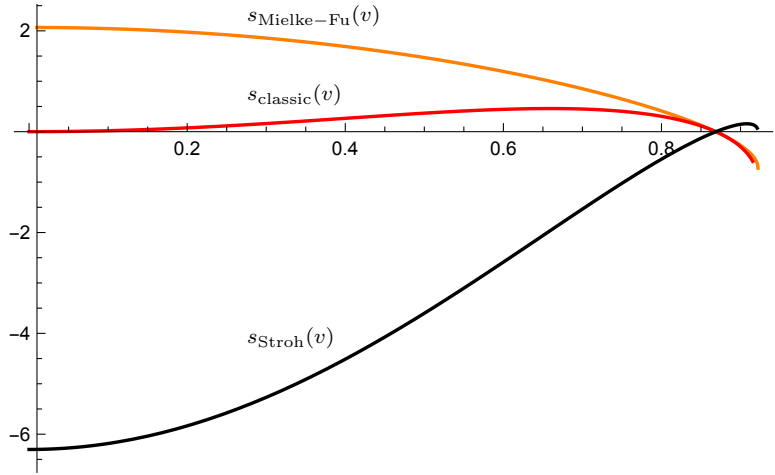}
			\put(-200,50){\footnotesize $s_{\rm Stroh}(v)$}
			\put(-200,142){\footnotesize $s_{\rm classic}(v)$}
			\put(-200,172){\footnotesize $s_{\rm Mielke-Fu}(v)$}
		\caption{\footnotesize All the alternative forms of the secular equations are equivalent. For a linear elastic material having the parameters $\lambda_{\rm e} =7.59\, \text{GPa}, \, \mu_{\rm e} =1.89745 \, \text{GPa}, \,
			\rho=2.22287\, \frac{g}{{\rm mm}^3}$ they give us the same value of the wave speed, i.e., $v_{\rm R}=0.868832$.}
	\label{Figsec123}
\end{figure}
 We point out again that there is no dependence of the wave speed on the wave number $k$, contrary with what happens when the Cosserat theory is considered, see Figure \ref{kCe}.

Moreover, after solving the system $\boldsymbol{\boldsymbol{\mathcal{M}}}_{v_{\rm R}}\, y(0)=0$ we obtain that the last component of $y(0)$ vanishes, since $\frac{2 k^2 \sqrt{{c}_{m}^2-v_{\rm R}^2}}{{c}_{m}}\neq 0$ due to the fact the solution is in the admissible set, which implies that $v_{\rm R}<{c}_{m}$.  This means that the micro-rotation vanishes and only macroscopic displacements govern the movement of the half-space. Therefore, from \eqref{08}, the wave propagation solution will be
\begin{align}\label{08c}
y(x_2)=e^{-k\,x_2 \,\boldsymbol{\mathcal{E}}_{v_{\rm R}}}\begin{pmatrix}y_1(0)\\
y_2(0)\end{pmatrix}
\end{align}
where $\boldsymbol{\mathcal{E}}_{v_{\rm R}}\in\mathbb{C}^{3 \times 3}$  is given by  
\begin{align}\label{10c} \boldsymbol{\mathcal{E}}_{v_{\rm R}}=\boldsymbol{\mathcal{T}}^{-1}(\boldsymbol{\mathcal{M}}_{v_{\rm R}}+ {\rm i}\, \,\boldsymbol{\mathcal{R}}^T),
\end{align}
and
$y_1(0), 
	y_2(0)$ are solutions of the algebraic system
	\begin{align}
	\begin{footnotesize}\begin{pmatrix}
		{{c}_{t}\,k^2 \, v^2 \sqrt{{c}_{l}^2-v^2}} & {{\rm i}\, {c}_{t} k^2 \left(2\, {c}_{t} \sqrt{\left(v^2-{c}_{l}^2\right) \left(v^2-{c}_{t}^2\right)}+{c}_{l} \left(v^2-2\, {c}_{t} ^2\right)\right)} \\
	-	{{\rm i}\, {c}_{t} k^2 \left(2\, {c}_{t} \sqrt{\left(v^2-{c}_{l}^2\right) \left(v^2-{c}_{t}^2\right)}+{c}_{l} \left(v^2-2\, {c}_{t} ^2\right)\right)} & {{c}_{l} k^2 v^2 \sqrt{{c}_{t}^2-v^2}}
	\end{pmatrix}\,\begin{pmatrix}y_1(0)\\
	y_2(0)\end{pmatrix}\end{footnotesize}=0.\notag
\end{align}

\section{Final remarks}
In this paper we have shown that the approach proposed by Mielke and Fu \cite{fu2002new,mielke2004uniqueness} for the study of the propagation of seismic waves in anisotropic linear elastic materials can be used in the framework of the isotropic Cosserat elastic materials, too. One of the big advantages of this new approach, compared with the classical methods like Stroh formalism, is that it leads to the proof of the existence and uniqueness of the solution of the obtained secular equation. While the existence of solutions was proven before for several problems concerning the propagation of seismic waves in materials with microstructure \cite{ChiritaGhiba2,ChiritaGhiba3}, but using restrictive conditions upon the constitutive coefficients, the uniqueness of the solution was, in the best situation, only conjectured in such generalized theories. Indeed, the problem of existence and uniqueness of the solution of the secular equation remains unsolved in almost all generalized linear theories from elasticity. This is the case since   the explicit forms of the corresponding secular equations are not completely analytically written. It is not the case when    Mielke and Fu's approach is used \cite{fu2002new,mielke2004uniqueness}, the secular equation being written as $\det\boldsymbol{\boldsymbol{\mathcal{M}}}_v=0$, where the hermitian matrix $\boldsymbol{\mathcal{M}}_v$ has a known integral form. Moreover, due to the form of the hermitian matrix, it is possible to prove that the secular equation  $\det\boldsymbol{\boldsymbol{\mathcal{M}}}_v=0$ has a unique admissible solution. Therefore, with this paper we close the problem of the propagation of seismic waves in isotropic linear Cosserat elastic materials and we propose two numerically viable strategies to solve this problem for specific materials. 

\begin{footnotesize}
	
	\bigskip
	
	\noindent\textbf{Acknowledgement.}

	The  work of Ionel-Dumitrel Ghiba  was supported by a grant of the Romanian Ministry of Research
	and Innovation, CNCS--UEFISCDI, project number
	PN-III-P1-1.1-TE- 2021-0783, within PNCDI III.  
	
		Hassam Khan thanks HEC/DAAD overseas scholarship scheme for~MSc leading to PhD, 2016 (57343333).  
	
	Angela Madeo acknowledges support from the European Commission through the funding of the ERC Consolidator
	Grant META-LEGO, N$^\circ$ 101001759. 
	
	Patrizio Neff acknowledges support  in the framework of the Priority Programme SPP 2256 "Variational Methods for Predicting Complex Phenomena in Engineering Structures and Materials" funded by the Deutsche Forschungsgemeinschaft (DFG, German research foundation), (Project-ID 440935806) "A variational scale-dependent transition scheme - from Cauchy elasticity to the relaxed micromorphic continuum".
	\bigskip
	
	\bibliographystyle{plain} 

\addcontentsline{toc}{section}{References}

\appendix
\section{An overview of the proofs from Fu and Mielke in our notation}\setcounter{equation}{0}

In this section, we will carry out  a matrix algebraic analysis  of the Riccati equation \eqref{12} of the linear Cosserat model to show that the properties listed in the previous section hold true for Cosserat isotropic materials, too. The content of this Appendix is almost entirely based on the results presented by Fu and Mielke in \cite{fu2001nonlinear,fu2002new} and we do not claim any merit in finding (and proving) them. The only aim of this appendix is to follow step by step the results from \cite{fu2001nonlinear,fu2002new} in order see that they can be applied in the framework of the linear isotropic Cosserat model, too.

\begin{proposition}{\rm (Exactly as from Fu and Mielke's paper)}
	The matrix problem 
	\begin{align}\label{13}
	 \boldsymbol{\mathcal{T}}{\mathcal{E}}^2- {\rm i}\, (\boldsymbol{\mathcal{R}}+ \boldsymbol{\mathcal{R}}^T)E- \boldsymbol{\mathcal{Q}}+ k^2 v^2{\id}=0,\qquad \textrm{\rm Re\,spec\,}\boldsymbol{\mathcal{E}}>0,
	\end{align}
	where `$\textrm{\rm Re\,spec\,}\boldsymbol{\mathcal{E}} $  means the real part of spectra of $\boldsymbol{\mathcal{E}}$  and $\boldsymbol{\mathcal{T}}$,$\boldsymbol{\mathcal{T}}$, $\boldsymbol{\mathcal{R}}$ are defined in \eqref{nTQ}, has a unique solution for $\boldsymbol{\mathcal{E}}$. 
\end{proposition}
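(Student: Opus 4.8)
The plan is to transcribe the matrix‑polynomial/invariant‑subspace machinery of Fu and Mielke and to extract existence and uniqueness from the spectral geometry that the subsonic hypothesis forces. Throughout I work under the standing assumptions \eqref{d12} and $0\le v<\widehat v$, so that $\boldsymbol{\mathcal{T}}$ and $\widetilde{\boldsymbol{\mathcal{Q}}}:=\boldsymbol{\mathcal{Q}}-k^2v^2\,\id$ are symmetric and positive definite (the Lemma on $\boldsymbol{\mathcal{T}},\boldsymbol{\mathcal{Q}}$ together with Proposition \ref{propQp} at $\theta=0$), and so that the characteristic equation \eqref{x9} has no real root (Proposition \ref{lemmaGH}).

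First I would set up the linearization. Writing $L(\lambda):=\lambda^2\boldsymbol{\mathcal{T}}-{\rm i}\,\lambda\,(\boldsymbol{\mathcal{R}}+\boldsymbol{\mathcal{R}}^T)-\widetilde{\boldsymbol{\mathcal{Q}}}$, a matrix $\boldsymbol{\mathcal{E}}$ solves \eqref{13} exactly when $L(\boldsymbol{\mathcal{E}})=0$. Since $\boldsymbol{\mathcal{T}}$ is invertible, $L$ has the regular companion linearization $\mathcal C=\begin{pmatrix}0&\id\\ \boldsymbol{\mathcal{T}}^{-1}\widetilde{\boldsymbol{\mathcal{Q}}}& {\rm i}\,\boldsymbol{\mathcal{T}}^{-1}(\boldsymbol{\mathcal{R}}+\boldsymbol{\mathcal{R}}^T)\end{pmatrix}\in\mathbb C^{6\times 6}$, whose eigenvalues are the zeros of $\det L$; after the substitution $r={\rm i}\,\lambda$ used in the proof of Lemma \ref{lemmaGH2} these are precisely the roots of \eqref{x9}, and $\mathcal C$ is, up to a similarity, the Stroh matrix $\mathcal N$ of \eqref{y13}. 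The standard dictionary then reads: solvents of $L(\boldsymbol{\mathcal{E}})=0$ are in bijection with the $\mathcal C$‑invariant subspaces $V\subset\mathbb C^6$ of dimension $3$ that are graphs over the first factor, $V=\{(d,\boldsymbol{\mathcal{E}}\,d):d\in\mathbb C^3\}$, with ${\rm spec}\,\boldsymbol{\mathcal{E}}={\rm spec}\,(\mathcal C|_V)$; the decay requirement ${\rm Re\,spec\,}\boldsymbol{\mathcal{E}}>0$ selects the subspaces associated with eigenvalues $\lambda$ with ${\rm Re}\,\lambda>0$, equivalently $r={\rm i}\,\lambda$ with ${\rm Im}\,r>0$.

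Next I would pin down the spectral geometry. Because $\boldsymbol{\mathcal{T}},\widetilde{\boldsymbol{\mathcal{Q}}},\boldsymbol{\mathcal{R}}+\boldsymbol{\mathcal{R}}^T$ are real and symmetric one has $L(\lambda)^*=L(-\bar\lambda)$, so the zero set of $\det L$ is invariant under $\lambda\mapsto-\bar\lambda$: the eigenvalues of $\mathcal C$ are symmetric about the imaginary axis. By Proposition \ref{lemmaGH} none of them lies on it (a purely imaginary $\lambda$ would give a real $r$ solving \eqref{x9}), so exactly three eigenvalues, counted with multiplicity, have ${\rm Re}\,\lambda>0$ and three have ${\rm Re}\,\lambda<0$. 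Hence there is a \emph{unique} $3$‑dimensional $\mathcal C$‑invariant subspace $V_+$ whose spectrum lies in the open right half‑plane. Uniqueness of $\boldsymbol{\mathcal{E}}$ is then immediate: any solution of \eqref{13} produces a $3$‑dimensional $\mathcal C$‑invariant graph subspace with spectrum in $\{{\rm Re}\,\lambda>0\}$, which must coincide with $V_+$, and a graph subspace determines the corresponding $\boldsymbol{\mathcal{E}}$ uniquely.

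It therefore remains to prove existence, i.e. that $V_+$ is non‑characteristic (a graph over the first factor); this is the one genuinely non‑trivial point and the place where positive definiteness is essential. Here I would follow Fu and Mielke: equip $\mathbb C^6$ with the (complexified) Hamiltonian structure familiar from the Stroh formalism, in which $\mathcal C$ is skew; since $\mathcal C$ has no imaginary eigenvalue, $V_+$ is Lagrangian; and then show that a Hermitian form built from the traction–displacement pairing — definite thanks to $\boldsymbol{\mathcal{T}}>0$ and $\widetilde{\boldsymbol{\mathcal{Q}}}>0$ — is definite on $V_+$ while it is not on the unphysical plane $\{0\}\times\mathbb C^3$, which forces $V_+\cap(\{0\}\times\mathbb C^3)=\{0\}$. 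An equivalent route, also Fu–Mielke's, is a continuation argument: $\boldsymbol{\mathcal{T}}>0$ keeps $\mathcal C$ finite for all $v\in[0,\widehat v)$ and $\widetilde{\boldsymbol{\mathcal{Q}}}>0$ (Proposition \ref{propQp}) keeps $\mathcal C$ free of imaginary eigenvalues, so $V_+=V_+(v)$ is a continuous family; one checks the graph property directly at $v=0$, where $\boldsymbol{\mathcal{Q}}>0$ makes the computation explicit, and shows it cannot be lost along $[0,\widehat v)$, the obstruction again being ruled out by the definite energy form. Once this step is in hand, the corresponding $\boldsymbol{\mathcal{M}}_v$ obtained from $\eqref{10}_2$ is Hermitian by the same pairing identity, completing the proof. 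I expect this non‑characteristic/graph step to be the main obstacle; everything else (the linearization dictionary, the $3+3$ eigenvalue split, the reduction of uniqueness to it) is formal given Propositions \ref{lemmaGH} and \ref{propQp}.
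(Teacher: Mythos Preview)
Your approach is correct and shares the paper's core idea: exploit the spectral symmetry $\lambda\mapsto-\bar\lambda$ of the quadratic pencil $L(\lambda)$ to obtain a $3+3$ split of the six roots, then take the invariant subspace associated with $\{{\rm Re}\,\lambda>0\}$ as the graph of the unique $\boldsymbol{\mathcal{E}}$. The paper's proof in the Appendix is much terser: it simply observes that $\det L$ is a sextic with real coefficients, that roots pair as $r,-\bar r$, and concludes in one line that ``collecting the corresponding eigenspaces defines $\boldsymbol{\mathcal{E}}$ uniquely.''

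The genuine difference is that you isolate and address what the paper's proof glosses over: why the three-dimensional stable subspace $V_+$ is a \emph{graph} over the first factor (equivalently, why the eigenvectors $d^{(k)}$ with ${\rm Im}\,r_k>0$ are linearly independent in $\mathbb{C}^3$). You propose two routes for this --- a Lagrangian/definite-form argument and a continuation argument from $v=0$ --- both of which are standard in the Fu--Mielke/Stroh literature and are the right tools. The paper effectively appeals to this step by reference (``Exactly as from Fu and Mielke's paper'') without spelling it out; your framing via the companion linearization $\mathcal C$ and the solvent--invariant-subspace dictionary makes the logical structure transparent and buys a clean separation between the formal part (the $3+3$ split, uniqueness) and the analytic part (non-characteristicity). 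For the purposes of this paper either level of detail is acceptable, but your version is the more complete argument.
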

\begin{proof}
	Let $\boldsymbol{\mathcal{E}}$ be the solution of \eqref{13}. Let $r$ be the eigenvalue of $\boldsymbol{\mathcal{E}}$ and $a$ be the associated eigenvector (so that $\boldsymbol{\mathcal{E}}a=ra$). It follows from \eqref{13} that $r$ and $a$  must satisfy the eigenvalue problem
	\begin{align}\label{15}
	\{r^2	 \boldsymbol{\mathcal{T}}-{\rm i}\,r( \boldsymbol{ \boldsymbol{\mathcal{R}}}+ \boldsymbol{ \boldsymbol{\mathcal{R}}}^T)- \boldsymbol{ \boldsymbol{\mathcal{Q}}}+k^2 v^2{\id}\}a=0, \qquad 
	\det\{r^2	 \boldsymbol{\mathcal{T}}-{\rm i}\,r( \boldsymbol{ \boldsymbol{\mathcal{R}}}+ \boldsymbol{ \boldsymbol{\mathcal{R}}}^T)- \boldsymbol{ \boldsymbol{\mathcal{Q}}}+k^2 v^2{\id}\}=0.
	\end{align}
	In the explicit form, the characteristic equation $\eqref{15}_2$ 
	takes the form
	\begin{align} 
	r^6+P_1 r^4+P_2 r^2+P_3 =0,
	\end{align}
	where $P_1, P_2, P_3, P_4$ are given by \eqref{defpC}. 
	We observe that characteristic equation is a six degree polynomial with real coefficients. If $r$ is the root of the characteristic equation then so is $-\overline{r}$.  Thus, we conclude that the characteristic equation has three roots with positive real parts. Collecting the corresponding eigenspaces defines $\boldsymbol{\mathcal{E}}$ uniquely.
\end{proof}
\begin{theorem}{\rm (Exactly as from Fu and Mielke's paper)}\label{mT1}
	If $\boldsymbol{\mathcal{E}}$ solves \eqref{13}, then $\boldsymbol{\mathcal{M}}$ obtained from $\eqref{10}_2$ is Hermitian.
\end{theorem}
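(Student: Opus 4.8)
**Proof plan for: if $\boldsymbol{\mathcal{E}}$ solves \eqref{13}, then $\boldsymbol{\mathcal{M}}$ obtained from $\eqref{10}_2$ is Hermitian.**

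The plan is to follow the Fu--Mielke argument verbatim, adapting only the names of the matrices. Recall that $\boldsymbol{\mathcal{M}} = \boldsymbol{\mathcal{T}}\boldsymbol{\mathcal{E}} - {\rm i}\,\boldsymbol{\mathcal{R}}^T$ with $\boldsymbol{\mathcal{T}}$, $\boldsymbol{\mathcal{Q}}$ symmetric positive definite and $\boldsymbol{\mathcal{R}}$ real. The key device is to consider the auxiliary matrix $\boldsymbol{\mathcal{N}} := \boldsymbol{\mathcal{M}} - \boldsymbol{\mathcal{M}}^*$ (where $\,{}^*$ denotes conjugate transpose) and show $\boldsymbol{\mathcal{N}} = 0$ by deriving for it a linear matrix equation (a Sylvester/Lyapunov-type equation) whose only solution is the zero matrix, using the spectral separation $\textrm{Re}\,\textrm{spec}\,\boldsymbol{\mathcal{E}} > 0$.

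First I would rewrite the quadratic matrix equation \eqref{13} in terms of $\boldsymbol{\mathcal{M}}$. Substituting $\boldsymbol{\mathcal{E}} = \boldsymbol{\mathcal{T}}^{-1}(\boldsymbol{\mathcal{M}} + {\rm i}\,\boldsymbol{\mathcal{R}}^T)$ into $\boldsymbol{\mathcal{T}}\boldsymbol{\mathcal{E}}^2 - {\rm i}\,(\boldsymbol{\mathcal{R}}+\boldsymbol{\mathcal{R}}^T)\boldsymbol{\mathcal{E}} - \boldsymbol{\mathcal{Q}} + k^2 v^2\,\id = 0$ yields, exactly as in \eqref{12}$_1$,
\begin{align}\label{app-riccati}
(\boldsymbol{\mathcal{M}} - {\rm i}\,\boldsymbol{\mathcal{R}})\,\boldsymbol{\mathcal{T}}^{-1}(\boldsymbol{\mathcal{M}} + {\rm i}\,\boldsymbol{\mathcal{R}}^T) - \boldsymbol{\mathcal{Q}} + k^2 v^2\,\id = 0.
\end{align}
Next I would take the conjugate transpose of \eqref{app-riccati}. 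Since $\boldsymbol{\mathcal{T}}^{-1}$ is real symmetric, $\boldsymbol{\mathcal{Q}}$ is real symmetric and $\id$ is real, the conjugate transpose of \eqref{app-riccati} reads $(\boldsymbol{\mathcal{M}}^* - {\rm i}\,\boldsymbol{\mathcal{R}})\,\boldsymbol{\mathcal{T}}^{-1}(\boldsymbol{\mathcal{M}}^* + {\rm i}\,\boldsymbol{\mathcal{R}}^T) - \boldsymbol{\mathcal{Q}} + k^2 v^2\,\id = 0$ (using that $(\boldsymbol{\mathcal{R}}^T)^* = \boldsymbol{\mathcal{R}}$ because $\boldsymbol{\mathcal{R}}$ is real). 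Subtracting the two equations and expanding both products, the quadratic terms $\boldsymbol{\mathcal{M}}\boldsymbol{\mathcal{T}}^{-1}\boldsymbol{\mathcal{M}}$ and $\boldsymbol{\mathcal{M}}^*\boldsymbol{\mathcal{T}}^{-1}\boldsymbol{\mathcal{M}}^*$ remain, but after regrouping one obtains a relation of the form
\begin{align}\label{app-lyap}
\boldsymbol{\mathcal{E}}^*\,\boldsymbol{\mathcal{N}} + \boldsymbol{\mathcal{N}}\,\boldsymbol{\mathcal{E}} = 0, \qquad \boldsymbol{\mathcal{N}} := \boldsymbol{\mathcal{M}} - \boldsymbol{\mathcal{M}}^*,
\end{align}
where the cancellation of the $\boldsymbol{\mathcal{R}}$-dependent lower-order terms is the place where the real symmetry of $\boldsymbol{\mathcal{T}}^{-1}$ and $\boldsymbol{\mathcal{Q}}$ is used; I would carry out this bookkeeping carefully, exactly as Fu and Mielke do, factoring $\boldsymbol{\mathcal{M}} + {\rm i}\,\boldsymbol{\mathcal{R}}^T = \boldsymbol{\mathcal{T}}\boldsymbol{\mathcal{E}}$ back in to recover $\boldsymbol{\mathcal{E}}$ on the left and right.

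Finally I would invoke the uniqueness-of-solution fact for the Lyapunov equation \eqref{app-lyap}: since $\boldsymbol{\mathcal{E}}$ has all eigenvalues with strictly positive real part (by \eqref{13}), the spectra of $-\boldsymbol{\mathcal{E}}$ and of $\boldsymbol{\mathcal{E}}^*$ are disjoint (one lies in the open left half-plane, the other in the open right half-plane), so the only solution $\boldsymbol{\mathcal{N}}$ of $\boldsymbol{\mathcal{E}}^*\boldsymbol{\mathcal{N}} = -\boldsymbol{\mathcal{N}}\boldsymbol{\mathcal{E}}$ is $\boldsymbol{\mathcal{N}} = 0$; hence $\boldsymbol{\mathcal{M}} = \boldsymbol{\mathcal{M}}^*$, i.e. $\boldsymbol{\mathcal{M}}$ is Hermitian. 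The main obstacle is purely the algebraic manipulation leading to \eqref{app-lyap} — the clean cancellation of all terms except the Lyapunov pair — but this is exactly the computation in \cite{fu2002new} and nothing in it uses anisotropy or any structural feature absent from the Cosserat matrices $\boldsymbol{\mathcal{T}}, \boldsymbol{\mathcal{R}}, \boldsymbol{\mathcal{Q}}$ in \eqref{nTQ}; the only hypotheses genuinely needed are the symmetry and positive-definiteness of $\boldsymbol{\mathcal{T}}$ and $\boldsymbol{\mathcal{Q}}$ (guaranteed by \eqref{d12}) and the reality of $\boldsymbol{\mathcal{R}}$, so the proof transfers without change.
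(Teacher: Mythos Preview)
Your proposal is correct and follows essentially the same route as the paper: take the conjugate transpose of the Riccati equation \eqref{12}$_1$, subtract to obtain the homogeneous Lyapunov equation $\overline{\boldsymbol{\mathcal{E}}}^T\boldsymbol{\mathcal{N}}+\boldsymbol{\mathcal{N}}\boldsymbol{\mathcal{E}}=0$ for $\boldsymbol{\mathcal{N}}=\boldsymbol{\mathcal{M}}-\overline{\boldsymbol{\mathcal{M}}}^T$, and conclude $\boldsymbol{\mathcal{N}}=0$ from $\textrm{Re}\,\textrm{spec}\,\boldsymbol{\mathcal{E}}>0$. The only cosmetic difference is that the paper invokes the integral representation $\mathbf{X}=\int_0^\infty e^{-t\overline{\boldsymbol{\mathcal{E}}}^T}\mathbf{B}\,e^{-t\boldsymbol{\mathcal{E}}}\,dt$ for the Lyapunov solution, while you appeal to the equivalent Sylvester spectral-disjointness criterion.
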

\begin{proof}
	By  taking   transpose  and complex conjugate of $\eqref{12}_1$ we obtain
	\begin{align}\label{16}
	(\boldsymbol{\mathcal{M}}^T+ {\rm i}\,\boldsymbol{ \boldsymbol{\mathcal{R}}}) \boldsymbol{\mathcal{T}}^{-1}(\boldsymbol{\mathcal{M}}- {\rm i}\,\boldsymbol{ \boldsymbol{\mathcal{R}}}^T)- \boldsymbol{ \boldsymbol{\mathcal{Q}}}^T+k^2 v^2 {\id}=0\quad  \Leftrightarrow\ \quad 
	(\overline{	\boldsymbol{\mathcal{M}}}^T- {\rm i}\,\boldsymbol{ \boldsymbol{\mathcal{R}}})\mathcal{ \boldsymbol{\mathcal{T}}}^{-1}(\overline{	\boldsymbol{\mathcal{M}}}+ {\rm i}\,\boldsymbol{ \boldsymbol{\mathcal{R}}}^T)- \boldsymbol{ \boldsymbol{\mathcal{Q}}}^T+k^2 v^2  {\id}=0.
	\end{align}
	On subtracting  \eqref{16}$_1$ from \eqref{16}$_2$, we get
	\begin{align}
	(\boldsymbol{\mathcal{M}}-\overline{	\boldsymbol{\mathcal{M}}}^T)\boldsymbol{\mathcal{E}}+\overline{\boldsymbol{\mathcal{E}}}^T(\boldsymbol{\mathcal{M}}-\overline{	\boldsymbol{\mathcal{M}}}^T)=0,
	\end{align}
	which has the form of a Liaponov matrix equation
	\begin{align}\label{18}
	\mathbf{X}\boldsymbol{\mathcal{E}}+\overline{\boldsymbol{\mathcal{E}}}^T\mathbf{X}=\mathbf{B}.
	\end{align}
	The equation \eqref{18} has a unique solution $\mathbf{X}$.
	For $\text{Re}\,\text{spec}(\boldsymbol{\mathcal{E}})>0$ it is given \cite{barnet1990matrix} by 
	\begin{align}
	\mathbf{X}=\int_{0}^{\infty} e^{-t\,\overline{\boldsymbol{\mathcal{E}}}^T}\,\mathbf{B}\, e^{-t\,\boldsymbol{\mathcal{E}}} dt.
	\end{align}
	Since we have $\mathbf{B}=0$ the unique solution of \eqref{18} is $\mathbf{X}=0$ and hence $\boldsymbol{\mathcal{M}}=\overline{\boldsymbol{\mathcal{M}}}^T$.
\end{proof}
\begin{remark}
	{\rm 	Every Hermitian matrix has real eigenvalues.	The determinant of Hermitian matrix is a real number. \textsl{}A Hermitian matrix is said to be postive positive if it has positive eigenvalues.}
\end{remark}

In the proof of the following theorem, there is only a small difference compared to the proof given by Fu and Mielke \cite{fu2002new}.
\begin{theorem}{\rm (Exactly as from Fu and Mielke's paper)} 
	Let $\boldsymbol{\mathcal{M}}$ and $\boldsymbol{\mathcal{E}}$ be the same as in Theorem \ref{mT1}. Then the matrix $\frac{d \boldsymbol{\mathcal{M}}}{dv}$ is negative definite.
\end{theorem}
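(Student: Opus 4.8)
The plan is to differentiate the algebraic Riccati equation $\eqref{12}_1$ with respect to $v$ and to convert the resulting relation into a Lyapunov matrix equation whose unique solution can be written as an explicit integral with a manifestly sign-definite integrand. First I would recall that, by Theorem \ref{thmielke1}, for $0\le v<\widehat{v}$ there is a unique matrix $\boldsymbol{\mathcal{E}}=\boldsymbol{\mathcal{E}}_v$ solving \eqref{13E} with $\text{Re}\,\text{spec}\,\boldsymbol{\mathcal{E}}>0$, and the corresponding $\boldsymbol{\mathcal{M}}=\boldsymbol{\mathcal{M}}_v=-(-\boldsymbol{\mathcal{T}}\boldsymbol{\mathcal{E}}+{\rm i}\,\boldsymbol{\mathcal{R}}^T)$ is Hermitian and satisfies $\eqref{12}_1$, i.e.
\begin{align}
(\boldsymbol{\mathcal{M}}_v-{\rm i}\,\boldsymbol{\mathcal{R}})\boldsymbol{\mathcal{T}}^{-1}(\boldsymbol{\mathcal{M}}_v+{\rm i}\,\boldsymbol{\mathcal{R}}^T)-\boldsymbol{\mathcal{Q}}+k^2v^2\,{\id}=0.
\end{align}
Since $\boldsymbol{\mathcal{T}}$, $\boldsymbol{\mathcal{R}}$ are independent of $v$ while $\boldsymbol{\mathcal{Q}}=\boldsymbol{\mathcal{Q}}(v)$ depends on $v$ only through the $-k^2v^2\,{\id}$ term (so that $\boldsymbol{\mathcal{Q}}-k^2v^2\,{\id}$ has $v$-derivative $-2k^2v\,{\id}$ after including the explicit $+k^2v^2{\id}$, one must be careful: $-\boldsymbol{\mathcal{Q}}+k^2v^2\,{\id}$ has derivative $2k^2v\,{\id}-\tfrac{d}{dv}\boldsymbol{\mathcal{Q}}$; but $\boldsymbol{\mathcal{Q}}$ as defined in \eqref{nTQ} already contains a $-k^2v^2$ on the diagonal, so $-\boldsymbol{\mathcal{Q}}+k^2v^2\id$ is actually $v$-independent up to a constant — I would simply track the net coefficient $c(v)$ of ${\id}$ and note its derivative is a positive multiple of ${\id}$, this being the crux of the sign). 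Writing $\boldsymbol{\mathcal{M}}':=\tfrac{d\boldsymbol{\mathcal{M}}_v}{dv}$ and differentiating, I would obtain
\begin{align}
\boldsymbol{\mathcal{M}}'\,\boldsymbol{\mathcal{T}}^{-1}(\boldsymbol{\mathcal{M}}_v+{\rm i}\,\boldsymbol{\mathcal{R}}^T)+(\boldsymbol{\mathcal{M}}_v-{\rm i}\,\boldsymbol{\mathcal{R}})\boldsymbol{\mathcal{T}}^{-1}\boldsymbol{\mathcal{M}}' = -\,\frac{d}{dv}\bigl(-\boldsymbol{\mathcal{Q}}+k^2v^2{\id}\bigr),
\end{align}
and the right-hand side is a negative-definite multiple of ${\id}$ (this is where $\mu_{\rm c}>0$, $\rho>0$, $j>0$ enter, guaranteeing the net coefficient is genuinely $v$-dependent with negative derivative).

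Next I would recognize the left-hand side as $\overline{\boldsymbol{\mathcal{E}}}^{\,T}\boldsymbol{\mathcal{M}}'+\boldsymbol{\mathcal{M}}'\boldsymbol{\mathcal{E}}$ after using $\eqref{10}$ to replace $\boldsymbol{\mathcal{T}}^{-1}(\boldsymbol{\mathcal{M}}_v+{\rm i}\,\boldsymbol{\mathcal{R}}^T)=\boldsymbol{\mathcal{E}}$ and $(\boldsymbol{\mathcal{M}}_v-{\rm i}\,\boldsymbol{\mathcal{R}})\boldsymbol{\mathcal{T}}^{-1}=\overline{\boldsymbol{\mathcal{E}}}^{\,T}$ — the latter following from Hermiticity of $\boldsymbol{\mathcal{M}}_v$ and the structure $\boldsymbol{\mathcal{M}}_v=\boldsymbol{\mathcal{T}}\boldsymbol{\mathcal{E}}-{\rm i}\,\boldsymbol{\mathcal{R}}^T$ together with symmetry of $\boldsymbol{\mathcal{T}}$ (one takes conjugate transpose: $\boldsymbol{\mathcal{M}}_v=\boldsymbol{\mathcal{M}}_v^*=\overline{\boldsymbol{\mathcal{E}}}^{\,T}\boldsymbol{\mathcal{T}}+{\rm i}\,\boldsymbol{\mathcal{R}}$, hence $(\boldsymbol{\mathcal{M}}_v-{\rm i}\,\boldsymbol{\mathcal{R}})\boldsymbol{\mathcal{T}}^{-1}=\overline{\boldsymbol{\mathcal{E}}}^{\,T}$). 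So $\boldsymbol{\mathcal{M}}'$ solves a Lyapunov equation $\overline{\boldsymbol{\mathcal{E}}}^{\,T}\mathbf{X}+\mathbf{X}\boldsymbol{\mathcal{E}}=\mathbf{B}$ with $\mathbf{B}=-\tfrac{d}{dv}(-\boldsymbol{\mathcal{Q}}+k^2v^2{\id})$ negative definite. Since $\text{Re}\,\text{spec}\,\boldsymbol{\mathcal{E}}>0$ for $0\le v<\widehat{v}$, this Lyapunov equation has the unique solution $\mathbf{X}=-\int_0^\infty e^{-t\overline{\boldsymbol{\mathcal{E}}}^{\,T}}(-\mathbf{B})e^{-t\boldsymbol{\mathcal{E}}}\,dt$, cf. \cite{barnet1990matrix}; writing $-\mathbf{B}=cC^*C$ for a suitable invertible $C$ and $c>0$, the integrand $e^{-t\overline{\boldsymbol{\mathcal{E}}}^{\,T}}C^*C e^{-t\boldsymbol{\mathcal{E}}}=(Ce^{-t\boldsymbol{\mathcal{E}}})^*(Ce^{-t\boldsymbol{\mathcal{E}}})$ is positive semidefinite and positive definite for each $t$, so the integral is positive definite and hence $\boldsymbol{\mathcal{M}}'=\mathbf{X}$ is negative definite, which is the claim.

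The main obstacle, as I see it, is purely bookkeeping: correctly identifying the net coefficient of ${\id}$ in $-\boldsymbol{\mathcal{Q}}+k^2v^2\,{\id}$ after the double-counting issue (the $-k^2v^2$ already sitting inside $\boldsymbol{\mathcal{Q}}$ in \eqref{nTQ}), and verifying that its $v$-derivative is a strictly negative multiple of ${\id}$ under the hypotheses \eqref{d12} — this is where one uses $\rho,j>0$ so the relevant diagonal entries scale like $-k^2v^2$ with the right sign. The algebraic manipulation turning the differentiated Riccati equation into the Lyapunov form is then routine and identical in structure to the proof of Theorem \ref{thmielke3} (and to Fu–Mielke \cite{fu2002new}), so I would simply remark that "the proof is identical to that in \cite{fu2002new} with the matrices $\boldsymbol{\mathcal{T}},\boldsymbol{\mathcal{R}},\boldsymbol{\mathcal{Q}}$ of \eqref{nTQ}," pointing the reader to the Appendix for the full calculation, since the entire argument is theory-independent once $\boldsymbol{\mathcal{T}},\boldsymbol{\mathcal{Q}}$ are symmetric positive definite and $\text{Re}\,\text{spec}\,\boldsymbol{\mathcal{E}}>0$.
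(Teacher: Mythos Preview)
Your approach is correct and identical to the paper's: differentiate the Riccati equation, rewrite it as the Lyapunov equation $\overline{\boldsymbol{\mathcal{E}}}^{\,T}\boldsymbol{\mathcal{M}}'+\boldsymbol{\mathcal{M}}'\boldsymbol{\mathcal{E}}=-2k^2v\,\id$, and use the integral representation of its unique solution to read off negative definiteness. Your bookkeeping worry is an artifact of a typo in \eqref{nTQ} (the $-k^2v^2$ terms on the diagonal of $\boldsymbol{\mathcal{Q}}$ should not be there, as is clear from the Lemma immediately following and from equation \eqref{n11}); with $\boldsymbol{\mathcal{Q}}$ $v$-independent the right-hand side is simply $-2k^2v\,\id$, and $\mu_{\rm c},\rho,j$ play no role in its sign.
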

\begin{proof}
	Differentiation $\eqref{12}_1$ w.r.t $v$ gives
	\begin{align}
	\frac{d}{dv}(\boldsymbol{\mathcal{M}}- {\rm i}\,\boldsymbol{ \boldsymbol{\mathcal{R}}})\boldsymbol{\mathcal{E}}+(\boldsymbol{\mathcal{M}}- {\rm i}\,\boldsymbol{ \boldsymbol{\mathcal{R}}})\frac{d}{dv}\boldsymbol{\mathcal{E}}-\frac{d}{dv} \boldsymbol{ \boldsymbol{\mathcal{Q}}}+\frac{d}{dv}( k^2 v^2 {\id})&=0,\notag\\
	\frac{d}{dv}\boldsymbol{\mathcal{M}}\,\boldsymbol{\mathcal{E}}+(\boldsymbol{\mathcal{M}}- {\rm i}\,\boldsymbol{ \boldsymbol{\mathcal{R}}})\frac{d}{dv}\boldsymbol{\mathcal{E}}&=-2\, v \,k^2 {\id},\notag\\
	\frac{d}{dv}\boldsymbol{\mathcal{M}}\,\boldsymbol{\mathcal{E}}+(\boldsymbol{\mathcal{M}}- {\rm i}\,\boldsymbol{ \boldsymbol{\mathcal{R}}})\frac{d}{dv}\{ \boldsymbol{\mathcal{T}}^{-1}(\boldsymbol{\mathcal{M}}+ {\rm i}\,\boldsymbol{ \boldsymbol{\mathcal{R}}}^T)\}&=-2\, v \,k^2 {\id},\\
	\frac{d}{dv}\boldsymbol{\mathcal{M}}\,\boldsymbol{\mathcal{E}}+(\boldsymbol{\mathcal{M}}- {\rm i}\,\boldsymbol{ \boldsymbol{\mathcal{R}}}) \boldsymbol{\mathcal{T}}^{-1}\frac{d}{dv}\boldsymbol{\mathcal{M}}&=-2\, v \,k^2 {\id},\notag\\
	\frac{d}{dv}\boldsymbol{\mathcal{M}}\,\boldsymbol{\mathcal{E}}+(\overline{\boldsymbol{\mathcal{M}}}^T- {\rm i}\,\boldsymbol{ \boldsymbol{\mathcal{R}}}) \boldsymbol{\mathcal{T}}^{-1}\frac{d}{dv}\boldsymbol{\mathcal{M}}&=-2\, v \,k^2\, {\id},\qquad\qquad \boldsymbol{\mathcal{M}}=\overline{\boldsymbol{\mathcal{M}}}^T.\notag
	\end{align}
	Finally, we obtain
	\begin{align}\label{23}
	\frac{d}{dv}\boldsymbol{\mathcal{M}}\,\boldsymbol{\mathcal{E}}+\overline{\boldsymbol{\mathcal{E}}}^T\frac{d}{dv}\boldsymbol{\mathcal{M}}=-2\, k^2\, v\, {\id}.
	\end{align}   Equation \eqref{23}  has a unique solution for $\boldsymbol{\mathcal{M}}$ given by
	\begin{align}
	\frac{d}{dv}\boldsymbol{\mathcal{M}}=\int_{0}^{\infty} e^{-t\overline{\boldsymbol{\mathcal{E}}}^T}(-2\,\rho\, k^2 \,v\, {\id} )\, e^{-t\boldsymbol{\mathcal{E}}} dt=-2\,k^2 \, v\,\int_{0}^{\infty} e^{-t\overline{\boldsymbol{\mathcal{E}}}^T}  {\id} e^{-t\boldsymbol{\mathcal{E}}} dt.
	\end{align} 
	Thus, for arbitrary non-zero vector $\eta \in \mathbb{C}^n$ we have
	\begin{align}
	\langle	\overline{\eta }, \frac{d}{dv}\boldsymbol{\mathcal{M}}\,\eta\rangle=&-2\,k^2 \, v\,\langle \overline{\eta } ,\int_{0}^{\infty}e^{-t\overline{\boldsymbol{\mathcal{E}}}^T}\,  {\id}\,e^{-t\boldsymbol{\mathcal{E}}} dt \,\eta\rangle_{\mathbb{C}^{n}}\notag=-2\,k^2\,  v\int_{0}^{\infty}\langle   \,\overline{e^{-t{\boldsymbol{\mathcal{E}}}}\eta } , \,e^{-t\boldsymbol{\mathcal{E}}} dt \,\eta\rangle_{\mathbb{C}^{n}},\\
	=&-2\,k^2 \, v\int_{0}^{\infty}\langle\overline{\zeta}(t) , \,\zeta(t)\rangle_{\mathbb{C}^{n}}\,dt =-2\,k^2 \, v\int_{0}^{\infty}\underbrace{\norm{\zeta(t)}^2}_{\geq a^+>0}\,dt\, ,\quad \ \ \ \ \ \ \ \text{where} \,\,\ \ \zeta(t)= e^{-t\boldsymbol{\mathcal{E}}}\eta.
	\end{align}
	Since $\zeta(0)=\eta \neq 0$ and $\zeta(t)$ is continuous at $t=0$ (so that $\zeta(t)$ is non-zero at least in a small but finite interval), we have $\langle\overline{\eta }, \frac{d}{d\,v}\boldsymbol{\mathcal{M}}\,\eta\rangle<0$ and hence $\frac{d}{d\,v} \boldsymbol{\mathcal{M}}$ is negative definite.
\end{proof}

\begin{theorem}{\rm (Exactly as from Fu and Mielke's paper)}
	The Hermitian matrix $\boldsymbol{\mathcal{M}}_\theta $ satisfying  \eqref{17}   is independent of $\theta$.
\end{theorem}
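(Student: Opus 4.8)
The plan is to reduce the statement to the uniqueness clause of Theorem~\ref{thmielke1}, applied to the rotated data $(\boldsymbol{\mathcal{T}}_{\!\! \theta},\boldsymbol{\mathcal{R}}_\theta,\widetilde{\boldsymbol{\mathcal{Q}}}_\theta)$. For $0\le v<\widehat v$ the matrices $\boldsymbol{\mathcal{T}}_{\!\! \theta}$ and $\widetilde{\boldsymbol{\mathcal{Q}}}_\theta$ are symmetric and positive definite (the discussion following \eqref{RTQ}), so Theorem~\ref{thmielke1} applies verbatim and the rotated matrix problem \eqref{17} has a unique solution $\boldsymbol{\mathcal{E}}_{\!\theta}$ with $\text{Re spec}\,\boldsymbol{\mathcal{E}}_{\!\theta}>0$. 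It therefore suffices to exhibit one admissible solution of \eqref{17} and to check that the matrix $\boldsymbol{\mathcal{T}}_{\!\! \theta}\boldsymbol{\mathcal{E}}_{\!\theta}-{\rm i}\,\boldsymbol{\mathcal{R}}_\theta^T$ built from it is the original, $\theta$-free matrix $\boldsymbol{\mathcal{M}}:=\boldsymbol{\mathcal{M}}_0=\boldsymbol{\mathcal{T}}\boldsymbol{\mathcal{E}}-{\rm i}\,\boldsymbol{\mathcal{R}}^T$; this will simultaneously establish part~(ii) of Theorem~\ref{Mielketh}.

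The candidate is the M\"obius-type expression $\boldsymbol{\mathcal{E}}_{\!\theta}^\star:=(\cos\theta\,\id+{\rm i}\,\sin\theta\,\boldsymbol{\mathcal{E}})^{-1}(\cos\theta\,\boldsymbol{\mathcal{E}}+{\rm i}\,\sin\theta\,\id)$. First I would check that $\cos\theta\,\id+{\rm i}\,\sin\theta\,\boldsymbol{\mathcal{E}}$ is invertible: its eigenvalues are $\cos\theta+{\rm i}\,\sin\theta\,\lambda$ with $\lambda\in\mathrm{spec}\,\boldsymbol{\mathcal{E}}$ and $\mathrm{Re}\,\lambda>0$, and such numbers never vanish (if $\sin\theta=0$ then $\cos\theta=\pm1$; if $\sin\theta\neq0$, vanishing would force $\mathrm{Re}\,\lambda=0$). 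Next, using the rewritten original relations $\boldsymbol{\mathcal{T}}\boldsymbol{\mathcal{E}}=\boldsymbol{\mathcal{M}}+{\rm i}\,\boldsymbol{\mathcal{R}}^T$ and $\boldsymbol{\mathcal{T}}\boldsymbol{\mathcal{E}}^2-{\rm i}(\boldsymbol{\mathcal{R}}+\boldsymbol{\mathcal{R}}^T)\boldsymbol{\mathcal{E}}-\widetilde{\boldsymbol{\mathcal{Q}}}=0$ (which are just \eqref{10}$_2$ and \eqref{09}$_1$), together with the rotation identities \eqref{21}, I would substitute $\boldsymbol{\mathcal{E}}_{\!\theta}^\star$ into \eqref{17}, multiply on the left by $(\cos\theta\,\id+{\rm i}\,\sin\theta\,\boldsymbol{\mathcal{E}})$, and verify that the terms grouped by $\cos^2\theta$, $\sin\theta\cos\theta$ and $\sin^2\theta$ cancel identically; the same bookkeeping yields $\boldsymbol{\mathcal{T}}_{\!\! \theta}\boldsymbol{\mathcal{E}}_{\!\theta}^\star-{\rm i}\,\boldsymbol{\mathcal{R}}_\theta^T=\boldsymbol{\mathcal{M}}$. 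I expect this matrix algebra --- bulky because of the substitutions from \eqref{21} --- to be the only genuine work in the proof.

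The remaining point, and the one I expect to be subtlest, is the admissibility $\text{Re spec}\,\boldsymbol{\mathcal{E}}_{\!\theta}^\star>0$. I would argue it through the substitution $r={\rm i}\,\lambda$ of Lemma~\ref{lemmaGH2}: if $\boldsymbol{\mathcal{E}}d=\lambda d$ then $\boldsymbol{\mathcal{E}}_{\!\theta}^\star d=\big((\cos\theta\,\lambda+{\rm i}\,\sin\theta)/(\cos\theta+{\rm i}\,\sin\theta\,\lambda)\big)d$, so at the level of $r={\rm i}\,\lambda$ the eigenvalues transform by $r\mapsto(r\cos\theta-\sin\theta)/(r\sin\theta+\cos\theta)$, the real fractional-linear (M\"obius) transformation whose associated matrix has determinant $\cos^2\theta+\sin^2\theta=1$ and which therefore maps the open upper half-plane $\{\mathrm{Im}\,r>0\}$ bijectively onto itself. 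Since $\mathrm{Re}\,\lambda>0\iff\mathrm{Im}\,r>0$, positivity of the real parts is preserved, hence $\boldsymbol{\mathcal{E}}_{\!\theta}^\star$ is the unique admissible solution of \eqref{17}, and consequently $\boldsymbol{\mathcal{M}}_\theta=\boldsymbol{\mathcal{T}}_{\!\! \theta}\boldsymbol{\mathcal{E}}_{\!\theta}^\star-{\rm i}\,\boldsymbol{\mathcal{R}}_\theta^T=\boldsymbol{\mathcal{M}}$ for every $\theta$. The structural ingredients here --- uniqueness for the rotated Riccati equation and the $SL(2,\mathbb{R})$-invariance of the upper half-plane --- are exactly those used by Fu and Mielke, and they transfer to the isotropic Cosserat setting without change because they rely on nothing beyond the positive definiteness of $\boldsymbol{\mathcal{T}}_{\!\! \theta}$ and $\widetilde{\boldsymbol{\mathcal{Q}}}_\theta$.
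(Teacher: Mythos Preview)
Your proposal is correct, but the route differs from the paper's. The paper (following Fu and Mielke) differentiates the rotated Riccati equation \eqref{eqMt} with respect to $\theta$, uses the derivative identities $\boldsymbol{\mathcal{T}}_{\!\theta}'=-(\boldsymbol{\mathcal{R}}_\theta+\boldsymbol{\mathcal{R}}_\theta^T)$, $\boldsymbol{\mathcal{R}}_\theta'=\boldsymbol{\mathcal{T}}_{\!\theta}-\widetilde{\boldsymbol{\mathcal{Q}}}_\theta$, $\widetilde{\boldsymbol{\mathcal{Q}}}_\theta'=\boldsymbol{\mathcal{R}}_\theta+\boldsymbol{\mathcal{R}}_\theta^T$, and after cancellation arrives at the homogeneous Lyapunov equation $\boldsymbol{\mathcal{M}}_\theta'\boldsymbol{\mathcal{E}}_\theta+\overline{\boldsymbol{\mathcal{E}}}_\theta^T\boldsymbol{\mathcal{M}}_\theta'=0$, whence $\boldsymbol{\mathcal{M}}_\theta'=0$. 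The M\"obius formula for $\boldsymbol{\mathcal{E}}_\theta$ is only obtained afterwards, by integrating the ODE $\boldsymbol{\mathcal{E}}_\theta'={\rm i}(\id-\boldsymbol{\mathcal{E}}_\theta^2)$.

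You reverse this order: you posit the M\"obius expression up front, verify algebraically that it solves \eqref{17} with the correct impedance (a clean way to organise this is to right-multiply by $A^2=(\cos\theta\,\id+{\rm i}\sin\theta\,\boldsymbol{\mathcal{E}})^2$, which commutes with $\boldsymbol{\mathcal{E}}_\theta^\star$, and then collect the $\boldsymbol{\mathcal{E}}^2$, $\boldsymbol{\mathcal{E}}$ and constant terms; these reduce via \eqref{21} to $\boldsymbol{\mathcal{T}}\boldsymbol{\mathcal{E}}^2-{\rm i}(\boldsymbol{\mathcal{R}}+\boldsymbol{\mathcal{R}}^T)\boldsymbol{\mathcal{E}}-\widetilde{\boldsymbol{\mathcal{Q}}}=0$), and then invoke uniqueness. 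The paper's route reuses the Lyapunov machinery already in place for Theorems~\ref{mT1} and~\ref{thmielke3} and requires no ansatz; your route proves parts~(i) and~(ii) of Theorem~\ref{Mielketh} simultaneously and makes the spectral mechanism transparent via the $SL(2,\mathbb{R})$ action on the upper half-plane. One small caveat: your closing remark that these are ``exactly'' the Fu--Mielke ingredients is slightly loose --- their published argument is the differentiation/Lyapunov one, not the direct-verification one --- but your proof stands on its own.
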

\begin{proof} Only in this proof, we denote by $f'(\theta)=\frac{d\,f}{d\, \theta}(\theta)$. On	differentiating $\eqref{21}_1$  w.r.t $\theta$ we get
	\begin{align}
	 \boldsymbol{\mathcal{T}}_{\!\! \theta} '=2\, \sin\theta \cos\theta(\widetilde{ \boldsymbol{ \boldsymbol{\mathcal{Q}}}}- \boldsymbol{\mathcal{T}})-\cos^2\theta( \boldsymbol{ \boldsymbol{\mathcal{R}}}+ \boldsymbol{ \boldsymbol{\mathcal{R}}}^T)+\sin^2\theta( \boldsymbol{ \boldsymbol{\mathcal{R}}}+ \boldsymbol{ \boldsymbol{\mathcal{R}}}^T).
	\end{align}
	It is obvious to see
	\begin{align}
	 \boldsymbol{ \boldsymbol{\mathcal{R}}}_\theta  ^T=\cos^2\theta   \boldsymbol{ \boldsymbol{\mathcal{R}}}^T-\sin^2\theta   \boldsymbol{ \boldsymbol{\mathcal{R}}}+\sin\theta\cos\theta (  \boldsymbol{\mathcal{T}}-\widetilde{ \boldsymbol{ \boldsymbol{\mathcal{Q}}}}),\notag
	\end{align}
	and
	\begin{align}
	- \boldsymbol{ \boldsymbol{\mathcal{R}}}_\theta  - \boldsymbol{ \boldsymbol{\mathcal{R}}}_\theta  ^T=2\, \sin\theta \cos\theta(\widetilde{ \boldsymbol{ \boldsymbol{\mathcal{Q}}}}- \boldsymbol{\mathcal{T}})-\cos^2\theta( \boldsymbol{ \boldsymbol{\mathcal{R}}}+ \boldsymbol{ \boldsymbol{\mathcal{R}}}^T)+\sin^2\theta( \boldsymbol{ \boldsymbol{\mathcal{R}}}+ \boldsymbol{ \boldsymbol{\mathcal{R}}}^T)\notag,
	\end{align}
	\begin{align}\label{22}
	 \boldsymbol{\mathcal{T}}_{\!\! \theta} '=	- \boldsymbol{ \boldsymbol{\mathcal{R}}}_\theta  - \boldsymbol{ \boldsymbol{\mathcal{R}}}_\theta  ^T.
	\end{align}
	Similarly,
	\begin{align}
	 \boldsymbol{ \boldsymbol{\mathcal{R}}}_\theta  '=-2\, \cos\theta\sin\theta( \boldsymbol{ \boldsymbol{\mathcal{R}}}+ \boldsymbol{ \boldsymbol{\mathcal{R}}}^T)+(\cos^2\theta-\sin^2\theta)( \boldsymbol{\mathcal{T}}-\widetilde{ \boldsymbol{ \boldsymbol{\mathcal{Q}}}}),\notag
	\end{align}
and	we see that
	\begin{align}
	 \boldsymbol{\mathcal{T}}_{\!\! \theta} -{ \boldsymbol{ \boldsymbol{\mathcal{Q}}}}_\theta=(\cos^2\theta-\sin^2\theta)( \boldsymbol{\mathcal{T}}-\widetilde{ \boldsymbol{ \boldsymbol{\mathcal{Q}}}})-2\, \sin\theta\cos\theta( \boldsymbol{ \boldsymbol{\mathcal{R}}}+ \boldsymbol{ \boldsymbol{\mathcal{R}}}^T)\notag.
	\end{align}
Therefore
	\begin{align}\label{24}
	 \boldsymbol{ \boldsymbol{\mathcal{R}}}_\theta  '=	 \boldsymbol{\mathcal{T}}_{\!\! \theta} -	{ \boldsymbol{ \boldsymbol{\mathcal{Q}}}}_\theta,
	\end{align}
	and
	\begin{align}
	{ \boldsymbol{ \boldsymbol{\mathcal{Q}}}}_\theta'=2\, \sin\theta \cos\theta( \boldsymbol{\mathcal{T}}- \boldsymbol{ \boldsymbol{\mathcal{Q}}})+\cos^2\theta( \boldsymbol{ \boldsymbol{\mathcal{R}}}+ \boldsymbol{ \boldsymbol{\mathcal{R}}}^T)-\sin^2\theta( \boldsymbol{ \boldsymbol{\mathcal{R}}}+ \boldsymbol{ \boldsymbol{\mathcal{R}}}^T).\notag
	\end{align}	
	It is also easy to see
	\begin{align}\label{25}
	{ \boldsymbol{ \boldsymbol{\mathcal{Q}}}}_\theta'=	 \boldsymbol{ \boldsymbol{\mathcal{R}}}_\theta  +	 \boldsymbol{ \boldsymbol{\mathcal{R}}}_\theta  ^{T},
	\end{align}
	and after	differentiating $ \boldsymbol{\mathcal{T}}_{\!\! \theta}  \boldsymbol{\mathcal{T}}_{\!\! \theta} ^{-1}=\id$, we also get
	\begin{align}\label{z6}
	( \boldsymbol{\mathcal{T}}_{\!\! \theta} ^{-1})'=- \boldsymbol{\mathcal{T}}_{\!\! \theta} ^{-1} \boldsymbol{\mathcal{T}}_{\!\! \theta} ' \boldsymbol{\mathcal{T}}_{\!\! \theta} ^{-1}.
	\end{align}
	On differentiating $\eqref{12}_1$ w.r.t $\theta$ we get
{	\begin{align}\label{26}
	(\boldsymbol{\mathcal{M}}_\theta - {\rm i}\,\boldsymbol{ \boldsymbol{\mathcal{R}}}_\theta  )'  \boldsymbol{\mathcal{E}}_{\!\theta}   +	(\boldsymbol{\mathcal{M}}_\theta - {\rm i}\,\boldsymbol{ \boldsymbol{\mathcal{R}}}_\theta  )  \boldsymbol{\mathcal{E}}_{\!\theta}   '-\boldsymbol{\mathcal{M}}_\theta '=0,\notag\\
	(\boldsymbol{\mathcal{M}}_\theta '- {\rm i}\,\boldsymbol{ \boldsymbol{\mathcal{R}}}_\theta  ')  \boldsymbol{\mathcal{E}}_{\!\theta}   +	(\boldsymbol{\mathcal{M}}_\theta - {\rm i}\,\boldsymbol{ \boldsymbol{\mathcal{R}}}_\theta  ) \boldsymbol{\mathcal{T}}_{\!\! \theta} ^{-1}\{(\boldsymbol{\mathcal{M}}_\theta '+ {\rm i}\, ( \boldsymbol{ \boldsymbol{\mathcal{R}}}_\theta  ^T )')- \boldsymbol{\mathcal{T}}_{\!\! \theta} ' \boldsymbol{\mathcal{T}}_{\!\! \theta} ^{-1}(\boldsymbol{\mathcal{M}}_\theta + {\rm i}\,\boldsymbol{ \boldsymbol{\mathcal{R}}}_\theta  ^T)\}-{ \boldsymbol{ \boldsymbol{\mathcal{Q}}}}_\theta'=0,\notag\\
	(\boldsymbol{\mathcal{M}}_\theta '- {\rm i}\,\boldsymbol{ \boldsymbol{\mathcal{R}}}_\theta  ')  \boldsymbol{\mathcal{E}}_{\!\theta}   +\overline{\boldsymbol{\mathcal{E}}}_\theta ^T(\boldsymbol{\mathcal{M}}_\theta '+ {\rm i}\, ( \boldsymbol{ \boldsymbol{\mathcal{R}}}_\theta  ^T )')-\overline{\boldsymbol{\mathcal{E}}}_\theta ^T \boldsymbol{\mathcal{T}}_{\!\! \theta} '{\boldsymbol{\mathcal{E}}}_\theta-{ \boldsymbol{ \boldsymbol{\mathcal{Q}}}}_\theta'=0,\\
	\boldsymbol{\mathcal{M}}_\theta ' \boldsymbol{\mathcal{E}}_{\!\theta}   +\overline{\boldsymbol{\mathcal{E}}}_\theta ^T	\boldsymbol{\mathcal{M}}_\theta '- {\rm i}\,\boldsymbol{ \boldsymbol{\mathcal{R}}}_\theta  ' \boldsymbol{\mathcal{E}}_{\!\theta}   + {\rm i}\, \overline{\boldsymbol{\mathcal{E}}}_\theta ^T( \boldsymbol{ \boldsymbol{\mathcal{R}}}_\theta  ^T )'-\overline{\boldsymbol{\mathcal{E}}}_\theta ^T \boldsymbol{\mathcal{T}}_{\!\! \theta} '{\boldsymbol{\mathcal{E}}}_\theta-{ \boldsymbol{ \boldsymbol{\mathcal{Q}}}}_\theta'=0,\notag
	\end{align}}
	and 	using \eqref{22},\eqref{24} and \eqref{25} in $\eqref{26}_6$, yields
	\begin{align}\label{27}
	\boldsymbol{\mathcal{M}}_\theta ' \boldsymbol{\mathcal{E}}_{\!\theta}   +\overline{\boldsymbol{\mathcal{E}}}_\theta ^T\boldsymbol{\mathcal{M}}_\theta '- {\rm i}\, ( \boldsymbol{ \boldsymbol{\mathcal{R}}}_\theta  )' \boldsymbol{\mathcal{E}}_{\!\theta}   +{\rm i}\,\overline{\boldsymbol{\mathcal{E}}}_\theta ^T( \boldsymbol{ \boldsymbol{\mathcal{R}}}_\theta  )'-\overline{\boldsymbol{\mathcal{E}}}_\theta ^T \boldsymbol{\mathcal{T}}_{\!\! \theta} ' \boldsymbol{\mathcal{E}}_{\!\theta}   - \boldsymbol{ \boldsymbol{\mathcal{Q}}}'_\theta =0,\notag\\
	\boldsymbol{\mathcal{M}}_\theta ' \boldsymbol{\mathcal{E}}_{\!\theta}   +\overline{\boldsymbol{\mathcal{E}}}_\theta ^T\boldsymbol{\mathcal{M}}_\theta '- {\rm i}\, ( \boldsymbol{\mathcal{T}}_{\!\! \theta} - { \boldsymbol{ \boldsymbol{\mathcal{Q}}}}_\theta) \boldsymbol{\mathcal{E}}_{\!\theta}  + {\rm i}\,\overline{\boldsymbol{\mathcal{E}}}_\theta ^T( \boldsymbol{\mathcal{T}}_{\!\! \theta} - { \boldsymbol{ \boldsymbol{\mathcal{Q}}}}_\theta)+ {\boldsymbol{\mathcal{E}}}_\theta ^T( \boldsymbol{ \boldsymbol{\mathcal{R}}}_\theta  + \boldsymbol{ \boldsymbol{\mathcal{R}}}_\theta  ^T) \boldsymbol{\mathcal{E}}_{\!\theta}   -( \boldsymbol{ \boldsymbol{\mathcal{R}}}_\theta  + \boldsymbol{ \boldsymbol{\mathcal{R}}}_\theta  ^T)=0.
	\end{align} 
	On replacing  first ${ \boldsymbol{ \boldsymbol{\mathcal{Q}}}}_\theta$ in \eqref{27}$_2$ by $ {\boldsymbol{\mathcal{E}}}_\theta ^T(\boldsymbol{\mathcal{M}}_\theta + {\rm i}\,\boldsymbol{ \boldsymbol{\mathcal{R}}}_\theta  ^T)$ and the second  ${ \boldsymbol{ \boldsymbol{\mathcal{Q}}}}_\theta$ by $(\boldsymbol{\mathcal{M}}_\theta - {\rm i}\,\boldsymbol{ \boldsymbol{\mathcal{R}}}_\theta  ) \boldsymbol{\mathcal{E}}_{\!\theta}    $, see \eqref{eqMt}, we deduce
	\begin{align}
	\boldsymbol{\mathcal{M}}_\theta ' \boldsymbol{\mathcal{E}}_{\!\theta}   + {\boldsymbol{\mathcal{E}}}_\theta ^T\boldsymbol{\mathcal{M}}_\theta '- {\rm i}\, [\{ \boldsymbol{\mathcal{T}}_{\!\! \theta} - {\boldsymbol{\mathcal{E}}}_\theta ^T(\boldsymbol{\mathcal{M}}_\theta + {\rm i}\,\boldsymbol{ \boldsymbol{\mathcal{R}}}_\theta  ^T)\} \boldsymbol{\mathcal{E}}_{\!\theta}   + {\rm i}\,{\boldsymbol{\mathcal{E}}}_\theta ^T\{ \boldsymbol{\mathcal{T}}_{\!\! \theta} - (\boldsymbol{\mathcal{M}}_\theta - {\rm i}\,\boldsymbol{ \boldsymbol{\mathcal{R}}}_\theta){\boldsymbol{\mathcal{E}}}_\theta \}] +{\boldsymbol{\mathcal{E}}}_\theta ^T( \boldsymbol{ \boldsymbol{\mathcal{R}}}_\theta  + \boldsymbol{ \boldsymbol{\mathcal{R}}}_\theta  ^T) \boldsymbol{\mathcal{E}}_{\!\theta}    -( \boldsymbol{ \boldsymbol{\mathcal{R}}}_\theta  + \boldsymbol{ \boldsymbol{\mathcal{R}}}_\theta  ^T)=0,\notag\\
	\boldsymbol{\mathcal{M}}_\theta ' \boldsymbol{\mathcal{E}}_{\!\theta}   + {\boldsymbol{\mathcal{E}}}_\theta ^T\boldsymbol{\mathcal{M}}_\theta '- {\rm i}\,\boldsymbol{\mathcal{T}}_{\!\! \theta}  \boldsymbol{\mathcal{E}}_{\!\theta}   + {\rm i}\,{\boldsymbol{\mathcal{E}}}_\theta^T  \boldsymbol{\mathcal{T}}_{\!\! \theta} - {\boldsymbol{\mathcal{E}}}_\theta^T  \boldsymbol{ \boldsymbol{\mathcal{R}}}_\theta  ^T \boldsymbol{\mathcal{E}}_{\!\theta}   - {\boldsymbol{\mathcal{E}}}_\theta^T  \boldsymbol{ \boldsymbol{\mathcal{R}}}_\theta    \boldsymbol{\mathcal{E}}_{\!\theta}   + {\boldsymbol{\mathcal{E}}}_\theta ^T( \boldsymbol{ \boldsymbol{\mathcal{R}}}_\theta  + \boldsymbol{ \boldsymbol{\mathcal{R}}}_\theta  ^T) \boldsymbol{\mathcal{E}}_{\!\theta}   -( \boldsymbol{ \boldsymbol{\mathcal{R}}}_\theta  + \boldsymbol{ \boldsymbol{\mathcal{R}}}_\theta  ^T)=0,\\
	\boldsymbol{\mathcal{M}}_\theta ' \boldsymbol{\mathcal{E}}_{\!\theta}   + {\boldsymbol{\mathcal{E}}}_\theta ^T\boldsymbol{\mathcal{M}}_\theta '	- {\rm i}\,\boldsymbol{\mathcal{T}}_{\!\! \theta}   \boldsymbol{\mathcal{E}}_{\!\theta}   + {\rm i}\,{\boldsymbol{\mathcal{E}}}_\theta^T  \boldsymbol{\mathcal{T}}_{\!\! \theta} -( \boldsymbol{ \boldsymbol{\mathcal{R}}}_\theta  + \boldsymbol{ \boldsymbol{\mathcal{R}}}_\theta  ^T)=0,\notag\\
	\boldsymbol{\mathcal{M}}_\theta ' \boldsymbol{\mathcal{E}}_{\!\theta}   + {\boldsymbol{\mathcal{E}}}_\theta ^T\boldsymbol{\mathcal{M}}_\theta '	- {\rm i}\,\boldsymbol{\mathcal{T}}_{\!\! \theta} \{ \boldsymbol{\mathcal{T}}_{\!\! \theta} ^{-1}(\boldsymbol{\mathcal{M}}_\theta + {\rm i}\,\boldsymbol{ \boldsymbol{\mathcal{R}}}_\theta  ^T)\}-( \boldsymbol{ \boldsymbol{\mathcal{R}}}_\theta  + \boldsymbol{ \boldsymbol{\mathcal{R}}}_\theta  ^T)+ {\rm i}\, \{(\boldsymbol{\mathcal{M}}_\theta - {\rm i}\,\boldsymbol{ \boldsymbol{\mathcal{R}}}_\theta  ) \boldsymbol{\mathcal{T}}_{\!\! \theta} ^{-1}\} \boldsymbol{\mathcal{T}}_{\!\! \theta} =0,\notag\\	
	\boldsymbol{\mathcal{M}}_\theta ' \boldsymbol{\mathcal{E}}_{\!\theta}   + {\boldsymbol{\mathcal{E}}}_\theta ^T\boldsymbol{\mathcal{M}}_\theta '	- {\rm i}\, \cancel{\boldsymbol{\mathcal{M}}_\theta }+ {\rm i}\, \cancel{\boldsymbol{\mathcal{M}}_\theta }+	\cancel{( \boldsymbol{ \boldsymbol{\mathcal{R}}}_\theta  + \boldsymbol{ \boldsymbol{\mathcal{R}}}_\theta  ^T)}-\cancel{	( \boldsymbol{ \boldsymbol{\mathcal{R}}}_\theta  + \boldsymbol{ \boldsymbol{\mathcal{R}}}_\theta  ^T)}=0,\notag	
	\end{align}
	finaly we obtain,
	\begin{align}
	\boldsymbol{\mathcal{M}}_\theta ' \boldsymbol{\mathcal{E}}_{\!\theta}   + {\boldsymbol{\mathcal{E}}}_\theta ^T\boldsymbol{\mathcal{M}}_\theta '=0
	\end{align}
	which is another homogeneous Liaponov matrix equation. It then follows that $\boldsymbol{\mathcal{M}}_\theta '=0$, (see  \eqref{18}) so $\boldsymbol{\mathcal{M}}_\theta $ is independent of $\theta$.
\end{proof}
We have observed that $ \boldsymbol{\mathcal{E}}_{\!\theta}   $ reduces to $\boldsymbol{\mathcal{E}}$ defined in Theorem $2.1$ 
when $\theta=0$,  we have $\boldsymbol{\mathcal{M}}_\theta \equiv \boldsymbol{\mathcal{M}}$, where $\boldsymbol{\mathcal{M}}$ is the corresponding $\boldsymbol{\mathcal{M}}$ given in Theorem $(2.1)$ . Thus
\begin{align}\label{28}
\boldsymbol{\mathcal{E}}_{\!\theta}   = \boldsymbol{\mathcal{T}}_{\!\! \theta} ^{-1}(\boldsymbol{\mathcal{M}}+ {\rm i}\,\boldsymbol{ \boldsymbol{\mathcal{R}}}_\theta  ^T).
\end{align}
On	differentiating \eqref{28} with respect to $\theta$, we get
\begin{align}\label{29}
\boldsymbol{\mathcal{E}}_{\!\theta}   '&=( \boldsymbol{\mathcal{T}}_{\!\! \theta} ^{-1})'(\boldsymbol{\mathcal{M}}+ {\rm i}\,\boldsymbol{ \boldsymbol{\mathcal{R}}}_\theta  ^T)+ \boldsymbol{\mathcal{T}}_{\!\! \theta} ^{-1}(\boldsymbol{\mathcal{M}}+ {\rm i}\,\boldsymbol{ \boldsymbol{\mathcal{R}}}_\theta  ^T)'=( \boldsymbol{\mathcal{T}}_{\!\! \theta} ^{-1})'(\boldsymbol{\mathcal{M}}+ {\rm i}\,\boldsymbol{ \boldsymbol{\mathcal{R}}}_\theta  ^T)+ \boldsymbol{\mathcal{T}}_{\!\! \theta} ^{-1}( {\rm i}\,\boldsymbol{ \boldsymbol{\mathcal{R}}}_\theta  ^T)',
\end{align}
making use of \eqref{z6}, we obtain
\begin{align}\label{30}
\boldsymbol{\mathcal{E}}_{\!\theta}   '&=- \boldsymbol{\mathcal{T}}_{\!\! \theta} ^{-1} \boldsymbol{\mathcal{T}}_{\!\! \theta} ' \boldsymbol{\mathcal{T}}_{\!\! \theta} ^{-1}(\boldsymbol{\mathcal{M}}+ {\rm i}\,\boldsymbol{ \boldsymbol{\mathcal{R}}}_\theta  ^T)+ {\rm i}\,\boldsymbol{\mathcal{T}}_{\!\! \theta} ^{-1}( \boldsymbol{\mathcal{T}}_{\!\! \theta} - { \boldsymbol{ \boldsymbol{\mathcal{Q}}}}_\theta)= \boldsymbol{\mathcal{T}}_{\!\! \theta} ^{-1}( \boldsymbol{ \boldsymbol{\mathcal{R}}}_\theta+ \boldsymbol{ \boldsymbol{\mathcal{R}}}_\theta  ^T)+ {\rm i}\, (\id- \boldsymbol{\mathcal{T}}_{\!\! \theta} ^{-1} { \boldsymbol{ \boldsymbol{\mathcal{Q}}}}_\theta).
\end{align}
From \eqref{28} it is easy to see
\begin{align}\label{31}
 \boldsymbol{ \boldsymbol{\mathcal{R}}}_\theta  ^T= {\rm i}\, \boldsymbol{\mathcal{M}}- {\rm i}\,\boldsymbol{\mathcal{T}}_{\!\! \theta}  \boldsymbol{\mathcal{E}}_{\!\theta}   
\end{align}
which after substituting \eqref{31} in $\eqref{30}_2$, leads us to
\begin{align}\label{z10}
\boldsymbol{\mathcal{E}}_{\!\theta}   '&= \boldsymbol{\mathcal{T}}_{\!\! \theta} ^{-1}\{ \boldsymbol{ \boldsymbol{\mathcal{R}}}_\theta  + {\rm i}\, (\boldsymbol{\mathcal{M}}- \boldsymbol{\mathcal{T}}_{\!\! \theta} ) \boldsymbol{\mathcal{E}}_{\!\theta}   \} \boldsymbol{\mathcal{E}}_{\!\theta}   + {\rm i}\, (\id- \boldsymbol{\mathcal{T}}_{\!\! \theta} ^{-1} { \boldsymbol{ \boldsymbol{\mathcal{Q}}}}_\theta)= \boldsymbol{\mathcal{T}}_{\!\! \theta} ^{-1}\{ {\rm i}\, (\boldsymbol{\mathcal{M}}- {\rm i}\,\boldsymbol{ \boldsymbol{\mathcal{R}}}_\theta  )- {\rm i}\,\boldsymbol{\mathcal{T}}_{\!\! \theta} ) \boldsymbol{\mathcal{E}}_{\!\theta}   \} \boldsymbol{\mathcal{E}}_{\!\theta}   + {\rm i}\, (\id- \boldsymbol{\mathcal{T}}_{\!\! \theta} ^{-1} { \boldsymbol{ \boldsymbol{\mathcal{Q}}}}_\theta)\\&= \boldsymbol{\mathcal{T}}_{\!\! \theta} ^{-1}\{ {\rm i}\,\boldsymbol{ \boldsymbol{\mathcal{Q}}}- {\rm i}\,\boldsymbol{\mathcal{T}}_{\!\! \theta}  \boldsymbol{\mathcal{E}}_{\!\theta}   \} \boldsymbol{\mathcal{E}}_{\!\theta}   + {\rm i}\, (\id- \boldsymbol{\mathcal{T}}_{\!\! \theta} ^{-1} { \boldsymbol{ \boldsymbol{\mathcal{Q}}}}_\theta)=\cancel{ {\rm i}\,\boldsymbol{\mathcal{T}}_{\!\! \theta} ^{-1} \boldsymbol{ \boldsymbol{\mathcal{Q}}}}- {\rm i}\,\boldsymbol{\mathcal{E}}_{\!\theta}   ^2+\id  {\rm i}\,-\cancel{ {\rm i}\,\boldsymbol{\mathcal{T}}_{\!\! \theta} ^{-1} { \boldsymbol{ \boldsymbol{\mathcal{Q}}}}_\theta}=\id  {\rm i}\, - {\rm i}\,\boldsymbol{\mathcal{E}}_{\!\theta}   ^2.\notag
\end{align} 
After integrating this matrix differential equation subject to the condition $\boldsymbol{\mathcal{E}}_{0}=\boldsymbol{\mathcal{E}}$, we obtain
\begin{proposition} {\rm (Exactly as from Fu and Mielke's paper)} We have
	\begin{align}\label{32}
	\boldsymbol{\mathcal{E}}_{\!\theta}   =(\cos\theta \id+ {\rm i}\, \sin\theta \boldsymbol{\mathcal{E}})^{-1}(\cos\theta \boldsymbol{\mathcal{E}}+ {\rm i}\, \sin\theta \id).
	\end{align}
\end{proposition}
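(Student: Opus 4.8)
The plan is to read \eqref{32} as the unique solution of the matrix Riccati initial value problem that has already been extracted in \eqref{z10}, namely
\begin{align}
\boldsymbol{\mathcal{E}}_{\!\theta}'={\rm i}\,(\id-\boldsymbol{\mathcal{E}}_{\!\theta}^{2}),\qquad\qquad \boldsymbol{\mathcal{E}}_{0}=\boldsymbol{\mathcal{E}},
\end{align}
and to identify that solution by linearising the Riccati flow. Concretely, I would introduce the matrix pair
\begin{align}
\mathbf{X}(\theta):=\cos\theta\,\id+{\rm i}\,\sin\theta\,\boldsymbol{\mathcal{E}},\qquad\qquad \mathbf{Y}(\theta):=\cos\theta\,\boldsymbol{\mathcal{E}}+{\rm i}\,\sin\theta\,\id,
\end{align}
and check directly that $\mathbf{X}'={\rm i}\,\mathbf{Y}$, $\mathbf{Y}'={\rm i}\,\mathbf{X}$ with $\mathbf{X}(0)=\id$, $\mathbf{Y}(0)=\boldsymbol{\mathcal{E}}$ (equivalently $\mathbf{X}''=-\mathbf{X}$, $\mathbf{Y}''=-\mathbf{Y}$, which is where the $\cos$/$\sin$ shape comes from). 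Since $\mathbf{X}$ and $\mathbf{Y}$ are polynomials in $\boldsymbol{\mathcal{E}}$ with scalar coefficients, they --- and $\mathbf{X}^{-1}$ wherever it exists --- commute with one another, which makes the manipulations below legitimate.

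The first genuine point to settle is the invertibility of $\mathbf{X}(\theta)$ for every $\theta$. The eigenvalues of $\mathbf{X}(\theta)$ are $\cos\theta+{\rm i}\,\sin\theta\,\lambda$ with $\lambda\in\text{spec}\,\boldsymbol{\mathcal{E}}$; such a number can vanish only if $\sin\theta\neq0$ and $\lambda={\rm i}\cot\theta$ is purely imaginary, which is impossible because $\text{Re}\,\text{spec}\,\boldsymbol{\mathcal{E}}>0$ by Theorem \ref{thmielke1} (and for $\sin\theta=0$ one has $\mathbf{X}(\theta)=\pm\,\id$). Hence $\boldsymbol{\mathcal{F}}(\theta):=\mathbf{Y}(\theta)\,\mathbf{X}(\theta)^{-1}$ is well defined for all $\theta$, it satisfies $\boldsymbol{\mathcal{F}}(0)=\boldsymbol{\mathcal{E}}$, and, using $\mathbf{X}^{-1}\mathbf{Y}=\mathbf{Y}\mathbf{X}^{-1}$,
\begin{align}
\boldsymbol{\mathcal{F}}'=\mathbf{Y}'\mathbf{X}^{-1}-\mathbf{Y}\mathbf{X}^{-1}\mathbf{X}'\mathbf{X}^{-1}={\rm i}\,\mathbf{X}\mathbf{X}^{-1}-{\rm i}\,\mathbf{Y}\mathbf{X}^{-1}\mathbf{Y}\mathbf{X}^{-1}={\rm i}\,(\id-\boldsymbol{\mathcal{F}}^{2}),
\end{align}
so $\boldsymbol{\mathcal{F}}$ solves exactly the same initial value problem as $\boldsymbol{\mathcal{E}}_{\!\theta}$.

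To conclude I would invoke uniqueness: the right-hand side $\mathbf{E}\mapsto{\rm i}\,(\id-\mathbf{E}^{2})$ is a polynomial, hence locally Lipschitz on $\mathbb{C}^{3\times3}$, so the Picard--Lindel\"of theorem gives uniqueness of the $\mathbb{C}^{3\times3}$-valued solution on any interval on which a solution exists. Both $\theta\mapsto\boldsymbol{\mathcal{E}}_{\!\theta}$ --- which exists for every $\theta$ by solvability of \eqref{17}, guaranteed by the positive definiteness of $\boldsymbol{\mathcal{T}}_{\!\!\theta}$ and $\widetilde{\boldsymbol{\mathcal{Q}}}_\theta$ for $0\le v<\widehat v$ (Proposition \ref{propQp}), and which depends smoothly on $\theta$ --- and $\theta\mapsto\boldsymbol{\mathcal{F}}(\theta)$ are global solutions on $[0,\pi]$, so they coincide, which is precisely \eqref{32}. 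As a byproduct, the defining spectral condition $\text{Re}\,\text{spec}\,\boldsymbol{\mathcal{E}}_{\!\theta}>0$ is recovered from the scalar fact that the M\"obius map $z\mapsto(z\cos\theta+{\rm i}\sin\theta)/({\rm i}z\sin\theta+\cos\theta)$, whose associated matrix lies in $\mathrm{SL}(2,\mathbb{C})$, maps the open right half-plane into itself (a one-line computation: the real part of the numerator, after clearing the denominator, equals $\text{Re}(z)$). The only delicate point is therefore not a calculation but this packaging --- that $\mathbf{X}(\theta)$ stays invertible over the whole $\theta$-range and that $\theta\mapsto\boldsymbol{\mathcal{E}}_{\!\theta}$ is regular enough to run the continuation/uniqueness argument --- both of which are already supplied by the spectral and positivity facts established earlier in the paper. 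An equivalent, slightly more computational route would skip the linearisation altogether and differentiate the right-hand side of \eqref{32} directly, verifying the Riccati equation and the initial value by hand.
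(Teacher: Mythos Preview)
Your proof is correct and follows essentially the same route as the paper: both integrate the matrix Riccati equation $\boldsymbol{\mathcal{E}}_{\!\theta}'={\rm i}\,(\id-\boldsymbol{\mathcal{E}}_{\!\theta}^{2})$ from \eqref{z10} by linearising it to the second-order equation $\mathbf{J}''+\mathbf{J}=0$ (the paper via the substitution $\boldsymbol{\mathcal{E}}_{\!\theta}=-{\rm i}\,\mathbf{J}^{-1}\mathbf{J}'$, you via the explicit pair $(\mathbf{X},\mathbf{Y})$ and direct verification). Your version is somewhat more careful than the paper's in that you explicitly justify the invertibility of $\mathbf{X}(\theta)$ from $\text{Re}\,\text{spec}\,\boldsymbol{\mathcal{E}}>0$ and invoke Picard--Lindel\"of for uniqueness, points the paper leaves implicit.
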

\begin{proof}
Since, $\mathbf{J}(\theta)\,\mathbf{J}^{-1}(\theta)=\id$	after differentiating, it can be observed that for any matrix $\mathbf{J}(\theta)$, we have
	\begin{align}
	(\mathbf{J}^{-1} (\theta))'=-\mathbf{J}^{-1} (\theta)\mathbf{J}'(\theta)\mathbf{J}^{-1} (\theta),
	\end{align}	
	it then follows that 
	\begin{align}
	(\mathbf{J}^{-1} (\theta) \mathbf{J}'(\theta))'=-(\mathbf{J}^{-1} (\theta) \mathbf{J}'(\theta))^2+\mathbf{J}^{-1} (\theta) \mathbf{J}''\theta),
	\end{align}
	or equivalently
	\begin{align}
	(-{\rm i}\,\mathbf{J}^{-1} (\theta) \mathbf{J}'(\theta))'=- {\rm i}\, (-{\rm i}\,\mathbf{J}^{-1} (\theta) \mathbf{J}'(\theta))^2-{\rm i}\,\mathbf{J}^{-1} (\theta) \mathbf{J}''\theta).
	\end{align}
	This suggests the transformation
	\begin{align}\label{z11}
	\boldsymbol{\mathcal{E}}_{\theta}=-{\rm i}\,\mathbf{J}^{-1} (\theta) \mathbf{J}'(\theta)
	\end{align}
	since on substituting \eqref{z11} in \eqref{z10}, we arrive at
	\begin{align}\label{z12}
	\mathbf{J}''\theta)+\mathbf{J}(\theta)=0,
	\end{align}
	the general solution of \eqref{z12} is
	\begin{align}
	\mathbf{J}(\theta)=\cos\theta \,\mathbf{c}_{l}+ {\rm i}\, \sin\theta\, \mathbf{c}_{t},\notag
	\end{align}
	where, $\mathbf{c}_{l}$ and $\mathbf{c}_{t}$ are constant matrices.
	From the condition $\boldsymbol{\mathcal{E}}_0=\boldsymbol{\mathcal{E}}$ we obtain $\boldsymbol{\mathcal{E}}=- {\rm i}\,\mathbf{c}_{l} \mathbf{c}_{t}$. Thus, $\mathbf{J}(\theta)=\mathbf{c}_{l}(\cos\theta \id+ {\rm i}\,\sin\theta\, \boldsymbol{\mathcal{E}})$ and
	\begin{align}\label{z13}
	\boldsymbol{\mathcal{E}}_{\!\theta}   =-{\rm i}\,\mathbf{J}^{-1} (\theta) \mathbf{J}(\theta)=(\cos\theta \,\id+ {\rm i}\,\sin\theta\, \boldsymbol{\mathcal{E}})^{-1}(\cos\theta \boldsymbol{\mathcal{E}}+ {\rm i}\,\sin\theta \,\id)
	\end{align}
	and the proof is complete.
\end{proof}
If $\lambda  $ is an eigenvalue of $\boldsymbol{\mathcal{E}}$. then by \eqref{13} the corresponding eigenvalues of $ \boldsymbol{\mathcal{E}}_{\!\theta}   $ is
\begin{align}
\lambda \, ( \theta)=\Phi(\lambda_{\rm e} ,\theta),\quad\text{where} \quad\Phi(\lambda  ,\theta)=\frac{\lambda  \,\cos\theta+ {\rm i}\, \sin\theta}{\cos\theta+ {\rm i}\, \lambda  \, \sin\theta}= {\rm i}\, \frac{d}{d\theta}\text{ln}(\cos\theta+ {\rm i}\, \lambda  \,\sin\theta).
\end{align}
The real part of $\lambda  \, ( \theta)$ is indeed positive. It then follows
\begin{align}
\int_{0}^{\pi} \lambda  \, ( \theta)\, d\theta&=- {\rm i}\, \int_{0}^{\pi} \frac{d}{d\theta}\text{ln}(\cos\theta+ {\rm i}\, \lambda  \, \sin\theta)\,d\theta
=- {\rm i}\, \,\text{ln}(\cos\theta+ {\rm i}\, \lambda  \, \sin\theta)\Bigr|_{0}^{\pi}=\pi\,.
\end{align}
So this leads to 
\begin{proposition}{\rm (Exactly as from Fu and Mielke's paper)}
	We	have
	\begin{align}
	\int_{0}^{\pi} 
	\boldsymbol{\mathcal{E}}_{\!\theta}   \, d\theta=\pi \id
	\end{align}
\end{proposition}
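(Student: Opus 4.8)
The plan is to reduce this matrix identity to a scalar computation, exploiting the fact, established in the previous proposition, that $\boldsymbol{\mathcal{E}}_{\!\theta}$ is one fixed rational function of the single matrix $\boldsymbol{\mathcal{E}}$: formula \eqref{32} reads $\boldsymbol{\mathcal{E}}_{\!\theta}=g_\theta(\boldsymbol{\mathcal{E}})$ with $g_\theta(z)=(z\cos\theta+{\rm i}\sin\theta)/(\cos\theta+{\rm i}z\sin\theta)=\Phi(z,\theta)$. First I would invoke the standing simplification (already adopted in Section \ref{comSec}) that the three eigenvalues $\lambda_1,\lambda_2,\lambda_3$ of $\boldsymbol{\mathcal{E}}$ are distinct, so that $\boldsymbol{\mathcal{E}}=P\,\mathrm{diag}(\lambda_1,\lambda_2,\lambda_3)\,P^{-1}$; by Theorem \ref{thmielke1} these eigenvalues satisfy $\mathrm{Re}\,\lambda_j>0$. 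Then \eqref{32} gives $\boldsymbol{\mathcal{E}}_{\!\theta}=P\,\mathrm{diag}\big(\Phi(\lambda_1,\theta),\Phi(\lambda_2,\theta),\Phi(\lambda_3,\theta)\big)\,P^{-1}$ for every $\theta$, hence $\int_0^\pi\boldsymbol{\mathcal{E}}_{\!\theta}\,d\theta=P\,\mathrm{diag}\big(\int_0^\pi\Phi(\lambda_j,\theta)\,d\theta\big)_{j=1,2,3}\,P^{-1}$, and everything reduces to showing $\int_0^\pi\Phi(\lambda,\theta)\,d\theta=\pi$ for a single complex number $\lambda$ with positive real part.

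For that scalar integral I would use the elementary identity $\Phi(\lambda,\theta)=-{\rm i}\,\frac{d}{d\theta}\log(\cos\theta+{\rm i}\lambda\sin\theta)$ (a one-line differentiation), so that $\int_0^\pi\Phi(\lambda,\theta)\,d\theta=-{\rm i}\,\big[\log(\cos\theta+{\rm i}\lambda\sin\theta)\big]_{\theta=0}^{\theta=\pi}$ along any branch of $\log$ that is continuous on the curve $\gamma(\theta):=\cos\theta+{\rm i}\lambda\sin\theta$, $\theta\in[0,\pi]$. The crucial point is that $\mathrm{Re}\,\lambda>0$ makes $\mathrm{Im}\,\gamma(\theta)=(\mathrm{Re}\,\lambda)\sin\theta>0$ for all $\theta\in(0,\pi)$; thus $\gamma$ stays in the open upper half-plane on the open interval, never vanishes, and, since $\gamma(0)=1$ lies on the positive real axis and $\gamma(\pi)=-1$ on the negative real axis, it undergoes a net increase of argument of exactly $\pi$. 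Hence $\log\gamma(\pi)-\log\gamma(0)={\rm i}\pi$ for the continuous branch, so $\int_0^\pi\Phi(\lambda,\theta)\,d\theta=-{\rm i}\cdot{\rm i}\pi=\pi$. Substituting back yields $\int_0^\pi\boldsymbol{\mathcal{E}}_{\!\theta}\,d\theta=P\,(\pi\,\id)\,P^{-1}=\pi\,\id$.

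The only genuinely delicate step is the bookkeeping of the branch of the logarithm, that is, verifying that $\gamma$ winds exactly half-way around the origin in the positive sense with no extra turns; this is precisely where the positivity $\mathrm{Re}\,\mathrm{spec}\,\boldsymbol{\mathcal{E}}>0$ is indispensable, and I would spell out the sign of $\mathrm{Im}\,\gamma(\theta)$ explicitly. If one prefers not to assume that the eigenvalues of $\boldsymbol{\mathcal{E}}$ are distinct, the same argument runs verbatim through the Riesz--Dunford holomorphic functional calculus: write $\boldsymbol{\mathcal{E}}_{\!\theta}=\frac{1}{2\pi{\rm i}}\oint_\Gamma g_\theta(\zeta)\,(\zeta\,\id-\boldsymbol{\mathcal{E}})^{-1}\,d\zeta$ for a contour $\Gamma$ encircling $\mathrm{spec}\,\boldsymbol{\mathcal{E}}$ inside the open right half-plane (where each $g_\theta$ is holomorphic, its poles being purely imaginary), interchange the two bounded integrals $\int_0^\pi$ and $\oint_\Gamma$, and combine $\int_0^\pi g_\theta(\zeta)\,d\theta=\pi$ for every $\zeta\in\Gamma$ with $\frac{1}{2\pi{\rm i}}\oint_\Gamma(\zeta\,\id-\boldsymbol{\mathcal{E}})^{-1}\,d\zeta=\id$ to obtain $\int_0^\pi\boldsymbol{\mathcal{E}}_{\!\theta}\,d\theta=\pi\,\id$.
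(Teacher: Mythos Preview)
Your proof is correct and essentially matches the paper's own argument: the paper also reduces to the scalar identity $\int_0^\pi\Phi(\lambda,\theta)\,d\theta=\pi$ via the logarithmic antiderivative, and then lifts it to the matrix level by the Riesz--Dunford contour integral exactly as in your alternative paragraph. Your discussion of the branch of the logarithm (using $\mathrm{Im}\,\gamma(\theta)=(\mathrm{Re}\,\lambda)\sin\theta>0$ on $(0,\pi)$) is in fact more careful than the paper, which performs that step formally.
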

\begin{proof} We use the spectral calculus of matrices. By choosing any closed curve $\Gamma$ in the complex plane surrounding all the  eigenvectors of $\boldsymbol{\mathcal{E}}$ with positive real parts and lying in the open half plane $\text{Re}\, \lambda  > 0$. Then by Proposition $2.1$ we have
	\begin{align}
	\int_{0}^{\pi} 	 \boldsymbol{\mathcal{E}}_{\!\theta}   \, d\theta&=\int_{0}^{\pi} \frac{1}{2\pi  {\rm i}\, }\oint_\Gamma \Phi(\lambda ,\theta) (\lambda \id -\boldsymbol{\mathcal{E}})^{-1}\,d \lambda \,d\theta=\frac{1}{2\pi  {\rm i}\, }\oint_\Gamma\int_{0}^{\pi}  \Phi(\lambda_{\rm e} \, ,\theta)\,d\theta\, (\lambda \, \id -\boldsymbol{\mathcal{E}})^{-1}\,d \lambda_{\rm e}  ,\notag\\
	&=\frac{1}{2\pi  {\rm i}\, }\oint_\Gamma \, (\lambda \, \id -\boldsymbol{\mathcal{E}})^{-1}\pi\,d \lambda_{\rm e}  .\notag
	\end{align}
	The conclusion of the proposition follows then since $\int_{0}^{\pi} \lambda_{\rm e} \, ( \theta)\, d\theta=\pi$.
\end{proof}
On integrating \eqref{28}, we get
\begin{align}\label{final}
\int_{0}^{\pi}	 \boldsymbol{\mathcal{E}}_{\!\theta}   \, d\theta=\int_{0}^{\pi}\,  \boldsymbol{\mathcal{T}}_{\!\! \theta} ^{-1}(\boldsymbol{\mathcal{M}}+ {\rm i}\,\boldsymbol{ \boldsymbol{\mathcal{R}}}_\theta  ^T)\, d\theta\quad &\Leftrightarrow\quad 
\int_{0}^{\pi}	 \boldsymbol{\mathcal{T}}_{\!\! \theta} ^{-1}\boldsymbol{\mathcal{M}}\, d\theta=\int_{0}^{\pi}\, \boldsymbol{\mathcal{E}}_{\!\theta}   \, d\theta-  {\rm i}\, \int_{0}^{\pi} \boldsymbol{\mathcal{T}}_{\!\! \theta} ^{-1} \boldsymbol{ \boldsymbol{\mathcal{R}}}_\theta  ^T\, d\theta,\notag\\ \quad &\Leftrightarrow\quad 
\Big(\int_{0}^{\pi}	 \boldsymbol{\mathcal{T}}_{\!\! \theta} ^{-1}\, d\theta\Big) \boldsymbol{\mathcal{M}}=\pi\id-  {\rm i}\, \int_{0}^{\pi} \boldsymbol{\mathcal{T}}_{\!\! \theta} ^{-1} \boldsymbol{ \boldsymbol{\mathcal{R}}}_\theta  ^T\, d\theta.
\end{align} 
Finally we obtain
\begin{theorem}{\rm (Exactly as from Fu and Mielke's paper)}
	The unique solution of the \textit{algebraic Riccati equation} \eqref{12} that satisfies \\ $\text{\rm Re\,spec}\,( \boldsymbol{\mathcal{T}}^{-1}(\boldsymbol{\mathcal{M}}+{\rm i}\,\boldsymbol{\mathcal{R}}^T))>0$ is given explicitly by
	\begin{align}
	\boldsymbol{\mathcal{M}}=\Big(\int_{0}^{\pi}	 \boldsymbol{\mathcal{T}}_{\!\! \theta} ^{-1}\, d\theta\Big)^{-1}\Big(\pi\id-  {\rm i}\, \int_{0}^{\pi} \boldsymbol{\mathcal{T}}_{\!\! \theta} ^{-1} \boldsymbol{ \boldsymbol{\mathcal{R}}}_\theta  ^T\, d\theta\Big).
	\end{align}
\end{theorem}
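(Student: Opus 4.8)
The plan is to read off the formula from the one-parameter family of rotated matrices $\boldsymbol{\mathcal{T}}_\theta$, $\boldsymbol{\mathcal{R}}_\theta$, $\widetilde{\boldsymbol{\mathcal{Q}}}_\theta$ introduced in \eqref{21} and the associated solutions $\boldsymbol{\mathcal{E}}_\theta$ of the rotated matrix problem \eqref{17}. The defining identity is $\boldsymbol{\mathcal{E}}_\theta=\boldsymbol{\mathcal{T}}_\theta^{-1}(\boldsymbol{\mathcal{M}}_\theta+{\rm i}\,\boldsymbol{\mathcal{R}}_\theta^T)$, and the decisive structural input, already available from Theorem \ref{Mielketh}(i), is that $\boldsymbol{\mathcal{M}}_\theta$ does not depend on $\theta$; hence we may write $\boldsymbol{\mathcal{M}}_\theta\equiv\boldsymbol{\mathcal{M}}$ throughout, where $\boldsymbol{\mathcal{M}}=\boldsymbol{\mathcal{M}}_0$ is the solution of the Riccati equation \eqref{12} we wish to represent. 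The second ingredient is the explicit form $\boldsymbol{\mathcal{E}}_\theta=(\cos\theta\,\id+{\rm i}\sin\theta\,\boldsymbol{\mathcal{E}})^{-1}(\cos\theta\,\boldsymbol{\mathcal{E}}+{\rm i}\sin\theta\,\id)$ from Theorem \ref{Mielketh}(ii).

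First I would integrate the defining identity over $\theta\in[0,\pi]$. Since $\boldsymbol{\mathcal{M}}$ is $\theta$-independent it factors out of the integral, yielding
\begin{align}
\int_{0}^{\pi}\boldsymbol{\mathcal{E}}_\theta\,d\theta=\Big(\int_{0}^{\pi}\boldsymbol{\mathcal{T}}_\theta^{-1}\,d\theta\Big)\boldsymbol{\mathcal{M}}+{\rm i}\int_{0}^{\pi}\boldsymbol{\mathcal{T}}_\theta^{-1}\boldsymbol{\mathcal{R}}_\theta^T\,d\theta.
\end{align}
Thus everything reduces to evaluating the left-hand side, and the crux of the argument is the identity $\int_{0}^{\pi}\boldsymbol{\mathcal{E}}_\theta\,d\theta=\pi\,\id$ asserted in Theorem \ref{Mielketh}(iii). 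I would establish it by holomorphic functional calculus: writing $\boldsymbol{\mathcal{E}}_\theta=\Phi(\boldsymbol{\mathcal{E}},\theta)$ with the scalar symbol $\Phi(\lambda,\theta)=(\lambda\cos\theta+{\rm i}\sin\theta)/(\cos\theta+{\rm i}\lambda\sin\theta)$, representing $\boldsymbol{\mathcal{E}}_\theta=\frac{1}{2\pi{\rm i}}\oint_{\Gamma}\Phi(\lambda,\theta)(\lambda\,\id-\boldsymbol{\mathcal{E}})^{-1}\,d\lambda$ over a contour $\Gamma$ enclosing the three eigenvalues of $\boldsymbol{\mathcal{E}}$ in the open right half-plane, and then interchanging the $\theta$- and $\lambda$-integrations. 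Using the observation $\Phi(\lambda,\theta)={\rm i}\,\frac{d}{d\theta}\ln(\cos\theta+{\rm i}\lambda\sin\theta)$ gives $\int_0^\pi\Phi(\lambda,\theta)\,d\theta=\pi$ for every $\lambda$ with $\mathrm{Re}\,\lambda>0$, so the inner scalar integral collapses to $\pi$ times the resolvent, and the residue sum of the spectral projections equals $\id$.

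The hard part will be the clean justification of this functional-calculus step. One must verify that the branch of $\ln(\cos\theta+{\rm i}\lambda\sin\theta)$ stays continuous and single-valued along $[0,\pi]$ whenever $\mathrm{Re}\,\lambda>0$, so that the fundamental theorem of calculus delivers exactly $\pi$ rather than $\pi+2\pi{\rm i}\,n$; this uses that for such $\lambda$ the curve $\theta\mapsto\cos\theta+{\rm i}\lambda\sin\theta$ avoids the origin and does not wind around it. One also has to fix $\Gamma$ in the open right half-plane, which is legitimate because the eigenvalues of $\boldsymbol{\mathcal{E}}$ have strictly positive real part for $0\le v<\widehat{v}$ (Lemma \ref{lemmaGH2} together with Proposition \ref{lemmaGH}).

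Once $\int_{0}^{\pi}\boldsymbol{\mathcal{E}}_\theta\,d\theta=\pi\,\id$ is established, the conclusion is a one-line rearrangement: substituting into the integrated relation gives $\pi\,\id=\big(\int_{0}^{\pi}\boldsymbol{\mathcal{T}}_\theta^{-1}\,d\theta\big)\boldsymbol{\mathcal{M}}+{\rm i}\int_{0}^{\pi}\boldsymbol{\mathcal{T}}_\theta^{-1}\boldsymbol{\mathcal{R}}_\theta^T\,d\theta$, and left-multiplying by the inverse of $\int_{0}^{\pi}\boldsymbol{\mathcal{T}}_\theta^{-1}\,d\theta$ produces the claimed explicit formula. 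Finally I would record that this inverse exists: for $0\le v<\widehat{v}$ each $\boldsymbol{\mathcal{T}}_\theta$ is positive definite by the discussion surrounding the limiting speed (Proposition \ref{propQp} and the relations \eqref{RTQ}), hence each $\boldsymbol{\mathcal{T}}_\theta^{-1}$ is positive definite and so is the average $\mathbf{H}_v=\frac{1}{\pi}\int_{0}^{\pi}\boldsymbol{\mathcal{T}}_\theta^{-1}\,d\theta$, which is therefore invertible; rewriting the result in terms of $\mathbf{H}_v$ and $\mathbf{S}_v=-\frac{1}{\pi}\int_{0}^{\pi}\boldsymbol{\mathcal{T}}_\theta^{-1}\boldsymbol{\mathcal{R}}_\theta^T\,d\theta$ recovers exactly the representation \eqref{explMielke}.
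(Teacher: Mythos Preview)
Your proposal is correct and follows essentially the same route as the paper: integrate the relation $\boldsymbol{\mathcal{E}}_\theta=\boldsymbol{\mathcal{T}}_\theta^{-1}(\boldsymbol{\mathcal{M}}+{\rm i}\,\boldsymbol{\mathcal{R}}_\theta^T)$ over $[0,\pi]$, use $\int_0^\pi\boldsymbol{\mathcal{E}}_\theta\,d\theta=\pi\,\id$ (established via the spectral/functional-calculus argument you sketch), and left-multiply by the inverse of $\int_0^\pi\boldsymbol{\mathcal{T}}_\theta^{-1}\,d\theta$. The paper proves the integral identity as a separate proposition immediately beforehand, so its proof of the theorem itself is literally one line; your write-up folds that preliminary proposition into the argument and adds the invertibility check, but the substance is identical.
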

\begin{proof}
	We multiply the relation \eqref{final} by $\Big(\int_{0}^{\pi}	 \boldsymbol{\mathcal{T}}_{\!\! \theta} ^{-1}\, d\theta\Big)^{-1}$.
\end{proof}

\end{footnotesize}
\end{document}